\newtheorem{theorem}[subsubsection]{Theorem}
\newtheorem{corollary}[subsubsection]{Corollary}
\newtheorem{lemma}[subsubsection]{Lemma}
\newtheorem{proposition}[subsubsection]{Proposition}
\numberwithin{equation}{section}
\author{Jean-Beno{î}t Bost}
\address{D{é}partement de Math{é}matiques, Universit{é}
Paris-Sud,
B{â}timent 425, 91405 Orsay cedex, France}
\email{jean-benoit.bost@math.u-psud.fr}
\author{Klaus K{ü}nnemann}
\address{Mathematik, 
Universit{ä}t Regensburg, 93040 Regensburg,
Germany}
\email{klaus.kuennemann@mathematik.uni-regensburg.de}
\title[The arithmetic Atiyah extension]
{Hermitian vector bundles and extension groups on arithmetic schemes II. \\
The arithmetic Atiyah extension}
\dedicatory{Pour  Jean-Michel Bismut}
\begin{document}

\frontmatter

\begin{abstract}
In a previous paper, we have defined arithmetic
extension groups in the context of Arakelov geometry.
In the present one, we introduce an arithmetic analogue of the Atiyah extension that defines an element --- the arithmetic Atiyah class --- in a suitable arithmetic extension group.
Namely, if $\overline{E}$ is a hermitian vector bundle on an arithmetic scheme $X$,
its arithmetic Atiyah class 
$\widehat{{\rm at}}_{X/{\mathbb Z}}(\overline{E})$ lies in the group $\widehat{\rm Ext}^1_X(E,E\otimes\Omega_{X/{\mathbb Z}}^1)$, and 
is an obstruction to the algebraicity over $X$ of the unitary connection on the
vector bundle $E_{\mathbb C}$ over the complex manifold $X({\mathbb C})$
that is compatible with its holomorphic structure.

In the first sections of this article, we develop the basic properties of the arithmetic Atiyah class
which can be used to define characteristic classes in 
arithmetic Hodge cohomology.

Then we study the vanishing of the first Chern class 
$\hat c_1^H(\overline{L})$
of a hermitian line bundle $\overline{L}$
in the arithmetic Hodge cohomology group
$\widehat{\rm Ext}^1_X({\mathcal O}_X,\Omega_{X/{\mathbb Z}}^1)$.
This may be translated into a concrete problem of diophantine geometry, concerning
rational points of the
universal vector extension of the Picard variety of $X$. We investigate this problem, which was already considered and solved in some cases by Bertrand, by using a classical transcendence result of Schneider-Lang, and we derive a finiteness result for the
kernel of $\hat c_1^H$.

In the final section, we consider a geometric analog of our arithmetic
situation, namely a smooth, projective variety $X$
which is fibered on a curve $C$ defined over some field $k$ of 
characteristic zero. 
% Let $K$ denote the function field of $C$ and assume that the fixed 
% part of the abelian variety ${\rm Pic}^0_{X_{K}/K}$ is trivial.
To any line 
bundle $L$ over $X$ is attached its relative Atiyah class ${\rm 
at}_{X/C}L$ in $H^1(X,\Omega^1_{X/C})$.
We describe precisely when ${\rm at}_{X/C}L$ vanishes. In particular, when 
the fixed part of the relative Picard variety of $X$ over $C$ is trivial, 
this holds iff some positive power of $L$ descends to a line bundle  
over $C$.
\end{abstract}

\begin{altabstract}
Dans un pr\'ec\'edent article, nous avons d\'efini des groupes d'extensions 
arithm\'eti\-ques dans le contexte de la g\'eom\'etrie d'Arakelov. 
Dans le pr\'esent travail, nous introduisons un analogue arithm\'etique de 
l'extension d'Atiyah; sa classe dans un groupe d'extensions arithm\'etiques 
convenable d\'efinit la classe d'Atiyah arithm\'etique. Plus pr\'ecis\'ement, 
pour tout fibr\'e vectoriel hermitien $\overline{E}$ sur un sch\'ema 
arithm\'etique $X$, sa classe d'Atiyah arithm\'etique 
$\widehat{{\rm at}}_{X/{\mathbb Z}}(\overline{E})$ appartient au groupe 
$\widehat{\rm Ext}^1_X(E,E\otimes\Omega_{X/{\mathbb Z}}^1)$ et constitue 
une obstruction
\`a l'alg\'ebricit\'e sur $X$ de l'unique connection unitaire sur la 
fibr\'e vectoriel $E_{\mathbb C}$ sur la vari\'et\'e complexe $X({\mathbb C})$ 
qui soit
compatible avec sa structure holomorphe.

Dans les premi\`eres sections de cet article, nous pr\'esentons la 
construction et les propri\'et\' es de base de la classe d'Atiyah, qui 
permettent notamment de d\'efinir des classes caract\'eristiques en 
cohomologie de Hodge arithm\'etique.

Nous \'etudions ensuite l'annulation de la premi\`ere classe de Chern
$\hat c_1^H(\overline{L})$
d'un fibr\'e en droites hermitien $\overline{L}$ dans le groupe de 
cohomologie de Hodge arithm\'etique
$\widehat{\rm Ext}^1_X({\mathcal O}_X,\Omega_{X/{\mathbb Z}}^1)$. 
La d\'etermination de tels fibr\'es en droites hermitiens se traduit en une 
question de g\'eom\'etrie diophantienne, concernant les points rationnels 
de l'extension vectorielle universelle de la vari\'et\'e de Picard de $X$. 
Nous \'etudions  ce probl\`eme --- qui a d\'ej\`a \'et\'e consid\'er\'e, 
et r\'esolu dans certains cas, par Bertrand --- au moyen d'un classique 
r\'esultat de transcendance d\^u \`a Schneider et Lang, et nous en 
d\'eduisons un th\'eor\`eme de finitude sur le noyau de 
 $\hat c_1^H$.
 
 Dans la derni\`ere section, nous \'etudions un analogue g\'eom\'etrique 
de la situation arithm\'etique pr\'ec\'edente. A savoir,  nous 
consid\'erons une vari\'et\'e projective lisse $X$ fibr\'ee sur une 
courbe $C$, au dessus d'un corps de base $k$ de caract\'eristique nulle 
et nous attachons \`a tout fibr\'e en droites $L$ sur $X$ sa classe d'Atiyah relative 
 ${\rm 
at}_{X/C}L$ dans $H^1(X,\Omega^1_{X/C})$. Nous d\'eterminons quand cette classe
 ${\rm at}_{X/C}L$ s'annule. Notamment, lorsque la vari\'et\'e de Picard 
relative de $X$ sur $C$ n'a pas de partie fixe, cela se produit 
pr\'ecis\'ement lorsque une puissance non-nulle de $L$ descend en un fibr\'e 
en droites sur $C$.
\end{altabstract}

\subjclass{MSC: Primary 14G40; Secondary  11J95, 14F05, 32L10}

\keywords{Arakelov geometry, hermitian vector bundles, extension groups, Atiyah class, transcendence and algebraic groups}

\altkeywords{g\'eom\'etrie d'Arakelov, fibr\'es vectoriels hermitiens, groupes d'extensions, classe d'Atiyah, transcendance et groupes alg\'ebriques} 

%\date{\today}

%
\maketitle

\tableofcontents

\mainmatter

    \def\P{{\mathbb P}}
    \def\N{{\mathbb N}}
    \def\Z{{\mathbb Z}}
    \def\Q{{\mathbb Q}}
    \def\R{{\mathbb R}}
    \def\C{{\mathbb C}}
    \def\F{{\mathbb F}}
    \def\E{{\mathbb E}}
    \def\H{{\mathbb H}}
    \def\A{{\mathbb A}}
    \def\G{{\mathbb G}}

    \newcommand{\cA}{{\mathcal A}}
    \newcommand{\cC}{{\mathcal C}}
    \newcommand{\cE}{{\mathcal E}}
    \newcommand{\cI}{{\mathcal I}}
    \newcommand{\cO}{{\mathcal O}}
    \newcommand{\cU}{{\mathcal U}}    
    \newcommand{\cV}{{\mathcal V}}
    \newcommand{\cX}{{\mathcal X}}

    \renewcommand{\Re}{\,{\rm Re}\,}
    \renewcommand{\Im}{\,{\rm Im}\,}
    
      \newcommand{\Id}{{\rm Id}}
      \newcommand{\pr}{{\rm pr}}

    \newcommand{\ol}[1]{\overline{#1}}
    \newcommand{\cf}{\emph{cf.}\,}
    \newcommand{\ie}{\emph{i.e.}\,}
    \newcommand{\eg}{\emph{e.g.}\,}

    \newcommand{\at}{{\rm at}}
    \newcommand{\cat}{{\mathcal At}}
    \newcommand{\jet}{{\rm jet}}
    \newcommand{\cjet}{{\mathcal J\!et}}

    \newcommand{\Sm}{{\mathbf {SmPr}}}    
    \newcommand{\id}{\mbox{\rm id}}
    \newcommand{\rk}{{\rm{rk}}\,}
    \newcommand{\an}{{\rm hol}}
    \newcommand{\dolb}{{\overline{\partial}}}

    \newcommand{\Spec}{{\rm Spec}\,}
    \newcommand{\Proj}{{\rm Proj}}

    \newcommand{\Hom}{{\rm Hom}}
    \newcommand{\End}{{\rm End}}
    \newcommand{\Aut}{{\rm Aut}}
    \newcommand{\ihom}{{\mathcal Hom}}
    \newcommand{\iend}{{\mathcal End}}
    \newcommand{\im}{{{\rm im\, }}}
    \newcommand{\Coker}{{\rm Coker}}
    \newcommand{\Ker}{{\rm Ker}}
 \newcommand{\Lie}{{\rm Lie}\,}
 
    \newcommand{\cl}{{\rm cl}}
    \newcommand{\clar}{\widehat{{\rm cl}}}

    \newcommand{\NS}{{\rm{NS}}}
    \newcommand{\Pic}{{\rm{Pic}}}
    \newcommand{\Pico}{{\rm{Pic}}^0}

    \newcommand{\Ext}{\mbox{\rm Ext}}
    \newcommand{\Exthat}{\widehat{\rm Ext}}
    \newcommand{\dega}{\widehat{\rm deg}\,}

\setcounter{tocdepth}{1}

\setcounter{section}{-1}
\section{Introduction}

\noindent{\bf 0.1.}
This paper is a sequel to \cite{bostkuennemann1}, where we have defined and 
investigated arithmetic extensions on arithmetic schemes, and the groups they 
define.

Recall that if $X$ is a 
scheme over $\Spec \Z$, separated of finite type,  whose generic 
fiber $X_\Q$ is smooth, then an arithmetic extension of vector bundles 
over $X$ is the data $(\cE, s)$ of a short exact sequence of vector 
bundles (that is, of locally free coherent sheaves of $\cO_{X}$-modules) 
on the scheme $X,$
\begin{equation}\label{ext}\cE : 0 \longrightarrow G \stackrel{i}{\longrightarrow} E
\stackrel{p}{\longrightarrow} F \longrightarrow 0,
\end{equation}
and of a $\cC^\infty$-splitting
\[
s: F_\C \longrightarrow E_\C,
\]
invariant under complex conjugation, 
of the extension of $\cC^\infty$-complex
 vector bundles
\[
\cE_\C : 0 \longrightarrow G_\C \stackrel{i_\C}{\longrightarrow} E_\C
\stackrel{p_\C}{\longrightarrow} F_\C \longrightarrow 0
\]
on the complex manifold $X(\C)$, that is deduced from $\cE$ by the 
base change from $\Z$ to $\C$ and analytification.

For any two given vector bundles $F$ and $G$ over $X,$ the isomorphism 
classes of the so-defined arithmetic extensions of $F$ by $G$ 
constitute a set $\Exthat_{X}^1(F,G)$ that becomes an abelian group when 
equipped with the addition law defined by a variant of the classical 
construction of the Baer sum of $1$-extension of (sheaves of) 
modules\footnote{Consider indeed two arithmetic extensions of $F$ by $G$, say 
$\ol{\cE}_\alpha:=(\cE_\alpha, s_\alpha),$ $\alpha=1,2,$  defined by  extensions 
of vector bundles 
$\cE_\alpha : 0 \rightarrow G \stackrel{i_\alpha}{\rightarrow} E_\alpha
\stackrel{p_\alpha}{\rightarrow} F \rightarrow 0$
and $\cC^\infty$-splittings $s_\alpha: F_\C \rightarrow E_{\alpha,\C}$. 
We may define a vector bundle 
$E:=
\frac{\Ker(p_1-p_2:E_1\oplus E_2 { \rightarrow  } F)}
{{\rm Im}\,((i_1,-i_2): G {\rightarrow  } E_1\oplus E_2)}
$
over $X$. The Baer sum of $\ol{\cE}_1$ and $\ol{\cE}_2$ is the arithmetic 
extension $\ol{\cE}$ defined by the usual Baer sum of $\cE_1$ and $\cE_2$ --- namely
$\cE : 0 \rightarrow G \stackrel{i}{\rightarrow} E
\stackrel{p}{\rightarrow} F \rightarrow 0$
where the morphisms $i: G \rightarrow E$ and $p: E \rightarrow F$ are 
defined by $p([(g_1,g_2)]):=p_1(f_1)=p_2(f_2)$ and
$i(g):=[(i_1(g),0)]=[(0,i_2(g))]$ --- equipped with the $\cC^\infty$-splitting 
$s:F_\C \rightarrow E_\C$ defined by $s(e):=[(s_1(e),s_2(e))].$
}. 

Recall that a hermitian vector bundle $\ol{E}$ over $X$ is a pair
$(E,\|.\|)$ consisting of a vector bundle $E$ over $X$ and of a
$\cC^\infty$-hermitian metric, invariant under complex conjugation, on the
holomorphic vector bundle $E_{\C}$ over $X(\C).$ Examples of
arithmetic extensions in the above sense are provided by admissible
extensions  
\begin{equation}\label{admissible}
    \ol{\mathcal{E}}:\,0 \longrightarrow
\ol{G}\stackrel{i}{\longrightarrow}  \ol{E}
\stackrel{p}{\longrightarrow}
\ol{F}\longrightarrow  0
\end{equation}
of hermitian vector
bundles over $X$, namely from the
data of an extension
\[
\mathcal{E}:\,0\longrightarrow G \stackrel{i}{\longrightarrow}  E
\stackrel{p}{\longrightarrow}F
\longrightarrow  0
\]
of the underlying ${\mathcal O}_{X}$-modules such that the hermitian metrics
$\|.\|_{\ol{G}}$ and $\|.\|_{\ol{F}}$ on $G_{\C}$ and $F_{\C}$ are
induced (by restriction and quotients) by the metric $\|.\|_{\ol{E}}$
on
$E_{\C}$ (by means of the morphisms $i_{\C}$ and $p_{\C}$). Indeed, to
any such admissible extension is naturally attached its  orthogonal
splitting, namely the $\mathcal{C}^\infty$-splitting 
\[s_{\ol{\cE}}:F_{\C} \longrightarrow E_{\C}\]
that maps $F_{\C}$ isomorphically onto the orthogonal complement
$i_{\C}(G_{\C})^\perp$ of the image of $i_{\C}$ in $E_{\C}$. This
splitting is invariant under complex conjugation, and  
$(\mathcal{E},s_{\ol{\cE}})$ is an
arithmetic extension of $F$ by $G$. For any two hermitian vector
bundles $\ol{F}$ and $\ol{G}$ over $X,$ this construction establishes a
bijection from the set of isomorphism classes of admissible extension 
of  the form (\ref{admissible}) to the set $\Exthat^1_{X}(F,G)$.

In \cite{bostkuennemann1} we 
studied basic properties of the so-defined arithmetic extension groups. 
In particular, we introduced the
following natural morphisms of abelian groups:
\begin{itemize}
\item
the ``forgetful" morphism 
\[
\nu: \Exthat^1_{X}(F,G)
\longrightarrow \Ext^1_{\cO_X}(F,G),
\] 
which maps the class of an
arithmetic extension $(\cE,s)$ to the one of the underlying extension 
$\cE$ of $\cO_{X}$-modules;
\item
the morphism 
\[
b:{\rm Hom}_{\cC^\infty_{X(\C)}}(F_\C,G_\C)^{F_\infty} \longrightarrow
\widehat{\rm Ext}^1_X(F,G),
\]
defined on the real vector space $\Hom_{\cC^\infty_{X(\C)}}(F_\C,G_\C)^{F_\infty}$
of $\cC^\infty$-morphisms of vector bundles over $X(\C)$ from $F_{\C}$ to $G_{\C}$,
invariant under complex conjugation; it sends an element $T$ in this  space  to
the class of
the arithmetic
extension $(\mathcal{E},s)$ where $\mathcal{E}$ is the trivial
algebraic extension, defined by (\ref{ext}) with 
$E:=G\oplus F$ and $i$ and $p$ the obvious injection and projection
morphisms, and where $s$ is given by $s(f)=(T(f),f)$;
\item
the morphism 
\[
\iota:{\rm Hom}_{{\mathcal O}_X}(F,G)
\longrightarrow 
{\rm Hom}_{\cC^\infty_{X(\C)}}(F_\C,G_\C)^{F_\infty}
\] 
which sends a morphism  $\varphi: F \rightarrow G$ of vector 
bundles over $X$ to the  morphism of $\mathcal{C}^\infty$-complex vector bundles 
$\varphi_\C: F_\C \rightarrow G_\C$ deduced from $\varphi$ by base 
change from $\Z$ to $\C$ and analytification;
\item
the morphism 
\[
\Psi:\widehat{\rm Ext}_{X}^1(F,G)\longrightarrow
Z_{\overline{\partial}}^{0,1}(X_\R,F^\lor \otimes G), 
\]
that takes values in the real vector space
\[
Z_{\overline{\partial}}^{0,1}(X_\R,F^\lor \otimes G)
:=Z_{\overline{\partial}}^{0,1}(X(\C),F_\C^\lor \otimes G_\C)^{F_\infty}
\] 
of
$\ol{\partial}$-closed forms of type $(0,1)$ on $X(\C)$ with
coefficients in $F^\lor_\C\otimes G_\C$, invariant under complex
conjugation. It maps the class of an arithmetic extension $(\cE,s)$ to
its ``second fundamental form" $\Psi(\cE,s)$ defined by 
\[
i_{\C}\circ \Psi(\cE,s)
=\ol{\partial}_{F^\lor_\C\otimes G_\C}(s).
\]
\end{itemize}
We also established
the following basic exact sequence: 
\begin{equation}\label{eq:longexseq1}
{\rm Hom}_{{\mathcal O}_X}(F,G)
\stackrel{\iota}{\rightarrow }
{\rm Hom}_{\cC^\infty_{X(\C)}}(F_\C,G_\C)^{F_\infty}
\stackrel{b}{\rightarrow }
\widehat{\rm Ext}_X^1(F,G)
\stackrel{\nu}{\rightarrow } {\rm Ext}_{\cO_X}^1(F,G)\rightarrow 0,
\end{equation}
which displays the arithmetic extension group 
$\widehat{\rm Ext}_X^1(F,G)$ as an extension of the ``classical" extension group  
${\rm Ext}_{{\cO}_X}^1(F,G)$ by a group of analytic type. 

The sequel of \cite{bostkuennemann1} was devoted to the study of the
groups 
$\widehat{\rm Ext}_X^1(F,G)$ when the base scheme is an arithmetic curve, that is,
the spectrum $\Spec \cO_K$ of the ring of integers of some number 
field $K$. In this special case, these extension groups
appear as natural tools in geometry of numbers and reduction theory in
their modern guise, namely the study of hermitian vector bundles over 
arithmetic curves and their admissible extensions.

In the present paper, we focus on a natural construction of arithmetic
extensions attached to hermitian vector bundles over an arithmetic
scheme $X$ as above, their \emph{arithmetic Atiyah extensions}. In
contrast with the arithmetic extensions over arithmetic curves
investigated in \cite{bostkuennemann1}, for which the interpretation
as admissible extensions was crucial, the arithmetic Atiyah extensions
are genuine examples of arithmetic extensions constructed as pairs
$(\cE,s)$ --- where $s$ is a $\cC^\infty$-splitting of an extension
$\cE$ of vector bundles over $X$ --- and not derived from an
admissible extension.
%Beside, the vanishing properties of the classes 
%of the arithmetic Atiyah extensions turn out to be related to %difficult
%transcendence questions on abelian varieties.

\vspace{\baselineskip}

\noindent{\bf 0.2.}
Atiyah extensions of vector bundles were initially introduced 
by Atiyah \cite{atiyah57} in the context of complex analytic geometry.

Namely, for any holomorphic vector bundle $E$ over a complex 
manifold $X,$ Atiyah introduces the holomorphic vector bundle
$ P^1_{X}(E)$ of jets of order one of sections of $E$, whose fiber
$ P^1_{X}(E)_{x}$ at a point $x$ of $X$ %(the coherent sheaf of principal parts) 
is by definition the space of sections of $E$ over
the first order thickening $x_{1}:=\Spec \cO_{X,x}/\mathfrak{m}_{x}^2$ of 
$x$ in $X$. Here, as usual, $\cO_{X}$ denotes the sheaf of
holomorphic functions over $X$, and $\mathfrak{m}_{x}$ the maximal ideal
of its stalk $\cO_{X,x}$ at $x$. 

The vector bundle $ P^1_{X}(E)$ fits into a short exact sequence of
holomorphic vector bundles
\begin{equation}\label{AtiyahAnIntro}
    \cat_{X}E: 0 \longrightarrow E\otimes  \Omega^1_{X}
\stackrel{i}{\longrightarrow} P^1_{X}(E) \stackrel{p}{\longrightarrow}
E \longrightarrow 0,
\end{equation}
where the morphisms $i$ and $p$ are defined as follows: for any point $x$ in 
$X$, the map $i_{x}: E_x\otimes \Omega^1_{X,x} \rightarrow P^1_{X}(E)_{x}$
maps an element $v$ in $E_x \otimes \Omega^1_{X,x} \simeq
\Hom_{\C}(T_{X,x},E_{x})$ to the section of $E$ over $x_{1}$ that
vanishes at $x$ and admits $v$ as differential, while the map $p_{x}: P^1_{X}(E)_{x}
\rightarrow E_{x}$ is simply the evaluation at $x$.

The Atiyah extension of $E$ is precisely the extension $\cat_{X}E$ of
$E$ by $E\otimes \Omega^1_{X}$ so-defined. According to its very definition, its 
class ${\rm at}_{X}E$ in
the group ${\rm Ext}^1_{\cO_{X}}(E, E\otimes \Omega^1_{X})$ which
classifies  extensions of holomorphic vector bundles of $E$ by
$E\otimes \Omega^1_{X}$ is the
obstruction to the existence of a holomorphic connection
\[
\nabla : E \longrightarrow E\otimes \Omega^1_{X}
\]
on the vector bundle $E$.

The point of Atiyah's article \cite{atiyah57} is that the class ${\rm at}_{X}E$
also leads to a straightforward construction of characteristic classes 
of $E$ with values in the so-called Hodge cohomology groups of $X$
\begin{equation}\label{Hodgegroups}
    H^{p,p}(X):=H^p(X, \Omega^p_{X}).
\end{equation}
For instance, Atiyah defines a first Chern class $c_{1}^H(E)$ in
    $H^{1,1}(X)=H^1(X, \Omega^1_{X})$ as the image of ${\rm at}_{X}E$ 
    by the morphism
    \[
    \begin{array}{ccccc}
    {\rm Ext}^1_{\cO_{X}}(E,E\otimes\Omega^1_{X}) &\simeq & 
    {\rm Ext}^1_{\cO_{X}}(\cO_{X},{\iend}\,E\otimes \Omega^1_{X})&\\
    & & \hspace*{1.8cm}\downarrow 
    {\scriptstyle ({\rm Tr}_{E}\otimes {\rm id}_{\Omega^1_{X}} )\circ \,\_}&\\
    & & {\rm Ext}^1_{\cO_{X}}(\cO_{X}, \Omega^1_{X})& \simeq & H^1(X, \Omega^1_{X}) 
    \end{array}
    \]
    deduced from the canonical trace morphism
    \[
    \begin{array}{crcl}
        {\rm Tr}_{E}: & {\iend}\,E \simeq E^\lor \otimes E &
	\longrightarrow & \cO_{X},  \\
         & \lambda \otimes v & \mapsto & \lambda(v).
    \end{array}\]
    Higher degree characteristic classes are constructed by means of
    the successive powers $({\rm at}_{X}E)^p$ in ${\rm Ext}^p_{\cO_{X}}(\cO_{X},
    ({\iend}\,E)^{\otimes p}\otimes\Omega^p_{X}),$ where $p$ denotes a positive integer.
    For instance, the $p$-th Segre class, associated to the $p$-th
    Newton polynomial $X_{1}^p+\cdots +X_{\rk E}^p$, may be
    constructed in the Hodge cohomology group $H^p(X, \Omega^p_{X})$ as
    \[
    s_{p}^H(E):= ({\rm Tr}^p_{E}\otimes {\rm id}_{\Omega^p_{X}})\circ
    ({\rm at}_{X}E)^p,
    \]
    where 
    \[
    \begin{array}{crcl}
        {\rm Tr}^p_{E}: & ({\iend}\, E)^{\otimes p}  &
	\longrightarrow & \cO_{X},  \\
         & T_{1}\otimes \ldots \otimes T_{p}  & \mapsto &
	 {\rm Tr}_{E}(T_{1}\ldots T_{p}).
    \end{array}\]
    
    When the manifold $X$ is compact and K\"ahler (\emph{e.g.},
    projective), the Hodge cohomology group $H^p(X, \Omega^p_{X})$ may
    be identified with a subspace of the complex de Rham cohomology
    group $H^{2p}_{\rm{dR}}(X,\C)$ of $X$, and Atiyah's construction
    of characteristic classes is compatible (up to normalization) to
    classical topological constructions.
    
    The definition of the Atiyah class and the construction of the
    associated characteristic classes obviously make sense in a purely
    algebraic context, say over a base field $k$ of characteristic
    zero. If $X$ is a smooth algebraic scheme over $k$, for any vector
    bundle $E$ over $X$, its Atiyah class ${\rm at}_{X/k}E$ is
    constructed as above, \emph{mutatis mutandis}, as an element of
    the $k$-vector space ${\rm Ext}^1_{\cO_{X}}(E, E\otimes\Omega^1_{X/k})$, 
    and the associated characteristic classes are elements of the
    Hodge cohomology groups of $X$ defined similarly to (\ref{Hodgegroups}), but 
    now using the Zariski topology of 
    $X$ instead of the analytic one, and  the sheaf of
    K\"ahler differentials $\Omega^p_{X/k}$ instead of the
    holomorphic differential forms $\Omega^p_{X}$.
    
    These constructions are especially suited to smooth algebraic schemes $X$ that are 
    proper over $k$. In this case, the ``Hodge to de Rham" spectral
    sequence degenerates, and the Hodge group $H^{p,p}(X)$ gets
    identified to a %$k$-vector
    subquotient %space
    %\fnote{I have changed subspace  to subquotient space} 
    of the Hodge filtration
    of the algebraic de Rham cohomology group 
    $H^{2p}_{\rm{dR}}(X/k):=
    H^{2p}(X,\Omega^{\cdot}_{X/k})$. 
    %\fnote{ I have replaced $\mathbb{H}^{2p}(X,(\Omega^{\cdot}_{X/k},d))$ by $H^{2p}(X,\Omega^{\cdot}_{X/k})$} 
    Moreover, when $X$
    is proper over
    $k=\C$, this algebraic construction is  compatible with the
    previous analytic one, as a consequence of the GAGA principle.
    
    This algebraic version of Atiyah's constructions has been
    considerably extended by Illusie \cite{illusie71}. Instead of a
    smooth algebraic scheme over a field $k$, he considers a suitable morphism 
    of ringed topoi $f: X \rightarrow S,$ 
and associates Atiyah classes and characteristic classes to
perfect complexes of sheaves of $\cO_{X}$-modules; his definitions
involve the cotangent complex $\mathbb{L}^\cdot_{X/S}$ of $X$ over $S$, which
in this general setting plays the role of the sheaf $\Omega^1_{X/k}$ attached
to a smooth scheme $X$ over the field $k$. 
Let us also mention the presentation of this ``algebraic" theory and of some 
of its developments in the monograph of  Ang\' eniol and Lejeune-Jalabert 
\cite{angenioletal89}, and the analytic construction of Buchweitz and 
Flenner \cite{buchweitzflenner98}, \cite{buchweitzflenner03}\footnote{These  authors work in 
an analytic context as 
the original article \cite{atiyah57}, but extend the construction of Atiyah 
classes to complex of coherent analytic sheaves over possibly singular 
complex spaces. Like Illusie's construction, this requires to deal 
with the cotangent complex, now in an analytic context.}.

\vspace{\baselineskip}

\noindent{\bf 0.3.} Let us briefly describe our construction of arithmetic 
Atiyah classes.

Let $\ol{E}:=(E,\|.\|_{E})$ be a hermitian 
vector bundle over a scheme $X$ which is separated and of finite type over $\Z$,
%\fnote{ We never us quasi-projective but I think that we want our schemes to be flat.} 
and which for 
simplicity will be assumed  smooth over $\Z$ in this introduction. 
The relative version of the exact sequence (\ref{AtiyahAnIntro})
defines the Atiyah extension of $E$ over $\Z$:
\begin{equation}\label{AtiyahZIntro}
    \cat_{X/\Z}E: 0 \longrightarrow E\otimes \Omega^1_{X/\Z}
\stackrel{i}{\longrightarrow} P^1_{X/\Z}(E) \stackrel{p}{\longrightarrow}
E \longrightarrow 0.
\end{equation}

Besides, according to a classical result of Chern and 
Nakano (\cite{chern46, nakano55}), the 
holomorphic vector bundle $E_{\C}^\an$ over the complex manifold $X(\C)$,
seen as $\cC^\infty$-vector bundle, admits a unique connection $\nabla_{\ol{E}}$
that is  unitary with respect to the hermitian metric $\|.\|_{E}$, and
moreover is
compatible with its holomorphic structure in the sense that its
component $\nabla_{\ol{E}}^{0,1}$ of type $(0,1)$ coincides with the
$\ol{\partial}$-operator $\ol{\partial}_{E_{\C}}$ with coefficients in 
the holomorphic vector bundle
$E_{\C}^\an.$ The component $\nabla_{\ol{E}}^{1,0}$ of type $(1,0)$ of 
$\nabla_{\ol{E}}$
defines a $\cC^\infty$-splitting $s_{\ol{E}}$ of the Atiyah extension of 
the holomorphic vector bundle $E_{\C}^\an$:
\[
\cat_{X(\C)}E_{\C}: 0 \longrightarrow \Omega^1_{X(\C)}\otimes
E_{\C}
\stackrel{i_{\C}}{\longrightarrow} P^1_{X(\C)}(E_{\C})
\stackrel{p_{\C}}{\longrightarrow}
E_{\C} \longrightarrow 0.\]
Namely, for any point $x$ in $\cX(\C)$ and any $e$ in $E_{x}$, $s_{\ol{E}}(e)$ 
is the section of $E$ over $x_1$ that takes the value $e$ at $x$ and
is killed by $\nabla_{\ol{E}}^{1,0}$. 

Since the above analytic Atiyah extension $\cat_{X(\C)}E_{\C}$ is
precisely the extension deduced from $\cat_{X/\Z}E$ by the base change 
from $\Z$ to $\C$
and analytification, the pair $(\cat_{X/\Z}E,s_{\ol{E}})$ defines an
arithmetic extension, the \emph{arithmetic Atiyah extension} 
$\widehat{\cat}_{X/\Z} \ol{E}$ of the
hermitian vector bundle $\ol{E}$. Its class $\widehat{\rm at}_{X/\Z} \ol{E}$ 
in $\widehat{\rm Ext}^1_X(E,E \otimes \Omega^1_{X/\Z})$
--- the \emph{arithmetic Atiyah class} of $\ol{E}$ --- is mapped by the 
forgetful morphism $\nu$ to the 
``algebraic" Atiyah class ${\rm at}_{X/\Z} E$ of $E$ in 
${\rm Ext}^1_{\cO_X}(E,E \otimes \Omega^1_{X/\Z})$ (defined by
the extension $\cat_{X/\Z}E$) and by the ``second fundamental form"
morphism $\Psi$ to the curvature 
form of the Chern-Nakano connection $\nabla_{\ol{E}}$ (up to a sign).

\vspace{\baselineskip}

\noindent{\bf 0.4.}
In the first section of this article, we begin by reviewing the
constructions of the Atiyah extension in the classical $\C$-analytic and
algebraic frameworks. For the sake of simplicity, we deal with locally
free coherent sheaves only, and follow a naive approach --- we work
with relative differentials, and not with their ``correct" derived
version defined by the cotangent complex. This naive approach is 
sufficient when one considers --- as we shall in the sequel --- relative 
situations $f: X \rightarrow S$ where $X$ is integral, and $f$ is l.c.i. 
and generically smooth,  in which case $\mathbb L^\cdot_{X/S}$ is 
quasi-isomorphic to $\Omega^1_{X/S}.$

Then, in Section \ref{atiyah1}, we construct the arithmetic Atiyah class in
the following relative situation, which extends the one considered
in the previous paragraphs. Consider 
arithmetic schemes $X$ and
$S$, flat  over an arithmetic ring 
$(R,\Sigma, F_{\infty})$ (in the sense of 
\cite[3.1.1]{gilletsoule90}; see also \cite[1.1]{bostkuennemann1}),
and a morphism of $R$-schemes $\pi:X \rightarrow S$, smooth over the
fraction field $K$ of $R$. Then, to any hermitian vector bundle
$\ol{E}$ over $X$, we attach a class $\widehat{\rm at}_{X/S} \ol{E}$ in 
$\widehat{\rm Ext}^1_X(E,E \otimes \Omega^1_{X/S})$.  
Applying a trace morphism to
this class, we define the \emph{first Chern class} $\hat c_{1}^H(\ol{E})$
of $\ol{E}$ \emph{in arithmetic Hodge cohomology}, that lies  in the
group 
\[
\widehat{H}^{1,1}(X/S):=\widehat{\rm
Ext}^1_X(\cO_{X}, \Omega^1_{X/S}).
\]
The class $\widehat{\rm at}_{X/S} \ol{E}$ and its trace $\hat c_{1}^H(\ol{E})$ 
satisfy compatibility properties with pull-back and tensor operations 
on hermitian vector bundles that extend well-known properties of the classical
Atiyah  and first Chern  classes. In particular we construct a
functorial homomorphism
\[
\hat c_{1}^H:\widehat{\rm Pic} (X) \longrightarrow
\widehat{H}^{1,1}(X/S)
\]
from the group of isomorphism classes of hermitian line bundles over
$X$ to the arithmetic Hodge cohomology group.

In the last sections of this article, we investigate the kernel
of this morphism. It trivially vanishes on the image of 
\[
\pi^\ast : \widehat{\rm Pic} (S) \longrightarrow \widehat{\rm Pic}(X),
\] 
and we may wonder ``how large" this image $\pi^\ast(\widehat{\rm Pic} (S))$ is 
in $\ker \hat c^H_{1}.$ 

This 
question becomes a concrete problem of Diophantine geometry when the 
base arithmetic ring is a number field $K$ equipped with a non-empty set $\Sigma$
of embeddings $\sigma:K\hookrightarrow \C$ stable under complex conjugation, and 
when $S$ is $\Spec K$ and $X$ is projective over $K$. Indeed, in 
this case, the class of a hermitian line bundle $\ol{L}= (L, 
\|.\|_{L})$ over $X$ lies in the kernel of $\hat c^H_{1}$ precisely when $L$ 
admits an algebraic connection
$\nabla: L \rightarrow L \otimes \Omega^1_{X/K}$, defined over $K$,
such that the induced holomorphic connection 
$\nabla_{\C}: L_{\C} \rightarrow L_{\C}\otimes \Omega^1_{X_\Sigma(\C)}$
on the holomorphic line bundle $L_{\C}$ over 
\[
X_\Sigma(\C):= \coprod_{\sigma \in \Sigma} X_{\sigma}(\C)
\]
is unitary with respect to the hermitian metric $\|.\|_{L}.$

One easily checks that, if $L$ has a torsion class in ${\rm Pic}(X)$ and if the 
metric $\|.\|_{L}$ has vanishing curvature on $X_\Sigma(\C)$, then their exists such a 
connection. Moreover the converse implication, namely

\noindent ${\bf I1}_{X,\Sigma}$: \emph{if a hermitian line bundle $\ol{L}= (L, 
\|.\|_{L})$ over $X$ admits an algebraic connection $\nabla$ defined over $K$ such 
that the connection $\nabla_{\C}$ on $L_\C$ over $X_\Sigma(\C)$ is unitary with 
respect to $\|.\|_{L},$ then $L$ 
has a torsion class in ${\rm Pic}(X)$ and the metric $\|.\|_{L}$ has vanishing 
curvature,}

\noindent turns out to be equivalent with the following condition, where $\pi$ 
denotes the structural morphism from $X$ to $\Spec K$:

\noindent ${\bf I2}_{X,\Sigma}$:  \emph{the image of $\pi^\ast: \widehat{\rm Pic} 
(\Spec K) \rightarrow \widehat{\rm Pic}
(X)$ 
has finite index in the kernel
of}
\[
\hat c_1^H:\widehat{\rm Pic} (X) \longrightarrow
\widehat{H}^{1,1}(X/K).
\]

The equivalent assertions ${\bf I1}_{X,\Sigma}$ and ${\bf I2}_{X,\Sigma}$ may be translated in 
terms of $K$-rational points of the universal vector extension of the 
Picard variety of $X$. In this formulation, their validity %of {\bf AF1}  and {\bf AF2}
 has been established by Bertrand \cite{bertrand95, bertrand98} when $\Sigma$ 
has a unique element (necessarily a real embedding of $K$) and when this Picard 
variety admits ``real 
multiplication"\footnote{namely, if this Picard variety $A$ has dimension $g$, 
the $\Q$-algebra ${\rm End}(A/K)\otimes_\Z \Q$ is assumed to be a totally real 
field of degree $g$ over $\Q$. Actually, Bertrand establishes a more precise 
result, concerning $g$ independent extensions of $A$ by the additive group 
$\mathbb{G}_a$; see   \cite[Theorem 3, pages 13-14]{bertrand98}.} as a 
consequence of 
the analytic subgroup theorem of W\"ustholz 
(\cite{wuestholz89}). 

Inspired by \cite{bertrand95, bertrand98} --- which we tried to understand in 
more geometric terms, avoiding the explicit use of differential forms and their 
periods, but working with  algebraic groups and their exponential maps--- we 
establish in Section  \ref{sect3}  the validity of ${\bf I1}_{X,\Sigma}$ and 
${\bf I2}_{X,\Sigma}$ when $\Sigma$ is 
%formed by one real embedding of $K$, or by all embeddings of $K$ in $\C$.  
arbitrary without any assumption on the Picard variety of $X$.
The proof proceeds by reducing to the case where $X$ is an abelian variety, and $\Sigma$ has a unique or two conjugate elements. To handle this case, 
we use a classical transcendence theorem of Schneider-Lang 
characterizing Lie algebras of algebraic subgroups of  commutative algebraic 
groups over number fields.  Our argument is presented in the first part of Section 3, and may be read independently of the rest of the article.

%considers a stronger statements, that would correspond to variants of the above 
%conditions ${\bf I1}_{X,\Sigma}$ and {\bf AF2} where the base arithmetic ring is a 
%number field $K$ equipped with one real embedding or a pair of 
%conjugate complex embeddings.}
%\fnote{Where does Bertrand make this conjecture?} by Bertrand (\cite{bertrand98}), 
%who established it when this Picard variety admits ``real 
%multiplication" (see Theorem 3, \emph{infra}), as a consequence of 
%the analytic subgroup theorem of W\"ustholz 
%(\cite{wuestholz89}). 

The validity of  ${\bf I1}_{X,\Sigma}$ and ${\bf I2}_{X,\Sigma}$ demonstrates 
that the first Chern class $\hat c^H_{1}(\ol{L})$ in the group 
$\widehat{H}^{1,1}(X/K)$ encodes  quite non-trivial Diophantine 
informations. In a later part of this work, we plan to study  
characteristic classes of higher degree, with values in the arithmetic Hodge 
cohomology groups   
\[
\widehat{H}^{p,p}(X/S):=\widehat{\rm
Ext}^p_X(\cO_{X}, \Omega^p_{X/S})
\]
defined as special instances of the higher arithmetic extension 
groups introduced in \cite[0.1]{bostkuennemann1}, that are deduced 
from the powers of the arithmetic Atiyah class $\widehat{\rm 
at}_{X/S}\ol{E}$ using suitably defined products on the higher arithmetic 
extension groups.

Let us also indicate that, starting from the results in Section 3, 
one may derive  finiteness results on  
$\ker \hat c^H_{1}/\pi^\ast(\widehat{\rm Pic} (S))$  for more general smooth 
projective morphisms $\pi: X \rightarrow S$ of arithmetic schemes over 
arithmetic rings, by considering the restriction of $\pi$ over points of 
$S$ rational over some number field. We leave this to the interested reader. 

In the final section of the article, we establish a geometric 
analogue of condition 
${\bf I1}_{X,\Sigma}$. 
We consider a smooth, projective, 
geometrically connected curve $C$ over some field $k$ of 
characteristic zero, its function field $K:=k(C),$ and a smooth 
projective variety $X$ over $k$ equipped with a dominant $k$ morphism 
$f:X\rightarrow C$, with geometrically connected fibers. To any line 
bundle $L$ over $X$ is attached its relative Atiyah class ${\rm 
at}_{X/C}L$ in $H^1(X,\Omega^1_{X/C}).$ We show that, when the fixed 
part of the abelian variety ${\rm Pic}^0_{X_{K}/K}$ is trivial, then   
${\rm 
at}_{X/C}L$ vanishes iff some positive power of $L$ is isomorphic to a line bundle of the 
form $f^\ast M$, where $M$ is a line bundle over $C.$ The proof relies on the Hodge index theorem expressed in the Hodge 
cohomology groups of $X$.

Considering the classical analogy between number fields and function fields, it 
may be interesting to observe that, when investigating the kernel of the 
relative Atiyah class of line bundles, a transcendence result --- in the 
guise of a criterion for a subspace of the Lie algebra of a commutative 
algebraic group to define an algebraic subgroup --- plays a key role in 
the ``number field case", while our main tool in the ``function field 
case" is intersection theory in Hodge cohomology.

In Appendix A, we describe arithmetic extension groups in terms of 
\v{C}ech cocycles.
Based on this description,  in the main part of the paper we calculate
explicit \v{C}ech cocycles which represent the arithmetic Atiyah class 
and the first Chern class in arithmetic Hodge cohomology.
Finally Appendix B summarizes basic facts concerning universal vector 
extensions of Picard varieties that are used in Sections 3 and 4.

It is a pleasure to thank 
A. Chambert-Loir and D. Bertrand for helpful discussions and
S. Kudla, M. Rapoport and J. Schwermer for invitations to the ESI 
in Vienna where part of the work on this paper was done.
We are grateful to the TMR network `Arithmetic geometry'
and the DFG-Forschergruppe `Algebraische Zykel und $L$-Funktionen'
for their support and to the universities of Regensburg and
Paris-Sud (Orsay) for their hospitality.
Finally we wish to thank the referee for his careful reading and his helpful suggestions for improving the exposition.

\section{Atiyah extensions in algebraic and analytic geometry}\label{sect1}

\subsection{Definition and basic properties}\label{atgen}

We consider simultaneously the algebraic and the analytic situation where
$\pi:X\to S$ is a morphism of locally ringed spaces which is either
\begin{enumerate}
\item[a)] 
a separated morphism of finite presentation between schemes, or
\item[b)]
an analytic  morphism between complex analytic spaces.
\end{enumerate}
%\begin{enumerate}
%\item[a)] 
%a separated morphism of finite presentation between schemes, or
%\item[b)]
%an analytic  morphism between complex analytic spaces.
%\end{enumerate}

We denote in both cases by $\cO_X$ the structure sheaf of regular resp. 
holomorphic functions on $X$.
Let $I$ denote the ideal sheaf, and
\[
\Delta^{(1)}:X^{(1)}\longrightarrow X\times_S X
\] 
the first infinitesimal neighborhood of the diagonal 
$\Delta\colon\,X\to X \times_S X$.
For $i=1,2,$ let $q_i:X^{(1)}\rightarrow X$
denote the composition of
$\Delta^{(1)}$ with the $i$-th projection.
We identify $(\Omega_{X/S}^1,d)$ with the $\cO_X$-module $I/I^2$ and the
universal derivation
\begin{equation}\label{kaehlerid}
d:\cO_X\longrightarrow I/I^2\,,\,\,d(\lambda)=q_2^*(\lambda)-q_1^*(\lambda).
\end{equation}
The $\cO_X$-modules $q_{1*}\cO_{X^{(1)}}$ and 
$q_{2*}\cO_{X^{(1)}}$ are 
canonically isomorphic as sheaves of $\cO_S$-modules.
We denote this $\cO_S$-module by $P^1_{X/S}$ and observe that 
$P^1_{X/S}$ carries two natural $\cO_X$-module structures via 
the left and right projection $q_1$ and $q_2$.
The canonical extension
\[
0\longrightarrow I/I^2 \longrightarrow \mathcal{O}_{X\times_S
  X}/I^2 \longrightarrow \mathcal{O}_{X\times_S X} / I \longrightarrow 0
\]
yields an exact sequence of $\cO_X$-modules
\begin{equation}\label{tae}
 0 \longrightarrow \Omega^1_{X/S} \longrightarrow P^1_{X/S} \longrightarrow \cO_X \longrightarrow 0
\end{equation}
for both $\cO_X$-module structures.
The left and right $\cO_X$-module structures yield canonical 
but different $\cO_X$-linear
splittings of (\ref{tae}) which map $1 \,{\rm mod}\, I$ to $1 \,{\rm mod} \,I^2$.

\subsubsection{}
Let $F$ denote a vector bundle (that is, a locally free coherent sheaf) on $X$.
We obtain from (\ref{tae}) an exact sequence of $\cO_X$-modules
\[
\cjet^1_{X/S}(F)\colon 
0 \longrightarrow F \otimes \Omega^1_{X/S} \stackrel{i_F}{\longrightarrow}
 P^1_{X/S}(F) \stackrel{p_F}{\longrightarrow} F \longrightarrow 0
\]
where 
\begin{equation}\label{definejet}
P^1_{X/S}(F)=q_{1*}q_2^*F.
\end{equation}
Indeed we have
\[
P^1_{X/S}(F)=P^1_{X/S} \otimes F 
\]
where the tensor product  is taken using the right
$\cO_X$-module structure on $P^1_{X/S}$, and then the sequence is viewed as sequence
of $\cO_X$-modules via the left $\cO_X$-module structure.
The canonical splitting of (\ref{tae}) for the right $\cO_X$-module structure
induces a canonical $\cO_S$-linear splitting of $\cjet^1_{X/S}(F)$.
We obtain a canonical direct sum decomposition
\begin{equation}\label{canosplit}
P^1_{X/S}(F)=F\oplus ( F\otimes\Omega_{X/S}^1 )
\end{equation}
of $\cO_S$-modules.
We use squared brackets $[\,,\,]$ when we refer to this decomposition.
A straightforward calculation shows that, in terms of this decomposition, the 
left $\cO_X$-module structure  of $P^1_{X/S}(F)$
 is given by
\begin{equation}\label{modstructure}
\lambda\cdot[f, \omega]=[\lambda\cdot f, \lambda\cdot \omega-f\otimes d\lambda]
\end{equation}
for local sections $\lambda$ of $\cO_X$, $f$ of $F$, and $\omega$ of 
$F\otimes \Omega_{X/S}^1$.
It follows that there is a one-to-one correspondence
\[
\genfrac{\{}{\}}{0pt}{}{\mbox{$\cO_X$-linear splittings 
}}{\mbox{$s\colon \, F\to P^1_{X/S}(F)$  of $\cjet^1_{X/S}(F)$}}
\longleftrightarrow 
\genfrac{\{}{\}}{0pt}{}{\mbox{algebraic resp. holomorphic}}{
\mbox{ connections $\nabla\colon\, F\to F\otimes\Omega^1_{X/S} $}}.
\]
Under this correspondence, a connection $\nabla$ corresponds to the
splitting $s_\nabla$ of $\cjet^1_{X/S}(F)$ given by the formula
\begin{equation}\label{sectgbconn}
s_\nabla:F\longrightarrow P^1_{X/S}(F)=F\oplus (F \otimes
\Omega_{X/S}^1)\,\,,\,\,
f\longmapsto \bigl[f,-\nabla(f)\bigr].
\end{equation}
%We observe that a splitting $s$ of $\jet^1_{X/S}(F)$ corresponds by adjunction to an
%isomorphism of $\cO_{X^{(1)}}$-modules from $q_1^*F$ to $q_2^*F$ which 
%restricts to the identity over the diagonal $X$ in $X^{(1)}$.
For instance, the ``trivial" connection $\nabla:=d$ on $E=\cO_X$  is associated to the canonical left $\cO_X$-linear splitting of (\ref{tae}).

\subsubsection{}
The extension $\cjet^1_{X/S}(F)$ is called the \emph{extension given 
by the $1$-jets} or \emph{principal parts of first order associated with $F$}.
We denote the class of $\cjet^1_{X/S}(F)$ in 
${\rm Ext}_{\cO_X}^1(F,F\otimes \Omega_{X/S}^1)$
by $\jet^1_{X/S}(F)$ and abbreviate $\jet(F)=\jet^1_{X/S}(F)$ if $X/S$ is 
clear from the context.
We have followed in (\ref{kaehlerid}), (\ref{definejet}), and (\ref{sectgbconn})
the conventions
fixed in \cite[16.7]{EGAIV4}, \cite[III. (1.2.6.2)]{illusie71}, and 
\cite[(2.3.4)]{deligne70}.

\subsubsection{}
We recall from \cite[Propositions 6, 7 and 8]{atiyah57} that 
the assignment
\begin{eqnarray*}
\{\mbox{vector bundles on $X$}\}
& \longrightarrow & \{\mbox{short exact sequences of ${\cO_X}$-modules}\}\\
F&\longmapsto & {\cjet}_{X/S}^1(F)
\end{eqnarray*}
defines an additive, exact functor.
Furthermore ${\cjet}^1_{X/S}(F)$ is a short exact sequence of vector bundles 
if $\pi$ is smooth.

The following Lemma is a slight refinement of \cite[Proposition 10]{atiyah57}.

\begin{lemma}\label{attenprodcom}
Let $E$ and $F$ denote vector bundles on $X$. 

i) Let 
\[
B=\frac{\text{\rm Ker}\bigl( p_E \otimes \id_F- \id_E \otimes p_F : 
P^1_{X/S} (E) \otimes F \oplus E\otimes P^1_{X/S} (F)\to 
E\otimes F\bigr)}{\text{\rm Im}\bigl( (i_E\otimes \id_F, - \id_E\otimes i_F): 
E \otimes F \otimes \Omega^1_{X/S}\to P^1_{X/S}(E) \otimes F \oplus 
E\otimes P^1_{X/S} (F)\bigr)}.
\] 
denote the Baer sum 
of the extensions $\cjet_{X/S}^1 (E)\otimes  F$ and $E\otimes  \cjet_{X/S}^1(F)$. 
There exists a canonical isomorphism
\begin{equation}\label{compis}
\varphi:P_{X/S}^1(E\otimes  F)\longrightarrow B
\end{equation}
which fits into a commutative diagram
\[
\begin{array}{ccccccccc}
 0&\longrightarrow &  E\otimes  F \otimes\Omega_{X/S}^1 &\longrightarrow 
&P_{X/S}^1(E\otimes  F) & \longrightarrow &E\otimes  F & \longrightarrow &0\\
 & &|| & & \,\,\,\,\,\downarrow {\scriptstyle \varphi}& & ||& & \\
 0 &\longrightarrow &E\otimes F \otimes  \Omega_{X/S}^1&\longrightarrow 
&B & \longrightarrow &E\otimes  F & \longrightarrow &0.
\end{array}
\]
Consequently we have
\[
\jet_{X/S}^1(E\otimes  F)
=\jet_{X/S}^1(E)\otimes  F+E\otimes  \jet_{X/S}^1(F)
\]
in ${\rm Ext}^1_{\cO_X}(E\otimes F,E\otimes F\otimes  \Omega_{X/S}^1)$.

ii)
Let $\nabla_E$ and $\nabla_F$ denote connections on $E$ and $F$.
We equip the tensor product $E\otimes F$ with the product connection
\begin{equation}\label{prodconn}
\nabla_{E\otimes  F}
=\nabla_E\otimes  {\rm id}_F+{\rm id}_E\otimes  \nabla_F.
\end{equation}
The connections $\nabla_E$, $\nabla_F$, and $\nabla_{E\otimes  F}$ induce sections
$s_E$,$s_F$, and $s_{E\otimes  F}$ of $\cjet_{X/S}^1(E)$, $\cjet_{X/S}^1(F)$, 
and $\cjet_{X/S}^1(E\otimes F)$  respectively. We have
\[
\varphi\circ s_{E\otimes F}=(s_E\otimes {\rm id}_F,{\rm id}_E\otimes s_F)
\]
where the notation on the right hand side refers to the description
of the Baer sum given above.
\end{lemma}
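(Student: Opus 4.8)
The plan is to read part i) as the extension-level refinement of the classical additivity of Atiyah classes under tensor product, and part ii) as a short computation with the sections attached to connections by formula~(\ref{sectgbconn}). Throughout I would exploit the two $\cO_X$-module structures on $P^1_{X/S}$ and the identity $P^1_{X/S}(G)=P^1_{X/S}\otimes G$ (tensor via the right structure, sequence viewed via the left), together with the canonical $\cO_S$-decomposition~(\ref{canosplit}) and the explicit left module structure~(\ref{modstructure}) as the main computational tools.

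For part i) I would construct $\varphi$ of~(\ref{compis}) intrinsically from the sheaf of principal parts and the Leibniz rule for jets of a product. Writing a local section of $P^1_{X/S}(E\otimes F)$ as $p\otimes(e\otimes f)$, with $p$ a local section of $P^1_{X/S}$, I send it to the pair $\bigl((p\otimes e)\otimes f,\ e\otimes(p\otimes f)\bigr)$ in $P^1_{X/S}(E)\otimes F\oplus E\otimes P^1_{X/S}(F)$. Applying the canonical projection $P^1_{X/S}\to\cO_X$ of~(\ref{tae}) to each factor shows immediately that this pair lies in $\Ker\bigl(p_E\otimes\id_F-\id_E\otimes p_F\bigr)$, hence represents an element of the numerator defining $B$. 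The delicate point is that each summand is only well defined once one fixes how a scalar $\lambda$ is moved across the tensor product: interchanging the left and right $\cO_X$-structures on $P^1_{X/S}$ produces the correction term $d\lambda=q_2^*\lambda-q_1^*\lambda$ of~(\ref{kaehlerid}). The whole content of the construction is that this Leibniz ambiguity takes values precisely in the image of $(i_E\otimes\id_F,-\id_E\otimes i_F)$ that is divided out in the Baer sum $B$. Verifying this — bookkeeping the two module structures and checking that the $d\lambda$-terms land in the Baer-sum image — is where the real work lies, and it is most safely done in the decomposition~(\ref{canosplit}) using~(\ref{modstructure}). This yields a well-defined $\cO_X$-linear map $\varphi\colon P^1_{X/S}(E\otimes F)\to B$.

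Granting $\varphi$, the commutativity of the displayed diagram is then formal. On the subsheaf $E\otimes F\otimes\Omega^1_{X/S}$, which corresponds to sections $p\otimes(e\otimes f)$ with $p$ in $\Omega^1_{X/S}$, the map $\varphi$ restricts to the canonical identification with the subobject of $B$ coming from $i_E\otimes\id_F$; and $\varphi$ is compatible with the projections onto $E\otimes F$ by the augmentation computation just made. Thus $\varphi$ is a morphism of extensions inducing the identity on both the sub and the quotient, so the five lemma forces it to be an isomorphism. Since $B$ represents, by its very construction, the class $\jet_{X/S}^1(E)\otimes F+E\otimes\jet_{X/S}^1(F)$ in ${\rm Ext}^1_{\cO_X}(E\otimes F,E\otimes F\otimes\Omega^1_{X/S})$, the existence of such a $\varphi$ is exactly the asserted identity for $\jet_{X/S}^1(E\otimes F)$.

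For part ii) I would evaluate both sides on a local section $e\otimes f$ using~(\ref{sectgbconn}), which reads $s_\nabla(g)=[g,-\nabla g]$ in the decomposition~(\ref{canosplit}). On one hand, the definition~(\ref{prodconn}) of the product connection gives $s_{E\otimes F}(e\otimes f)=[e\otimes f,\ -\nabla_E e\otimes f-e\otimes\nabla_F f]$; on the other hand $(s_E\otimes\id_F,\id_E\otimes s_F)(e\otimes f)=\bigl([e\otimes f,-\nabla_E e\otimes f]_E,\ [e\otimes f,-e\otimes\nabla_F f]_F\bigr)$. Feeding the first expression through $\varphi$ and comparing with the second, the two representatives differ exactly by the image under $(i_E\otimes\id_F,-\id_E\otimes i_F)$ of the form $-e\otimes\nabla_F f$, hence agree in $B$. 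Conceptually this is just the Leibniz rule~(\ref{prodconn}) reappearing: $s_{E\otimes F}(e\otimes f)$ is the jet flat for the product connection, and $\varphi$ carries it to the $\nabla_E$-flat jet of $e$ tensored with $f$ together with $e$ tensored with the $\nabla_F$-flat jet of $f$. This establishes $\varphi\circ s_{E\otimes F}=(s_E\otimes\id_F,\id_E\otimes s_F)$. I expect the only genuine obstacle in the whole argument to be the well-definedness in part i); once that is in place, the commutative diagram, the isomorphism property via the five lemma, and the section identity of part ii) are all either formal or a one-line computation with~(\ref{sectgbconn}).
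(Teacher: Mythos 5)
Your part~i) breaks down at exactly the point you flag as ``where the real work lies'': the diagonal assignment $p\otimes(e\otimes f)\mapsto\bigl((p\otimes e)\otimes f,\ e\otimes(p\otimes f)\bigr)$ is \emph{not} well defined modulo $IM={\rm Im}(i_E\otimes \id_F,-\id_E\otimes i_F)$. Write $\epsilon\colon P^1_{X/S}\to\cO_X$ for the augmentation and test the defining relation of $P^1_{X/S}(E\otimes F)=P^1_{X/S}\otimes_{\cO_X}(E\otimes F)$ (tensor via the right structure): $p\,q_2^*\lambda\otimes(e\otimes f)=p\otimes(\lambda e\otimes f)$. The two images differ by $\bigl(0,\ (\id_E\otimes i_F)(\epsilon(p)\,e\otimes f\otimes d\lambda)\bigr)$, and moving $\lambda$ onto $f$ instead produces $\bigl((i_E\otimes\id_F)(\epsilon(p)\,e\otimes f\otimes d\lambda),\ 0\bigr)$. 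Such one-sided terms are not of the antidiagonal form $\bigl((i_E\otimes\id_F)\omega,\,-(\id_E\otimes i_F)\omega\bigr)$ that generates $IM$ (note $i_E\otimes\id_F$ is injective); only the ambiguity coming from moving $\lambda$ between $e$ and $f$ is antidiagonal, and that is presumably what misled you. The same defect is visible on the subsheaf $E\otimes F\otimes\Omega^1_{X/S}$: for $p=\beta\in I/I^2$ your recipe yields $\bigl((i_E\otimes\id_F)\omega,\ +(\id_E\otimes i_F)\omega\bigr)$ with $\omega=e\otimes f\otimes\beta$, which in $B$ equals \emph{twice} the canonical image of $\omega$, since $\bigl((i_E\otimes\id_F)\omega,0\bigr)\equiv\bigl(0,(\id_E\otimes i_F)\omega\bigr)$ mod $IM$. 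So even where your formula makes sense, the left square of the diagram would not commute, and your subsequent claim that $\varphi$ restricts to the canonical identification on the sub is false.

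The paper avoids this by \emph{not} taking the symmetric diagonal: it defines $\varphi$ in the $\cO_S$-decomposition (\ref{canosplit}) by $\varphi\bigl([e_0\otimes f_0,\,e_1\otimes f_1\otimes\alpha]\bigr)=\bigl([e_0,0]\otimes f_0+[0,e_1\otimes\alpha]\otimes f_1\bigr)\oplus\bigl(e_0\otimes[f_0,0]\bigr)\ {\rm mod}\ IM$ --- diagonal on the jet part only, with the $\Omega^1$-part placed in a \emph{single} summand (the two possible placements differ exactly by an element of $IM$, which is why the map is canonical) --- and then checks $\cO_X$-linearity by the short computation with (\ref{modstructure}), which is where the $d\lambda$-term gets absorbed. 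Equivalently and intrinsically, you could set $\varphi\bigl(j^1(s\otimes t)\bigr):=\bigl(j^1(s)\otimes t,\ s\otimes j^1(t)\bigr)$ on $1$-jets of decomposable sections --- there the Leibniz discrepancy $j^1(\lambda u)-\lambda\, j^1(u)=i(u\otimes d\lambda)$ does produce antidiagonal terms lying in $IM$ --- and extend by left $\cO_X$-linearity; but that extension does not coincide with your formula on general tensors $p\otimes(e\otimes f)$, precisely because extending linearly forces the $\Omega^1$-part into one slot rather than both. Your part~ii) computation agrees with the paper's and is fine once the correct $\varphi$ is in place, but as written it leans on the flawed construction of part~i).
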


\proof 
i) 
Let $IM={\rm Im}(i_E\otimes \id_F, - \id_E\otimes i_F)$.
Recall that
\[
P^1_{X/S}(E\otimes F)
=(E\otimes F)\oplus (E\otimes F\otimes \Omega_{X/S}^1).
\]
There exists a unique $\cO_S$-linear map (\ref{compis}) which satisfies
\begin{eqnarray*}
\varphi\bigl([e_0\otimes f_0,e_1\otimes f_1\otimes \alpha]\bigr)
&=&\bigl([e_0, 0]\otimes f_0+[0,e_1\otimes \alpha]\otimes f_1\bigr)
\oplus\bigl(e_0\otimes [f_0, 0]\bigr)\, {\rm mod }\, IM\\
&=&\bigl([e_0, 0]\otimes f_0\bigr)\oplus \bigl(e_0\otimes [f_0, 0]
+e_1\otimes [0, f_1\otimes \alpha]\bigr)\,{\rm mod}\, IM
\end{eqnarray*}
for local sections $e_0,e_1$ of $E$, $f_0, f_1$ of $F$ and $\alpha$ of 
$\Omega_{X/S}^1$.
It is straightforward to check that $\varphi$ is well defined  
and makes our diagram commutative.
It remains to show that $\varphi$ is also $\cO_X$-linear.
This follows from
\begin{eqnarray*}
\varphi\bigl(\lambda\cdot[e_0\otimes f_0, 0]\bigr)
&=& \varphi\bigl([\lambda \cdot e_0\otimes f_0, -e_0\otimes f_0\otimes d\lambda]\bigr)\\
&=& \bigl([\lambda \cdot e_0, 0]\otimes f_0 - [0, e_0\otimes d\lambda]\otimes f_0\bigr)
\oplus \bigl( \lambda \cdot e_0\otimes [f_0, 0]\bigr)\,{\rm mod}\,IM\\
&=&\lambda \cdot \varphi\bigl([e_0\otimes f_0, 0]\bigr)
\end{eqnarray*}
as $\varphi$ induces the identity on $\Omega_{X/S}^1\otimes E\otimes F$.

ii) 
For local sections $e$ of $E$ and $f$ of $F$, we get
\begin{eqnarray*}
\varphi \circ s_{E\otimes F}(e\otimes f)
&=& \bigl( [e, -\nabla e]\otimes f\bigr)\oplus \bigl(e\otimes [f,- \nabla f]\bigr)\mod\,IM\\
&=& (s_E\otimes {\rm id}_F,{\rm id}_E\otimes s_F)(e\otimes f)
\end{eqnarray*}
which proves ii).
\qed

\begin{corollary}\label{atiyahdual}
Let $E$ be a vector bundle on $X$ and denote 
$$j_E:\cO_X\rightarrow E\otimes E^\vee\simeq\End(E)$$
the canonical morphism
of vector bundles which maps $1$ to $\id_E$. 
The Baer sum of $\cjet_{X/S}^1(E)\otimes E^\vee$ and $E\otimes \cjet_{X/S}^1(E^\vee)$ 
is canonically isomorphic
to $\cjet_{X/S}^1(E\otimes E^\vee)$. The pullback $\cjet_{X/S}^1(E\otimes E^\vee)\circ j_E$ of $\cjet^1_{X/S}(E\otimes E^\vee)$ along $j_E$ --- defined as the upper 
extension in the commutative diagram
\begin{equation}\label{pulldiag1}
\begin{array}{ccccccccc}
0& \rightarrow &E\otimes E^\vee \otimes \Omega_{X/S}^1 & \rightarrow & Q & \rightarrow & \cO_X & \rightarrow & 0\\& 
&\| & & \downarrow & & \,\,\,\downarrow {\scriptstyle j_E}\\
0& \rightarrow &E\otimes E^\vee \otimes \Omega_{X/S}^1 & \rightarrow & P^1_{X/S}(E\otimes E^\vee) & \rightarrow &E\otimes E^\vee & \rightarrow & 0
\end{array}
\end{equation}
whose righthand square is cartesian \emph{(compare \cite[App. A.4.2]{bostkuennemann1})} ---
 admits a canonical
splitting.
\end{corollary}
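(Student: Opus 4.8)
The plan is to treat the two assertions separately: the first is a direct specialization of Lemma~\ref{attenprodcom}, while the second rests on a single non-formal observation. For the first assertion I would simply apply Lemma~\ref{attenprodcom}(i) with $F:=E^\vee$. This produces the canonical isomorphism $\varphi\colon P^1_{X/S}(E\otimes E^\vee)\longrightarrow B$ fitting into the commutative diagram of that lemma, where $B$ is the Baer sum of $\cjet^1_{X/S}(E)\otimes E^\vee$ and $E\otimes\cjet^1_{X/S}(E^\vee)$; in particular one reads off $\jet^1_{X/S}(E\otimes E^\vee)=\jet^1_{X/S}(E)\otimes E^\vee+E\otimes\jet^1_{X/S}(E^\vee)$. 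Nothing beyond Lemma~\ref{attenprodcom}(i) is needed here.

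For the second assertion I would first unwind the pullback. Since the right-hand square of~(\ref{pulldiag1}) is cartesian, $Q$ is the fibre product $P^1_{X/S}(E\otimes E^\vee)\times_{E\otimes E^\vee}\cO_X$ formed along $p_{E\otimes E^\vee}$ and $j_E$. Splitting the upper row therefore amounts to producing a canonical $\cO_X$-linear map $\sigma\colon\cO_X\to P^1_{X/S}(E\otimes E^\vee)$ with $p_{E\otimes E^\vee}\circ\sigma=j_E$; equivalently, since $\Hom_{\cO_X}(\cO_X,-)=\Gamma(-)$, a global section $\sigma(1)$ of $P^1_{X/S}(E\otimes E^\vee)$ lying over the global section $\id_E$ of $E\otimes E^\vee\simeq\End(E)$. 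Using the canonical decomposition~(\ref{canosplit}), my candidate is the global section $\xi:=[\id_E,0]$, and I would set $\sigma(\lambda):=\lambda\cdot\xi$ using the left $\cO_X$-structure~(\ref{modstructure}); then $p_{E\otimes E^\vee}(\sigma(\lambda))=\lambda\,\id_E=j_E(\lambda)$, so the pair $(\sigma,\id_{\cO_X})$ factors through $Q$ and splits the upper row.

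The point that makes this work — and which I expect to be the only real content — is that $\xi=[\id_E,0]$ is indeed the \emph{natural}, connection-independent lift of $\id_E$, even though $\cjet^1_{X/S}(\End E)$ itself in general does \emph{not} split (its class is the Atiyah class of $\End E$). To see this I would argue locally: for any local connection $\nabla$ on $E$, the product connection $\nabla_{\End}:=\nabla\otimes\id_{E^\vee}+\id_E\otimes\nabla^\vee$ of~(\ref{prodconn}), with $\nabla^\vee$ the connection dual to $\nabla$, kills the identity, $\nabla_{\End}(\id_E)=0$. Hence by~(\ref{sectgbconn}) the associated splitting $s_{\nabla_{\End}}$ of $\cjet^1_{X/S}(\End E)$ sends $\id_E$ to $\bigl[\id_E,-\nabla_{\End}(\id_E)\bigr]=[\id_E,0]=\xi$, independently of the choice of $\nabla$. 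Consequently the locally defined maps $s_{\nabla_{\End}}\circ j_E$ agree on overlaps and glue to the global $\cO_X$-linear $\sigma$ above, furnishing the desired canonical splitting of $\cjet^1_{X/S}(E\otimes E^\vee)\circ j_E$.

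The main obstacle is thus purely conceptual rather than computational: one must recognize that the horizontality of $\id_E$ for \emph{every} induced connection yields a globally well-defined lift of $\id_E$ into $P^1_{X/S}(\End E)$, with no need for a global connection on $E$ (whose existence is precisely what the Atiyah class obstructs). As a consistency check, I would note that $\xi$ is the first jet of the global section $\id_E$, and that under $\varphi$ this splitting matches the one produced by part~(ii) from a connection and its dual; this compatibility is a convenient verification but is not logically required for the statement.
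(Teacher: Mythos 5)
Your proof is correct, and both halves land where the paper does: the first assertion is, exactly as in the paper, a direct application of Lemma \ref{attenprodcom} i) with $F=E^\vee$. For the second assertion you construct precisely the paper's splitting, but by a different mechanism. The paper argues functorially: $j_E$ induces, by functoriality of $\cjet^1_{X/S}$, a morphism of extensions from $\cjet^1_{X/S}(\cO_X)$ to $\cjet^1_{X/S}(E\otimes E^\vee)$ lying over $j_E$; the cartesian square in (\ref{pulldiag1}) factors this through $Q$, and composing with the canonical splitting $s_d$ of $\cjet^1_{X/S}(\cO_X)$ (the one attached to the trivial connection $d$ on $\cO_X$) produces the required section. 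Unwinding this in the decomposition (\ref{canosplit}) recovers exactly your $\sigma(\lambda)=\lambda\cdot[\id_E,0]$: the induced map $P^1_{X/S}(\cO_X)\to P^1_{X/S}(E\otimes E^\vee)$ sends $s_d(\lambda)=[\lambda,-d\lambda]$ to $[\lambda\,\id_E,-\id_E\otimes d\lambda]$, which is $\lambda\cdot[\id_E,0]$ by (\ref{modstructure}). What your route buys is explicitness: you exhibit the lift as the $1$-jet $[\id_E,0]$ of $\id_E$ and check $\cO_X$-linearity and $p\circ\sigma=j_E$ directly from the module-structure formula, with no appeal to functoriality of the jet construction; what the paper's route buys is brevity and automatic compatibility with the surrounding formalism. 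Two minor observations: your local-connection gluing paragraph is logically redundant --- since (\ref{canosplit}) is a globally defined $\cO_S$-linear decomposition, $\xi=[\id_E,0]$ is a global section and $\lambda\mapsto\lambda\cdot\xi$ is $\cO_X$-linear by the module axioms alone, so the horizontality $\nabla_{\End}(\id_E)=0$ under every product connection serves only as the (correct and illuminating) conceptual explanation, as you yourself note; and where you do invoke local connections you tacitly use that $E$ is locally free, so that a local frame supplies one.
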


\proof
The first statement follows from Lemma \ref{attenprodcom}.
The map $j_E$ induces by functoriality a morphism from 
$\cjet_{X/S}^1(\cO_X)$ to $\cjet_{X/S}^1(E\otimes E^\vee)$.
Since the righthand side in (\ref{pulldiag1}) is cartesian, we obtain a
commutative diagram
\begin{equation}\label{pulldiag2}
\begin{array}{ccccccccc}
0& \longrightarrow &\Omega_{X/S}^1 & \longrightarrow & P_{X/S}^1(\cO_X) &
\longrightarrow & \cO_X & \longrightarrow & 0\\
& &\,\,\,\,\,\,\,\,\,\,\,\,\,\,\,\,\,
\downarrow {\scriptstyle j_E\otimes {\rm id}_{\Omega_{X/S}^1}} & & \,\,\,\,\, \downarrow {\scriptstyle \varphi}
& & \| \,\,\,\\
0& \longrightarrow &E\otimes E^\vee \otimes \Omega_{X/S}^1 & \longrightarrow &
Q & \longrightarrow &\cO_X & \longrightarrow & 0.
\end{array}
\end{equation}
The canonical splitting $s_d$ of 
of $\cjet_{X/S}^1(\cO_X)$ (that correspond to the connection $d$ on $\cO_X$) induces via (\ref{pulldiag2}) the requested
canonical splitting $\varphi \circ s_d$ of $\cjet_{X/S}^1(E\otimes E^\vee)\circ j_E$.
\qed

\begin{lemma}\label{pullbackconnection}
Consider a commutative diagram
\[
\begin{array}{ccc}
\tilde X &\stackrel{f}{\longrightarrow} & X\\
\,\,\,\,\,\downarrow {\scriptstyle \tilde \pi} & &\,\,\,\,\,\,\downarrow {\scriptstyle \pi}\\
\tilde S &\stackrel{g}{\longrightarrow} & S
\end{array}
\]
in the category of locally ringed spaces where $\tilde \pi$ and  $\pi$ 
are morphisms as in situation \ref{atgen}, a) or b).  
Let $E$ be a vector bundle on $X$ and denote  by $f^*$
the canonical map
$f^*\Omega_{X/S}^1\to \Omega_{ \tilde X/\tilde S}^1$.

i) There exists a canonical $\cO_{\tilde X}$-linear map
\[
\phi:f^*P^1_{X/S}(E)\longrightarrow P^1_{\tilde X/\tilde S}(f^*E)
\]
which makes the diagram
\begin{equation}\label{commbcd}
\begin{array}{ccccccccc}
 0&\longrightarrow &f^*E\otimes_{\cO_{\tilde X}} f^*\Omega_{X/S}^1 &\longrightarrow 
&f^*P_{X/S}^1(E) & \longrightarrow &f^*E & \longrightarrow &0\\
 & &\downarrow {\scriptstyle {\rm id}_{f^*E}\otimes f^*} & 
& \,\,\,\,\,\downarrow {\scriptstyle \phi}& & ||& & \\
 0 &\longrightarrow & f^*E\otimes_{\cO_{\tilde X}}\Omega_{\tilde X/\tilde S}^1 &\longrightarrow 
& P_{\tilde X/\tilde S}^1(f^*E)& \longrightarrow &f^*E & \longrightarrow &0.
\end{array}
\end{equation}
commutative.
Consequently we have
\[
({\rm id}_{f^*E}\otimes f^*)\circ {\jet}_{X/S}^1(E)={\jet}_{\tilde X/\tilde S}^1(f^*E)
\]
in ${\rm Ext}_{\cO_{\tilde X}}^1(f^*E,f^*E\otimes_{\cO_{\tilde X}} \Omega_{\tilde X/\tilde S}^1)$.

ii) A connection $\nabla_E$ on $E$ induces a splitting $s_E$ of 
${\cjet}_{X/S}^1(E)$. The splitting 
\[
s_{f^*E}:=\phi\circ f^*(s_E)
\]
induces a connection $f^*\nabla_E$ on $f^*E$
which is uniquely determined by
\begin{equation}\label{pullbdef}
(f^*\nabla_E)(f^*s)=f^*(\nabla_E\,s):= ({\rm id}_{f^\ast E} \otimes f^\ast)(f^{-1}(\nabla_E s))
\end{equation}
for local sections $s$ of $E$.
\end{lemma}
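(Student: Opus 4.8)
The plan is to construct $\phi$ explicitly through the canonical $\cO_{\tilde S}$-linear decomposition (\ref{canosplit}) and to reduce everything to the module-structure formula (\ref{modstructure}). The local sections of $f^\ast P^1_{X/S}(E)$ are generated over $\cO_{\tilde X}$ by the pullbacks $f^\ast[e,\omega]$ of local sections $[e,\omega]$ of $P^1_{X/S}(E)$ written in the decomposition (\ref{canosplit}), so I would set
\[
\phi\bigl(f^\ast[e,\omega]\bigr):=\bigl[\,f^\ast e,\,(\id_{f^\ast E}\otimes f^\ast)(f^\ast\omega)\,\bigr],
\]
where the brackets on the right refer to the decomposition (\ref{canosplit}) for $P^1_{\tilde X/\tilde S}(f^\ast E)$ and $f^\ast\colon f^\ast\Omega^1_{X/S}\to\Omega^1_{\tilde X/\tilde S}$ is the canonical map. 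The crux — and the step I expect to require the most care — is to check that this prescription is compatible with the defining relations $f^\ast(\lambda\cdot m)=f^\sharp(\lambda)\cdot f^\ast m$ of the pullback (for $\lambda$ a local section of $\cO_X$ with comorphism image $f^\sharp\lambda$), so that $\phi$ is well defined and $\cO_{\tilde X}$-linear.

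Expanding $\lambda\cdot[e,\omega]=[\lambda e,\lambda\omega-e\otimes d\lambda]$ by (\ref{modstructure}) and applying $\phi$, the only nonformal point is the term $e\otimes d\lambda$: here one uses that the canonical map intertwines the two universal derivations, $f^\ast(d_{X/S}\lambda)=d_{\tilde X/\tilde S}(f^\sharp\lambda)$, whence $(\id_{f^\ast E}\otimes f^\ast)\bigl(f^\ast(e\otimes d\lambda)\bigr)=f^\ast e\otimes d_{\tilde X/\tilde S}(f^\sharp\lambda)$. Feeding this into (\ref{modstructure}) on $\tilde X$ produces exactly $f^\sharp(\lambda)\cdot\phi(f^\ast[e,\omega])$, which gives $\cO_{\tilde X}$-linearity. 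Commutativity of (\ref{commbcd}) is then immediate from the construction: applying $\phi$ to $f^\ast[0,\omega]$ shows that on the subobject $\phi$ restricts to $\id_{f^\ast E}\otimes f^\ast$, while the first bracket coordinate is preserved, so $\phi$ induces the identity on the quotient $f^\ast E$. The asserted identity in $\Ext^1$ then follows because (\ref{commbcd}), having the identity in its right-hand column and $\id_{f^\ast E}\otimes f^\ast$ in its left-hand column, exhibits the lower extension $\cjet^1_{\tilde X/\tilde S}(f^\ast E)$ as the image of $\jet^1_{X/S}(E)$ under pullback to $\tilde X$ followed by push-out along $\id_{f^\ast E}\otimes f^\ast$. (A more conceptual construction of $\phi$ is available: the square induces a morphism $\tilde X\times_{\tilde S}\tilde X\to X\times_S X$ carrying the diagonal to the diagonal, hence a map $\tilde X^{(1)}\to X^{(1)}$ of first infinitesimal neighborhoods compatible with $q_1,q_2$, and the resulting base-change morphism applied to $P^1_{X/S}(E)=q_{1\ast}q_2^\ast E$ yields $\phi$ with its linearity and compatibility with (\ref{tae}) built in; I would keep the explicit argument above as the main line and only allude to this.)

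For part ii), I start from the splitting $s_E(e)=[e,-\nabla_E e]$ attached to $\nabla_E$ by (\ref{sectgbconn}), which is $\cO_X$-linear; hence $f^\ast(s_E)$ is $\cO_{\tilde X}$-linear and $s_{f^\ast E}:=\phi\circ f^\ast(s_E)$ splits $\cjet^1_{\tilde X/\tilde S}(f^\ast E)$, by commutativity of the right-hand square of (\ref{commbcd}) together with $p_E\circ s_E=\id_E$. Under the correspondence between $\cO_{\tilde X}$-linear splittings and connections it therefore defines a connection $f^\ast\nabla_E$. To identify it, I evaluate on a pullback section:
\[
s_{f^\ast E}(f^\ast s)=\phi\bigl(f^\ast[s,-\nabla_E s]\bigr)=\bigl[\,f^\ast s,\,-f^\ast(\nabla_E s)\,\bigr],
\]
with $f^\ast(\nabla_E s)$ understood as in (\ref{pullbdef}); comparing the second coordinate with $s_{f^\ast E}(f^\ast s)=[\,f^\ast s,-(f^\ast\nabla_E)(f^\ast s)\,]$ yields precisely $(f^\ast\nabla_E)(f^\ast s)=f^\ast(\nabla_E s)$. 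Finally, since the sections $f^\ast s$ generate $f^\ast E$ over $\cO_{\tilde X}$ and a connection is determined on such generators together with the Leibniz rule, this relation determines $f^\ast\nabla_E$ uniquely.
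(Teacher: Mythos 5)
Your proposal is correct and follows essentially the same route as the paper: the paper builds $\phi$ from the direct sum of the canonical maps $f^{-1}E\to f^\ast E$ and $f^{-1}(E\otimes_{\cO_X}\Omega^1_{X/S})\to f^\ast E\otimes_{\cO_{\tilde X}}\Omega^1_{\tilde X/\tilde S}$, tensored up over $f^{-1}\cO_X$, and its key verification is precisely your check of $\cO_{\tilde X}$-linearity against the module structure (\ref{modstructure}) via the compatibility $f^\ast(d\lambda)=d(f^\sharp\lambda)$ of the universal derivations; your explicit computation for part ii) on pullback sections, with uniqueness via generation and the Leibniz rule, spells out what the paper dismisses as a straightforward consequence of the construction of $\phi$. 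The only detail the paper states that you leave implicit is the exactness of the upper row of (\ref{commbcd}), which holds because $E$ is locally free, so the jet sequence is locally split and remains exact under $f^\ast$.
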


Notice that the case where $\tilde \pi$ is as in situation  \ref{atgen}, b)
 and $\pi$ as in situation  \ref{atgen}, a) is allowed.
\proof
i) Observe that the upper sequence in (\ref{commbcd}) is exact as $E$ is
locally free.
Recall that 
\begin{equation}\label{decjet1}
f^*P^1_{X/S}(E)=\bigl[f^{-1}E\oplus f^{-1}(E\otimes_{\cO_X} \Omega_{X/S}^1)\bigr]
\otimes_{f^{-1}\cO_X}\cO_{\tilde X}
\end{equation}
and
\begin{equation}\label{decjet2}
P^1_{\tilde X/\tilde S}(f^*E)=f^*E\oplus f^*E\otimes_{\cO_{\tilde X}}\Omega_{\tilde X/\tilde S}^1.
\end{equation}
By the very definitions of $f^\ast E$ and $f^\ast (E \otimes_{\cO_X} \Omega^1_{X/S}),$ we have $f^{-1}\cO_X$-linear canonical maps
\[
f^{-1}E\longrightarrow f^*E
\]
and
\[
f^{-1}(E\otimes_{\cO_X} \Omega_{X/S}^1)
\rightarrow f^*(E\otimes_{\cO_X} \Omega_{X/S}^1)
\stackrel{\sim}{\rightarrow} f^*E\otimes_{\cO_{\tilde X}}f^*\Omega_{X/S}^1
\stackrel{{\rm id}_{f^*E}\otimes f^*}{\longrightarrow} 
f^*E\otimes_{\cO_{\tilde X}}\Omega_{\tilde X/\tilde S}^1.
\]
The direct sum of these maps induces 
a $g^{-1}\cO_S$-linear morphism 
\[
\bigl[f^{-1}E\oplus f^{-1}(E\otimes_{\cO_X} \Omega_{X/S}^1)\bigr]
\longrightarrow f^*E\oplus f^*E\otimes_{\cO_{\tilde X}}\Omega_{{\tilde X}/\tilde S}^1.
\]
It is straightforward to check that this morphism is $f^{-1}\cO_X$-linear
for the module structure given by formula (\ref{modstructure}).
Via (\ref{decjet1}) and (\ref{decjet2}), we obtain the desired morphism $\phi$
which fits by construction in the diagram (\ref{commbcd}).

ii) is a straightforward consequence of the construction of $\phi$ in the proof
of i).
\qed

\subsection{Cotangent complex and Atiyah class}
In situation \ref{atgen}, a) resp. b), the cotangent complex $\mathbb{L}_{X/S}^\cdot$ is
constructed in \cite[II.1.2]{illusie71} resp. \cite[2.38]{buchweitzflenner03} as 
an object in the 
derived category $D(\cO_X{\rm -mod})$ of $\cO_X$-modules. 
Consider $\Omega_{X/S}^1$ as a complex concentrated in degree zero.
The cotangent complex 
$\mathbb{L}_{X/S}^\cdot$ comes with a natural morphism
\begin{equation}\label{aucot}
\mathbb{L}_{X/S}^\cdot\longrightarrow \Omega_{X/S}^1
\end{equation}
in $D(\cO_X{\rm -mod})$ which is a quasi-isomorphism if $X$ is smooth over $S$.
Given a vector bundle $E$ over $X$, the {\it Atiyah class} of $E$ is defined 
in \cite[IV.2.3]{illusie71} resp.  \cite[\S 3]{buchweitzflenner03} %{\it loc. cit.} 
as an element 
\[
\at_{X/S}(E)\in {\rm Ext}_{\cO_X}^1(E,E\otimes^{\mathbb{L}} \mathbb{L}_{X/S}^\cdot)
={\rm Hom}_{D(\cO_X{\rm -mod})}(E,E\otimes^{\mathbb{L}} \mathbb{L}_{X/S}^\cdot[1]).
\]

If $X\stackrel{\pi}{\rightarrow} S$ is a morphism of schemes, the Atiyah
class of Illusie maps  under the morphism 
induced by (\ref{aucot}) to the class (compare \cite[Cor. IV.2.3.7.4]{illusie71})
\[
\jet^1_{X/S}(E)\in {\rm Ext}_{\cO_X}^1(E,E\otimes \Omega_{X/S}^1).
\]
Furthermore, according to \cite[ Prop. II.1.2.4.2]{illusie71},  (\ref{aucot}) induces an isomorphism
\begin{equation}\label{encoreuniso}
H_0(\mathbb{L}^\cdot_{X/S}) \stackrel{\sim}{\longrightarrow} \Omega^1_{X/S}.
\end{equation}

If $X\stackrel{\pi}{\rightarrow} S$ is a smooth morphism of complex 
analytic spaces, the Atiyah class 
of Buchweitz and Flenner maps  under the morphism induced by (\ref{aucot}) to 
the {\it opposite} class of $\jet^1_{X/S}(E)$ (\cite[3.27]{buchweitzflenner03}).

If the canonical morphism (\ref{aucot}) is a 
quasi-isomorphism, we call $\cjet_{X/S}^1(E) $ the {\it Atiyah extension
associated with $E$}
and denote it by $\cat_{X/S}(E)$.

The associated extension class ${\rm at}_{X/S}(F)$
equals the opposite of the Atiyah classes ${\rm At}(F)$ in 
\cite{buchweitzflenner03} and $b(F)$ in 
\cite[Section 4]{atiyah57}.
It coincides with the Atiyah class defined in \cite{angenioletal89}.
Compare also \cite[2.4 and Rem. 3.17]{buchweitzflenner03} for a discussion 
of signs related to the Atiyah class.

The following Lemma implies in particular that (\ref{aucot}) 
is a quasi-isomorphism in the situations considered in the next sections.
%Section \ref{atiyah1}, \ref{sect3} and Section \ref{sect4}.

\begin{lemma}\label{augqis}
Let $\pi:X\to S$ be a locally complete intersection  (l.c.i.) 
morphism of schemes such that $X$ is integral and $\pi$ is generically smooth, in the sense that the
smooth locus of $\pi$ is dense in $X$.
Then the morphism (\ref{aucot})
is a quasi-isomorphism. 
\end{lemma}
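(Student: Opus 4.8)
The plan is to prove that the natural morphism (\ref{aucot}) is an isomorphism on every homology sheaf. By (\ref{encoreuniso}) it already induces an isomorphism on $H_0$, and $\Omega^1_{X/S}$, viewed as a complex, is concentrated in degree $0$; so the assertion reduces to the vanishing $H_i(\mathbb{L}^\cdot_{X/S})=0$ for all $i\neq 0$. This is local on $X$, so I would fix a point and pass to an affine neighborhood. Since $\pi$ is l.c.i., after shrinking $X$ I may factor it as $X\stackrel{\iota}{\hookrightarrow} Y\stackrel{g}{\longrightarrow} S$ with $\iota$ a regular immersion of ideal sheaf $I$ and $g$ smooth. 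Feeding $\mathbb{L}^\cdot_{Y/S}\simeq\Omega^1_{Y/S}$ (as $g$ is smooth) and $\mathbb{L}^\cdot_{X/Y}\simeq(I/I^2)[1]$ (the cotangent complex of a regular immersion, \cite{illusie71}) into the transitivity triangle attached to $X\to Y\to S$ exhibits $\mathbb{L}^\cdot_{X/S}$ as quasi-isomorphic to the two-term complex
\[
[\, I/I^2 \stackrel{d}{\longrightarrow} \Omega^1_{Y/S}|_X \,]
\]
of locally free $\cO_X$-modules, placed in homological degrees $1$ and $0$. In particular $H_i(\mathbb{L}^\cdot_{X/S})=0$ for $i\geq 2$, and the whole statement comes down to proving $H_1(\mathbb{L}^\cdot_{X/S})=\Ker(d)=0$.

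To obtain this vanishing I would combine two facts. First, since the cotangent complex localizes, $\mathbb{L}^\cdot_{X/S}|_U\simeq\mathbb{L}^\cdot_{U/S}$ over the smooth locus $U$ of $\pi$; because $\pi|_U$ is smooth this complex is concentrated in degree $0$, so $H_1(\mathbb{L}^\cdot_{X/S})|_U=\Ker(d)|_U=0$. As $U$ is dense it contains the generic point $\eta$ of $X$, whence $(\Ker d)_\eta=0$. Second, $\Ker(d)$ is a subsheaf of $I/I^2$, which is locally free and therefore torsion-free since $X$ is integral. A subsheaf of a torsion-free coherent sheaf on an integral scheme whose generic stalk vanishes is itself zero: for any $x$ the map $(I/I^2)_x\to(I/I^2)_\eta$ is injective by torsion-freeness, and it kills the germ at $x$ of any local section $s$ of $\Ker(d)$, since that germ maps to $s_\eta\in(\Ker d)_\eta=0$; hence $s_x=0$ for all $x$ and $s=0$. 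Therefore $\Ker(d)=0$, i.e. $H_1(\mathbb{L}^\cdot_{X/S})=0$, which completes the argument.

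The one step that truly uses the hypotheses --- and which I regard as the heart of the matter --- is the vanishing of $H_1$. The l.c.i. assumption is precisely what presents $H_1$ as the kernel of a map of locally free sheaves, so that it is torsion-free; integrality of $X$ is what upgrades ``vanishing at $\eta$'' to ``vanishing everywhere''; and generic smoothness is what forces the generic stalk to vanish in the first place. Each hypothesis enters exactly once, and dropping any of them would break a link in the chain: without l.c.i. there could be homology in higher degrees or $H_1$ need not embed in a locally free sheaf, without integrality a generically trivial subsheaf might carry embedded torsion, and without generic smoothness $d$ would fail to be generically injective.
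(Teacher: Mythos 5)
Your proof is correct and follows essentially the same route as the paper's: both factor $\pi$ locally through a regular immersion into a smooth $S$-scheme, represent $\mathbb{L}^\cdot_{X/S}$ by the two-term complex $J/J^2 \to j^*\Omega^1_{Q/S}$ (you via the transitivity triangle, the paper by directly citing \cite[Cor. III.3.2.7]{illusie71}), and kill $H_1$ by combining injectivity over the dense smooth locus with torsion-freeness of the locally free conormal sheaf on the integral scheme $X$. The paper packages the same argument as left-exactness of the conormal sequence (\ref{exactfact}) together with the isomorphism (\ref{encoreuniso}), so the content is identical.
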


\proof
It is sufficient to show our claim locally on $X$ as the formation 
of (\ref{aucot}) is compatible with restrictions to open subsets.
Hence we may assume that $\pi$ admits a factorization 
\[
X\stackrel{j}{\longrightarrow}Q\stackrel{q}{\longrightarrow} S
\]
where $j$ is a regular immersion defined by some regular ideal sheaf ${ J}$ 
and $q$ is smooth. We obtain an exact sequence 
\begin{equation}\label{exactfact}
0\longrightarrow
{J}/{J}^2\stackrel{\phi}{\longrightarrow}
j^*\Omega_{Q/S}^1\stackrel{\psi}{\longrightarrow}
\Omega_{X/S}^1\longrightarrow 0.
\end{equation}
This is well known up to the injectivity of $\phi$ which holds
as $\phi$ is a morphism of locally free sheaves which is injective over the
smooth locus of $\pi$. The complex
\[
{J}/{J}^2\stackrel{\phi}{\longrightarrow}
j^*\Omega_{Q/S}^1
\]
situated in degrees minus one and zero is a cotangent complex for $f$ 
by \cite[Cor. III.3.2.7]{illusie71}.
Therefore it follows from the exactness of (\ref{exactfact}) on the left and the isomorphism (\ref{encoreuniso}) that  (\ref{aucot}) is in fact a 
quasi-isomorphism.
\qed

\subsection{$\cC^\infty$-connections compatible with the holomorphic structure}\label{cocost}
Let $E$ denote a holomorphic vector bundle on a complex manifold $X$. 
Recall that a $\mathcal{C}^\infty$-connection
\[
\nabla\colon\, A^0(X,E)\longrightarrow A^1(X,E)
\] 
on $E$
is called {\it compatible with the holomorphic structure} if its $(0,1)$-part 
coincides with the Dolbeault
operator, i.e.
$\nabla^{0,1} = \overline\partial_E$.
Consider the Atiyah extension 
associated with $E$
\[
\cat_X(E)\colon 0\longrightarrow E\otimes \Omega^1_X
\stackrel{i_E}{\longrightarrow} P_{X/\C}^1(E) 
\stackrel{p_E}{\longrightarrow} E\longrightarrow 0.
\] 
In the same way as before, we obtain a one-to-one correspondence 
\[
\nabla\longleftrightarrow s_{\nabla^{1,0}}
\]
between $\mathcal{C}^\infty$-connections on the vector 
bundle $E$ which are compatible with the holomorphic structure and
$\mathcal{C}^\infty$-splittings
\begin{equation}\label{sign10}
s_{\nabla^{1,0}}:E\longrightarrow P_{X/\C}^1(E)\,,\,\,
f\longmapsto [f,-\nabla^{1,0}(f)]
\end{equation}
of the extension $\cat_X(E)$. 

It is straightforward to check that this correspondence satisfies compatibility properties with tensor operations and pull back similar to the ones established in Lemma \ref{attenprodcom}, Corollary \ref{atiyahdual}, and Lemma \ref{pullbackconnection} above.

The one-to-one correspondence described above extends in a straightforward
way to the relative situation where $X/S$ is a holomorphic family of
complex manifolds. We leave the details of this construction to the 
interested reader.

Assume that $E$ carries a hermitian metric $h$. 
A $\mathcal{C}^\infty$-connection $\nabla$ on $\overline E=(E,h)$ is
called {\it unitary} if and only if it satisfies
\[
dh(s,t) = h(\bigtriangledown s,t) + h(s,\bigtriangledown
t)\,\,\,\,\mbox{ for all }s,t\in A^0(X,E).
\]
Recall that a hermitian holomorphic vector bundle $\overline E=(E,h)$ carries a unique
unitary connection $\nabla_{\ol{E}}$ which is compatible with the holomorphic structure
(\cite{chern46}, \cite{nakano55}; see also \cite[Ch. 0.5]{griffithsharris78} or \cite[Sect. II.2]{wells80}; this connection is sometimes called the \emph{Chern connection} of $(E,h)$). Moreover the assignement $\overline{E} \mapsto \nabla_{\overline{E}}$ is compatible with direct sums, tensor products, duals and pull-backs.

\begin{lemma}\label{curvaturesff}
Let $\overline E=(E,h)$ be a hermitian holomorphic vector bundle on $X$.
Let $\nabla=\nabla_{\ol{E}}$ denote the unitary 
$\mathcal{C}^\infty$-connection on $E$ which is compatible
with the complex structure. 
The curvature form
\[
\nabla^2\in A^{1,1}\bigl(X,{\iend}(E)\bigr)
\]
and the second fundamental form 
\[
\alpha\in A^{0,1}\bigl(X,{\iend}(E)\otimes \Omega_X^1\bigr)
\]
associated with ${\cat}_{X}(E)$ and its 
$\mathcal{C}^\infty$-splitting $s_{\nabla^{1,0}}$ as in 
\cite[A.5.2]{bostkuennemann1} satisfy
\begin{equation}\label{coseff}
\alpha=-\nabla^2
\end{equation}
where we read the canonical isomorphism 
\[
A^{1,1}\bigl(X,{\iend}(E)\bigr)\stackrel{\sim}{\rightarrow}
A^{0,1}\bigl(X,{\iend}(E)\otimes \Omega^1_X\bigr)\,,\,\,
f\otimes (\alpha\wedge\beta)\mapsto (f\otimes \alpha)\wedge\beta.
\]
(compare \cite[1.1.5]{bostkuennemann1}) as an identification.
\end{lemma}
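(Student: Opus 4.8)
The plan is to establish the identity $\alpha = -\nabla^2$ by computing both the second fundamental form and the curvature explicitly in a local frame and comparing. First I would recall the definition of the second fundamental form $\alpha$ from \cite[A.5.2]{bostkuennemann1}: for an arithmetic extension with $\cC^\infty$-splitting $s$, the form $\alpha$ is characterized by $i_E \circ \alpha = \dolb(s)$, where $\dolb$ acts on the $\cC^\infty$-section $s$ of the appropriate Hom-bundle. Here the splitting is $s_{\nabla^{1,0}}$, which by (\ref{sign10}) sends $f \mapsto [f, -\nabla^{1,0}(f)]$ in the decomposition $P^1_X(E) = E \oplus (E \otimes \Omega^1_X)$. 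The key technical point is that this decomposition is only $\cC^\infty$ (indeed only $\cO_S$-linear in the algebraic picture), so applying $\dolb$ to $s_{\nabla^{1,0}}$ will pick up two contributions: one from differentiating the section $-\nabla^{1,0}(f)$ and one from the failure of the splitting to respect the holomorphic (left $\cO_X$-module) structure, as encoded in formula (\ref{modstructure}).

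The main computation I would carry out is the following. Work in a local holomorphic frame and write $\nabla = d + \theta$ where $\theta$ is the connection matrix, a $(1,0)$-form since $\nabla^{0,1} = \dolb_E$ by compatibility with the holomorphic structure. Then $\nabla^{1,0} = \partial + \theta$ and the curvature is $\nabla^2 = \dolb\theta + \partial\theta + \theta\wedge\theta$; because $\nabla$ is the Chern connection of a hermitian holomorphic bundle, $\nabla^2$ is of pure type $(1,1)$, so in fact $\nabla^2 = \dolb\theta$ modulo the contributions that cancel. I would then compute $\dolb$ applied to the section $f \mapsto [f, -\nabla^{1,0}f]$, being careful to use (\ref{modstructure}) to express how $\dolb$ interacts with the left $\cO_X$-structure on $P^1_X(E)$. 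The expected outcome is that $i_E \circ \alpha = \dolb(s_{\nabla^{1,0}})$ equals $-\dolb\theta = -\nabla^2$ under the canonical identification $A^{1,1}(X, \iend(E)) \cong A^{0,1}(X, \iend(E) \otimes \Omega^1_X)$ stated in the lemma, giving (\ref{coseff}).

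The hard part will be tracking signs and the precise bookkeeping of the two $\cO_X$-module structures on $P^1_X(E)$. The subtlety is that $s_{\nabla^{1,0}}$ is a splitting of the \emph{left} $\cO_X$-module extension, while the decomposition $[\,,\,]$ is the \emph{right} (or $\cO_S$-linear) one; so when I apply the Dolbeault operator I must use that $\dolb$ is defined via the holomorphic structure of $P^1_X(E)$, and formula (\ref{modstructure}) tells me $\lambda \cdot [f,\omega] = [\lambda f, \lambda\omega - f \otimes d\lambda]$, whose extra term $-f \otimes d\lambda$ is exactly what produces the $\theta$-dependence and ultimately the full curvature rather than just its naive differential. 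I would verify that the $(1,0)$-component $\partial\theta + \theta\wedge\theta$ drops out — consistent with $\nabla^2$ being of type $(1,1)$ for the Chern connection — leaving precisely $-\dolb\theta$, and then read off the claimed sign through the stated isomorphism, which is compatible with the sign conventions recorded after Lemma \ref{augqis} relating $\at_{X/S}(E)$ to the opposite of the Atiyah classes of \cite{atiyah57} and \cite{buchweitzflenner03}.
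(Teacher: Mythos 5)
Your plan is correct and matches the paper's proof in all essentials: the paper likewise works in a local holomorphic frame, uses the conventions of Wells to note that the connection matrix $\theta$ has type $(1,0)$ while the curvature matrix $\Theta$ has pure type $(1,1)$, hence $\Theta=\dolb\theta$, and concludes $\dolb\bigl(s_{\nabla^{1,0}}\bigr)=-\dolb\theta=-\Theta=-\nabla^2$, with the sign coming from (\ref{sign10}) exactly as you say. The only cosmetic difference is that where you propose to differentiate $s_{\nabla^{1,0}}$ directly by unwinding the module-structure formula (\ref{modstructure}), the paper instead subtracts the \emph{holomorphic} splitting $s_{\tilde\nabla^{1,0}}$ attached to the connection $\tilde\nabla$ with vanishing matrix in the chosen frame, so that $\dolb$ is applied to the genuine ${\iend}(E)\otimes\Omega^1_X$-valued section $s_{\nabla^{1,0}}-s_{\tilde\nabla^{1,0}}$ (which is just $-\theta$) --- the same bookkeeping you describe, packaged slightly more efficiently.
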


\proof
Recall from \cite[A.5.2]{bostkuennemann1} that $\alpha$ is determined by
\[
\overline \partial_{P_{X/\C}^1(E)\otimes E^\vee}
\bigl(s_{\nabla^{1,0}}\bigr)=(i_E\otimes {\rm id}_{A^{0,1}_X})(\alpha).
\]
It is sufficient to verify (\ref{coseff}) locally on $X$.
Hence we may assume that $E$ admits a holomorphic frame. 
We describe $\nabla$ and $\nabla^2$ with respect to this frame
by the connection matrix $\theta$ and the curvature matrix $\Theta$.
Following the conventions in \cite[Ch. III]{wells80}, we have
\[
\Theta_{ik}=d\theta_{ik}+\sum_{j}\theta_{ij}\wedge\theta_{jk}.
\]
The connection matrix $\theta$ has type $(1,0)$ 
and the curvature matrix $\Theta$ has type $(1,1)$ by {\it loc. cit.}
Hence the equality above becomes
\begin{equation}\label{connpartdolb}
\Theta=\overline{\partial}\theta.
\end{equation}
Let $\tilde \nabla$ denote the connection on $E$ whose connection matrix is zero.
The associated splitting $s_{\tilde \nabla^{1,0}}$ of $\cat_X(E)$ is holomorphic.
Hence (\ref{sectgbconn}) and (\ref{connpartdolb}) give
\[
\overline\partial_{P_{X/\C}^1(E)\otimes E^\vee }
\bigl(s_{\nabla^{1,0}}\bigr)=\overline\partial_{ P_{X/\C}^1(E)\otimes E^\vee}
\bigl( s_{\nabla^{1,0}}-s_{\tilde \nabla^{1,0}}\bigr)
=-\overline{\partial}(\theta)=-\Theta=-\nabla^2.
\]
\qed

\section{The arithmetic Atiyah class of a vector bundle with connection}\label{atiyah1}
In this section we fix an arithmetic ring $R=(R,\Sigma,F_\infty)$ in the sense of \cite[3.1.1]{gilletsoule90}. We denote $K$ the fraction field of $R$, 
and we let $S:=\Spec R$.

%a flat arithmetic scheme $S$ over Spec $R$.

\subsection{Definition and basic properties}
Let $X$ be an integral arithmetic scheme over $R$ (in the sense of \cite{gilletsoule90}, or \cite[1.1]{bostkuennemann1}) with
a flat, l.c.i. structural morphism $\pi:X\to S$. Recall that the generic fiber $X_K$ of $X$ is smooth (by the very definition of an arithmetic scheme in \emph{loc. cit.}), and observe that $\pi$ satisfies the assumptions in Lemma \ref{augqis}.

Let $E$ be a vector bundle on $X$. 
We consider the commutative square
\[
\begin{array}{ccc}
(X_\Sigma(\C),\cO_{X_\Sigma}^{\rm hol}) &\stackrel{j}{\longrightarrow} & (X,\cO_X)\\
\downarrow {\scriptstyle \pi_\C} & &\downarrow {\scriptstyle \pi}\\
(S_\Sigma(\C),\cO_{S_\Sigma}^{\rm hol}) &\stackrel{j_0}{\longrightarrow} & (S,\cO_S).
\end{array}
\]
Lemma \ref{pullbackconnection} implies that the formation of the 
Atiyah extension of $E$ is
compatible with base change with respect to this diagram.
More precisely, we have a canonical  analytification isomorphism
\[
P^1_{X/S}(E)^{\rm hol}_\C
\stackrel{\sim}{\longrightarrow} P^1_{X_\Sigma(\C)/S_\Sigma(\C)}(E^{\rm hol}_\C)
\]
where we put $F_\C^{\rm hol}=j^*F$ for every $\cO_X$-module $F$.

\subsubsection{}
We have seen in \ref{cocost} that there is a one-to-one correspondence between
$\mathcal{C}^\infty$-connections
\[
\nabla\colon\, A^0\bigl(X_\Sigma(\mathbb{C}),E_\mathbb{C}\bigr)\to 
A^1\bigl(X_\Sigma(\mathbb{C}),E_\mathbb{C}\bigr)
\]
which are compatible with the holomorphic structure and commute with 
the action of $F_\infty$, and sections
\[
s_\nabla\colon\, E_\mathbb{C}\to P^1_{X/S}(E)_\mathbb{C}
\]
such that $(\cat_{X/{S}}E,s_\nabla)$ is an arithmetic extension. 
This correspondence allows us to associate 
its {\it arithmetic Atiyah extension} $(\cat_{X/S}E, s_\nabla)$ and
its {\it arithmetic Atiyah class}
\[
\widehat{\rm at}_{X/S}(E,\nabla)\in \widehat{\rm
  Ext}^1_X(E,E\otimes \Omega_{X/S}^1)
\]
to any  vector bundle $E$ on
$X$ equipped with an $F_\infty$-invariant $\cC^\infty$-connection
$\nabla$ on $E_\C$ that is compatible with the holomorphic structure.

If $\overline{E}$ is a hermitian vector bundle over $X$, we obtain
the {\it arithmetic Atiyah extension $(\cat_{X/S}E, s_{\nabla_{\overline{E}}})$ of $\overline{E}$} and
its {\it arithmetic Atiyah class}
\[
\widehat{\rm at}_{X/S}(\overline{E}):=
\widehat{\rm at}_{X/S}(E,\nabla_{\overline{E}})\in \widehat{\rm
  Ext}^1_X(E,E\otimes \Omega_{X/S}^1),
\] 
where $\nabla_{\overline{E}}$ denotes the 
unitary connection on $E_\C^{\rm hol}$ over $X_{\Sigma}(\C)$ that is compatible with the 
complex structure.
As a direct consequence of this definition and Lemma \ref{curvaturesff}, we get
a formula for the ``second fundamental form" 
(compare the introduction and \cite[2.3.1]{bostkuennemann1})
\[
\Psi \bigl(\widehat{\text{at}}_{X/S}(\overline E)\bigr)\in
A^{0,1} 
\bigl(X_\mathbb{R},\,{\iend}(E)\otimes \Omega^1_{X/S}\bigr).
\]
Namely 
\begin{equation}
\Psi \bigl(\widehat{\at}_{X/S}(\overline E)\bigr)= - R_{\overline{E}},
\end{equation}
under the canonical identification
\[
A^{1,1} \bigl(X_\mathbb{R},\,{\iend}(E)\bigr)= A^{0,1} 
\bigl(X_\mathbb{R},\, {\iend}(E)\otimes\Omega^1_{X/S} \bigr),
\]
where $R_{\overline{E}}:=\nabla_{\overline{E}}^2$ denotes 
the curvature of $\overline E$.

In particular, when $\overline{E}$ is a hermitian line bundle over $X,$ 
\begin{equation}\label{formfirstcc}
\frac{1}{2\pi i}\,\Psi \bigl(\widehat{\at}_{X/S}(\overline E)\bigr)
= - \frac{1}{2\pi i}\, R_{\overline{E}} =: c_1(\overline E)
\end{equation}
is the first Chern form of $\overline{E}$.

We collect basic properties of the arithmetic Atiyah class.

\begin{proposition}\label{tpla}

i)
Let $(E,\nabla_E)$ and $(F,\nabla_F)$ be vector bundles on
$X$ equipped with $F_\infty$-invariant $\mathcal{C}^\infty$-connections 
compatible with the holomorphic structure.
We equip the tensor product $E\otimes F$ with the product connection.
Then the equality
\[
\widehat{\rm at}_{X/S}(E\otimes F,\nabla_{E\otimes F})=\widehat{\rm
  at}_{X/S}(E,\nabla_{E})\otimes F + E\otimes \widehat{\rm at}_{X/S}( F,\nabla_{ F})
\]
holds in $\widehat{\rm Ext}_X^1(E\otimes F,
E\otimes F\otimes\Omega_{X/S}^1)$.

ii)
Let $\ol{E}$ and $\ol{F}$ be hermitian vector bundles on $X$, 
and $\ol{E}\otimes\ol{F}$ their tensor product equipped with 
the product hermitian metric.
Then the equality
\[
\widehat{\rm at}_{X/S}(\ol{E}\otimes \ol{F})=\widehat{\rm
  at}_{X/S}(\ol{E})\otimes F + E\otimes \widehat{\rm at}_{X/S}(\ol{ F})
\]
holds in $\widehat{\rm Ext}_X^1(E\otimes F,
E\otimes F\otimes\Omega_{X/S}^1)$.

iii)
Let $\ol{E}$ be a hermitian vector bundles on $X$, and 
$\ol{E}^\vee$ the dual hermitian vector bundle. Then the equality 
\begin{equation}\label{atardual}
\widehat{\rm at}_{X/S}(\ol{E})=
-\widehat{\rm at}_{X/S}(\ol{E}^\vee)
\end{equation}
holds in
\begin{multline}\label{isos}
\widehat{\rm Ext}_X^1(E,E\otimes \Omega_{X/S}^1) \simeq 
\widehat{\rm Ext}_X^1(\cO_X, E^\vee \otimes E\otimes \Omega_{X/S}^1) \\
\simeq \widehat{\rm Ext}_X^1(\cO_X,(E^\vee)^\vee \otimes E^\vee \otimes \Omega_{X/S}^1)
\simeq \widehat{\rm Ext}_X^1(E^\vee , E^\vee \otimes \Omega_{X/S}^1),
\end{multline}
where the first and last isomorphisms  in (\ref{isos}) are the canonical isomorphisms 
in \cite[2.4.6]{bostkuennemann1}, and the second one is deduced from the 
isomorphism $E^\vee \otimes E \simeq E\otimes  E^\vee$ exchanging the two 
factors and the canonical biduality isomorphism $E \simeq (E^\vee)^\vee.$

iv)
Let $f:X\rightarrow Y$ be a morphism of integral arithmetic schemes which are
generically smooth l.c.i. over $S$.
Let $(E,\nabla_E)$ be a vector bundle on $Y$ with 
$F_\infty$-invariant $\mathcal{C}^\infty$-connection which is
compatible with the holomorphic structure.
The canonical map 
$f^*:f^*\Omega_{Y/S}^1\rightarrow \Omega_{X/S}^1$ 
induces  a homomorphism
\[
\widehat{\rm Ext}_X^1(f^*E,f^*E\otimes f^*\Omega_{Y/S}^1)\longrightarrow 
\widehat{\rm Ext}_X^1(f^*E,f^*E\otimes \Omega_{X/S}^1)
\]
by pushout along ${\rm id}_{f^*E}\otimes f^*$.
We still denote the image of $f^*\widehat{\rm at}_{Y/S}(E,\nabla_E)$ under this map
by $f^*\widehat{\rm at}_{Y/S}(E,\nabla_E)$ and equip
$f^*E_\C^{\rm hol}$ with the connection $f^*\nabla_{E}$ described in (\ref{pullbdef}).
Then we have the equality
\[
f^*\widehat{\rm at}_{Y/S}(E,\nabla_E)=\widehat{\rm at}_{X/S}(f^*E,f^*\nabla_{E}).
\]
in $\widehat{\rm Ext}_X^1(f^*E,f^*E\otimes \Omega_{X/S}^1)$.

v) 
Let $f:X\rightarrow Y$ be a morphism of integral arithmetic schemes which are
generically smooth l.c.i. over $S$.
Let $\overline{E}$ denote a hermitian vector bundle on $Y$, and $f^*\ol{E}$ its pull-back on $X.$
Then the inverse image $f^*\widehat{\rm at}_{Y/S}(\overline{E})$ may be defined in $\widehat{\rm Ext}_X^1(f^*E,f^*E\otimes \Omega_{X/S}^1)$ as in iv) and satisfies \[
f^*\widehat{\rm at}_{Y/S}(\overline{E})=\widehat{\rm at}_{X/S}(f^*\overline{E}).
\]

\end{proposition}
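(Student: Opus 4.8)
The plan is to reduce the statement to part iv), whose connection-level version is already at hand, combined with the functoriality of the Chern connection under pullback. By the very definition of the arithmetic Atiyah class of a hermitian vector bundle given earlier in this section, we have $\widehat{\rm at}_{Y/S}(\overline E)=\widehat{\rm at}_{Y/S}(E,\nabla_{\overline E})$, where $\nabla_{\overline E}$ is the Chern connection on $E_\C^{\rm hol}$ over $Y_\Sigma(\C)$. This connection is $F_\infty$-invariant and compatible with the holomorphic structure, so part iv) applies verbatim and yields
\[
f^*\widehat{\rm at}_{Y/S}(\overline E)=f^*\widehat{\rm at}_{Y/S}(E,\nabla_{\overline E})=\widehat{\rm at}_{X/S}(f^*E,f^*\nabla_{\overline E}),
\]
where $f^*\nabla_{\overline E}$ is the pulled-back connection defined by (\ref{pullbdef}), which on the analytic side is the pullback of $\nabla_{\overline E}$ along the holomorphic map $f_\C:X_\Sigma(\C)\to Y_\Sigma(\C)$.

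It then remains to identify $f^*\nabla_{\overline E}$ with the Chern connection $\nabla_{f^*\overline E}$ of the pullback hermitian bundle. First I would check that $f^*\overline E$ is genuinely a hermitian vector bundle in the arithmetic sense: its metric is the $f_\C$-pullback of that of $\overline E$, and it is $F_\infty$-invariant because $f$ is an $R$-morphism, so $f_\C$ commutes with complex conjugation. The identity $f^*\nabla_{\overline E}=\nabla_{f^*\overline E}$ is then exactly the compatibility of the assignment $\overline E\mapsto\nabla_{\overline E}$ with pull-backs recorded at the end of \ref{cocost}. Substituting this and invoking once more the definition of the arithmetic Atiyah class of a hermitian bundle, we obtain
\[
\widehat{\rm at}_{X/S}(f^*E,f^*\nabla_{\overline E})=\widehat{\rm at}_{X/S}(f^*E,\nabla_{f^*\overline E})=\widehat{\rm at}_{X/S}(f^*\overline E),
\]
which together with the previous display gives the asserted equality.

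The only genuine content lies in the functoriality $f^*\nabla_{\overline E}=\nabla_{f^*\overline E}$, and the point I expect to require the most care is verifying that the formal pullback of connections of Lemma \ref{pullbackconnection}, through which $f^*\nabla_{\overline E}$ is defined in iv), coincides with the differential-geometric pullback along $f_\C$. Once this matching is in place the claim is immediate from uniqueness of the Chern connection: since $f_\C$ is holomorphic, the pullback connection has $(0,1)$-part equal to $\overline\partial_{f^*E}$ and is unitary for the pullback metric, so by uniqueness it must be the Chern connection of $f^*\overline E$. As this compatibility is precisely what is asserted in \ref{cocost}, the argument is in the end essentially formal, the main bookkeeping being the careful identification of the two pullback operations.
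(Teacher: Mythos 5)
Your treatment of part v) is correct and is in substance the paper's own argument: the paper disposes of iv) and v) together by appealing to Lemma \ref{pullbackconnection}, with the identity $f^*\nabla_{\ol{E}}=\nabla_{f^*\ol{E}}$ supplied by the compatibility of the assignment $\ol{E}\mapsto\nabla_{\ol{E}}$ with pull-backs recorded at the end of \ref{cocost}. The point you single out as delicate --- that the formal pull-back of (\ref{pullbdef}) agrees with the differential-geometric pull-back along $f_\C$ --- is indeed the right thing to check, and your verification is sound: Lemma \ref{pullbackconnection} explicitly allows the mixed case of an analytic source over an algebraic target, both pull-backs are characterized by $(f^*\nabla_E)(f^*s)=f^*(\nabla_E\,s)$ together with the Leibniz rule, and since $f_\C$ is holomorphic the pulled-back connection has $(0,1)$-part $\dolb_{f^*E}$ and is unitary for the pulled-back ($F_\infty$-invariant) metric, so uniqueness of the Chern connection closes the argument exactly as you say.

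However, the statement to be proved is the whole of Proposition \ref{tpla}, and your proposal establishes only v), taking iv) for granted and leaving i)--iii) untouched. The omission of iv) is venial --- it follows directly from Lemma \ref{pullbackconnection}, since the arithmetic Atiyah class is by definition the class of the pair $(\cat_{Y/S}E,s_{\nabla_E})$, part i) of that Lemma computes the push-out along ${\rm id}_{f^*E}\otimes f^*$ of the pulled-back jet extension, and part ii) identifies $\phi_\C\circ f_\C^*(s_{\nabla_E})$ with $s_{f^*\nabla_E}$ --- but it should at least be stated. Parts i)--iii) are genuinely absent and require different inputs: i) rests on Lemma \ref{attenprodcom} and its $\cC^\infty$ variant noted in \ref{cocost}; ii) additionally on the fact that the Chern connection of a tensor product of hermitian bundles is the tensor product of the Chern connections; and iii) on Corollary \ref{atiyahdual} together with a careful unwinding of the isomorphisms (\ref{isos}) from \cite[2.4.6]{bostkuennemann1}. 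If the intended target was v) alone, your proof is complete and coincides with the paper's route; as a proof of the full proposition it is incomplete.
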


\proof
Assertion i) follows from Lemma \ref{attenprodcom} and its variant for $\cC^\infty$-connections compatible with the holomorphic structure, and assertion
ii) is a direct consequence of i) and of the fact that the Chern connection of a tensor product of hermitian vector bundles coincides with the tensor product of their Chern connections. To establish iii), observe that 
Corollary \ref{atiyahdual} and  the compatibility of the canonical splitting 
given there 
with holomorphic and hermitian structures leads to the equality
\[
(E^\vee \otimes \widehat{\rm at}_{X/S}(\ol{E}))\circ j_E=
-(\widehat{\rm at}_{X/S}(\ol{E}^\vee)\otimes E)\circ j_E
\]
in $\widehat{\rm Ext}_X^1(\cO_X,{\iend}(E)\otimes \Omega_{X/S}^1)$
where $.\circ j_E$ denotes the pushout along $j_E$. 
Equality (\ref{atardual}) then follows from the very definitions of the isomorphisms in (\ref{isos}) in \cite[Prop. 2.4.6]{bostkuennemann1}.
iv) and v) follow from \ref{pullbackconnection}. 
\qed

Let $\overline{E}$ be a hermitian line bundle on $X$. 
We give a cocycle description of $\widehat{\rm at}(\overline{E})$ 
based on the description of arithmetic extension
groups by \v{C}ech cocycles given in Appendix A.

\begin{proposition}\label{cocycdescrat}
Let $\overline{E}=(E,h)$ be a hermitian vector bundle of rank $n$
on $X$. 
Choose an affine, open cover $\mathcal{U}=(U_i)_{i\in I}$
of $X$ such that $E$ admits a frame
\[
f_i:\cO_{U_i}^n\stackrel{\sim}{\longrightarrow}E|_{U_i}
\]
over each $U_i$.
For $i\in I$, we define
%\begin{eqnarray*}
$$h_i:=h(f_{i,\C},f_{i,\C})
=\bigl(h(f_{i,\C}(e_l),f_{i,\C}(e_k))\bigr)_{1\leq k,l\leq n} %&
\in %&
 {\rm Mat}_n
\bigl(\mathcal{C}^\infty(U_{i,\Sigma}(\C),\C)^{F_\infty}\bigr),$$ %\\
where $e_l:=(\delta_{\alpha l})_{1\leq \alpha \leq n},$ and
$$
\partial \log \,h_i:=f_i\circ h_i^{-1}\circ (\partial h_i)\circ f_i^{-1}%&
\in%&
A^0\bigl(U_{i,\R},{\it End}(E)\otimes \Omega_{X/S}^1\bigr).
$$%\end{eqnarray*}
For $i,j\in I$, we define
\begin{eqnarray*}
f_{ij}:=f_j^{-1}\circ f_i&\in& {\rm Mat}_n\bigl( \cO_X(U_{ij})\bigr)\\
{\rm dlog}\,f_{ij}:=f_j\circ (d f_{ij})\circ f_i^{-1}
&\in & \Gamma \bigl(U_{ij},{\it End}(E)\otimes \Omega_{X/S}^1\bigr).
\end{eqnarray*}
Then the isomorphism 
$$\hat{\rho}_{\mathcal{U},E,E\otimes \Omega^1_{X/S}}: 
\widehat{\rm Ext}_X^1 (E,E\otimes \Omega^1_{X/S})\to 
\check{H}^0\bigl(\cU,C({\rm ad}_{End(E)\otimes \Omega^1_{X/S}})\bigr)$$ 
constructed in Lemma \ref{bigdiaext} maps
$\widehat{\rm at}_{X/S}(\overline{E})$ to the class of 
\[
\bigl((-{\rm dlog}\,f_{ij})_{i,j\in I},(-\partial {\rm log}\,h_i)_{i\in I}\bigr).
\]
\end{proposition}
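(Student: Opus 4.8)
The plan is to compute the two pieces of \v{C}ech data that, by Lemma \ref{bigdiaext}, represent an arithmetic extension class: a \v{C}ech $1$-cocycle encoding the underlying algebraic extension $\cjet^1_{X/S}(E)$, obtained from local \emph{algebraic} splittings, together with a $0$-cochain of $\cC^\infty$-sections measuring the discrepancy between the global $\cC^\infty$-splitting coming from the Chern connection and those local algebraic splittings. The key input is that each frame $f_i$ furnishes a preferred local algebraic connection $d_i$ on $E|_{U_i}$, namely the trivial connection in the frame $f_i$ (transporting $d^{\oplus n}$ on $\cO_{U_i}^n$); by the correspondence around (\ref{sectgbconn}) it corresponds to an $\cO_X$-linear splitting $s_{d_i}$ of $\cjet^1_{X/S}(E)|_{U_i}$, with $s_{d_i}(f)=[f,-d_i f]$.

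First I would compute the algebraic cocycle. Writing $f_i = f_j\circ f_{ij}$ and differentiating components in the two frames gives the transformation law $d_i - d_j = -{\rm dlog}\,f_{ij}$ as $\cO_X$-linear maps $E\to E\otimes\Omega^1_{X/S}$ over $U_{ij}$. Via $s_{d_i}(f)=[f,-d_i f]$ and the identification of the subsheaf $E\otimes\Omega^1_{X/S}$ inside $P^1_{X/S}(E)$, the difference $s_{d_j}-s_{d_i}$ of splittings is then identified with $d_i - d_j = -{\rm dlog}\,f_{ij}$. This is precisely the \v{C}ech $1$-cocycle representing $\jet^1_{X/S}(E)=\at_{X/S}(E)$, yielding the first component $(-{\rm dlog}\,f_{ij})$.

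Next I would compute the analytic comparison term. The global $\cC^\infty$-splitting attached to $\overline E$ is $s_{\nabla^{1,0}}$, where $\nabla=\nabla_{\overline E}$ is the Chern connection and $s_{\nabla^{1,0}}(f)=[f,-\nabla^{1,0}f]$ by (\ref{sign10}). The standard description of the Chern connection in a holomorphic frame --- it is the unique connection compatible with $h$ whose $(0,1)$-part is $\overline\partial$ --- gives that its $(1,0)$-connection matrix in the frame $f_i$ is $h_i^{-1}\partial h_i$, i.e. $\nabla^{1,0}|_{U_i}=d_i+\partial\log h_i$ with $\partial\log h_i$ as defined in the statement. Hence, reading the difference $s_{\nabla^{1,0}}-s_{d_i}$ through the same identification $E\otimes\Omega^1_{X/S}\hookrightarrow P^1_{X/S}(E)$, it corresponds to $d_i-\nabla^{1,0}=-\partial\log h_i$, which is the second component $(-\partial\log h_i)$.

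Finally I would feed the pair $\bigl((s_{d_j}-s_{d_i})_{i,j},(s_{\nabla^{1,0}}-s_{d_i})_i\bigr)$ into the isomorphism $\hat\rho_{\mathcal U,E,E\otimes\Omega^1_{X/S}}$ of Lemma \ref{bigdiaext} and check, using that $\nabla^{1,0}$ is a \emph{global} connection (so that $\partial\log h_j-\partial\log h_i=d_i-d_j=-{\rm dlog}\,f_{ij}$), that this pair satisfies the cocycle condition defining $\check H^0(\mathcal U,C({\rm ad}_{\iend(E)\otimes\Omega^1_{X/S}}))$ and represents $\widehat{\rm at}_{X/S}(\overline E)$. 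The main obstacle is purely bookkeeping: one must reconcile the three sign conventions --- that of (\ref{sectgbconn}) and (\ref{sign10}) (the $-\nabla$ appearing in the splitting), that of the \v{C}ech complex $C(\cdot)$ fixed in Lemma \ref{bigdiaext} (the orientation of $s_{d_j}-s_{d_i}$ and $s_{\nabla^{1,0}}-s_{d_i}$), and the normalization of the Chern connection matrix as $h_i^{-1}\partial h_i$ --- so that both components acquire the stated overall minus sign; this is the same sign already recorded in Lemma \ref{curvaturesff} and in the second fundamental form formula.
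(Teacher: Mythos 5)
Your proposal is correct and follows essentially the same route as the paper's proof: local connections $\nabla_i$ trivialized by the frames $f_i$, the difference of the induced algebraic splittings yielding $\alpha_{ij}=-{\rm dlog}\,f_{ij}$, and the Chern connection formula $\nabla^{1,0}(f_i)=f_i\cdot h_i^{-1}\cdot(\partial h_i)$ (Wells, III.2) yielding $\beta_i=-\partial\log h_i$. The only cosmetic difference is that the paper evaluates the splitting differences on $j_E(1_X)=f_i\otimes f_i^\vee$ through the pullback diagram defining $\hat\rho_{\mathcal{U},E,E\otimes\Omega^1_{X/S}}$ in Corollary \ref{bigdiaext2}, whereas you read them off directly as sections of $\iend(E)\otimes\Omega^1_{X/S}$ --- the same computation, and your signs all match.
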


\proof 
Let $\nabla$ denote the unitary connection on $E_{\C}$ which is compatible with
the holomorphic structure.
We compute a cocycle $\bigl((\alpha_{ij})_{i,j}, (\beta_i)_i\bigr)$ which 
represents the image of the arithmetic extension $({\cat}(E),s_\nabla)$ 
under  $\hat{\rho}_{\mathcal{U},E,E\otimes \Omega^1_{X/S}}$.
We follow the construction of $\hat{\rho}_{\mathcal{U},E,E\otimes \Omega^1_{X/S}}$ given in Appendix A. Consider the diagram
\[
\begin{array}{ccccccccc}
0&\longrightarrow&{\mathcal End} (E)\otimes \Omega_{X/S}^1&
\longrightarrow&W &\longrightarrow&\cO_X&\longrightarrow&0\\
&&||&&\downarrow &&\downarrow {\scriptstyle j_E}&\\
0&\longrightarrow &E\otimes\Omega_{X/S}^1\otimes E^\vee&\longrightarrow 
&  P^1_{X/S}(E)\otimes E^{\vee}&\longrightarrow &E\otimes E^{\vee}&\longrightarrow &0.
\end{array}
\]
where the lower exact sequence is the extension ${\cat}(E)\otimes E^\vee$ and the
upper exact sequence is the pullback $({\cat}(E)\otimes E^\vee)\circ j_E$
of the lower exact sequence by $j_E$.
There is a unique connection $\nabla_i\colon E|_{U_i}\to E|_{U_i}\otimes \Omega_{U_i/S}^1$ 
such that $\nabla_i(f_i)=0$.
It satisfies
\[
\nabla_j(f_i)=\nabla_j(f_j\cdot f_{ij})=f_j\cdot d f_{ij}, 
\]
where the frames $f_i$ and $f_j$ are seen as ``line vectors" with entries sections of $E$.
The connection $\nabla_i$ determines an $\cO_{U_i}$-linear splitting $s_{\nabla_i}$ of 
${\cat}(E)$ over $U_i$ as in (\ref{sectgbconn}).
We write $j_E(1_X)=f_i\otimes f_i^\vee$, where $f_i^\vee$ denotes the
dual frame of $E^\vee$ --- which we may see as a ``column vector" with entries sections of $E^\vee$ --- and get
\begin{eqnarray*}
\alpha_{ij}&=&(s_{\nabla_j}\otimes {\rm id}_{E^\vee}-s_{\nabla_i}\otimes 
{\rm id}_{E^\vee})\circ j_E(1_X)\\
&=&(-\nabla_j+\nabla_i)f_i\otimes f_i^\vee \\
&=&\bigl (-f_j\cdot (d f_{ij})\bigr)\otimes f_i^\vee\\
&=&-{\rm dlog}\,f_{ij}.
\end{eqnarray*}
We observe that we have
\[
\nabla^{1,0}(f_i)=f_i\cdot h_i^{-1}\cdot (\partial h_i)
\]
by \cite[III.2, eq. (2.1)]{wells80}. Hence
\begin{eqnarray*}
\beta_i
&=&(s_{\nabla^{1,0}}\otimes {\rm id}_{E^\vee}-s_{\nabla_i}\otimes 
{\rm id}_{E^\vee})\circ j_E(1_X)\\
&=&-f_i\circ h_i^{-1}\circ (\partial h_i)\circ f_i^{-1}\\
&=&-\partial \log \,h_i.
\end{eqnarray*}
Our claim follows.
\qed

The properties of the arithmetic Atiyah class in Proposition \ref{tpla} may be 
recovered by straightforward cocycle computations using 
Proposition \ref{cocycdescrat}.

\subsubsection{}
Let us indicate that there is a straightforward
generalization of the construction 
of the arithmetic extension class $\at_{X/S}(E,\nabla)$
in $\widehat{\rm Ext}^1_X(E, E\otimes \Omega_{X/S}^1)$
given above when $S$ is a flat
arithmetic scheme over $\Spec R$, $X$ an integral arithmetic scheme
equipped with a l.c.i. morphism $\pi:X\rightarrow S$, smooth over $K$,
and $\nabla$ is a relative $\mathcal{C}^\infty$-connection for
$X_\Sigma(\C)/S_\Sigma(\C)$.

If the relative connection $\nabla$ is induced by an absolute connection
$\nabla_X$ via the canonical map
\begin{equation}\label{canmap}
\Omega_{X/{\Spec R}}^1\rightarrow \Omega_{X/S}^1,
\end{equation}
the relative and the absolute Atiyah class are related as follows.
The commutative square
\[
\begin{array}{ccc}
X &\stackrel{{\rm id}_X}{\longrightarrow} & X\\
\,\,\,\,\,\,\downarrow {\scriptstyle \pi} & &\downarrow\\
S &{\longrightarrow} & \Spec R
\end{array}
\]
induces by Lemma \ref{pullbackconnection} a commutative diagram
\begin{equation}
\begin{array}{ccccccccc}
 0&\longrightarrow &E\otimes \Omega_{X/{\Spec R}}^1 &\longrightarrow 
&P_{X/{\Spec R}}^1(E) & \longrightarrow &E & \longrightarrow &0\\
 & &\downarrow & 
& \,\,\,\downarrow {\scriptstyle \phi}& & ||& & \\
 0 &\longrightarrow & E\otimes\Omega_{ X/ S}^1 &\longrightarrow 
& P_{X/S}^1(E)& \longrightarrow &E & \longrightarrow &0.
\end{array}
\end{equation}
which identifies $\cat_{X/S}(E)$ with the pushout of $\cjet^1_{X/{\Spec R}}(E)$
along the canonical map (\ref{canmap}).
We have $s_\nabla=\phi_\C\circ s_{\nabla_X}$.
Hence the pushout of the arithmetic extension 
$(\cjet^1_{X/{\Spec R}}E,s_\nabla)$ along the canonical map (\ref{canmap})
is by its very definition 
in \cite[2.4.1]{bostkuennemann1}
the arithmetic extension $(\cat_{X/S}(E),s_\nabla)$.

%in this way, one may also establish the 
%following refined variant of Proposition \ref{tpla}, iii):

%\begin{corollary}\label{dualencore}
%For any hermitian vector bundle $\ol{E}$ on a smooth 
%arithmetic $S$-scheme $X$, we have the equality
%\[
%\widehat{\rm at}_{X/S}(\ol{E})=
%-\widehat{\rm at}_{X/S}(\ol{E}^\vee)
%\]
%in
%\begin{multline}\label{isos}
%\widehat{\rm Ext}_X^1(E,E\otimes \Omega_{X/S}^1) \simeq 
%\widehat{\rm Ext}_X^1(\cO_X, E^\vee \otimes E\otimes \Omega_{X/S}^1) \\
%\simeq \widehat{\rm Ext}_X^1(\cO_X,(E^\vee)^\vee \otimes E^\vee \otimes \Omega_{X/S}^1)
%\simeq \widehat{\rm Ext}_X^1(E^\vee , E^\vee \otimes \Omega_{X/S}^1).
%\end{multline}
%\end{corollary}

%The first and last isomorphisms  in (\ref{isos}) are the canonical isomorphism 
%in \cite[2.4.6]{bostkuennemann1}, and the second one is deduced from the 
%isomorphism $E^\vee \otimes E \simeq E\otimes  E^\vee$ exchanging the two 
%factors and the canonical biduality isomorphism $E \simeq (E^\vee)^\vee.$

%

\subsection{The first Chern class in arithmetic Hodge cohomology}\label{secfcc}
\subsubsection{}
For a hermitian vector bundle $\ol{E}$ on an arithmetic scheme $X$, flat and l.c.i. over $S=\Spec R,$ 
we put
\[
\hat{c}_1^H(\ol{E}):=\hat{c}_1^H(X/S,\ol{E}):={\rm tr}_E\circ 
(\widehat{\rm at}_{X/S}(\ol{E})\otimes E^\vee )\circ j_E
\in
\widehat{\rm Ext}^1(\cO_X,\Omega_{X/S}^1)
\]
where ${\rm tr}_E:E \otimes E^\vee {\to} \cO_X$ and 
$j_E:\cO_X{\to} End(E) \simeq E \otimes E^\vee$
denote the canonical morphisms.
We call $\hat{c}_1^H(\ol{E})$ the {\it first Chern class of $\overline E$ in
arithmetic Hodge cohomology}.

When $\ol{E}$ is a hermitian line bundle, ${\rm tr}_E$ and $j_E$ are 
the ``obvious" isomorphisms, and $\hat{c}_1^H(\ol{E})$ is nothing else than 
$\widehat{\rm at}_{X/S}(\ol{E})$ in $$\widehat{\rm Ext}_X^1(E,E \otimes \Omega_{X/S}^1) \simeq
\widehat{\rm Ext}_X^1(\cO_X,E^\vee \otimes E \otimes \Omega_{X/S}^1)
\simeq \widehat{\rm Ext}_X^1(\cO_X,\Omega_{X/S}^1).$$

Using the the description of the arithmetic Atiyah class
in terms of \v{C}ech cocycles in Proposition \ref{cocycdescrat}, and the 
expression of the differential of the determinant in terms of the trace, we 
obtain, after a straightforward computation:
\begin{equation}
\hat{c}_1^H(\ol{E}) = \hat{c}_1^H(\det \ol{E}).
\end{equation}
 
 Proposition \ref{cocycdescrat} also leads immediately to the following 
description of the first Chern class  in
arithmetic Hodge cohomology for hermitian line bundles:

\begin{lemma}
Let $\overline{L}$ be a hermitian line bundle on an arithmetic scheme
$X$. Choose an affine, open cover $\mathcal{U}=(U_i)_{i\in I}$
of $X$ such that $L$ admits a trivialization $l_i\in \Gamma(U_i,L)$ over $U_i$.
Put
\[
f_{ij}:= l_j^{-1}\cdot l_i\in \Gamma(U_{ij},\cO^*).
\]
Then 
\[
\hat{\rho}_{\mathcal{U},\Omega_{X/S}^1}\bigl(\hat{c}_1^H(\overline{L})\bigr)
=\bigl[(-{\rm dlog}\,f_{ij})_{i,j\in I},(-\partial {\rm log}\,\|l_i\|^2)_{i\in I}\bigr].
\]
\end{lemma}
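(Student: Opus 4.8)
The plan is to read off the formula directly from Proposition \ref{cocycdescrat}, specialized to the rank-one case $\overline{E}=\overline{L}$, together with the identification of $\hat c_1^H(\overline{L})$ with $\widehat{\rm at}_{X/S}(\overline{L})$ recorded just before the statement.

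First I would apply Proposition \ref{cocycdescrat} to the hermitian line bundle $\overline{L}=(L,h)$, taking as frames the trivializations $l_i$, regarded as isomorphisms $\cO_{U_i}\stackrel{\sim}{\to} L|_{U_i}$. In this rank-one situation $h_i=h(l_{i,\C},l_{i,\C})=\|l_i\|^2$, the transition ``matrices'' reduce to the scalar transition functions $f_{ij}=f_j^{-1}\circ f_i=l_j^{-1}l_i$, and the proposition asserts that $\hat\rho_{\cU,L,L\otimes\Omega^1_{X/S}}\bigl(\widehat{\rm at}_{X/S}(\overline{L})\bigr)$ is represented by $\bigl((-{\rm dlog}\,f_{ij})_{i,j},(-\partial\log h_i)_i\bigr)$, whose entries are a priori sections of ${\iend}(L)\otimes\Omega^1_{X/S}$.

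Next I would transport this cocycle to $\Omega^1_{X/S}$-coefficients through the canonical isomorphism ${\iend}(L)\simeq\cO_X$ supplied by ${\rm tr}_L$ and $j_L$. A one-line computation shows that the endomorphism-valued form $l_j\circ(df_{ij})\circ l_i^{-1}$ acts on a local section $s$ of $L$ by $s\mapsto(f_{ij}^{-1}df_{ij})\,s$, hence corresponds under ${\iend}(L)\simeq\cO_X$ to the scalar logarithmic differential $f_{ij}^{-1}df_{ij}={\rm dlog}\,f_{ij}$; similarly $l_i\circ h_i^{-1}(\partial h_i)\circ l_i^{-1}$ corresponds to $\partial\|l_i\|^2/\|l_i\|^2=\partial\log\|l_i\|^2$. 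Under this identification the cocycle above becomes $\bigl((-{\rm dlog}\,f_{ij})_{i,j},(-\partial\log\|l_i\|^2)_i\bigr)$, now with entries in $\Omega^1_{X/S}$.

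Finally, since $\hat c_1^H(\overline{L})={\rm tr}_L\circ(\widehat{\rm at}_{X/S}(\overline{L})\otimes L^\vee)\circ j_L$ and, as observed before the statement, ${\rm tr}_L$ and $j_L$ are the obvious mutually inverse isomorphisms identifying $\hat c_1^H(\overline{L})$ with $\widehat{\rm at}_{X/S}(\overline{L})$, it remains to invoke the naturality of the \v{C}ech realization $\hat\rho$ of Appendix A: the identification of coefficient complexes induced by ${\iend}(L)\simeq\cO_X$ must intertwine $\hat\rho_{\cU,L,L\otimes\Omega^1_{X/S}}$ with $\hat\rho_{\cU,\Omega^1_{X/S}}$. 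Granting this, the displayed formula follows at once. The only point requiring care --- and the main (if mild) obstacle --- is precisely this compatibility: one has to check that the differentials entering the coefficient complex $C(\,\cdot\,)$ of Appendix A commute with the identification ${\iend}(L)\simeq\cO_X$. For a line bundle this is immediate, because ${\iend}(L)=\cO_X$ canonically and ${\rm tr}_L$ becomes the identity under this identification, so the verification is purely formal.
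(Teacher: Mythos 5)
Your proposal is correct and follows exactly the route the paper intends: the lemma is stated there as an immediate consequence of Proposition \ref{cocycdescrat} specialized to rank one, combined with the observation that ${\rm tr}_L$ and $j_L$ identify $\hat c_1^H(\overline{L})$ with $\widehat{\rm at}_{X/S}(\overline{L})$. Your extra verification that the identification ${\iend}(L)\simeq\cO_X$ intertwines the two \v{C}ech realizations $\hat\rho$ is precisely the (trivial in rank one) point the paper leaves implicit.
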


\subsubsection{}
Let $\widehat{\rm Pic}(X)$ denote the group of isometry classes of hermitian line bundles
on $X$.
It follows immediately from Proposition \ref{tpla} that the map
\[
\hat{c}_1^H:\widehat{\rm Pic}(X)\rightarrow \widehat{\rm Ext}_X^1(\cO_X,\Omega_{X/S}^1)
\]
is a group homomorphism which satisfies
\[
\hat{c}_1^H(X/S,\,.\,) \circ f^*=f^*\circ \hat{c}_1^H(Y/S,\,.\,)
\]
for every morphism $f\colon X\to Y$ of integral, flat, l.c.i, arithmetic $S$-schemes.

\subsubsection{}
We consider the diagrams
\begin{equation}\label{arfcc1}
\begin{array}{cccccccc}
\cO(X)^* &\stackrel{\log\vert.\vert^2}\rightarrow & A^{0,0}(X_\R)&
\stackrel{a}{\rightarrow} & \widehat{\rm Pic}(X) & {\rightarrow} &
  {\rm Pic}(X)\\
\,\,\,\,\,\,\,\,\,\downarrow {\scriptstyle \rm - dlog} & &\, \downarrow
{\scriptstyle -\partial} & &\,\,\, \downarrow {\scriptstyle \hat{c}_1^H}&
&\downarrow {\scriptstyle c^H_1}\\
\Gamma(X,\Omega_{X/S}^1) &\rightarrow & 
{A^{0}(X_\R,\Omega_{X/S}^1)} &\stackrel{b}{\rightarrow} &
\widehat{\rm Ext}_X^1(\mathcal{O}_X,\Omega_{X/S}^1) &\stackrel{\nu}{\rightarrow} & 
{\rm Ext}_{\cO_X}^1(\mathcal{O}_X,\Omega_{X/S}^1).
\end{array}
\end{equation}
and
\begin{equation}\label{arfcc2}
\begin{array}{ccc}
\widehat{\rm Pic}(X) &\stackrel{c_1}{ \longrightarrow} &  A^{1,1}(X_\R) \\
\,\,\,\,\downarrow {\scriptstyle \hat{c}^H_1} & & \downarrow {\scriptstyle \iota} \\
\widehat{\rm Ext}_X^1(\mathcal{O}_X,\Omega_{X/S}^1)&\stackrel{\Psi}{\longrightarrow} 
& A^{0,1}(X_\R,\Omega_{X/S}^1).
\end{array}
\end{equation}
Here $A^{p,p}(X_\R)$ is by definition the space of {\it real} $(p,p)$-forms $\alpha$ on
the complex manifold $X_\Sigma(\C)$ which satisfy 
$F_\infty(\alpha)=(-1)^p\alpha$ (compare \cite[3.2.1]{gilletsoule90}).
The monomorphism $\iota$ is defined by
\[
A^{p,p}(X_\R)\hookrightarrow  A^{0,p}(X_\R,\Omega_{X/S}^p)\,,\,\,\,
\alpha\mapsto (2\pi i)^p \alpha 
\]
(compare \cite[1.1.5]{bostkuennemann1}).
Furthermore we have used the following morphisms
\begin{eqnarray*}
\log\vert.\vert^2 : \cO(X)^*&\longrightarrow & A^{0,0}(X_\R)\,,\,\,\,
f\longmapsto \log |f|^2,\\
{\rm dlog}\colon\cO(X)^*&\longrightarrow & \Gamma(X,\Omega_{X/S}^1)\,,\,\,\,
f\longmapsto f^{-1}df, \\
\Gamma(X,\Omega_{X/S}^1) &\longrightarrow & {A^0(X_\R,\Omega_{X/S}^1)}\,,\,\,\,
\alpha\longmapsto \alpha_\C, \\
\partial\colon \ker\,\partial\overline{\partial}|_{A^{0,0}(X_\R)}&\longrightarrow &
{A^0(X_\R,\Omega_{X/S}^1)}\,,\,\,\,f\longmapsto \partial f,\\
a\colon\ker\,\partial\overline{\partial}|_{A^{0,0}(X_\R)}&
{\longrightarrow} & \widehat{\rm Pic}(X)\,,\,\, f\longmapsto [(\cO_X,\|.\|_f)] 
\mbox{ with } \|1_X\|^2_f=\exp\,f, \\
b\colon {A^0(X_\R,\Omega_{X/S}^1)} &{\longrightarrow} &
\widehat{\rm Ext}_X^1(\mathcal{O}_X,\Omega_{X/S}^1),\,
T\longmapsto 
\Bigl[0 \to \Omega_{X/S}^1\stackrel{\binom{{\rm id}}{0}}{\to} \Omega_{X/S}^1\oplus \cO_X%\stackrel{(0, {\rm id})}{\to} \cO_X \to 0,
%s:=\genfrac{(}{)}{0pt}{}{T}{{\rm id}}\Bigr]
%\\
%&& \,\,\,\mbox{(compare the introduction and \cite[2.2]{bostkuennemann1})},\\
\end{eqnarray*}
\hspace{9cm}
$\stackrel{(0, {\rm id})}{\to} \cO_X \to 0,
s:=\genfrac{(}{)}{0pt}{}{T}{{\rm id}}\Bigr]$

(compare the introduction and \cite[2.2]{bostkuennemann1}),
\begin{eqnarray*}
\widehat{\rm Pic}(X) & \longrightarrow & {\rm Pic}(X)\,,\,\,\,
[(L, \|\,.\,\|)]\longmapsto [L],\\
\nu\colon \widehat{\rm Ext}_X^1(\mathcal{O}_X,\Omega_{X/S}^1) &\longrightarrow & 
{\rm Ext}^1_{\mathcal{O}_X}(\mathcal{O}_X,\Omega_{X/S}^1)\,,\,\,\,
[(\mathcal{E},s)]\longmapsto [\mathcal{E}],\\
c_1^H\colon{\rm Pic}(X)&\longrightarrow & {\rm Ext}^1_{\mathcal{O}_X}
(\mathcal{O}_X,\Omega_{X/S}^1)
\,,\,\,[L]\longmapsto [{\rm tr}_L\circ {\rm at}_{X/S}(L)\circ i_L],\\
c_1\colon\widehat{\rm Pic}(X) &{ \longrightarrow} &  A^{1,1}(X_\R)\,,\,\,
[\overline{L}=(L,\|\,.\,\|)]\longmapsto -(2\pi i)^{-1}\nabla^2_{\overline{L}}, \\
\Psi\colon \widehat{\rm Ext}_X^1(\mathcal{O}_X,\Omega_{X/S}^1)&{\longrightarrow} 
& A^{0,1}(X_\R,\Omega_{X/S}^1),\,\,\,\mbox{defined in \cite[2.3.1]{bostkuennemann1}}.
\end{eqnarray*}

The horizontal lines in (\ref{arfcc1}) are exact by \cite[(2.5.2)]{gilletsoule89}
and \cite[2.2.1]{bostkuennemann1}.
Observe the analogy between (\ref{arfcc1}) and \cite[(2.5.2)]{gilletsoule89}.

\begin{proposition}
The diagrams (\ref{arfcc1}) and (\ref{arfcc2}) are commutative.
\end{proposition}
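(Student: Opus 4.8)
The plan is to exploit the fact, already recorded in the text, that the horizontal lines of (\ref{arfcc1}) are exact; this reduces the proposition to the commutativity of the three squares constituting (\ref{arfcc1}) together with the single square (\ref{arfcc2}), the latter being essentially a restatement of the second fundamental form computation of Lemma \ref{curvaturesff}. I would treat the three squares of (\ref{arfcc1}) from the outside in, since the two outer ones are formal while the middle one carries the content.

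For the left square, let $f\in\cO(X)^*$. After analytification $f_\C$ is a nowhere vanishing holomorphic function, so $\log|f|^2=\log f_\C+\log\overline{f_\C}$ and hence $\partial\log|f|^2=f_\C^{-1}\partial f_\C=f_\C^{-1}df_\C=({\rm dlog}\,f)_\C$, the $\bar\partial$-part contributing nothing. Thus applying $-\partial$ after $\log|f|^2$, or $-{\rm dlog}$ followed by $(\cdot)_\C$, yields the same element of $A^0(X_\R,\Omega^1_{X/S})$. For the right square, recall that $\nu$ is the forgetful morphism sending the class of an arithmetic extension to its underlying algebraic extension class, and that it is functorial with respect to the tensor and trace operations used to build $\hat c_1^H$. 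Since $\nu(\widehat{\rm at}_{X/S}(\overline L))={\rm at}_{X/S}(L)=\jet^1_{X/S}(L)$ by the very definition of the arithmetic Atiyah extension, and $c_1^H([L])$ is obtained from $\jet^1_{X/S}(L)$ by exactly the same trace recipe, one gets $\nu(\hat c_1^H(\overline L))=c_1^H([L])$.

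The middle square is the main point. Let $\phi\in\ker\partial\bar\partial|_{A^{0,0}(X_\R)}$ and put $a(\phi)=(\cO_X,\|\cdot\|_\phi)$ with $\|1_X\|^2_\phi=\exp\phi$. As the underlying line bundle is $\cO_X$, its arithmetic Atiyah extension has underlying algebraic extension $\cjet^1_{X/S}(\cO_X)$ with its canonical splitting $s_d$ attached to the trivial connection $d$, while its $\cC^\infty$-splitting is $s_{\nabla^{1,0}}$ for the Chern connection of $(\cO_X,\exp\phi)$, whose connection form is $\partial\log\exp\phi=\partial\phi$. In the decomposition $P^1_{X/S}(\cO_X)=\cO_X\oplus\Omega^1_{X/S}$ of (\ref{canosplit}), formulas (\ref{sectgbconn}) and (\ref{sign10}) give $s_{\nabla^{1,0}}(1_X)-s_d(1_X)=[0,-\partial\phi]$, so the analytic splitting differs from the algebraic one exactly by $-\partial\phi$. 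Comparing with the definition of $b$ recalled just before (\ref{arfcc1}), this arithmetic extension is precisely $b(-\partial\phi)$, whence $\hat c_1^H(a(\phi))=b(-\partial\phi)$. The same identity can be read off from the \v{C}ech cocycles: the cocycle formula for $\hat c_1^H$ of line bundles derived from Proposition \ref{cocycdescrat}, applied to the trivialization $1_X$ (so $f_{ij}=1$ and $\|1_X\|^2=\exp\phi$), gives $[(0)_{ij},(-\partial\phi)_i]$, which is the representative of $b(-\partial\phi)$.

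Finally, for (\ref{arfcc2}), let $\overline L=(L,\|\cdot\|)$. For a line bundle $\hat c_1^H(\overline L)=\widehat{\rm at}_{X/S}(\overline L)$ under the canonical isomorphisms, so the formula $\Psi(\widehat{\rm at}_{X/S}(\overline E))=-R_{\overline E}$ obtained from Lemma \ref{curvaturesff} yields $\Psi(\hat c_1^H(\overline L))=-R_{\overline L}$, viewed in $A^{0,1}(X_\R,\Omega^1_{X/S})$ via the canonical identification of loc. cit. On the other side $c_1(\overline L)=-(2\pi i)^{-1}R_{\overline L}$, and $\iota$ multiplies a $(1,1)$-form by $2\pi i$, so $\iota(c_1(\overline L))=-R_{\overline L}$ under the same identification; the $2\pi i$ in $\iota$ precisely cancels the $(2\pi i)^{-1}$ in $c_1$, and the square commutes. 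The only genuine subtlety throughout — and the step I expect to demand the most care — is the consistent bookkeeping of signs and of the $2\pi i$ normalization across the conventions fixed for $\cjet$, $\Psi$, $\iota$, $c_1$ and the cocycle isomorphism $\hat\rho$; once these are pinned down, each verification is a short computation.
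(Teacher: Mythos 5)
Your proof is correct and follows essentially the same route as the paper: the left square by the direct computation $\partial\log|f|^2 = f^{-1}\partial f = {\rm dlog}\,f$, the middle square via the Chern connection formula $\nabla^{1,0}(1_X)=\partial\phi$ together with the sign in (\ref{sign10}), the right square by definition, and (\ref{arfcc2}) by formula (\ref{formfirstcc}) from Lemma \ref{curvaturesff}. Your additional \v{C}ech-cocycle cross-check of the middle square via Proposition \ref{cocycdescrat} is a sound redundancy the paper omits, but it does not change the argument.
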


\proof
For $f$ in $\cO(X)^*$, we have
\begin{equation}
\partial \log\,|f|^2=\frac{\partial
  (f\ol{f})}{f\ol{f}}=\frac{\partial f}{f}=\frac{d f}{f}={\rm dlog}\,f
\end{equation}
which shows the commutativity of the left square in (\ref{arfcc1}).
The unitary connection $\nabla_f$ on  $(\cO_X,\|.\|_f)$ 
that is compatible with the holomorphic structure is given 
according to \cite[III.2 formula (2.1)]{wells80} by the formula
\[
\nabla_f^{1,0}(1)=\partial f\in A^0(X_\R,\Omega_{X/S}^1).
\]
Taking into account the correspondence between connections and splittings in 1.3 above (and notably the sign in (\ref{sign10})), it follows that the middle square commutes.
The commutativity of the right square holds by definition.
The square (\ref{arfcc2}) is commutative by formula (\ref{formfirstcc}).
\qed

\section{Hermitian line bundles with vanishing arithmetic Atiyah class}\label{sect3}

This section is devoted to the proof of assertions ${\bf I1}_{X,\Sigma}$ and ${\bf I2}_{X,\Sigma}$ in the Introduction (see 0.4 \emph{supra}).

In the first part of the section, we establish the special case of ${\bf I1}_{X,\Sigma}$ where $X$ is an abelian variety and $\Sigma$ has a unique or two conjugate elements. As mentioned in the Introduction, the validity of ${\bf I1}_{X,\Sigma}$ in this case has been established by Bertrand (\cite{bertrand95, bertrand98}) under suitable hypotheses of ``real multiplication".

In the second part of the section, we use some classical properties of Picard varieties to extend ${\bf I1}_{X,\Sigma}$ to arbitrary smooth projective varieties $X$ over number fields. Finally we establish ${\bf I2}_{X,\Sigma}$, which describes the kernel of the first class Chern in arithmetic Hodge cohomology $\hat{c}_1^H$ ``up to a finite group".

\subsection{Transcendence and line bundles with connections on abelian varieties}\label{trans}

%In this section, we complete the proof of Theorem \ref{eqcond} by establishing Theorem \ref{eqcond}, i), when $X$ is an abelian variety.  Indeed, we are going to establish the following result, by using a classical transcendence result of Schneider-Lang:
The next paragraphs are devoted to the proof of the following theorem:

\begin{theorem}\label{thI}
Let $A$ be an abelian variety over a number field $K$, and $(L, \nabla)$ a line bundle over $A$ equipped with a connection (defined over $K$).

If there exists a field embedding $\sigma: K \hookrightarrow \C$ and a hermitian metric $\|.\|$ on the complex line bundle $L_\sigma$ on $A_\sigma(\C)$ such that the connection $\nabla_\sigma$ is unitary with respect to $\|.\|,$ then $L$ has a torsion class in $\Pic(A)$, and the metric $\|.\|$ has vanishing curvature. 

\end{theorem}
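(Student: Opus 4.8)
The plan is to argue in three stages: an algebraic reduction showing that $L$ must already lie in $\Pico(A)$ and that the metric is flat; a translation of the unitarity hypothesis into an analytic statement about a logarithm of a point of the universal vector extension; and finally an appeal to the Schneider--Lang theorem to force the underlying class to be torsion. \emph{Reduction.} First I would note that the mere existence of the algebraic connection $\nabla$ makes the Atiyah class $\at_{A/K}(L)\in\Ext^1_{\cO_A}(L,L\otimes\Omega^1_{A/K})\simeq H^1(A,\Omega^1_{A/K})$ vanish, by the correspondence between connections and splittings of $\cat_{A/K}(L)$ recalled in \ref{atgen}. For a line bundle this class is its first Chern class in Hodge cohomology, and the induced map $\NS(A)\to H^1(A,\Omega^1_{A/K})$ is injective for an abelian variety; hence $L$ is algebraically trivial, $L\in\Pico(A)$. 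Next, $\nabla_\sigma$ is an algebraic, hence holomorphic, connection on $L_\sigma$, so it is compatible with the holomorphic structure; if it is in addition unitary for $\|.\|$ it must coincide with the Chern connection of $(L_\sigma,\|.\|)$. Its curvature is then simultaneously a holomorphic $2$-form (type $(2,0)$) and of type $(1,1)$, so it vanishes. This already proves the vanishing of the curvature, and shows that $\nabla_\sigma$, and hence $\nabla$ itself (its curvature is defined over $K$ and dies under the faithfully flat base change $\sigma$), is integrable.

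\emph{Passage to the universal vector extension.} The pair $(L,\nabla)$ is now an algebraically trivial line bundle with integrable connection, and therefore defines a $K$-rational point $P$ of the universal vector extension $G:=E(\hat A)$ of $\hat A:=\Pico_{A/K}$, sitting in the exact sequence $0\to\omega_A\to G\xrightarrow{q}\hat A\to 0$ of Appendix~B, with $q(P)=[L]$. It thus suffices to prove that $q(P)$ is torsion in $\hat A(K)$. Over $\C$ via $\sigma$ I would identify $\Lie(G)_\C$ with $H^1_{\mathrm{dR}}(A_\sigma/\C)\simeq H^1(A_\sigma(\C),\C)$ and $G_\sigma(\C)$ with $\Lie(G)_\C/\Pi$, where $\Pi\simeq H_1(A_\sigma(\C),\Z)$ is the period lattice; the complex points of $G$ parametrise flat line bundles, and the unitary ones form the compact real torus $T:=\exp_G\bigl(iH^1(A_\sigma(\C),\R)\bigr)$. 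The unitarity of $\nabla_\sigma$ then says precisely that $P_\sigma\in T$, i.e. that some logarithm $u\in\Lie(G)_\C$ of $P_\sigma$ lies in the purely imaginary real subspace $iH^1(A_\sigma(\C),\R)$, which also contains the full lattice $\Pi$.

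\emph{Transcendence.} With $u$ so constrained and $\exp_G(u)=P\in G(K)$, I would invoke the classical Schneider--Lang theorem characterising Lie algebras of algebraic subgroups of commutative algebraic groups over number fields: the smallest $K$-rational subspace $W\subseteq\Lie(G)_\C$ containing $u$ is the Lie algebra of a connected algebraic subgroup $H\subseteq G$ defined over $K$, with $P\in H(K)$. Using a polarisation of $\hat A$ and the positivity of the associated Riemann form, together with the reality constraint $u\in iH^1(A_\sigma(\C),\R)$, I would then argue that the image $q(H)\subseteq\hat A$ must be finite, whence $q(P)=[L]$ is torsion; combined with $L\in\Pico(A)$ this gives the torsion class in $\Pic(A)$.

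The main obstacle is precisely this last step: making the Schneider--Lang mechanism interact correctly with the \emph{two} competing rational structures on $\Lie(G)_\C$ --- the algebraic ($K$-de Rham) structure in which $P$ is rational, and the Betti structure in which the unitarity condition $u\in iH^1(A_\sigma(\C),\R)$ is expressed --- and extracting torsion of $q(P)$ from it. This is exactly where Bertrand's ``real multiplication'' hypothesis served to split the problem into one-parameter subgroups directly amenable to Schneider--Lang; treating a general abelian variety $A$ and a general embedding $\sigma$ (reduced, as in the Introduction, to one or two conjugate embeddings) is where the genuine work lies, and I would expect to spend most of the effort setting up the auxiliary functions, the polarisation, and the height/derivative estimates needed to run the criterion.
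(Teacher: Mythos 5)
Your reduction (connection $\Rightarrow$ vanishing Atiyah class $\Rightarrow$ $L\in\Pico(A)$; curvature of the Chern connection simultaneously of type $(2,0)$ and purely imaginary, hence zero) and the translation into a point $P$ of the universal vector extension lying in the maximal compact subgroup are both sound, and they match the paper's setup (the latter is exactly its Theorem \ref{thIalternative}). The genuine gap is the transcendence step. The Schneider--Lang theorem as you would need it (Theorem \ref{SL}, Lang's theorem) requires a $K$-subspace $V\subseteq\Lie G$ together with a \emph{full $\C$-basis of $V_\sigma$ consisting of logarithms of algebraic points}; it does \emph{not} say that the smallest $K$-rational subspace containing a single logarithm $u$ with $\exp_G(u)=P_\sigma\in G(K)$ is the Lie algebra of an algebraic subgroup. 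What you are implicitly invoking is W\"ustholz's analytic subgroup theorem (which is indeed what Bertrand used), not Schneider--Lang; and even granting that, your route to torsion --- showing $q(H)$ finite via a polarisation and positivity of the Riemann form --- is not carried out, and you explicitly flag it as ``where the genuine work lies.'' So the proposal sets up the problem but does not close it.

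The paper closes it by a different mechanism which avoids both obstacles. It never runs the transcendence argument in the universal vector extension: instead it works in the extension $0\to\G_{m}\to L^\times\stackrel{\pi}{\to}A\to 0$, where $L^\times$ is the total space of $L$ minus the zero section, made into a commutative algebraic group. The connection $\nabla$ is a $K$-rational splitting $\Sigma:\Lie A\to\Lie L^\times$, and $\exp_{L^\times}\circ\,\Sigma$ restricted to the period lattice $\Gamma_{A_\sigma}$ computes the monodromy $\rho_\sigma$ with values in $\C^\ast=\ker\pi(\C)$. The whole point is then to exhibit a $\C$-basis of $\Lie A_\sigma$ made of \emph{periods} $\gamma_i$ with $\rho_\sigma(\gamma_i)\in\overline{\Q}^\ast$: for a real embedding, the rank-$g$ lattice of real periods spans $\Lie A_\sigma$ over $\C$ and a reality argument (Lemma \ref{real}) forces $\rho_\sigma=\pm 1$ along it; for a complex embedding one passes to an extension $K'$ admitting an involution $\tau$ with $\sigma\circ\tau=\overline{\sigma}$, forms $B=A\times A_\tau$ and $\tilde L=\pr^\ast L\otimes\pr_\tau^\ast L_\tau$, and observes that along the $2g$ diagonal periods $\Delta(\gamma)=(\gamma,j(\gamma))$ --- which do form a $\C$-basis of $\Lie B$ --- the monodromy is $\vert\rho(\gamma)\vert^2=1$ by unitarity (Lemma \ref{real2}). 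Schneider--Lang applied with $V=\Sigma(\Lie A)$ (resp.\ its analogue over $B$) then yields a connected algebraic subgroup $H\subseteq L^\times$ with $\Lie H=V$; since $\Lie\pi\vert_H$ is an isomorphism, $\pi\vert_H$ is an isogeny, $\pi\vert_H^\ast L$ is trivial by construction ($H$ lives inside the total space of $L$), and taking the norm gives $L^{\otimes N}\simeq\cO_A$ with $N=\deg\pi\vert_H$. Torsion thus falls out of the geometry of $L^\times$, with no polarisation, no positivity, and no auxiliary-function or height estimates beyond the classical Schneider--Lang statement itself; these are the ideas missing from your proposal.
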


Actually this implies that the connection $\nabla$ is the unique one on $L$ such that $(L,\nabla)$ has a torsion class in the group of isomorphism classes of line bundles with connections over $A$ (see \ref{vanishingatar} \emph{infra}).

Let us indicate that this result admits an alternative formulation in terms of universal vector extensions of abelian varieties and their maximal compact subgroups, in the spirit of Bertrand's articles \cite{bertrand95, bertrand98}:

\begin{theorem}\label{thIalternative}
Let $B$ be an abelian variety over a number field $K$,  $B^\#$ the universal vector extension of $B$, and $P$ a point in $B^\#(K)$. 

If there exists a field embedding $\sigma: K \hookrightarrow \C$ such that the point $P_\sigma$ belongs to the maximal compact subgroup of $B^\#_\sigma (\C)$, then $P$ is a torsion point in $B^\#(K)$. 

\end{theorem}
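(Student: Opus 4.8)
The plan is to translate the statement into an assertion about a logarithm of $P_\sigma$ and then to invoke the Schneider--Lang theorem. First I recall the analytic description of the complex Lie group $B^\#_\sigma(\C)$. Let $g=\dim B$, let $\exp$ denote the exponential map of $B^\#_\sigma(\C)$, and let $\Lambda^\#:=\ker\exp\subset\Lie(B^\#)\otimes_{K,\sigma}\C$ be its period lattice, of rank $2g$; thus $B^\#_\sigma(\C)=(\Lie(B^\#)\otimes_{K,\sigma}\C)/\Lambda^\#$ is noncompact of real dimension $4g$, with maximal compact subgroup $T:=\exp(\Lambda^\#\otimes_\Z\R)$, a real torus of dimension $2g$. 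The key structural fact is that the Lie algebra $\Lie(V)_\C$ of the vector part $V$ of $B^\#$ is the Hodge filtration $F^1$ of $\Lie(B^\#)\otimes_{K,\sigma}\C$, while $\Lambda^\#\otimes_\Z\R$ is the Betti real structure, and these are transverse: $F^1\cap(\Lambda^\#\otimes_\Z\R)=0$, so $\Lie(B^\#)\otimes_{K,\sigma}\C=F^1\oplus(\Lambda^\#\otimes_\Z\R)$ as real vector spaces. The hypothesis $P_\sigma\in T$ therefore means that some logarithm $u$ of $P_\sigma$ lies in $\Lambda^\#\otimes_\Z\R$, whereas $P\in B^\#(K)$ forces $\exp(u)=P_\sigma\in B^\#(\overline\Q)$. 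Since $B^\#(K)\hookrightarrow B^\#_\sigma(\C)$ is injective, it suffices to prove $u\in\Lambda^\#\otimes_\Z\Q$: then $nu\in\Lambda^\#$ for some $n\ge1$, whence $nP_\sigma=\exp(nu)=0$ and $nP=0$.

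Then I pass to the identity component $H$ of the Zariski closure of the subgroup generated by $P$: it is a connected commutative algebraic subgroup of $B^\#$ over $K$, and $P$ is torsion if and only if $\dim H=0$. Replacing $P$ by a suitable multiple and correcting $u$ by an element of $\Lambda^\#$, I may assume $P\in H(\overline\Q)$ and $u\in\Lie(H)_\C\cap(\Lambda^\#\otimes_\Z\R)$, so it remains to exclude $\dim H>0$. Writing $H$ as an extension $0\to V_H\to H\to B_H\to0$ of an abelian subvariety $B_H\subseteq B$ by a vector subgroup $V_H\subseteq V$, the image of $u$ in $\Lie(B_H)_\C$ is an abelian logarithm of the point $\rho(P)\in B_H(\overline\Q)$, where $\rho:B^\#\to B$ is the projection; and the reality constraint $u\in\Lambda^\#\otimes_\Z\R$, combined with the transversality $F^1\cap(\Lambda^\#\otimes_\Z\R)=0$, determines the $F^1$-component of $u$ --- the second-kind quasi-period data --- in terms of this abelian logarithm. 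The algebraicity of $\exp(u)=P_\sigma$ thus becomes the simultaneous algebraicity of the abelian logarithm and of its associated quasi-periods.

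The main obstacle is the last step, namely extracting a contradiction from this simultaneous algebraicity by means of the Schneider--Lang theorem. In its form characterizing the complexified Lie algebras of algebraic subgroups among the $\overline\Q$-rational subspaces of $\Lie(B^\#)_\C$, the theorem shows that, for a \emph{nontorsion} point, an abelian logarithm and its second-kind quasi-periods cannot be simultaneously algebraic; the combination of $\exp(u)\in H(\overline\Q)$ with the compactness condition then forces $u$ into the rational span $\Lambda^\#\otimes_\Z\Q$. The delicate point in applying it is that $u\in\Lambda^\#\otimes_\Z\R$ is a \emph{real}-linear rather than a $\overline\Q$-linear condition, so one cannot feed a proper $\overline\Q$-rational subspace directly into the transcendence machinery; one must instead use the interplay of this condition with complex conjugation. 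This is precisely where the reduction to $\sigma$ real, or to the conjugate pair $\{\sigma,\overline\sigma\}$ when $\sigma$ is complex, is used: as $P_{\overline\sigma}=\overline{P_\sigma}$ automatically lies in the maximal compact subgroup of $B^\#_{\overline\sigma}(\C)$, one applies Schneider--Lang to $B^\#$, respectively to $B^\#\times B^\#$ with the pair $(P_\sigma,P_{\overline\sigma})$, so that the reality condition becomes compatible with the $\overline\Q$-structure. This forces $\dim B_H=0$ and $V_H=0$, hence $u\in\Lambda^\#\otimes_\Z\Q$; by the descent noted above, $P$ is torsion in $B^\#(K)$.
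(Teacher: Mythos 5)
Your reductions are sound as far as they go: the identification of the maximal compact subgroup of $B^\#_\sigma(\C)$ with $\exp(\Lambda^\#\otimes_\Z\R)$, the reduction of the theorem to showing $u\in\Lambda^\#\otimes_\Z\Q$, and the idea of reducing to $\sigma$ real, or to the conjugate pair $\{\sigma,\overline{\sigma}\}$, all match the paper's strategy. But the heart of the proof --- the actual application of Schneider--Lang --- is missing, and as sketched it cannot be executed inside $B^\#$ at all. The theorem you invoke (the paper's Theorem \ref{SL}) requires a $K$-rational subspace $V\subseteq \Lie G$ \emph{together with} a full $\C$-basis of $V_\sigma$ consisting of logarithms of algebraic points, and you exhibit neither: in $\Lie B^\#_\sigma$ the only elements whose exponentials are manifestly algebraic are the periods themselves (with exponential $0$), which span no proper $K$-rational subspace relevant to $P$ --- any horizontal lift of a period of $B$ exponentiates into the vector part with transcendental quasi-period coordinates --- while $u$, being subject only to the $\R$-linear condition $u\in\Lambda^\#\otimes_\Z\R$, spans no $K$-rational line. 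Your paraphrase of Schneider--Lang (``for a nontorsion point, an abelian logarithm and its second-kind quasi-periods cannot be simultaneously algebraic'') is not what the theorem says, and your final claim that applying it to $B^\#\times B^\#$ ``forces $\dim B_H=0$ and $V_H=0$'' is asserted, not derived. Note also that in the correct argument the algebraic subgroup produced by Schneider--Lang is \emph{positive}-dimensional, so torsionness does not come from a dimension count on the Zariski closure of $\Z\cdot P$.

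What the paper does, and what your sketch lacks, is a change of ambient group. By duality (Appendix B.6--B.7), $P\in B^\#(K)$ is a pair $(L,\nabla)$ on $A=B^\vee$, and the proof runs in the $\G_m$-extension $L^\times$ of $A$ rather than in the vector extension: the $K$-rational connection $\nabla$ yields a $K$-rational splitting $\Sigma:\Lie A\to\Lie L^\times$, hence a $K$-subspace $V=\Sigma(\Lie A)$, and the exponentials of the lifted periods $\Sigma(\gamma)$ are precisely the monodromy numbers $\rho_\sigma(\gamma)\in\C^\ast$. The compactness hypothesis (unitarity of $\nabla_\sigma$) makes these algebraic after the reality step: $\rho_\sigma(\gamma)\in\{\pm 1\}$ along the real cycles when $\sigma$ is real (Lemma \ref{real}, giving $g$ periods forming a $\C$-basis of $\Lie A_\sigma$), and $\tilde\rho(\Delta(\gamma))=\vert\rho_\sigma(\gamma)\vert^2=1$ along the diagonal periods of $A\times A_\tau$ in the complex case (Lemma \ref{real2}, giving $2g$ such). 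Schneider--Lang applied to $G=L^\times$ (resp. $\tilde L^\times$) with this $V$ and this basis then produces an abelian subvariety $H$ isogenous to the base via $\pi$, on which $L$ pulls back trivially, and taking the norm gives $L^{\otimes N}\cong\cO_A$; torsionness of $P$, and the identification of $\nabla$ with $\nabla^{\rm tor}_L$, follow. Without this dualization --- or some substitute mechanism producing a $K$-rational subspace with a basis of logarithms of algebraic points --- your final paragraph is a restatement of the goal, not a proof.
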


Actually, for any given $K$ and $\sigma$, the implications in the statement of Theorems \ref{thI} and \ref{thIalternative} are equivalent when the abelian varieties $A$ and $B$ are dual to each other. This follows from the description of the universal vector extension $B^\#$ and of the maximal compact subgroup of $B^\#_\sigma (\C)$ recalled in Appendix B (see notably B.6 applied to $k=K$ and $X=A$, in which case $E_{X/k}= B^\#$, and B.7 applied to $X=A_\sigma$, in which case $E_{X/\C}(\C)=B^\#_\sigma (\C)$).

The formulation in Theorem \ref{thI} turns out to be more convenient for the proof, which will proceed along the following lines.

Firstly, the data $(L,\nabla)$ in Theorem \ref{thI} may be ``translated" in terms of algebraic groups:  the total space of the $\mathbb{G}_m$-torsor associated to $L$ defines a commutative algebraic group $L^\times$ , and the connection $\nabla$ an hyperplane in its Lie algebra $\Lie L^\times.$ Then an application of the theorem of Schneider-Lang to this situation will show that, \emph{if there exists a family $(\gamma_1,\ldots,\gamma_g)$ of points in the lattice of periods $\Gamma_{A_\sigma}$ of $A_\sigma$ which constitutes a $\C$-basis of $\Lie A_\sigma$ such that the monodromy of the complex line bundle with connection $(L_\sigma,\nabla_\sigma)$ along each $\gamma_i$  lies in $\overline{\Q}^\ast$, then $L$ is torsion.}

This criterion easily leads to a derivation of Theorem \ref{thI} when the image of the embedding $\sigma$ lies in $\R$. Indeed a simple ``reality" argument then shows that the monodromy of  $(L_\sigma,\nabla_\sigma)$ along the ``real periods" of $A_\sigma$ lies in $\{1,-1\}$.

When the image of $\sigma$ does not lie in $\R$, we may assume that $K$ is Galois over $\Q$, and consider the involution $\tau$ of $K$ such that $\sigma \circ \tau = \overline{\sigma}$.  It will turn out that we may apply the above criterion to the line bundle with connection on $A\times_K A_\tau$ defined as the external tensor product of $(L,\nabla)$ and $(L_\tau,\nabla_\tau)$ to establish that $L \boxtimes L_\tau$, hence $L$, is torsion. 

%\subsubsection{Commutative algebraic groups and a theorem of Schneider-Lang}

\subsubsection{Line bundles with connections on abelian varieties}\label{cav}

Let $A$ be an abelian variety over a field $k$ of characteristic 
zero, and $L$ a line bundle over $A$. We may choose a rigidification 
of $L$, namely a trivialization $\phi: k\simeq L_{e}$ of its fiber at the 
zero element $e$ of $A(k)$, or equivalently the vector 
$\ell:=\phi(1)$ in 
$L_{e}\setminus\{0\}.$

In the sequel, we shall assume that the following 
equivalent\footnote{Indeed (ii) and (iii) are equivalent
by the very definition of $\at_{A/k}L$,
(ii) is equivalent
to the rational vanishing of the first Chern class of $L$
(hence (i) implies (ii)), and if the first Chern class of $L$ 
vanishes rationally, one gets (i) from \cite[II.2 Cor. 1 to Th. 2]{kleiman66}, as
 $\Pic_{A/k}^0=\Pic_{A/k}^\tau$ by \cite[Cor. 6.8]{mumford82}.}
conditions are satisfied:

(i) \emph{the line bundle $L$ is algebraically equivalent to the trivial 
line bundle;}

(ii) \emph{the Atiyah class $\at_{A/k}L (=\jet^1_{A/k} L)$ of $L$ vanishes;}

(iii) \emph{the line bundle $L$ may be equipped with an algebraic 
connection $\nabla$.}

Observe that the connection $\nabla$ is 
necessarily flat\footnote{To establish this, we may reduce to the case $k=\C$ and use transcendental arguments. We may also  assume that $k$ is algebraically closed, and observe that the curvature $\nabla^2$ of an algebraic connection on $L$ depends only on the isomorphism class of $L$ and defines a morphism of algebraic groups over $k$ from the dual abelian variety $A^\vee$ to the additive group $\Gamma(A,\Omega^2_{A/k}) (\simeq \wedge^2 (\Lie A)^\vee).$ Since $A$ is proper and connected, any such morphism is zero.} and that the set of connections on $A$ is a torsor under the $k$-vector space $\Gamma(A, \Omega^1_{A/k}) \simeq (\Lie A)^\vee$ of regular 1-forms on $A$, which acts additively on this set.

Beside, the $\G_m$-torsor $L^\times$ defined by 
deleting the zero section from the total space\footnote{namely, the spectrum of the quasi-coherent $\cO_A$-algebra $\bigoplus_{n \in \N} L^{\vee\otimes n}.$}
$\mathbb{V}(L^\vee)$ of 
$L$ admits a unique structure of commutative algebraic group over 
$k$ such that  
the diagram
\begin{equation}
    \label{groupLcross}
    0 \longrightarrow \G_{m,k} \stackrel{\phi}{\longrightarrow} L^\times \stackrel{\pi}{\longrightarrow} A  
    \longrightarrow 0,
\end{equation}
--- where $\phi$ denotes the composite morphism $\G_{m,k} \stackrel{\phi}{\simeq} L_{e}^{\times} 
    \hookrightarrow L^\times$ and  $\pi$ the restriction of the ``structural morphism" 
from $\mathbb{V}(L^\vee)$ to $A$ --- becomes a short exact sequence of 
commutative algebraic groups.
Its zero element is the $k$-point $\epsilon \in L^\times (k)$ defined by $\ell$. (See for instance \cite{serre59}, VII.3.16.)

From (\ref{groupLcross}), we derive a short exact sequence of 
$k$-vector spaces:
\begin{equation}
    \label{LiegroupLcross}
    0 \longrightarrow \Lie \G_{m,k}  
   \stackrel{\Lie \phi}{\longrightarrow} \Lie L^\times \stackrel{\Lie \pi}{\longrightarrow} 
    \Lie A  
    \longrightarrow 0.
\end{equation}

Recall that a connection over a vector bundle on a smooth algebraic variety may be described \emph{\`a la} Ehresmann as an equivariant splitting of the differential of the structural morphism of its frame bundle (see for instance  \cite{kobayashinomizu63}, Chapter II, or \cite{spivak70}, Chapter 8; the constructions of \emph{loc. cit.} in a differentiable setting can be immediately transposed in the algebraic framework of smooth algebraic varieties). 
In the present situation, 
a connection $\nabla$ on $L$ may thus be seen as a $\G_{m,k}$-equivariant 
splitting of the surjective morphism of vector bundles over 
$L^\times$ defined by the differential of $\pi$:
$$D\pi: T_{L^\times} \longrightarrow \pi^\ast T_{A}.$$
In particular,  its value at the unit element $\epsilon$ of $L^\times$ defines a 
$k$-linear splitting 
$$\Sigma: \Lie A \longrightarrow \Lie L^\times$$
of (\ref{LiegroupLcross}).

Conversely, from any 
$k$-linear right inverse $\Sigma$ of $\Lie \pi$, we deduce 
a $\G_m$-equivariant splitting of $D\pi$ by constructing its 
$L^\times$-equivariant extension to $L^\times.$

Through these constructions, connections on $L$ and $k$-linear splittings of 
(\ref{LiegroupLcross}) %are easily seen to 
correspond bijectively. 
Indeed, by means of the identification  
\[
\begin{array}{rcl}
 \Lie \G_{m,k}%= k.X \partial/\partial X 
  & \stackrel{\sim}{\longrightarrow}  & k   \\
 \lambda\cdot X \frac{\partial\,\,}{\partial X} & \longmapsto  & \lambda, 
\end{array}
\]
the set of $k$-linear splittings of  (\ref{LiegroupLcross}) becomes naturally a torsor under $(\Lie A)^\vee$, and the above constructions are compatible with the (additive) actions of $(\Lie A)^\vee$ on the set of these splittings and on the set of connections on $L$.

This correspondence may also be described as follows. 
A linear splitting 
$\Sigma$ as above may also be seen as a morphism $\tilde{\ell}: 
A_{e,1}\rightarrow L^{\times}_{\epsilon,1}$ from the first infinitesimal 
neighbourhood $A_{e,1}$ of $e$ in $A$ to the first infinitesimal 
neighbourhood $L^{\times}_{\epsilon,1}$ of $\epsilon$ in $L^\times$ 
which is a right inverse of the map $\pi_{\epsilon, 1}: 
L^{\times}_{\epsilon,1} \rightarrow A_{e,1}$ deduced from $\pi.$ In 
other words, $\tilde{\ell}$ is a section of $L$ over $A_{{e,1}}$ such 
that $\tilde{\ell}(e)=l.$ The connection $\nabla$ associated to 
$\Sigma$  is the unique one such that $\nabla \tilde{\ell}(e)=0$. 

%The existence and unicity 
%of a connection with this property  follows from the fact that the set of 
%connections on $L$ is an additive torsor under 
%$\Gamma(A,\Omega^1_{A/k})\simeq (\Lie A)^\vee.$   

\subsubsection{The complex case}\label{complexcase}

 If $G$ is a commutative algebraic group over $\C$, its exponential map 
will be denoted $\exp_{G}$. It is the unique morphism of 
$\C$-analytic Lie groups
$$\exp_{G}: \Lie G \longrightarrow G(\C)$$
whose differential at $0\in \Lie G$ is $\Id_{\Lie G}$. Its kernel 
$$\Gamma_{G}:= \ker \exp_{G}$$ is a discrete additive subgroup of 
$\Lie G$. When $G$ is 
connected, $\exp_{G}$ is a universal covering of $G(\C)$, and $\Gamma_{G}$ may be identified with the fundamental group 
$\pi_{1}(G(\C),0_{G})$, or with the homology group $H_{1}(G(\C),\Z)$. 

Let us go back to the situation considered in paragraph \ref{cav}, in the case where the base field  $k$ is $\C$, and fix the algebraic connection $\nabla$ on $L.$

Then the diagram
\[
\begin{array}{ccc}
\Lie L^\times &\stackrel{D\pi}{\longrightarrow} & \Lie A\\
{ \;\;\;}\downarrow {\scriptstyle \exp_{L^\times}} & &{\;\;\;}\downarrow {\scriptstyle 
\exp_{A}}\\
L^\times(\C) &\stackrel{\pi}{\longrightarrow} & A(\C).
\end{array}
\]
is commutative. Consequently the morphism of groups 
$$\exp_{L^\times}\circ\, \Sigma : \Gamma_{A} \longrightarrow 
L^\times(\C)$$
takes its value in $\ker \pi \simeq \C^\ast.$ It coincides with the 
monodromy representation 
$$\rho: \Gamma_{A}=H_{1}(A(\C), \Z) \longrightarrow \C^\ast$$
of the line bundle with flat connection $(L,\nabla)$ --- or more properly of the corresponding objects in the analytic category ---  over $A(\C)$. Indeed, the horizontal $\G_{m,\C}$-equivariant foliation on $L^\times(\C)$ defined by $\nabla$ is translation invariant, and its leaves are precisely the translates in $L^\times(\C)$ of the image of $\exp_{L^\times}\circ \Sigma.$

\subsubsection{An application of the Theorem of Schneider-Lang}

To establish Theorem \ref{thI}, we shall use the following classical transcendence result on 
commutative algebraic groups:

\begin{theorem}\label{SL}
    Let $K$ be a number field and $\sigma: K \hookrightarrow \C$ a 
    field embedding, and let $G$ be a commutative algebraic group 
    over $K$, and $V$ a $K$-vector subspace of $\Lie G$.
    
    If there exists a basis $(\gamma_{1},\ldots,\gamma_{v})$ of the 
    complex vector space $V_{\sigma}$ such that, for every $i \in 
    \{1,\ldots,v\},$ $\exp_{G_{\sigma}}(\gamma_{i})$ belongs to 
    $G(\overline{\Q}),$ then $V$ is the Lie algebra of some algebraic 
    subgroup $H$ of $G$.
\end{theorem}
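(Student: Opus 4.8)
The plan is to deduce the statement from the several-variable version of the Schneider--Lang criterion, in the form established by Bombieri for algebraic values of meromorphic maps, after repackaging the hypotheses as a transcendence problem on $\C^v$. First I would carry out a reduction. Let $G_0\subseteq G_\sigma$ be the Zariski closure of the image $\exp_{G_\sigma}(V_\sigma)$ of the $\C$-subspace $V_\sigma\subseteq\Lie G_\sigma$ under the exponential map. Since $\exp_{G_\sigma}$ is a homomorphism of the additive group $V_\sigma$, this image is a subgroup and $G_0$ is a connected algebraic subgroup of $G_\sigma$; as the differential of $\exp_{G_\sigma}$ at the origin is the identity, $V_\sigma\subseteq\Lie G_0$, whence $\dim G_0\geq v$. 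The whole theorem amounts to the reverse inequality $\dim G_0\leq v$: granting it, $\Lie G_0=V_\sigma$, so $V_\sigma$ is the Lie algebra of an algebraic subgroup. Since $V$ is $K$-rational and, in characteristic zero, a connected algebraic subgroup is determined by its Lie algebra, $G_0$ is stable under $\mathrm{Gal}(\overline{K}/K)$ and descends to an algebraic subgroup $H$ of $G$ defined over $K$ with $\Lie H=V$. Thus it suffices to bound $\dim G_0$.

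Next I would set up the relevant meromorphic functions. Fix a projective embedding of $G$ over $K$ and let $f_1,\ldots,f_N$ be the pullbacks along $\exp_{G_\sigma}|_{V_\sigma}\colon\C^v\to G_\sigma(\C)$ of the coordinate functions, identifying $V_\sigma\cong\C^v$ by means of the basis $(\gamma_1,\ldots,\gamma_v)$. These functions are meromorphic on $\C^v$ and of finite order of growth --- indeed of order at most $2$ --- because $\exp_{G_\sigma}$ is built out of ordinary exponentials and theta functions attached to the linear and abelian parts of $G$. Choosing a $K$-basis of $V$ produces $v$ commuting translation-invariant algebraic vector fields on $G$, defined over $K$, and the corresponding derivations $\partial_1,\ldots,\partial_v$ of $\C^v$ preserve the ring $K[f_1,\ldots,f_N]$: this is the infinitesimal form of the invariance of the coordinate ring under the group law. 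By construction the transcendence degree of $K(f_1,\ldots,f_N)$ over $K$ equals $\dim G_0$. The arithmetic input is the multiplicativity of the exponential: since $\exp_{G_\sigma}\!\bigl(\sum_i n_i\gamma_i\bigr)=\prod_i\exp_{G_\sigma}(\gamma_i)^{n_i}$ for $(n_i)\in\Z^v$, and each $\exp_{G_\sigma}(\gamma_i)$ lies in $G(\overline{\Q})$, all the $f_j$ take values in a single number field on the subgroup $\Lambda:=\sum_i\Z\gamma_i$, with heights growing polynomially in $\max_i|n_i|$. As $(\gamma_1,\ldots,\gamma_v)$ is a $\C$-basis of $V_\sigma$, the rank-$v$ subgroup $\Lambda$ is Zariski dense in $\C^v$.

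The crux is then the transcendence criterion. Suppose, for contradiction, that $\dim G_0\geq v+1$, i.e. that at least $v+1$ of the $f_j$ are algebraically independent over $K$. The several-variable Schneider--Lang--Bombieri theorem applies to the finite-order functions $f_1,\ldots,f_N$, which are stable under the $v$ derivations $\partial_1,\ldots,\partial_v$ and take values in a fixed number field on $\Lambda$: it forces the common set of algebraic points to be contained in a proper algebraic hypersurface of $\C^v$, contradicting the Zariski density of $\Lambda$. The heart of that criterion is the classical auxiliary-function argument: using Siegel's lemma one builds a nonzero polynomial $P$ over the number field, of controlled degree $L$, such that $F:=P(f_1,\ldots,f_N)$ vanishes to high order along the $\partial_i$ at all points $\sum_i n_i\gamma_i$ with $|n_i|\leq T$; one extrapolates to a point and a derivative where $F$ does not vanish, and compares the Liouville lower bound for the resulting nonzero algebraic number with the analytic upper bound furnished by the finite order of growth and a Schwarz lemma on polydiscs, the parameters $L$ and $T$ being balanced so as to produce the contradiction. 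Hence $\dim G_0=v$, which finishes the proof modulo the descent step.

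I expect the main obstacle to be the transcendence core itself --- the construction and size estimates of the auxiliary function in several variables, and in particular the several-variable Schwarz lemma controlling a function of finite order vanishing along $\Lambda$. A secondary technical point to treat with care is the descent of $G_0$ to a subgroup defined over $K$, together with the verification that $\exp_{G_\sigma}$ has finite order of growth for a general commutative algebraic group, which one reduces to the structure of $G$ as an extension of an abelian variety by $\mathbb{G}_a^a\times\mathbb{G}_m^b$.
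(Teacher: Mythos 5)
The paper does not prove Theorem \ref{SL} at all: it quotes it as a classical result, attributing it to Lang (\cite{lang66}, IV.4, Theorem 2), building on Schneider \cite{schneider41}, with Waldschmidt \cite{waldschmidt87} as a reference for details, and only adds the remark that $H$ may be chosen connected, is then unique and defined over $K$, with $H_\sigma(\C)=\exp_{G_\sigma}(V_\sigma)$. Your sketch therefore cannot be compared with an in-paper argument; what it does is correctly reconstruct the classical proof that the paper cites. Your reduction --- Zariski closure $G_0$ of $\exp_{G_\sigma}(V_\sigma)$, equivalence of the theorem with $\dim G_0\le v$, transcendence degree of the pulled-back coordinate ring equal to $\dim G_0$, stability of $K[f_1,\ldots,f_N]$ under the invariant derivations attached to a $K$-basis of $V$, algebraic values on $\Lambda$, and Galois descent of $G_0$ via determination of a connected subgroup by its Lie algebra in characteristic zero --- is exactly Lang's. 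Two packaging points deserve care but are not fatal: first, Bombieri's hypersurface theorem is stronger than needed (Lang's cartesian-product criterion suffices, since in the $\gamma$-coordinates $\Lambda=\Z^v$ contains the cubes $\{0,\ldots,T\}^v$), and if you do use coordinate partials you should identify $V_\sigma\simeq\C^v$ via the $K$-basis of $V$, not the $\gamma$-basis, so that the derivations genuinely preserve $K[f_1,\ldots,f_N]$ (the change of basis has transcendental coefficients); second, the affine coordinates $f_j$ may have poles at points of $\Lambda$, so one needs the standard lifting of $\exp_{G_\sigma}$ to an entire finite-order map $(\theta_0,\ldots,\theta_N)$ without common zeros, plus the observation that the subgroup of points where a fixed chart applies stays Zariski dense --- a step your phrase ``take values in a single number field on $\Lambda$'' glosses over.
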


We have denoted $\overline{\Q}$ the algebraic closure of $\Q$ in 
$\C$. By means of the embedding $\sigma$, it may be seen as an algebraic 
closure of $K$, and the group $G(\overline{\Q})$ of
$\overline{\Q}$-rational points of $G$ becomes a subgroup of the 
group $G_{\sigma}(\C)$ of its complex points.

Observe also that the subgroup $H$ whose existence is asserted in 
Theorem \ref{SL} may clearly be chosen connected, and then $H$ is 
clearly unique, defined over $K$, and the group $H_{\sigma}(\C)$ of 
its complex points coincides with $\exp_{G_{\sigma}}(V_{\sigma}).$

Theorem \ref{SL} has been established by Lang (\cite{lang66}, IV.4, Theorem 2), who elaborated on 
some earlier work of Schneider on abelian functions and the 
transcendence of their values \cite{schneider41}. We refer the reader 
to 
\cite{waldschmidt87} (where it appears as Th\'eor\`eme 
5.2.1) for more details on Theorem \ref{SL} and its classical applications.

Let us point out that  Theorem \ref{SL} is  now subsumed by  various renowned more recent 
results --- namely, the transcendence criterion of Bombieri and the analytic subgroup
theorem of W\"ustholz. The reader may find  a recent survey of these 
and related transcendence results on commutative algebraic groups in the monograph \cite{bakerwustholz07}.

We now return to the situation  considered in paragraph \ref{cav}, where we assume that the base field $k$ is a number field $K$. 

Taking into account the relation in the complex case between the monodromy of connections on $L$ 
and the exponential map of the algebraic group $L^\times$  described in \ref{complexcase}, we 
derive from the theorem of Schneider-Lang (Theorem \ref{SL} above) 
applied to the algebraic group $G=L^\times$:

%\begin{theorem}
%    Let $B$ be an abelian variety of dimension $g$ over a number 
%    field $K$, and $(M,\nabla)$ a line bundle over $B$ equipped with a 
%    flat connection (defined over $K$).
%    
%    Let $\sigma: K \hookrightarrow \C$ be a field embedding, and let 
%    $\rho_{\sigma}:\Gamma_{B_{\sigma}} \longrightarrow \C^\ast$ 
%    denote the monodromy representation attached to the flat complex 
%    line bundle $(M_{\sigma},\nabla_{\sigma})$ over $B_{\sigma}(\C).$
%    
%    If there exists $\gamma_{1},\ldots,\gamma_{g}$ in $\Gamma_{B_\sigma}$
%     such that $(\gamma_{1},\ldots,\gamma_{g})$ is a 
%    basis of the $\C$-vector space $\Lie B_{\sigma}$ and such that, for every $i \in \{1,\ldots,g\}$, $\rho_\sigma(\gamma_i)$ belongs to $\overline{\Q}^\ast$, then $L$ has a 
%    torsion class in $\Pic(B)$.
%\end{theorem}

\begin{corollary}\label{corSL}
    Let $A$ be an abelian variety of dimension $g$ over a number 
    field $K$, and $(L,\nabla)$ a line bundle over $L$ equipped with a 
    flat connection (defined over $K$).
    
    Let $\sigma: K \hookrightarrow \C$ be a field embedding, and let 
    $\rho_{\sigma}:\Gamma_{A_{\sigma}} \longrightarrow \C^\ast$ 
    denote the monodromy representation attached to the flat complex 
    line bundle $(L_{\sigma},\nabla_{\sigma})$ over $A_{\sigma}(\C).$
    
    If there exists $\gamma_{1},\ldots,\gamma_{g}$ in $\Gamma_{A_\sigma}$
     such that $(\gamma_{1},\ldots,\gamma_{g})$ is a 
    basis of the $\C$-vector space $\Lie A_{\sigma}$ and such that, for 
    every $i \in \{1,\ldots,g\}$, $\rho_\sigma(\gamma_i)$ belongs 
    to $\overline{\Q}^\ast$, then $L$ has a 
    torsion class in $\Pic(A)$.
\end{corollary}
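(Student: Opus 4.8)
The plan is to translate the data $(L,\nabla)$ into the commutative algebraic group $L^\times$ of \ref{cav}, to apply the Schneider--Lang theorem to the ``horizontal'' subspace of $\Lie L^\times$ cut out by $\nabla$, and finally to read off the torsion of $[L]$ from the algebraic subgroup so produced. Since $L$ carries a connection defined over $K$, the construction of \ref{cav} applies: $L$ lies in $\Pico(A)$, the $\G_m$-torsor $L^\times$ is a commutative algebraic group sitting in the exact sequence (\ref{groupLcross}), and $\nabla$ corresponds to a $K$-linear splitting $\Sigma\colon \Lie A\to\Lie L^\times$ of (\ref{LiegroupLcross}). I set $G:=L^\times$ and $V:=\Sigma(\Lie A)\subset\Lie G$, a $K$-subspace of dimension $g$, and aim to apply Theorem \ref{SL} to the pair $(G,V)$.

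First I would check the hypothesis of Theorem \ref{SL}. As $\Sigma_\sigma\colon\Lie A_\sigma\to V_\sigma$ is an isomorphism and $(\gamma_1,\dots,\gamma_g)$ is a basis of $\Lie A_\sigma$, the vectors $\Sigma_\sigma(\gamma_1),\dots,\Sigma_\sigma(\gamma_g)$ form a basis of $V_\sigma$. By the identification of $\exp_{L^\times}\circ\,\Sigma$ with the monodromy recalled in \ref{complexcase}, one has
\[
\exp_{G_\sigma}\bigl(\Sigma_\sigma(\gamma_i)\bigr)=\rho_\sigma(\gamma_i)\in\ker\pi\simeq\C^\ast
\]
for each $i$; since $\phi\colon\G_{m,K}\hookrightarrow L^\times$ is defined over $K$, the assumption $\rho_\sigma(\gamma_i)\in\overline{\Q}^\ast$ means exactly that these exponentials lie in $G(\overline{\Q})$. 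Theorem \ref{SL} then yields a connected algebraic subgroup $H\subset L^\times$, defined over $K$, with $\Lie H=V$.

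Next I would analyze $H$. Because $\Lie\pi\circ\Sigma=\id_{\Lie A}$, the differential of $\pi|_H\colon H\to A$ is the isomorphism $V\to\Lie A$ inverse to $\Sigma$; in characteristic zero this forces $\pi|_H$ to be \'etale, hence an isogeny between connected groups of dimension $g$. Thus $\ker(\pi|_H)=H\cap\phi(\G_{m,K})$ is finite and $H$, being finite over the abelian variety $A$, is proper, hence itself an abelian variety. Finally, pulling back (\ref{groupLcross}) along $\pi|_H$ gives an extension of $H$ by $\G_m$ that is split, since the inclusion $H\hookrightarrow L^\times$ furnishes the homomorphic section $h\mapsto(h,h)$; equivalently $(\pi|_H)^\ast L$ is trivial in $\Pico(H)$. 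Since $\pi|_H$ is an isogeny, the induced map $(\pi|_H)^\ast\colon\Pico(A)\to\Pico(H)$ is the dual isogeny and has finite kernel, so the class $[L]$ --- which lies in this kernel --- is torsion.

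The main obstacle is the correct setup for Schneider--Lang: one must recognize that the horizontal subspace $V=\Sigma(\Lie A)$ is $K$-rational and that the periods $\gamma_i$ provide, via $\Sigma_\sigma$, a basis of $V_\sigma$ whose exponentials are algebraic --- this is precisely where the identification of $\exp_{L^\times}\circ\,\Sigma$ with the monodromy representation from \ref{complexcase} is essential. Once $H$ is produced, the remaining steps are standard facts about extensions of abelian varieties by $\G_m$ and dual isogenies.
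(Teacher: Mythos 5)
Your proof is correct and takes essentially the same route as the paper's: translate $(L,\nabla)$ into the group $L^\times$ and the splitting $\Sigma$, apply Schneider--Lang to $V=\Sigma(\Lie A)$ to produce a connected $K$-subgroup $H$ with $\pi_{\mid H}\colon H\to A$ an isogeny of abelian varieties, and observe that $\pi_{\mid H}^\ast L$ is trivial by the construction of $H$ inside $L^\times$. The only (inessential) variation is the last step: the paper concludes that $L^{\otimes N}$ is trivial by taking the norm of $\pi_{\mid H}^\ast L$ along the degree-$N$ isogeny, while you instead note that $[L]$ lies in the finite kernel of the dual isogeny $(\pi_{\mid H})^\ast\colon \Pico(A)\to\Pico(H)$ --- both deductions are standard and equivalent.
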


Observe that conversely,  if $n$ is a positive integer such that $L^{\otimes n} \simeq \cO_A$, the unique connection $\nabla_L^{\rm tor}$ on $L$ such the $n$-th tensor power of the line bundle with connection $(L,\nabla_L^{\rm tor})$ is isomorphic to $(\cO_A,d)$ is such that, for any $\sigma: K \hookrightarrow \C$, the image of the monodromy $\rho_\sigma$ of $(L_\sigma,\nabla_{L,\sigma}^{\rm tor})$ lies in the $n$-th roots of unity, hence in $\overline{\Q}^\ast.$ By elaborating slightly on the proof below, one may show that, with the notation of Corollary \ref{corSL}, the connection $\nabla$ necessarily coincides with the connection $\nabla_L^{\rm tor}$ so defined. We leave this to the interested reader.

\proof We consider the $K$-linear map $\Sigma: \Lie A \longrightarrow 
\Lie L^\times$ associated to the connection $\nabla$ as in \ref{cav}, and 
its image $V:= \Sigma (\Lie A).$ 
The vectors 
$\tilde{\gamma}_{i}:= \Sigma_{\sigma}(\gamma_{i}),$ $1\leq i 
\leq g,$
constitute a basis of the $\C$-vector space $V_{\sigma}$. Moreover 
the image $\exp_{L^\times_{\sigma}}(\tilde{\gamma}_{i})$ of 
$\tilde{\gamma}_{i}$ by the exponential map of 
$L^\times_{\sigma}$ is the point of $L^\times_{\sigma,e}\simeq \C^\ast$ 
defined by the monodromy $\rho_{\sigma}(\gamma_{i})$ of $\gamma_{i}$. 
According to our assumption, these images belong to 
$L^\times(\overline{\Q})$. 

The theorem of Schneider-Lang now shows that 
$V$ is the Lie algebra of a connected algebraic subgroup $H$, defined 
over $K$. Since $\Lie \pi_{\mid H}: \Lie H = V \rightarrow \Lie A$ is an 
isomorphism of $K$-vector spaces, the morphism of algebraic groups 
$\pi_{\mid H}: H \rightarrow A$ is \'etale, and consequently $H$ is an 
abelian variety over $K$ and $\pi_{\mid H}$ an isogeny.

By the very construction of $H$ as a subscheme of $L^\times$, the 
inverse image $\pi_{\mid H}^\ast L$ of $L$ on $H$ is 
trivial. If $N$ denotes the degree of $\pi_{\mid H}$, it follows that 
$L^{\otimes N}$ --- which is isomorphic to the norm, relative to 
$\pi_{\mid H}$, of $\pi_{\mid H}^\ast L$ --- is a trivial line bundle.
\qed

\subsubsection{Reality I}  Let us keep the framework of paragraph \ref{cav}, 
and suppose now that the base field $k$ is $\R$.

The line bundle with connection $(L,\nabla)$   defines 
a real analytic line bundle with flat connection $(L^\R, 
\nabla^\R)$ over the compact real analytic Lie group 
$A(\R)$. Its monodromy defines a representation $\rho_{\R}$ of 
the fundamental group $\pi_{1}(A(\R),0_{A})$, or equivalently 
of the homology group $H_{1}(A(\R)^\circ, \Z)$ of the 
connected component of $0_{A},$ with values in $\R^\ast.$

Actually the inclusion $\iota: A(\R)^\circ \hookrightarrow A(\C)$ 
defines an injective map of free abelian groups, of respective ranks 
$g$ and $2g$,
$$\iota_\ast: H_{1}(A(\R)^\circ,\Z) \longrightarrow H_{1}(A(\C),\Z),$$
and the monodromy representation $\rho_{\R}$ coincides with the restriction 
$\rho_\C \circ \iota_\ast$ of the monodromy representation
$$\rho_\C: H_{1}(A(\C),\Z) \longrightarrow \C^\ast$$
defined by the $\C$-analytic line bundle with flat connection 
$(L_\C,\nabla_\C)$ over the compact $\C$-analytic Lie 
group $A(\C).$

\begin{lemma}\label{real}
    The following conditions are equivalent:
    
    {\rm (i)} There exists a hermitian metric $\Vert.\Vert$ on the complex 
    line bundle $L_\C$ on $A(\C)$ such that the 
    connection $\nabla_\C$ is unitary with respect to 
    $\Vert.\Vert$\footnote{or, equivalently, such that 
    $\nabla_\C$ is the Chern connection associated to 
    $\Vert.\Vert$.}.

 {\rm (ii)}  The monodromy representation $\rho_{\R}$ takes its values 
    in $\{1,-1\}.$
\end{lemma}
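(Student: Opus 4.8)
\emph{Approach.} The plan is to express both conditions through the monodromy representation $\rho_\C\colon\Gamma\to\C^\ast$ of the flat bundle $(L_\C,\nabla_\C)$, where $\Gamma:=H_1(A(\C),\Z)$, and then to exploit the constraint imposed on $\rho_\C$ by the fact that $(L,\nabla)$ is defined over $\R$. First I would reformulate (i). Uniformizing $A(\C)=V/\Gamma$ with $V:=\Lie A_\C$, the flat bundle $(L_\C,\nabla_\C)$ is obtained from the trivial bundle $(V\times\C,d)$ by the $\Gamma$-action $\gamma\cdot(v,z)=(v+\gamma,\rho_\C(\gamma)z)$. Since $\nabla$, hence $\nabla_\C$, is flat (see \ref{cav}), if $\nabla_\C$ is unitary for some Hermitian metric then parallel transport is fibrewise isometric, so each monodromy $\rho_\C(\gamma)$ is an isometry of a one-dimensional Hermitian space, i.e. $|\rho_\C(\gamma)|=1$. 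Conversely, when $\rho_\C$ takes values in the unit circle the constant metric $|z|^2$ on $V\times\C$ is $\Gamma$-invariant and descends to a flat Hermitian metric on $L_\C$ making $\nabla_\C$ unitary. Thus (i) is equivalent to $\rho_\C(\Gamma)\subseteq\{z\in\C:|z|=1\}$.

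Next I would record the real structure. Complex conjugation is an antiholomorphic involution $F_\infty$ of $A(\C)$, with fixed locus $A(\R)$, lifting to a $\C$-antilinear involution $c$ of $V$ whose fixed real form is $W:=V^c=\Lie A$; the lattice $\Gamma$ is $c$-stable and $F_{\infty\ast}=c|_\Gamma$. As $A(\R)^\circ$ is the image of $W$ in $V/\Gamma$, one gets $\Lambda:=\iota_\ast H_1(A(\R)^\circ,\Z)=\Gamma\cap W=\Gamma^c$. The key input is that, $(L,\nabla)$ being defined over $\R$, it carries a $\nabla_\C$-preserving $\C$-antilinear automorphism covering $F_\infty$; comparing parallel transport along $\gamma$ with that along $c\gamma$ yields the reality relation
\[
\rho_\C(c\gamma)=\overline{\rho_\C(\gamma)}\qquad(\gamma\in\Gamma).
\]

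With these in hand the equivalence is short. For (i)$\Rightarrow$(ii): if $\gamma\in\Lambda=\Gamma^c$ then $c\gamma=\gamma$, so the reality relation gives $\rho_\C(\gamma)=\overline{\rho_\C(\gamma)}\in\R$, and together with $|\rho_\C(\gamma)|=1$ this forces $\rho_\C(\gamma)\in\{1,-1\}$; since $\rho_\R=\rho_\C\circ\iota_\ast$ this is exactly (ii). For (ii)$\Rightarrow$(i): the map $r:=|\rho_\C|\colon\Gamma\to\R_{>0}$ is a homomorphism, $c$-invariant by the reality relation, and trivial on $\Lambda$ by (ii). For any $\gamma\in\Gamma$ the element $\gamma+c\gamma$ is $c$-fixed, so lies in $\Lambda$, whence
\[
1=r(\gamma+c\gamma)=r(\gamma)\,r(c\gamma)=r(\gamma)^2
\]
and $r(\gamma)=1$. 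Hence $\rho_\C(\Gamma)$ lies in the unit circle and (i) holds.

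The routine parts are the dictionary between flat Hermitian metrics and circle-valued monodromy and the identification $\Lambda=\Gamma^c$. The real content --- and the point I expect to need the most care --- is the reality relation $\rho_\C(c\gamma)=\overline{\rho_\C(\gamma)}$ together with the trick $\gamma\mapsto\gamma+c\gamma$: this is precisely how being defined over $\R$ lets control of the monodromy along the $g$ real periods $\Lambda$ propagate to the full rank-$2g$ lattice $\Gamma$, thereby matching a condition on the real torus $A(\R)^\circ$ with one on all of $A(\C)$.
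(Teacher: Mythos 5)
Your proof is correct, and it rests on the same two pillars as the paper's own (rather terse) argument: the reformulation of (i) as the condition (i') that $\rho_\C$ take values in $U(1):=\{z\in\C\mid |z|=1\}$, and the reality relation $\rho_\C(c\gamma)=\overline{\rho_\C(\gamma)}$ forced by the antilinear, connection-preserving automorphism of $L_\C$ covering complex conjugation --- which is exactly the content of the ``details left to the reader.'' The only genuine divergence is in how (ii)$\Rightarrow$(i') is organized. The paper introduces the subgroup $\Gamma^-$ of purely imaginary periods, notes that $\rho_\C(\Gamma^-)\subseteq U(1)$ automatically (for $c\gamma=-\gamma$ the reality relation gives $\rho_\C(\gamma)^{-1}=\overline{\rho_\C(\gamma)}$), and concludes from $\Gamma^+\cap\Gamma^-=\{0\}$ together with the fact that $\Gamma/(\Gamma^+\oplus\Gamma^-)$ is a $2$-torsion group, implicitly using that the homomorphism $|\rho_\C|$ into the torsion-free group $\R_{>0}$ kills torsion. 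You compress all of this into the single averaging identity $\gamma+c\gamma\in\Gamma^c=\Gamma^+$ applied to $r:=|\rho_\C|\colon\Gamma\to\R_{>0}$, so that $r(\gamma)^2=r(\gamma+c\gamma)=1$ forces $r(\gamma)=1$. The two arguments are equivalent --- note $2\gamma=(\gamma+c\gamma)+(\gamma-c\gamma)$ with $\gamma-c\gamma$ purely imaginary, which is precisely the paper's $2$-torsion statement --- but yours is cleaner: it needs no rank count for $\Gamma^-$, no direct-sum decomposition, and makes the use of $2$-divisibility of $\R_{>0}$ transparent. Your preliminary identifications ($\iota_\ast H_1(A(\R)^\circ,\Z)=\Gamma\cap W=\Gamma^c$ via the exponential, and the dictionary between flat invariant metrics and circle-valued monodromy, using the flatness of $\nabla$ recorded in the paper) are all sound.
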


Clearly Condition (i) is equivalent to: 

 {\rm (i')} \emph{ The monodromy representation $\rho_{\C}$ takes its values in 
    $U(1):=\{z \in \C \mid \vert z\vert =1\}.$}

In the sequel, we shall only use the implications $ {\rm (i)} \Rightarrow 
 {\rm (i')}  \Rightarrow  {\rm (ii)} $, which are straightforward.
 To show $ {\rm (ii)} \Rightarrow  {\rm (i')} $, let $\Gamma^+:= \iota_\ast(H_{1}(A(\R)^\circ,\Z)),$ and
observe that the elements of $\Gamma_{A_\C}$ which are ``purely 
imaginary" in $\Lie A_\C \simeq (\Lie A){\otimes}_{\R}\C$ 
constitute a subgroup $\Gamma^-$ of rank $g$ such that 
$\Gamma^+ \cap \Gamma^-=\{0\}$, that 
$\Gamma/\Gamma^+ \oplus \Gamma^-$ is a 2-torsion group, and that the image $\rho_\C(\Gamma^-)$ of  $\Gamma^-$ by the monodromy 
representation lies in $U(1).$ We leave the details to the reader.

\subsubsection{Reality II} In this paragraph, we still
keep the framework of the paragraph \ref{cav}, and we now assume that the base 
field $k$ is $\C$. We may apply the considerations of the last paragraph  to the abelian variety 
over $\R$ deduced from $A$ by Weil restriction of scalar from $\C$ to 
$\R$. This leads to the following results, that we formulate without explicit reference to Weil restriction. 

Let  $A_{-},$ $L_{-},$ $\nabla_{-}$ be respectively the complex abelian 
variety, the line bundle over $A_{-}$, and the connection over 
$L_{-}$ deduced from $A,$ $L,$ and $\nabla$ by the base change $\Spec 
\C \rightarrow \Spec \C$ defined by complex conjugation.

Let us consider the complex abelian variety
$$B:= A \times A_{-},$$
the two projections
$$\pr: B \longrightarrow A \;\;\;\;\mbox{ and } \pr_{-}: B 
\longrightarrow A_{-},$$
and $(\tilde{L},\tilde{\nabla})$ the line bundle with connection over 
$B$ defined as the tensor product of $\pr^\ast(L,\nabla)$ and 
$\pr_{-}^\ast(L_{-},\nabla_{-})$.

Let $j: \Lie A \rightarrow \Lie A_{-}$ denote the canonical 
$\C$-antilinear isomorphism. It maps bijectively $\Gamma_{A}$ onto 
$\Gamma_{A_{-}},$ and we may introduce  the diagonal embedding
$$
\begin{array}{cccl}
    \Delta: & \Gamma_{A} & \longrightarrow &
    \Gamma_{A} \oplus \Gamma_{A_{-}}  \simeq \Gamma_{B} \\
     & \gamma & \longmapsto & (\gamma, j(\gamma)).
\end{array}
$$
Observe that any $\Z$-basis $(\gamma_{1},\ldots,\gamma_{2g})$ of $\Gamma_A$ is a $\R$-basis of 
$\Lie A$, and consequently its image 
$(\Delta(\gamma_{1}),\ldots,\Delta(\gamma_{2g}))$ by $\Delta$ is a 
$\C$-basis of $\Lie B$.

Let $\rho$ (resp. $\rho_{-},$ $\tilde{\rho}$) be the monodromy 
representation of $\Gamma_{A}$ (resp. $\Gamma_{A_{-}},$ $\Gamma_{B}$) 
defined by the line bundle with connection $(L,\nabla)$ (resp. 
$(L_{-},\nabla_{-}),$ $(\tilde{L},\tilde{\nabla})$).

It is straightforward that, for any $\gamma$ in $\Gamma_{A},$ the 
following relations hold:
$$\rho_{-}(j(\gamma))=\overline{\rho(\gamma)},$$
and
$$\tilde{\rho}(\Delta(\gamma))=\rho(\gamma).\rho_{-}(j(\gamma))=\vert 
\rho(\gamma) \vert^2.$$

These observations establish:

\begin{lemma}\label{real2}
    If there exists a hermitian metric $\Vert.\Vert$ on the complex 
    line bundle $L$ on $A(\C)$ such that the 
    connection $\nabla$ is unitary with respect to 
    $\Vert.\Vert$, then the image $\Delta(\Gamma)$ of the diagonal 
    embedding $\Delta$ contains a $\C$-basis of $\Lie B$, and is 
    included in the kernel of the monodromy representation 
    $\tilde{\rho}$ of $(\tilde{L},\tilde{\nabla})$.
    \end{lemma}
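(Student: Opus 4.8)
The plan is to treat the two assertions separately, taking for granted the two monodromy relations displayed immediately before the statement and invoking Lemma~\ref{real} for the sole analytic input. Throughout I write $\Gamma=\Gamma_A$ and use that $\Lie B=\Lie A\oplus\Lie A_-$ has complex dimension $2g$, whereas $\Gamma$ has rank $2g$.

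For the basis assertion (already observed in the running text), I would fix a $\Z$-basis $(\gamma_1,\ldots,\gamma_{2g})$ of $\Gamma$; being a lattice of full rank in the real vector space $\Lie A$, this is an $\R$-basis of $\Lie A$. It then suffices to check that the $2g$ vectors $\Delta(\gamma_i)=(\gamma_i,j(\gamma_i))$ are $\C$-linearly independent in $\Lie B$, since a dimension count then makes them a $\C$-basis. Suppose $\sum_i c_i\,\Delta(\gamma_i)=0$ with $c_i\in\C$; projecting $\C$-linearly onto the two factors of $\Lie B$ yields $\sum_i c_i\gamma_i=0$ in $\Lie A$ and $\sum_i c_i\,j(\gamma_i)=0$ in $\Lie A_-$. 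Because $j$ is $\C$-antilinear and injective, the second relation is equivalent to $\sum_i\overline{c_i}\,\gamma_i=0$. Adding and subtracting the two equalities and using the $\R$-linear independence of the $\gamma_i$ forces $\Re(c_i)=\Im(c_i)=0$, hence $c_i=0$ for every $i$. This is exactly the antilinearity mechanism that turns a real basis of $\Lie A$ into a complex basis of $\Lie B$.

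For the kernel assertion, the hypothesis supplies a hermitian metric on $L$ over $A(\C)$ for which $\nabla$ is unitary. The implication (i) $\Rightarrow$ (i') of Lemma~\ref{real} --- a purely analytic fact about the complex line bundle with connection $(L,\nabla)$ over the compact complex Lie group $A(\C)$, valid regardless of any real structure and hence applicable here with $k=\C$ --- gives that the monodromy $\rho$ of $(L,\nabla)$ takes values in $U(1)$, i.e. $|\rho(\gamma)|=1$ for all $\gamma\in\Gamma$. (Concretely: parallel transport for a unitary connection is an isometry, so the monodromy of a line bundle around any loop is multiplication by a unimodular scalar.) Substituting this into the displayed relation $\tilde\rho(\Delta(\gamma))=|\rho(\gamma)|^2$ gives $\tilde\rho(\Delta(\gamma))=1$ for every $\gamma\in\Gamma$, which is precisely $\Delta(\Gamma)\subseteq\ker\tilde\rho$.

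I expect no real obstacle: once the two monodromy identities are granted, both assertions are formal. The only points warranting care are the $\C$-antilinearity computation above and the observation that the ``unitary $\Rightarrow$ unimodular monodromy'' statement from Lemma~\ref{real} does not use the real structure, so that it transfers verbatim to the present $k=\C$ setting.
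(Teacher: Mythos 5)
Your proof is correct and takes essentially the same route as the paper: the paper deduces the lemma directly from the displayed monodromy identities $\tilde{\rho}(\Delta(\gamma))=\vert\rho(\gamma)\vert^2$ together with the observation that a $\Z$-basis of $\Gamma_A$ maps under $\Delta$ to a $\C$-basis of $\Lie B$, with unimodularity of $\rho$ supplied by the implication (i) $\Rightarrow$ (i') of Lemma~\ref{real}, exactly as you argue. Your antilinearity computation simply makes explicit the basis observation that the paper states without proof, and your remark that (i) $\Rightarrow$ (i') uses no real structure is the intended justification for invoking it over $k=\C$.
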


\subsubsection{Conclusion of the proof of Theorem \ref{thI}}

The following statement is a straightforward consequence of 
Corollary \ref{corSL} to the Theorem of Schneider-Lang, combined with  Lemma \ref{real} above:

\begin{corollary}\label{theoreal}
    Let $A$ be an abelian variety  over a number 
    field $K$, and $(L,\nabla)$ a line bundle over $A$ equipped with a 
    flat connection defined over $K$, and let $\sigma: K \hookrightarrow \C$ be a field embedding that is \emph{real}, namely such that its image $\sigma(K)$ lies in $\R$.

    If there exists a hermitian metric $\Vert.\Vert$ on the complex 
    line bundle $L_\sigma$ on $A_\sigma(\C)$ such that the 
    connection $\nabla_\sigma$ is unitary with respect to 
    $\Vert.\Vert$, then $L$ has a 
    torsion class in $\Pic(A)$.
\end{corollary}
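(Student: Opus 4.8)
The plan is to deduce the statement from Corollary \ref{corSL} by exhibiting a family of periods that forms a $\C$-basis of $\Lie A_\sigma$ and along which the monodromy is algebraic; the reality of $\sigma$ is precisely what forces this algebraicity, via Lemma \ref{real}. First I would use that $\sigma$ is real: it factors as $K\to\R\hookrightarrow\C$, so the complex line bundle with flat connection $(L_\sigma,\nabla_\sigma)$ on $A_\sigma(\C)$ is the base change to $\C$ of a line bundle with flat connection $(L_\R,\nabla_\R)$ on the real abelian variety $A_\R:=A\otimes_{K,\sigma}\R$, with $A_\sigma=A_\R\otimes_\R\C$. This places us exactly in the framework of the Reality I paragraph, applied to $A_\R$.

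Next I would apply Lemma \ref{real} to $(L_\R,\nabla_\R)$. The existence, assumed in the statement, of a hermitian metric $\Vert.\Vert$ on $L_\sigma$ for which $\nabla_\sigma$ is unitary is exactly condition (i) of that lemma; the (straightforward) implication (i) $\Rightarrow$ (ii) then shows that the monodromy representation $\rho_\R$ of the real analytic flat line bundle $(L_\R^\R,\nabla_\R^\R)$ over $A_\R(\R)$ takes its values in $\{1,-1\}$. Recall from the Reality I discussion that $\rho_\R=\rho_\sigma\circ\iota_\ast$, where $\iota_\ast:H_1(A_\R(\R)^\circ,\Z)\to H_1(A_\sigma(\C),\Z)=\Gamma_{A_\sigma}$ is injective, of rank $g$ into a group of rank $2g$, and $\rho_\sigma$ is the monodromy of $(L_\sigma,\nabla_\sigma)$.

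It then remains to feed the correct periods into Corollary \ref{corSL}. The identity component $A_\R(\R)^\circ$ is a compact real torus of dimension $g$ whose universal cover is the real Lie algebra $\Lie A_\R$, so that $\Gamma^+:=\iota_\ast(H_1(A_\R(\R)^\circ,\Z))$ is a lattice of full rank $g$ in $\Lie A_\R$. Choosing a $\Z$-basis $(\gamma_1,\ldots,\gamma_g)$ of $\Gamma^+$, these vectors form an $\R$-basis of $\Lie A_\R$ and hence a $\C$-basis of $\Lie A_\sigma=\Lie A_\R\otimes_\R\C$; moreover each $\gamma_i$ lies in the image of $\iota_\ast$ inside $\Gamma_{A_\sigma}$, so $\rho_\sigma(\gamma_i)=\rho_\R(\gamma_i)\in\{1,-1\}\subset\overline{\Q}^\ast$. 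All hypotheses of Corollary \ref{corSL} are thereby met, and it yields that $L$ has a torsion class in $\Pic(A)$.

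I do not anticipate a genuine obstacle: the two cited results carry the argument, and the only point requiring a little care is checking that the lattice of real periods spans $\Lie A_\sigma$ over $\C$. This is guaranteed by $A_\R(\R)^\circ$ being a real torus of the maximal possible dimension $g$, which is what makes a $\Z$-basis of $\Gamma^+$ simultaneously an $\R$-basis of $\Lie A_\R$ and a $\C$-basis of $\Lie A_\sigma$.
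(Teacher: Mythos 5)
Your proposal is correct and follows exactly the route the paper intends: the paper states Corollary \ref{theoreal} as a ``straightforward consequence'' of Corollary \ref{corSL} combined with Lemma \ref{real}, and your write-up supplies precisely those details --- descent along the real embedding $\sigma$ to $(L_\R,\nabla_\R)$ on $A_\R$, the implication (i) $\Rightarrow$ (ii) of Lemma \ref{real} giving $\rho_\R$ valued in $\{1,-1\}\subset\overline{\Q}^\ast$, and a $\Z$-basis of $\Gamma^+=\iota_\ast(H_1(A_\R(\R)^\circ,\Z))$ serving as the required $\C$-basis of $\Lie A_\sigma$ in Corollary \ref{corSL}. The one point you flag as needing care --- that the rank-$g$ lattice of real periods spans $\Lie A_\sigma$ over $\C$ because $\Lie A_\sigma=\Lie A_\R\otimes_\R\C$ --- is handled correctly.
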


If we use Lemma \ref{real2} instead of Lemma \ref{real}, we may prove:

\begin{corollary}\label{theotau}
    Let $A$ be an abelian variety  over a number 
    field $K$, and $(L,\nabla)$ a line bundle over $A$ equipped with a 
    flat connection defined over $K$.
    
  Let $\sigma: K \hookrightarrow \C$ be a field embedding, and let 
  $\tau$ be a (necessarily involutive) automorphism of the field $K$ such that $\sigma \circ 
  \tau = \overline{\sigma}.$

    If there exists a hermitian metric $\Vert.\Vert$ on the complex 
    line bundle $L_\sigma$ on $A_\sigma(\C)$ such that the 
    connection $\nabla_\sigma$ is unitary with respect to 
    $\Vert.\Vert$, then $L$ has a 
    torsion class in $\Pic(A)$.
\end{corollary}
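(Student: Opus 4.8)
The plan is to follow the proof of Corollary \ref{theoreal} verbatim, but with the ``reality'' input Lemma \ref{real} replaced by its complex analogue Lemma \ref{real2}. The difficulty is that Lemma \ref{real2} is formulated over $\C$, whereas the transcendence criterion of Corollary \ref{corSL} requires everything to be defined over the number field $K$; the role of the hypothesis $\sigma\circ\tau=\overline{\sigma}$ is precisely to realize the complex construction of Lemma \ref{real2} as the $\sigma$-fibre of objects defined over $K$. Concretely, I would first form over $K$ the conjugate data $A_\tau:=A\times_{K,\tau}K$, an abelian variety over $K$ of the same dimension $g$, together with the line bundle with flat connection $(L_\tau,\nabla_\tau)$ deduced from $(L,\nabla)$ through $\tau$. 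Setting $B:=A\times_K A_\tau$ (an abelian variety of dimension $2g$ over $K$), I let $(\mathcal{L},\mathcal{D})$ be the external tensor product, i.e. $\mathcal{L}=\pr^\ast L\otimes \pr_\tau^\ast L_\tau$ equipped with the tensor-product connection of $\pr^\ast\nabla$ and $\pr_\tau^\ast\nabla_\tau$. Since $L$ and $L_\tau$ carry connections they are algebraically equivalent to $0$, hence so is $\mathcal{L}$, and $\mathcal{D}$ is flat; thus $B$ and $(\mathcal{L},\mathcal{D})$ fall within the framework of \ref{cav}, over $K$.

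The key step is to identify the $\sigma$-fibre of this $K$-datum with the complex datum appearing in Lemma \ref{real2}. For any object $Y$ over $K$ (scheme, line bundle, or connection) and embedding $\phi$, write $Y_\phi$ for its base change. Because $\sigma\circ\tau=\overline{\sigma}$, both $(Y_\sigma)_-$ (base change of $Y_\sigma$ along complex conjugation $\C\to\C$, in the notation of Lemma \ref{real2}) and $(Y_\tau)_\sigma$ are canonically the base change of $Y$ along $\sigma\circ\tau$, whence a canonical isomorphism $(Y_\sigma)_-\cong(Y_\tau)_\sigma$. Applying this to $Y=A$ and to $Y=(L,\nabla)$ shows that $B_\sigma\cong A_\sigma\times_\C (A_\sigma)_-$ and that $(\mathcal{L},\mathcal{D})_\sigma$ is exactly the line bundle with connection $(\tilde L,\tilde\nabla)$ attached by Lemma \ref{real2} to the complex data $(A_\sigma,L_\sigma,\nabla_\sigma)$. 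In particular the monodromy representation of $(\mathcal{L},\mathcal{D})_\sigma$ on $\Gamma_{B_\sigma}$ coincides with the representation $\tilde\rho$ of Lemma \ref{real2}.

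It then remains to run the transcendence argument. By hypothesis $\nabla_\sigma$ is unitary for some hermitian metric on $L_\sigma$, so Lemma \ref{real2} applies and yields that the diagonal image $\Delta(\Gamma_{A_\sigma})$ contains a $\C$-basis of $\Lie B_\sigma$ and is contained in $\ker\tilde\rho$. Choosing a $\Z$-basis $\gamma_1,\ldots,\gamma_{2g}$ of $\Gamma_{A_\sigma}$, the vectors $\Delta(\gamma_1),\ldots,\Delta(\gamma_{2g})$ form a $\C$-basis of $\Lie B_\sigma$ on which $\tilde\rho$ takes the value $1\in\overline{\Q}^\ast$. Corollary \ref{corSL}, applied to the abelian variety $B$ over $K$ and to $(\mathcal{L},\mathcal{D})$, then gives that $\mathcal{L}$ has torsion class in $\Pic(B)$. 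Finally, pulling back along the closed immersion $i\colon A\hookrightarrow B$, $a\mapsto(a,0)$, for which $i^\ast\mathcal{L}\cong L$ (the factor $\pr_\tau^\ast L_\tau$ restricting to a trivial bundle), I conclude that $L$ has torsion class in $\Pic(A)$.

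The main obstacle is the identification of the second paragraph: one must make precise, and verify, the compatibility of the two base-change operations --- applying the automorphism $\tau$ over $K$ versus conjugating the $\sigma$-fibre over $\C$ --- not merely for the abelian variety but simultaneously for the line bundle and its connection, so that the $\C$-level construction of Lemma \ref{real2} genuinely descends to $K$ and the hypotheses of Corollary \ref{corSL} are met. Once this matching is established, both the transcendence input and the descent of torsionness along $A\hookrightarrow B$ are routine.
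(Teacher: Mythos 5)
Your proposal is correct and takes essentially the same route as the paper's own proof: form $B=A\times_K A_\tau$ with the external tensor product connection $(\tilde L,\tilde\nabla)$, use Lemma \ref{real2} applied to $(A_\sigma,L_\sigma,\nabla_\sigma)$ to verify the hypotheses of Corollary \ref{corSL} for $B$ over $K$, conclude that $\tilde L$ is torsion in $\Pic(B)$, and restrict along $A\simeq A\times\{e\}\hookrightarrow B$ to get the torsionness of $L$. The only difference is that you make explicit the base-change compatibility $(Y_\sigma)_-\cong (Y_{\tau})_\sigma$ coming from $\sigma\circ\tau=\overline{\sigma}$, which the paper leaves implicit; this is a clarification rather than a deviation.
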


Observe that when $\tau = {\rm Id}_K$ Corollary \ref{theotau} reduces to Corollary \ref{theoreal} above. We have however chosen to present explicitly the statement of Corollary  \ref{theoreal} and its proof above, since the basic idea behind  the proofs of Corollaries  \ref{theoreal} and \ref{theotau} appears more clearly in the first one, which indeed has been inspired by Bertrand's proof in \cite{bertrand95} and \cite{bertrand98}.

 \noindent \emph{Proof of Corollary \ref{theotau}.} As usual we denote $A_{\tau},$ $L_{\tau},$ and $\nabla_{\tau}$  respectively the  abelian 
variety over $K$, the line bundle over $A_{\tau}$, and the connection over 
$L_{\tau}$ deduced from $A,$ $L,$ and $\nabla$ by the base change $\Spec 
K \rightarrow \Spec K$ defined by $\tau$.
We may also introduce the  abelian variety over $K$
$$B:= A \times A_{\tau},$$
the two projections
$$\pr: B \longrightarrow A \;\;\;\;\mbox{ and } \pr_{\tau}: B 
\longrightarrow A_{\tau},$$
and $(\tilde{L},\tilde{\nabla})$ the line bundle with connection over 
$B$ defined as the tensor product of $\pr^\ast(L,\nabla)$ and 
$\pr_{\tau}^\ast(L_{\tau},\nabla_{\tau})$.

Lemma \ref{real2} applied to $(A_{\sigma}, L_{\sigma},\nabla_{\sigma})$ 
shows that the hypotheses of Corollary \ref{corSL} are satisfied by 
the abelian variety $B$ over $K$, and the line bundle with connection 
$(\tilde{L},\tilde{\nabla})$ over $B$. Consequently $\tilde{L}$ has a 
torsion class in $\Pic (B)$, and so $L$ itself --- which is isomorphic 
to the restriction of $\tilde{L}$ to $A\times \{e\} \simeq A$ --- has 
a torsion class in $\Pic (A).$ 
\qed

 %\noindent \emph{Proof of Theorem \ref{thI}.}
Finally consider  $K,$ $A,$ $(L,\nabla),$ $\sigma$ and  $\Vert.\Vert$ as in the statement of Theorem \ref{thI}. 

Let us first show that  $L$ has a torsion class in $K$. To achieve this, let us  choose a finite field extension $K'$ of $K$ admitting an automorphism $\tau$ and an embedding $\sigma'$ in $\C$ that extends $\sigma$ and satisfies $\sigma'\circ \tau= \overline{\sigma'}$ --- for instance the subfield $K'$ of $\C$ generated by $\sigma(K)$ and its image by complex conjugation. We may apply Corollary \ref{theotau} to the number field $K'$ equipped with the complex embedding $\sigma'$, and to the abelian variety $A_{K'}$ and the line bundle with connection $(L_{K'}, \nabla_{K'})$ deduced from $A$ and $(L,\nabla)$ by the base change $\Spec K' \rightarrow \Spec K.$ Therefore $L_{K'}$ has a torsion class in $\Pic(A_{K'}).$
Since the base change morphism $$\Pic(A) \longrightarrow \Pic(A_{K'})$$ is injective, this indeed implies that $L$ has a torsion class in $\Pic(A)$. 

To complete the proof of Theorem \ref{thI}, it is sufficient to observe that the curvature of $\Vert.\Vert$ --- or equivalently, of the $\mathcal{C}^\infty$-connection $\nabla_{\mathcal{C}^\infty} = \nabla_\sigma + \overline{\partial}_{L_\sigma}$ on $L_{\sigma}$ ---  vanishes for reason of type\footnote{One could also argue that this curvature coincides with the one of the holomorphic connection $\nabla_\sigma$, which vanishes, as recalled above.}: it is a $2$-form on $A_\sigma(\C)$ of type $(2,0)$, since $\nabla_\sigma$ is holomorphic, and purely imaginary, since $\nabla_{\mathcal{C}^\infty}$ is unitary.

\subsection{Hermitian line bundles with vanishing arithmetic Atiyah class on smooth projective varieties over number fields}\label{vanishingatar}

Let $K$ be a number field, and $\Sigma$ a non-empty set of field embeddings 
of $K$ in $\C$, stable under complex conjugation. 

To these data is naturally attached the arithmetic ring in the sense of 
Gillet-Soul\'e (\cite{gilletsoule90}, 3.1.1)  defined as the triple 
$(K,\Sigma, F_\infty)$ where $F_\infty$ denotes the conjugate linear 
involution of $\C^\Sigma$ defined by 
$F_\infty(a_\sigma)_{\sigma \in \Sigma}
:= (\overline{a_{\overline\sigma}})_{\sigma \in \Sigma}.$

\subsubsection{}\label{remcomplex}

Recall that, for any line bundle $M$ over a smooth projective connected variety $V$ over $\C$, the following conditions are equivalent, as a consequence of the GAGA principle and Hodge theory:
\begin{enumerate}
\item[(a1)] \emph{the Atiyah class $\at_{V/\C}M$ of $M$ vanishes in $H^{1,1}(V/\C):= \Ext^1_{\cO_V}(\cO_V, \Omega^1_{V/\C});$}

\item[(a2)] \emph{the first Chern class $c_1(M^\an)$ of the holomorphic line bundle $M^\an$ over $V(\C)$ deduced from $M$ vanishes rationally \emph{(that is, in $H^2(V(\C), \Q)$, or equivalently in $H^2(V(\C), \C)$)};}

\item[(a3)] \emph{there exists a $\mathcal{C}^\infty$-hermitian metric $\Vert . \Vert$ with vanishing curvature on $M^\an.$}
\end{enumerate}

Moreover, when they are satisfied, the metric $\Vert . \Vert$ is unique up to a constant factor in $\R^\ast_+$, and the $(1,0)$-part $\nabla^{1,0}$ of the $\mathcal{C}^\infty$-connection $\nabla$ on $M^\an$ that is unitary (for $\Vert . \Vert$) and compatible with the holomorphic structure is the unique integrable holomorphic connection $\nabla_M^{\rm u}$ whose monodromy lies in 
$U(1):= \{z\in \C \mid \vert z \vert =1 \}.$  
Observe also that $\nabla_M^{\rm u}$ algebraizes, and may be seen as as an ``algebraic" connection on the line bundle $M$ on the algebraic variety $V$ over $\C$.

\subsubsection{}\label{prerem}

Let $X$ be a smooth, projective, geometrically connected scheme over $K$,  
and $E_{X/K}$ the universal vector extension of $\Pic^0_{X/K}$ (see 
Appendix B for basic facts on Picard varieties and their universal vector extensions).

In the sequel, we shall consider $X$ and $\Spec K$ as  arithmetic schemes 
over the arithmetic ring $(K,\Sigma,F_\infty)$.

In particular, a hermitian line bundle $\ol{L}$ over $X$ is the data of a 
line bundle $L$ over $X$ and of a $\mathcal{C}^\infty$-hermitian metric 
$\Vert . \Vert_{\ol{L}},$ invariant under complex conjugation, on the 
holomorphic line bundle $L^\an_\C$ over $X_\Sigma(\C)= \coprod_{\sigma \in \Sigma}  X_\sigma(\C).$

According to the observations in \ref{remcomplex},  for any line bundle $L$ over $X$, the following conditions are equivalent:
\begin{enumerate}
\item[(b1)] \emph{the Atiyah class $\at_{X/K} L$ of $L$ in $H^{1,1}(X/K):= \Ext^1_{\cO_X}(\cO_X, \Omega^1_{X/K})$ vanishes;}
\item[(b2)] \emph{there exists a 
 $\mathcal{C}^\infty$-hermitian metric $\Vert . \Vert$ with 
vanishing curvature, invariant under 
complex conjugation, on the holomorphic line bundle $L^\an_\C$ over $X_\Sigma(\C)$.}
  \end{enumerate}
 
 When (b1) and (b2)  are realized, the metric $\Vert . \Vert$ is 
unique, up to some multiplicative constant, on every 
component $X_\sigma(\C)$ of $X_\Sigma(\C).$

Observe also that these conditions hold precisely when some positive power of  
the line bundle  $L$ is algebraically equivalent to zero\footnote{By definition 
a line bundle on $X$ is
algebraically equivalent to zero if and only if 
its restriction to the geometric fiber $X_{\overline{K}}$
is algebraically equivalent to zero.} (see for instance \cite[II.2 Cor. 1 to Th. 2]{kleiman66}).

\subsubsection{}\label{preremsuite}

Consider now  a  line bundle $L$   on $X$ satisfying Conditions (b1) and (b2) above, and let us choose  
a 
 $\mathcal{C}^\infty$ hermitian metric $\Vert . \Vert$ on $L_\C$, as in Condition (b1) above.
 
We shall denote $\overline{L}$ the hermitian line bundle $(L, \Vert . \Vert)$ over $X$, and $\nabla_{\overline{L}}$  the unitary connection on $L_\C$ which
is compatible with the holomorphic structure. It does not depend on the actual choice of $\Vert . \Vert$. Indeed,
for any $\sigma$ in $\Sigma$,  the 
$(1,0)$-part $\nabla_{\overline{L}}^{1,0}$
of $\nabla_{\overline{L}}$ coincides with $\nabla^{\rm u}_{L_\sigma}$ over $X_\sigma(\C)$.

It is a straightforward consequence of our definitions that the following 
conditions are equivalent:
\begin{enumerate}
\item[(1)]
\emph{the line bundle $L$ admits a connection $\nabla\colon L\to L\otimes \Omega_{X/K}^1$ (over $K$) 
such that the induced holomorphic connection $\nabla_{\C}$ on $L_\C$ over $X_\Sigma (\C)$ equals $\nabla_{\overline{L}}^{1,0}$, or equivalently such that for any $\sigma$ in $\Sigma$ 
the induced holomorphic connection $\nabla_{\sigma}$ on $L_\sigma$ over $X_\sigma$ equals $\nabla_{L_\sigma}^{\rm u};$}
\item[(2)] \emph{the class
$\hat{c}_1^H(\overline{L}):=\hat{c}_1^H(X/ \Spec K, \overline{L})$,
or in other words
the arithmetic Atiyah class
$\widehat{\rm at}_{X/K}(\overline{L})$, 
 vanishes in 
$\hat{H}^{1,1}(X/K):= \widehat{\Ext}^1_X(\cO_X, \Omega^1_{X/K})$;} 
\end{enumerate}

Observe also that, when $L$ is algebraically equivalent to zero,   the pair 
$(L_\C,\nabla_{\overline{L}}^{1,0})$ --- or equivalently the family $(L_\sigma, \nabla^{\rm u}_{L_\sigma})_{\sigma \in \Sigma}$ --- determines a
point $P=P_{\overline{L}}$ in the maximal compact subgroup of
\[
E_{X/K}(\R):=\bigl[\coprod\limits_{\sigma\in \Sigma}E_{X/K}(\C)\bigr]^{F_\infty}.
\]
(details of this construction may be found in the Appendix in 
B.7 and B.8), and Conditions (1) and (2) are also equivalent to:

\begin{enumerate}
\item[(3)] \emph{the point 
$P_{\overline{L}}$ in the maximal compact subgroup of $E_{X/K}(\R)$ is the image of a $K$-rational point of $E_{X/K}.$}
\end{enumerate}

We claim that, \emph{if a line bundle  
 $L$ over $X$ defines a torsion point in $\Pic(X),$ then Conditions \emph{(1)}  and \emph{(2)} are  
satisfied.}

Indeed, if $n$ is a positive integer and 
$\alpha: \cO_X \rightarrow L^{\otimes n}$ is an isomorphism of line bundles 
over $X$, we may introduce the connection $\nabla_L^{\rm tor}$ on $L$, defined over 
$K$, such that the connection $\nabla^{\rm tor}_{L^{\otimes n}}$ on $L^{\otimes n}$ 
deduced from $\nabla^{\rm tor}_L$ by taking its $n$-th tensor power makes $\alpha$ an 
isomorphism of line bundles with connections from $(\cO_X, d)$ to 
$(L^{\otimes n},\nabla_{L^{\otimes n}})$\footnote{More generally, for any two line bundles $L$ and $M$ over $X$, any connection $\nabla_M$ on $M$ and any isomorphism $\alpha: M \to L^{\otimes n}$, there exists a unique connection $\nabla_L$ on $L$ such that
the connection $\nabla_{L^{\otimes n}}$ on $L^{\otimes n}$ 
deduced from $\nabla_L$ by taking its $n$-th tensor power makes $\alpha$ an 
isomorphism of line bundles with connections from $(M, \nabla_M)$ to 
$(L^{\otimes n},\nabla_{L^{\otimes n}}).$ It may be defined by the following identity, valid for any local regular section $l$ of $L$: $n. l^{\otimes n-1}\otimes \nabla_L l= (\alpha \otimes {\rm Id_{\Omega^1_{X/K}}})\nabla_M( \alpha^{-1}(l^{\otimes n})).$ }. 
For any $\sigma$ in $\Sigma,$ the two connections $\nabla^{\rm tor}_{L,\sigma}$ and $\nabla^{\rm u}_{L_\sigma}$ on $L_\sigma$ coincide, since the monodromy of $\nabla^{\rm tor}_{L,\sigma}$ lies in the $n$-th roots of unity. Consequently \emph{Condition (1) is satisfied by $\nabla:= \nabla_L^{\rm tor}.$}

\subsubsection{}\label{enfinuntheoreme}

It turns out that, \emph{conversely, if Conditions (1) and (2) hold, 
then $L$ has a torsion class in $\Pic(X)$ and the connection $\nabla$, uniquely defined by (1), necessarily coincides with $\nabla_L^{\rm tor}$.} 
This is basically the content of Theorems \ref{thI} and \ref{thIalternative} 
when $X$ is an abelian variety and $\Sigma$ has one or two conjugate elements. 
It holds more generally for any $X$ as above:
 
\begin{theorem}\label{eqcond}
Let $X$ be a smooth, projective, geometrically connected variety over $K$,   
and let $\pi:X\to \Spec \,K$ its structural morphism, that we consider as a 
morphism of arithmetic schemes over the arithmetic 
ring $(K,\Sigma,F_\infty)$.

\emph{(i)} Let $\ol{L}= (L, \|.\|_L)$ be  a hermitian line bundle 
 over $X$. If $L$ admits an algebraic connection  $\nabla: L \rightarrow L \otimes \Omega^1_{X/K}$ such 
that $\nabla_{\C}$ is unitary with respect to $\|.\|_{L},$ then $L$ 
has a torsion class in ${\rm Pic}(X)$, the metric $\|.\|_L$ has vanishing 
curvature, and $\nabla$ coincides with $\nabla^{\rm tor}_L$.

\emph{(ii)}
For any hermitian line bundle $\ol{L}$ on $X$, if the first Chern class 
$\hat{c}_1^H(\ol{L})$ in $\hat{H}^{1,1}(X/K):= \widehat{\Ext}^1_X(\cO_X, \Omega^1_{X/K})$
vanishes, then there exists a positive integer $n$
such that $\ol{L}^{\otimes n}$ is isometric to the trivial bundle
$\cO_X$ equipped with a metric constant on every component $X_\sigma(\C)$ 
of $X_\Sigma(\C)$ --- or equivalently, such that the class 
of $\ol{L}^{\otimes n}$ in $\widehat{\rm Pic}
(X)$ belongs to the image of $\pi^\ast: \widehat{\rm Pic} 
(\Spec K) \rightarrow \widehat{\rm Pic}
(X).$

\emph{(iii)}
Let $P\in E_{X/K}(K)$ be a 
$K$-rational point of the universal vector extension $E_{X/K}$ that
belongs to the maximal compact subgroup of 
$E_{X/K}(\mathbb{R})$. 
Then $P$ is a torsion point in $E_{X/K}(K)$. 

\end{theorem}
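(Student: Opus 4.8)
The plan is to reduce all three assertions to the abelian-variety case already settled in Theorems \ref{thI} and \ref{thIalternative}, the reduction being supplied by the universal vector extension of the Picard variety. I would treat (iii) first, since it is essentially a restatement of Theorem \ref{thIalternative}. By definition (see \ref{prerem}), $E_{X/K}$ is the universal vector extension $B^\#$ of the abelian variety $B:=\Pico_{X/K}$ over $K$. Saying that a $K$-rational point $P$ lies in the maximal compact subgroup of $E_{X/K}(\R)=[\coprod_{\sigma\in\Sigma}E_{X/K}(\C)]^{F_\infty}$ amounts to saying that $P_\sigma$ lies in the maximal compact subgroup of $B^\#_\sigma(\C)$ for every $\sigma\in\Sigma$; as $\Sigma$ is non-empty, Theorem \ref{thIalternative} applied to $B$ and to any one such $\sigma$ shows at once that $P$ is a torsion point.

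For (i), I would first note that the existence of an algebraic connection on $L$ forces $\at_{X/K}L=0$, so by \ref{prerem} some positive power $L^{\otimes m}$ is algebraically equivalent to zero. The curvature of $\|.\|_L$ vanishes by a type argument: since $\nabla_\C$ is unitary it is the $(1,0)$-part of the Chern connection of $\|.\|_L$, whose curvature equals $\ol\partial\theta$ by (\ref{connpartdolb}) for a connection form $\theta$ that is holomorphic, so $\ol\partial\theta=0$. Hence $\ol L^{\otimes m}=(L^{\otimes m},\|.\|_L^{\otimes m})$ is a flat hermitian line bundle with $L^{\otimes m}$ algebraically equivalent to zero and carrying the $K$-connection $\nabla^{\otimes m}$, so the construction of \ref{preremsuite} (see B.7 and B.8) attaches to it a $K$-rational point $P\in E_{X/K}(K)$ which, because $\nabla_\C$ is unitary, lies in the maximal compact subgroup of $E_{X/K}(\R)$. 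By (iii), $P$ is torsion, and projecting along $E_{X/K}\to\Pico_{X/K}$ shows that $L^{\otimes m}$, hence $L$, has torsion class in $\Pic(X)$. Finally, $\nabla$ satisfies Condition (1) of \ref{preremsuite}, which determines a $K$-connection uniquely — two such differ by a global form in $H^0(X,\Omega^1_{X/K})$, a space that injects into $H^0(X_\sigma(\C),\Omega^1)$ under $\sigma$ — and since $\nabla^{\rm tor}_L$ satisfies the same condition, $\nabla=\nabla^{\rm tor}_L$.

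For (ii), I would use that $\hat{c}_1^H(\ol L)=\widehat{\at}_{X/K}(\ol L)$ for line bundles. Applying the forgetful map $\nu$ to its vanishing yields $\at_{X/K}L=0$, while applying $\Psi$ together with (\ref{formfirstcc}) shows that the first Chern form of $\ol L$, that is the curvature of $\|.\|_L$, vanishes; thus $\|.\|_L$ is flat and the equivalence (2)$\Rightarrow$(1) of \ref{preremsuite} furnishes an algebraic connection $\nabla$ over $K$ with $\nabla_\C=\nabla^{1,0}_{\ol L}$ unitary. Part (i) then applies and shows $L$ is torsion; choosing $n$ with $L^{\otimes n}\simeq\cO_X$, the induced flat metric on $\cO_X$ has potential $\log\|1\|^2$ pluriharmonic, hence constant, on each compact connected component $X_\sigma(\C)$, so $\ol L^{\otimes n}$ is isometric to $\cO_X$ with a metric constant on every component, i.e. lies in the image of $\pi^\ast\colon\widehat{\rm Pic}(\Spec K)\to\widehat{\rm Pic}(X)$. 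The only non-formal ingredient in the whole scheme is the transcendence input packaged in Theorems \ref{thI} and \ref{thIalternative}; the point demanding real care is the correct use of the Appendix B dictionary to certify that $P_{\ol L}$ is simultaneously $K$-rational and contained in the maximal compact subgroup.
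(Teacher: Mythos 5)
Your proposal is correct and follows essentially the paper's own route: the sole transcendence input is Theorem \ref{thI}/Theorem \ref{thIalternative} for abelian varieties, transported to a general $X$ through the Appendix B dictionary (points of $E_{X/K}$ as pairs $(L,\nabla)$ with integrable connection, maximal compact subgroup as unitary monodromy) and the observations of \ref{preremsuite}, exactly as in the paper. The only difference is organizational: you prove the linear chain (iii) $\Rightarrow$ (i) $\Rightarrow$ (ii), applying Theorem \ref{thIalternative} directly to $B=\Pico_{X/K}$, whereas the paper first establishes that (i)--(iii) are mutually equivalent for every $X$ and then uses the biduality isomorphism (\ref{bidualisom}) to reduce (iii) to the abelian case --- a harmless reshuffling, since the equivalence of Theorems \ref{thI} and \ref{thIalternative} already encodes that biduality.
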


\proof 

%For any given hermitian line bundle $\overline{L}$,  assertions  (i) and (ii) are equivalent
% as a straightforward consequence of the observations  in \ref{prerem}
%%before the statement of Theorem \ref{eqcond} 
%and of the implication $$\hat c_1^H (\ol{L})= 0 \Rightarrow c_1(\ol{L})=0,$$
%which follows from
% the commutativity of (\ref{arfcc2}).

We prove below that the assertions (i)--(iii) are equivalent for any 
given variety $X$ as above.
The isomorphism (\ref{bidualisom}) will then show that it is sufficient to show
(iii), hence any of the assertions (i)--(iii), for abelian varieties.
In order to prove i), we may choose
 $\sigma$ in $\Sigma$ and replace  the set of embeddings
$\Sigma$ by $\{\sigma \}$  (resp.
$\{\sigma,\overline{\sigma}\}$) if $\sigma$ is a real (resp.
 complex) embedding. 
In this situation,
(i) has been proved for abelian varieties as Theorem \ref{thI} in Section \ref{trans} \emph{supra}.

For any given hermitian line bundle $\overline{L}$, the equivalence of the implications in
(i) and (ii) is a straightforward consequence of the observations  in \ref{preremsuite}
%before the statement of Theorem \ref{eqcond} 
and of the implication $$\hat c_1^H (\ol{L})= 0 \Rightarrow c_1(\ol{L})=0,$$
which follows from
 the commutativity of (\ref{arfcc2}).

%i) $\Rightarrow $ iii). 
%A hermitian metric with curvature zero on the trivial line bundle on $X$ is constant  on every component $X_\sigma(\C)$ of $X_\Sigma(\C).$
%Therefore, if we introduce the canonical map 
%\[
%w:\widehat{\rm Pic} (X)\rightarrow {\rm Pic} (X)\hookrightarrow {\rm Pic}_{X/K}(K), 
%\]
%then we have:
%\[
%\Ker(\hat{c}_1^H)\cap \Ker(w)= {\Im} \bigl(\pi^*\colon \widehat{\rm Pic} (S){\rightarrow}
%\widehat{\rm Pic} (X)\bigr).
%\]
%Hence the map $w$ induces an injection of 
%\begin{equation}\label{arhomfin}
%\frac{
%\Ker\bigl({\hat c^H_1}\colon \widehat{\rm Pic} (X){\longrightarrow} 
%\widehat{\rm Ext}^1(\mathcal{O}_X,\Omega^1_{X/K})\bigr)}
%{\Im \bigl(\pi^*\colon \widehat{\rm Pic} (\Spec \, K){\longrightarrow}
%\widehat{\rm Pic} (X)\bigr)}
%\end{equation}
%into ${\rm Pic}_{X/K}(K)$.
%Part i) implies that the image of (\ref{arhomfin}) is contained in the torsion subgroup
%of ${\rm Pic}_{X/K}(K)$. This is a finite group as the N\'eron Severi group 
%\[
%NS_{X/K}(\overline{K})
%={\rm Pic}_{X/K}(\overline K)/{\rm Pic}^0_{X/K}(\overline K)
%\] 
%and ${\rm Pic}^0_{X/K}(K)$ are finitely 
%generated abelian groups by \cite[Exp. XIII Th. 5.1]{SGA6} and the theorem of 
%Mordell-Weil.

To establish the implication (ii) $\Rightarrow $ (iii),  
consider  $P$ in $E_{X/K}(K)$ a 
$K$-rational point of the universal vector extension that
belongs to the maximal compact subgroup of 
$E_{X/K}(\mathbb{R})$.
Replacing $K$ by a finite extension, we may assume that $P$ is represented
by a line bundle $L$ algebraically equivalent to zero
with an integrable connection $\nabla$.
If $P$ belongs to the maximal compact subgroup of $E_{X/K}(\mathbb{R})$,
we have $\nabla_\C=\nabla_{\overline{L}}^{1,0}$ where $\overline{L}$ carries
a hermitian metric with curvature zero.
As observed  in \ref{preremsuite} above,  this implies that $\hat{c}_1^H(\overline{L})=0$. 
According to (ii), there exists some integer $m>0$ such that $\overline{L}^{\otimes m}$ is isometric
to the trivial bundle $\cO_X$ with a constant metric.
It follows that $(L,\nabla)^{\otimes m}$ is isomorphic to the trivial 
bundle $\cO_X$ with the trivial connection, and consequently that $P$ belongs to the $m$-torsion of $E_{X/K}(K)$.

Finally, we show the implication (iii) $\Rightarrow $ (ii).
Let $\ol{L}= (L, \|.\|_L)$ be a hermitian line bundle over $X$ such that the class
$\hat{c}_1^H(\overline{L}):=\widehat{\rm at}_{X/K}(\overline{L})$ vanishes. Then ${\rm at}_{X/K}(L)$ vanishes too, and there exists a positive integer $m$ such that $L^{\otimes m}$ is algebraically equivalent to zero. By replacing $\overline{L}$  by $\overline{L}^{\otimes m},$ we may therefore assume that $L$ is algebraically equivalent to zero. As observed in \ref{preremsuite}, the point $P_{\overline{L}}$ associated to $(L_\C, \|.\|_L)$ lies in the maximal compact group of $E_{X/K}(\R)$, and is the image of a $K$-rational point of $E_{X/K}.$ According to (iii), it is a torsion point. This implies that $L$ has a torsion class in $\Pic (X)$, and that $\nabla_{\overline{L}}$ coincides with the connection $\nabla_{L,\C}^{\rm tor}.$ This establishes that $\overline{L}$ satisfies the conclusion of (i), and consequently, as observed above, of (ii).
\qed

\subsection{Finiteness results on the kernel of $\hat{c}^H_1$}
We may use Theorem \ref{eqcond} to investigate the kernel of the first Chern class in arithmetic Hodge cohomology.  Indeed  this Theorem 
easily leads to a derivation of the assertion $\mathbf{I2}_{X,\Sigma}$ in the Introduction (which conversely contains Part (ii) of Theorem \ref{eqcond}):

\begin{corollary}\label{findex}
The image of
$$\pi^\ast: \widehat{\rm Pic} 
(\Spec K) \longrightarrow \widehat{\rm Pic}
(X)$$ 
has finite index in the kernel
of
\[
\hat c_1^H:\widehat{\rm Pic} (X) \longrightarrow
\widehat{H}^{1,1}(X/K).
\]
\end{corollary}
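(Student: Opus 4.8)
The plan is to derive the statement from part (ii) of Theorem~\ref{eqcond}, which shows that the quotient in question is a torsion group, combined with the finite generation of $\Pic(X)$, which turns such a torsion group into a finite one. Write $N:=\Ker\bigl(\hat c_1^H\colon \widehat{\rm Pic}(X)\to \widehat H^{1,1}(X/K)\bigr)$ and $M:=\pi^\ast\bigl(\widehat{\rm Pic}(\Spec K)\bigr)$. First I would record that $M\subseteq N$: here the base is $S=\Spec K$, so $\Omega^1_{\Spec K/S}=0$ and $\hat c_1^H$ vanishes identically on $\widehat{\rm Pic}(\Spec K)$; the functoriality $\hat c_1^H(X/S,\,\cdot\,)\circ\pi^\ast=\pi^\ast\circ\hat c_1^H(\Spec K/S,\,\cdot\,)$ recalled in \ref{secfcc} then gives $\hat c_1^H\circ\pi^\ast=0$.

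The key step is to identify $M$ with the kernel of the forgetful homomorphism $u\colon\widehat{\rm Pic}(X)\to\Pic(X)$, $[(L,\|\cdot\|)]\mapsto[L]$, restricted to $N$. An element of $\Ker(u|_N)$ is represented by $(\cO_X,\|\cdot\|)$ with $\hat c_1^H=0$; since $\iota$ is injective and diagram (\ref{arfcc2}) commutes, the vanishing of $\hat c_1^H$ forces the first Chern form $c_1(\cO_X,\|\cdot\|)$ to vanish, so $\log\|1\|^2$ is pluriharmonic on each component $X_\sigma(\C)$. As these components are compact, $\log\|1\|^2$ is constant on each of them, whence $(\cO_X,\|\cdot\|)$ lies in $M$. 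The reverse inclusion being clear, $u$ induces an injection $N/M\hookrightarrow \Pic(X)$.

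It then remains to bound the image. By Theorem~\ref{eqcond}(ii), for each class in $N$ a positive power lies in $M$, so $u(N)\cong N/M$ is a torsion subgroup of $\Pic(X)$, i.e.\ $u(N)\subseteq \Pic(X)_{\mathrm{tors}}$. Finally I would invoke the finite generation of $\Pic(X)$: the N\'eron--Severi group $\NS(X)$ is finitely generated by the theorem of the base, and $\Pico(X)\subseteq\Pico_{X/K}(K)$ is finitely generated by the Mordell--Weil theorem, so $\Pic(X)$ is finitely generated and its torsion subgroup is finite. Hence $N/M$ is finite, which is the assertion.

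The substantive input, Theorem~\ref{eqcond}(ii), is already available, so I expect no serious obstacle; the only points requiring care are the identification $\Ker(u|_N)=M$ --- where the compactness of the components $X_\sigma(\C)$ is what upgrades ``pluriharmonic'' to ``constant'', thereby killing the continuous (metric) directions of $\widehat{\rm Pic}(X)$ --- and the appeal to Mordell--Weil and the theorem of the base at the end, which is what ultimately makes the torsion quotient finite rather than merely of bounded exponent.
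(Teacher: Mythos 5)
Your proof is correct and follows essentially the same route as the paper's: there too one shows $\Ker(\hat c_1^H)\cap\Ker(w)=\Im\,(\pi^\ast)$ for the forgetful map $w\colon\widehat{\rm Pic}(X)\to\Pic(X)\hookrightarrow{\rm Pic}_{X/K}(K)$ (using that a hermitian metric with vanishing curvature on $\cO_X$ is constant on each compact component $X_\sigma(\C)$), then embeds $\Ker(\hat c_1^H)/\Im\,(\pi^\ast)$ into the torsion subgroup of ${\rm Pic}_{X/K}(K)$ and concludes by Kleiman's finiteness theorem for the N\'eron--Severi group together with Mordell--Weil. Your only deviations --- citing Theorem \ref{eqcond}(ii) rather than (iii), and injecting into $\Pic(X)$ instead of ${\rm Pic}_{X/K}(K)$ --- are immaterial.
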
 

\proof
A hermitian metric with curvature zero on the trivial line bundle on $X$ is constant  on every component $X_\sigma(\C)$ of $X_\Sigma(\C).$
Therefore, if we introduce the canonical map 
\[
w:\widehat{\rm Pic} (X)\rightarrow {\rm Pic} (X)\hookrightarrow {\rm Pic}_{X/K}(K), 
\]
then we have:
\[
\Ker(\hat{c}_1^H)\cap \Ker(w)= {\Im} \bigl(\pi^*\colon \widehat{\rm Pic} (S){\rightarrow}
\widehat{\rm Pic} (X)\bigr).
\]
Hence the map $w$ induces an injection of 
\begin{equation}\label{arhomfin}
\frac{
\Ker\bigl({\hat c^H_1}\colon \widehat{\rm Pic} (X){\longrightarrow} 
\widehat{\rm Ext}_X^1(\mathcal{O}_X,\Omega^1_{X/K})\bigr)}
{\Im \bigl(\pi^*\colon \widehat{\rm Pic} (\Spec \, K){\longrightarrow}
\widehat{\rm Pic} (X)\bigr)}
\end{equation}
into ${\rm Pic}_{X/K}(K)$.
Theorem \ref{eqcond} (iii) implies that the image of (\ref{arhomfin}) is contained in the torsion subgroup
of ${\rm Pic}_{X/K}(K)$\footnote{Actually this morphism factorizes through the torsion subgroup ${\rm Pic}(X)_{\rm tor}$ of ${\rm Pic}(X)$, and one may easily show that the so defined injection $\Ker (\hat c^H_1)/\Im (\pi^\ast) \rightarrow {\rm Pic}(X)_{\rm tor}$ is an isomorphism.}. This is a finite group as the N\'eron-Severi group 
\[
NS_{X/K}(\overline{K})
={\rm Pic}_{X/K}(\overline K)/{\rm Pic}^0_{X/K}(\overline K)
\] 
and ${\rm Pic}^0_{X/K}(K)$ are finitely 
generated abelian groups by \cite[Th. 5.1]{kleiman71} and the theorem of 
Mordell-Weil.
\qed

We may also establish a similar finiteness result where the base scheme $\Spec K$ is replaced by an ``arithmetic curve":
 
\begin{corollary} 
Let $\cO_K$ denote the ring of integers in a number field $K$, and let us work 
 over the arithmetic ring  
$(\cO_K,\Sigma,F_\infty).
$
Let $S$ denote a non-empty open subset  of ${\rm Spec}\,\cO_K$, and let
$X$ be a smooth projective $S$-scheme with geometrically connected fibers.
Then
\begin{equation}\label{arhomfin2}
\frac{
\Ker\bigl({\hat c^H_1}\colon \widehat{\rm Pic} (X){\longrightarrow} 
\widehat{\rm Ext}^1_X(\mathcal{O}_X,\Omega^1_{X/S})\bigr)}
{\Im \bigl(\pi^*\colon \widehat{\rm Pic} (S){\longrightarrow}
\widehat{\rm Pic} (X)\bigr)}
\end{equation}
is a finite group.
\end{corollary}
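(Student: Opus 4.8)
The plan is to deduce the statement from Corollary~\ref{findex} by restricting everything to the generic fibre. Write $\eta=\Spec K$ for the generic point of $S$ and $X_K:=X\times_S\eta$ for the generic fibre, a smooth, projective, geometrically connected variety over $K$ that I regard as an arithmetic scheme over $(K,\Sigma,F_\infty)$; denote by $\pi_\eta\colon X_K\to\Spec K$ its structure morphism and by $\pi=\pi_S\colon X\to S$ the given one. Since $\eta\to S$ is an open immersion, $X_\sigma(\C)=(X_K)_\sigma(\C)$ for every $\sigma\in\Sigma$ and the restriction of $\Omega^1_{X/S}$ to $X_K$ is $\Omega^1_{X_K/K}$. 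First I would note that pullback along the open immersion $j\colon X_K\hookrightarrow X$, which lies over $j_0\colon\eta\hookrightarrow S$, is compatible with $\pi^\ast$ and with $\hat{c}_1^H$, by the functoriality of the arithmetic Atiyah class recorded in Proposition~\ref{tpla} and Lemma~\ref{pullbackconnection}. Hence $j^\ast$ and $j_0^\ast$ induce a homomorphism $\bar{j}\colon Q_S\to Q_K$, where
\[
Q_S:=\frac{\Ker\bigl(\hat c^H_1\colon \widehat{\rm Pic}(X)\to \widehat{\rm Ext}_X^1(\cO_X,\Omega^1_{X/S})\bigr)}{\Im\bigl(\pi^\ast\colon \widehat{\rm Pic}(S)\to\widehat{\rm Pic}(X)\bigr)}
\]
and $Q_K$ is the analogous quotient for $X_K/\Spec K$. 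By Corollary~\ref{findex} the group $Q_K$ is finite, so it is enough to prove that $\bar{j}$ is injective.

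To prove injectivity I would start from a class $[\ol L]\in Q_S$ with $j^\ast\ol L=\pi_\eta^\ast\ol N$ for some $\ol N\in\widehat{\rm Pic}(\Spec K)$. As $\Pic(\Spec K)=0$, the bundle $\ol N$ has trivial underlying line bundle and is given by archimedean data alone, $\ol N=(K,(\mu_\sigma)_{\sigma\in\Sigma})$; it extends to $\ol M:=(\cO_S,(\mu_\sigma)_{\sigma\in\Sigma})\in\widehat{\rm Pic}(S)$ with $j_0^\ast\ol M=\ol N$, which shows in passing that $j_0^\ast$ is surjective. Replacing $\ol L$ by $\ol L\otimes\pi_S^\ast\ol M^{-1}$ --- an operation that alters neither the class of $[\ol L]$ in $Q_S$ nor the vanishing of $\hat c_1^H$, since $\Im(\pi_S^\ast)\subseteq\Ker(\hat c_1^H)$ --- I reduce to the case $j^\ast\ol L=0$ in $\widehat{\rm Pic}(X_K)$.

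It then remains to establish the inclusion $\Ker(j^\ast)\cap\Ker(\hat c_1^H)\subseteq\Im(\pi_S^\ast)$, where $j^\ast\colon\widehat{\rm Pic}(X)\to\widehat{\rm Pic}(X_K)$, which I would argue in two steps. On the algebraic side, $X$ is regular and $\pi$ is smooth with geometrically connected fibres, so each fibre $X_s$ over a closed point $s\in S$ is an integral divisor with $\cO_X(X_s)=\pi^\ast\cO_S(s)$; the localization sequence for $\Pic(X)$ then yields $\Ker(\Pic(X)\to\Pic(X_K))=\pi^\ast\Pic(S)$, so $L\cong\pi^\ast M'$ for some $M'\in\Pic(S)$. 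On the archimedean side, $\hat c_1^H(\ol L)=0$ forces $c_1(\ol L)=0$ by the commutativity of~(\ref{arfcc2}), i.e. the Chern curvature of $\ol L$ vanishes on each $X_\sigma(\C)$; since $S_\sigma(\C)$ is a point, $\ol L$ restricts over $X_\sigma(\C)$ to the pullback of the one-dimensional space $M'_\sigma$ carrying a curvature-zero metric, and the maximum principle on the compact connected manifold $X_\sigma(\C)$ shows that this metric is constant along $\pi_\sigma$, hence descends to $M'_\sigma$. Assembling these descended metrics produces a hermitian structure $\ol{M'}$ on $M'$ with $\ol L\cong\pi_S^\ast\ol{M'}$, which gives the inclusion and thus the injectivity of $\bar{j}$, completing the proof. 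The main obstacle is precisely this last descent of the hermitian metric: one has to combine the vanishing of the relative curvature coming from $\hat c_1^H(\ol L)=0$ with the compactness and connectedness of the fibres $X_\sigma(\C)$ to conclude that the metric is pulled back from $S$, the descent of the underlying line bundle being the routine fact that, over a base with connected fibres, vertical line bundles are pullbacks.
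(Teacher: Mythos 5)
Your proof is correct and is essentially the paper's own argument: both embed the quotient (\ref{arhomfin2}) into the generic-fibre quotient (\ref{arhomfin}) via restriction to $X_K$, using the exactness of $\Pic(S)\to\Pic(X)\to\Pic(X_K)$ (which holds because the fibres are integral) together with the surjectivity of $\widehat{\rm Pic}(S)\to\widehat{\rm Pic}(\Spec K)$ to reduce injectivity to the inclusion $\Ker(j^\ast)\cap\Ker(\hat c_1^H)\subseteq \Im(\pi^\ast)$, and then invoke the finiteness of (\ref{arhomfin}) from Corollary \ref{findex}. The only divergence is your maximum-principle descent of the hermitian metric, which the paper short-circuits: since $X_\Sigma(\C)=(X_K)_\Sigma(\C)$, an element of $\Ker(j^\ast)$ already carries a metric that becomes the constant trivial metric under a generic trivialization, so the curvature-vanishing argument, while valid, is not needed.
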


\proof
Let $X_K$ denote the fiber of $X$ over ${\rm Spec}\,K$.
We consider $X_K$ as an arithmetic scheme over the 
arithmetic field $K=(K,\Sigma,F_\infty)$.
There is a canonical restriction map
\[
\nu:\widehat{\rm Pic} (X)\longrightarrow \widehat{\rm Pic} (X_K).
\]
Any element in $\Ker\,\nu\cap \Ker \,\hat{c}_1^H(X/S,\, .\,)$
is generically trivial and carries a constant metric. 
The sequence
\[
{\rm Pic}(S)\longrightarrow {\rm Pic}(X)\longrightarrow {\rm Pic}(X_K)
\]
is exact as the fibers of $X/S$ are integral.
Hence
\[
\Ker\,(\nu)\cap \Ker \,\hat{c}_1^H(X/S,\, .\,)\subseteq
{\Im\, \bigl(\pi^*\colon \widehat{\rm Pic} (S){\longrightarrow}
\widehat{\rm Pic} (X)\bigr)}.
\]
Moreover $\nu$ maps $\Im \bigl(\pi^*\colon \widehat{\rm Pic} (S){\longrightarrow}
\widehat{\rm Pic} (X)\bigr)$ onto $\Im \bigl(\pi^*\colon \widehat{\rm Pic} (\Spec K){\longrightarrow}
\widehat{\rm Pic} (X_K)\bigr)$. Consequently it induces an embedding of (\ref{arhomfin2}) into (\ref{arhomfin}).
The latter group is finite by Theorem \ref{eqcond}. 
Our claim follows.
\qed

%\begin{theorem}\label{thI} The equivalent conditions 
%hold when one of the following conditions is satisfied:

%(i) $\Sigma=\{\sigma\}$ for some real embedding $\sigma$ of $K;$

%(ii) $\Sigma=\{\sigma, \overline{\sigma}\}$ for some complex embedding $\sigma$ of $K$, and there exists an automorphism $\tau$ of the field $K$  such that $\overline{\sigma}= \sigma \circ \tau;$

%(iii) $\Sigma$ is the set of all field embeddings of $K$ in $\C$.

%\end{theorem}

%Condition (ii) may be rephrased as :

%\emph{
%(ii)' There exists a subfield $K_0$ of $K$ such that $[K:K_0]=2$ and a real embedding $\sigma_0$ of $K_0$ that is ramified in $K$, and $\Sigma$ is the pair of complex embeddings of $K$ that extend $\sigma_0.$
%}

\section{A geometric analogue}\label{sect4}

\subsection{Line bundles with vanishing relative Atiyah class on 
fibered projective varieties}

\subsubsection{Notation}

 In this section, we consider a smooth projective geometrically connected curve $C$ over a field $k$ of characteristic $0$, and a smooth projective %geometrically connected
 variety $V$ over $k$ equipped with a dominant $k$-morphism $\pi:V \rightarrow C$, with geometrically connected fibers.
 
 Observe that the morphism $\pi$ is flat, and smooth over 
an open dense subscheme of $C$, namely over the complement of the finite set $\Delta$  of closed points $P$ 
in $C$ such that the (scheme theoretic) fiber $\pi^\ast(P)$ is not smooth over $k$. 
 
 Let $K:=k(C)$ denote the function field of $C$. The generic fiber $V_K$ of $\pi$ is a smooth projective geometrically connected variety over $K.$ Conversely, according to Hironaka's resolution of singularities, any such variety over $K$ may be constructed from the data of a $k$-variety $V$ and of a $k$-morphism $\pi:V \rightarrow C$ as above.
 
 Recall also that a divisor $E$ in $V$ is called \emph{vertical} if it belongs to 
the group of divisors generated by components of closed fibers of $\pi$, or 
equivalently, if its restriction $E_K$ to the generic fiber $V_K$ of $V$ vanishes.

In the sequel, we  assume that the dimension $n$ of $V$ is at 
least $2$. Moreover  we  choose an ample line bundle  $\cO(1)$ over $V$,
we  denote $H$ its first Chern class in the Chow group $CH^1(X),$ and for any integral subscheme $D$ of positive dimension in $V$ and any line bundle $L$ over $V$,  we let:
$$\deg_{H,D} L:= \deg_k (c_1(L). H^{\dim D -1}.[D]).$$

Actually, we shall use this definition only when $D$ is a vertical 
divisor in $V$. Consequently, we could require $\cO(1)$ to be ample 
relatively to $\pi$ only, and when $n=2$ the choice of $\cO(1)$ is 
immaterial.

Observe that, if $\cO(1)$ is very ample and defines a projective embedding $\iota: V \hookrightarrow \P^N_k$, then, for any general enough $(\dim D -1)$-tuple $(H_1,\ldots,H_{\dim D -1})$ of projective hyperplanes in $\P^N_k$, the subscheme
$$C:= D \cap \iota^{-1}(H_1) \cap\ldots\cap\iota^{-1}(H_{\dim D -1})$$
in $\P^N_k$ is integral,  one-dimensional, and projective  over $k$, and its class $[C]$ in $CH_1(X)$ coincides with $H^{\dim D -1}.[D]$. Consequently  $\deg_{H,D} L$ is nothing but the degree $\deg_k c_1(L).[C]$ of the restriction of $L$ to the  ``general linear section" $C$ of $D$. 

Let us recall that, if $M$ is a smooth projective geometrically connected scheme over some field $k_0$ of characteristic zero, then the Picard functor $\Pic_{M/k_{0}}$ is representable by a separated group scheme over $k_0$, and that its identity component $\Pico_{M/k_{0}}$ is an abelian variety over $k_0$. A line bundle $L$ over $M$ is algebraically equivalent to zero\footnote{The reader should beware that, here as in the previous section, we use a ``geometric" definition of ``algebraically equivalent to zero", related as  follows to the one occuring in \cite{FultonIT}, 10.3: for any divisor $D$ in $M$ and any algebraic closure ${\overline{k}_0}$ of $k_0$, the line bundle $\cO(D)$ is algebraically equivalent to zero in our ``geometric"  sense  iff the divisor $D_{\overline{k}_0}$ on $M_{\overline{k}_0}$ is algebraically equivalent to zero in Fulton's sense. Also observe that (the first Chern class of) a line bundle on $M$  algebraically equivalent to zero in the above sense is numerically equivalent to zero in the sense of Fulton \cite{FultonIT}, 19.1.  In particular, with the notation of the previous paragraphs, for any line bundle $L$ algebraically equivalent to zero over $V$, $\deg_{H,D} L$ vanishes.} when the point in $\Pic_{M/k_{0}}(k_0)$ it defines belongs to  $\Pico_{M/k_{0}}(k_0)$, or equivalently, if its class in the N\'eron-Severi group of $M$ over $k_0$ --- defined as $\Pic_{M/k_{0}}(k_0)/\Pico_{M/k_{0}}(k_0)$ --- vanishes.

In particular, we may consider   the identity component  $\Pico_{V_K/K}$ of the Picard variety of the generic fiber $V_K$ of $\pi$; it is an abelian variety over $K$, and we shall denote $(B, \tau)$ its $K/k$-trace. By definition, $B$ is an abelian variety over $k$, and $\tau$ is a morphism of abelian varieties over $K$:
$$\tau: B_K \longrightarrow \Pico_{V_K/K}.$$
Since the base field $k$ is assumed to be of characteristic zero, this morphism is actually a closed immersion. We refer the reader to
Section \ref{Picard} \emph{infra} for a discussion and references concerning the definition of   $\Pico_{V_K/K}$ and $(B,\tau).$
 
\subsubsection{} The following theorem may be seen as a geometric counterpart, valid 
 over the function field $K:=k(C),$ of the characterization of hermitian
line bundles with {\bf v}anishing arithmetic {\bf A}tiyah class in Theorem \ref{eqcond} ii).

\begin{theorem}\label{MainGeom}
%\emph{({\bf V}anishing of the relative {\bf A}tiyah class)}
With the above notation, for any line bundle $L$ over $V$, the following 
three conditions are equivalent:

{\bf VA1} The relative Atiyah class $\at^1_{V/C}(L)$ vanishes in
\[
\Ext_{\mathcal{O}_X}^1(L,L\otimes \Omega_{V/C}^1)\simeq
H^1(V,\Omega_{V/C}^1).
\]

{\bf VA2} There exist a positive integer $N$ and a line bundle $M$ over $C$ 
such that the line bundle $L^{\otimes{N}} \otimes \pi^\ast M$ 
is algebraically equivalent to zero.

{\bf VA3} There exists a positive integer $N$ such that the line bundle $L_K^{\otimes N}$ on $V_K$ 
is algebraically equivalent to zero, and the attached $K$-rational point of the Picard variety 
$\Pico_{V_K/K}$
is defined by a $k$-rational point of the $K/k$-trace of $\Pico_{V_K/K}$. Moreover, for any component $D$ of a closed fiber of $\pi,$ the degree
$\deg_{H,D}L$ vanishes. 

%When the $K/k$-trace of $\Pico_{V_K/K}$ vanishes, these conditions are also 
%equivalent to:
%\fnote{ We might quote \cite{conrad06} and \cite{kahn06} 
%concerning the $K/k$-trace.}

%(iv) There exists a positive integer $N$ and a line bundle $M$ over $C$ such 
%that the line bundle $L^{\otimes N}$ is isomorphic to $\pi^\ast M.$

\end{theorem}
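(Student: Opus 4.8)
The plan is to establish the cycle $\textbf{VA2}\Rightarrow\textbf{VA1}\Rightarrow\textbf{VA3}\Rightarrow\textbf{VA2}$, the two geometric inputs being the conormal exact sequence $0\to\pi^\ast\Omega^1_{C/k}\to\Omega^1_{V/k}\to\Omega^1_{V/C}\to0$, which relates the relative and absolute Atiyah classes, and the Hodge index theorem applied to vertical divisors. The implication $\textbf{VA2}\Rightarrow\textbf{VA1}$ is immediate: the relative Atiyah class is additive, and $\at_{V/C}(\pi^\ast M)=0$ since $\pi^\ast M$ carries the obvious relative connection (its transition functions are pulled back from $C$, hence relatively flat); thus $N\cdot\at_{V/C}(L)=\at_{V/C}(L^{\otimes N}\otimes\pi^\ast M)=0$, and since $H^1(V,\Omega^1_{V/C})$ is a vector space over the characteristic-zero field $k$, this forces $\at_{V/C}(L)=0$.

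For $\textbf{VA1}\Rightarrow\textbf{VA3}$, vanishing of $\at_{V/C}(L)$ means $L$ admits a relative connection $\nabla\colon L\to L\otimes\Omega^1_{V/C}$. Restricting $\nabla$ to the generic fibre produces a connection on $L_K$, so $\at_{V_K/K}(L_K)=0$ and some power $L_K^{\otimes N}$ is algebraically equivalent to zero. For the degree condition, let $D$ be a component of a closed fibre and $\gamma\subset D$ a general linear section curve; since $\gamma$ maps to a single point of $C$, the natural map $\Omega^1_{V/C}|_\gamma\to\Omega^1_{\gamma/k}$ is surjective even when $\gamma$ lies in a non-reduced fibre, so $\nabla$ induces a connection on the pullback of $L$ to the normalisation $\widetilde\gamma$. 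A line bundle with a connection on a smooth projective curve in characteristic zero has degree zero, whence $\deg_{H,D}L=\deg(L|_{\widetilde\gamma})=0$.

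It remains, for $\textbf{VA1}\Rightarrow\textbf{VA3}$, to prove the trace condition; here we may replace $L$ by $L^{\otimes N}$, so that $L_K$ is algebraically equivalent to zero. The connection $\nabla$ is then flat: its relative curvature lies in $H^0(V,\Omega^2_{V/C})$ and injects into $H^0(V_K,\Omega^2_{V_K/K})$, where it vanishes because the curvature of a connection on an algebraically trivial line bundle defines a morphism from the proper connected group $\Pico_{V_K/K}$ to a vector group. The flat pair $(L,\nabla)$ thus defines, over the smooth locus of $\pi$, a section of the relative universal vector extension of $\Pico_{V/C}$ that is horizontal for its canonical Gauss--Manin connection; by the theorem of the fixed part, such a horizontal section is constant, so the point of $\Pico_{V_K/K}(K)$ attached to $L_K$ comes from the $K/k$-trace $(B,\tau)$. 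Finally, for $\textbf{VA3}\Rightarrow\textbf{VA2}$, after a power the generic class of $L$ equals $\tau(b)$ for some $b\in B(k)$; reversing the trace correspondence gives a line bundle $L_0$ on $V$, algebraically equivalent to zero, agreeing with $L_K$ on the generic fibre up to torsion, so that after a further power $L\otimes L_0^{-1}\cong\cO(E)$ for a vertical divisor $E$ with $\deg_{H,D}\cO(E)=\deg_{H,D}L-\deg_{H,D}L_0=0$ for every fibre component $D$. Applying the Hodge index theorem to a general complete-intersection surface $V\cap H_1\cap\cdots\cap H_{n-2}$, with $H_i\in|\cO(1)|$, shows that the intersection form on the components of a fixed fibre is negative semidefinite with kernel the class of the whole fibre (the generalised Zariski lemma); hence $E$ is numerically a rational pullback from $C$, some power $\cO(E)^{\otimes m}$ is $\pi^\ast M'$ modulo an algebraically trivial bundle, and $L^{\otimes m}\otimes\pi^\ast (M')^{-1}$ is algebraically equivalent to zero.

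The step I expect to be the main obstacle is the trace condition in $\textbf{VA1}\Rightarrow\textbf{VA3}$: upgrading the mere existence of a global relative connection to horizontality of the associated section for the Gauss--Manin connection, and then invoking the theorem of the fixed part to place the generic-fibre point in the $K/k$-trace. The generalised Zariski lemma used in $\textbf{VA3}\Rightarrow\textbf{VA2}$ is the second key ingredient, but it is the more classical of the two.
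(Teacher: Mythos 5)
Your cycle does not close: the trace condition in \textbf{VA1}$\Rightarrow$\textbf{VA3} contains a genuine gap, and it is exactly at the heart of the theorem. You assert that the section of the relative universal vector extension determined by the flat pair $(L,\nabla)$ is \emph{horizontal} for the canonical Gauss--Manin connection, but nothing you have established implies this. Over any open $U$ of the smooth locus, \emph{every} section of ${\rm Pic}^{\#,0}_{V_U/U}$ is (up to a twist by a pullback from $U$) represented by a pair $(L,\nabla)$ with a globally defined flat relative connection on $\pi^{-1}(U)$ --- that is what ${\rm Pic}^{\#,0}$ classifies (Appendix B.2) --- so the existence of a relative flat connection per se carries no horizontality information. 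Nor does the full hypothesis \textbf{VA1} yield it formally: for a non-isotrivial abelian scheme over a \emph{projective} curve (such families exist, e.g.\ over compact Shimura curves), any global section of the universal vector extension already produces $(L,\nabla)$ on the smooth projective total space with $\at_{V/C}(L)=0$, so ``horizontality of the section'' there is equivalent to the statement being proved, not an input. Quantitatively: \textbf{VA1} says $c_1(L)\in\ker H^1(p)$, i.e.\ $c_1(L)$ lifts to $H^1(V,\pi^\ast\Omega^1_{C/k})$; by Leray this lift has a component in $H^1(C,\Omega^1_{C/k})$ (a multiple of $F=\pi^\ast\mu_C$) and a component in $H^0(C,\Omega^1_{C/k}\otimes R^1\pi_\ast\cO_V)$, and the latter is precisely the Manin-type derivative measuring non-horizontality of the generic-fibre point; \textbf{VA1} gives no vanishing of it. The paper kills this component by a genuinely global argument: since $\Omega^2_{C/k}=0$, cup products of classes in $\ker H^1(p)$ vanish (Lemma \ref{vanishing}), so $c_1(L)^2.h^{d-2}=c_1(L).F.h^{d-2}=0$, and the Hodge index theorem (Proposition \ref{quelbeta}) then forces $c_1(L)\in\Q.F$, giving \textbf{VA2} directly --- this is where the projectivity of $V$ enters, and your proposal has nothing in its place. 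Even granting horizontality, the deduction ``horizontal section lies in the trace up to torsion'' is Manin's theorem of the kernel (correctly proved only by Chai, via Deligne's fixed part theorem), a much heavier external input than anything the paper invokes; the fixed part theorem alone does not apply to a merely holomorphic, non-flat section.

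The remaining steps are essentially fine, and partly different from the paper in an interesting way. In \textbf{VA2}$\Rightarrow$\textbf{VA1} you should say explicitly that a line bundle algebraically equivalent to zero has vanishing first Chern class in Hodge cohomology (\ref{fcihc}), hence vanishing absolute and a fortiori relative Atiyah class --- the paper derives this implication instead from the exact sequence $0\to\pi^\ast\Omega^1_{C/k}\to\Omega^1_{V/k}\to\Omega^1_{V/C}\to0$ and the identity $\at_{V/C}L=H^1(p)(\at_{V/k}L)$. Your degree argument --- restricting the relative connection to the normalization of a general linear section $\gamma$ of a fibre component, valid because the reduced curve $\gamma$ maps to a point of $C$ so that $\Omega^1_{V/C}|_\gamma\to\Omega^1_{\gamma/k}$ is surjective and Lemma \ref{pullbackconnection} applies --- is correct and is a nice alternative route: the paper instead obtains the degree conditions only within \textbf{VA2}$\Rightarrow$\textbf{VA3}, from invariance of $\deg_{H,D}$ under algebraic equivalence and the projection formula. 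Your \textbf{VA3}$\Rightarrow$\textbf{VA2} is the paper's argument (Corollary \ref{imtau} via Proposition \ref{Kktrace}, then Lemma \ref{vertdiv}); your reduction of the Zariski lemma to a general complete-intersection surface is an acceptable variant of the paper's direct quadratic-form computation. But as proposed, the implication from \textbf{VA1} to the trace condition is unproven, so the equivalence is not established.
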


Observe that, for any closed point $P$ of $C\setminus \Delta,$ its fiber $D:= \pi^\ast (P)$ is a divisor in $V$, smooth and geometrically connected over $k(P)$, and that, according to the projection formula, 
\begin{align*} 
\deg_{H,D}L & = \deg_k (c_1(L).H^{n-2}.[\pi^\ast(P)]) \\
& = \deg_k (\pi_\ast(c_1(L).H^{n-2}).[P])\\
& = [k(P):k]. \deg_K (c_1(L_K).c_1(\cO(1)_K)^{\dim V_K -1}.[V_K]).
\end{align*}
In particular, if some positive power $L_K^{\otimes N}$ of $L_K$ is algebraically equivalent to zero, then $\deg_{H,D}L$ vanishes.
Consequently, in condition {\bf VA3}, we may 
require the vanishing of $\deg_{H,D}L$ only for  components $D$ of 
the supports of the singular fibers $\pi^\ast(P),$ where  $P$ varies in $\Delta.$

The proof of the equivalence of conditions {\bf VA1} and {\bf VA2}, which uses the Hodge index theorem and basic 
properties of Hodge cohomology groups, will be presented in 
Sections \ref{HIT} and \ref{VA12} below. Then in Section 
\ref{Picard} and \ref{VA23} we shall recall some classical facts 
concerning the Picard variety $\Pico_{V_K/K}$ and its $K/k$-trace, 
and establish the equivalence of conditions {\bf VA2} and {\bf VA3}.

\subsection{Variants and complements}
Before we enter into the proof of Theorem \ref{MainGeom}, we discuss some
variants and related statements. Observe that the variants in \ref{varnfp}
make Theorem \ref{MainGeom} more similar to its "arithmetic counterpart" in
Theorem \ref{eqcond} ii), whereas Proposition \ref{affineva} would rather make less convincing
the analogy between the arithmetic framework in Section \ref{sect3}
and the geometric framework of the present section.

\subsubsection{}\label{varnfp} Recall that the following conditions are equivalent 
--- when they  hold, the Picard variety 
$\Pico_{V_K/K}$ will be said to have \emph{no fixed part}:

{\bf NFP1}\emph{ The $K/k$-trace of $\Pico_{V_K/K}$ vanishes,} or in other terms, \emph{for any abelian variety $A$ over $k$, there is no non-zero morphism of abelian varieties over $K$ from $A_K$ to $\Pico_{V_K/K}$.}

{\bf NFP2}\emph{ The morphism of $k$-abelian varieties naturally deduced 
from} $\pi: V \longrightarrow C$
$$\pi^\ast:\Pico_{C/k} \longrightarrow \Pico_{V/k}$$
 --- which has a finite kernel --- \emph{is an isogeny.}

{\bf NFP3}\emph{ The injective morphism of $k$-vector spaces
$$\pi^\ast: H^1(C, \cO_C) \longrightarrow H^1(V,\cO_V)$$
is an isomorphism.}

{\bf NFP4}\emph{ The injective morphism of $k$-vector spaces
$$\pi^\ast:H^0(C, \Omega^1_{C/k}) \longrightarrow H^0(V,\Omega^1_{V/k})$$
is an isomorphism.}

A few comments on these conditions may be appropriate.

The injectivity of $\pi^\ast$ in {\bf NFP2} may be derived by considering a smooth projective geometrically connected curve $C'$ in $V$ such that the morphism $\pi_{\mid C'}: C' \rightarrow C$ is finite. Let $i:C' \hookrightarrow V$ denote the inclusion morphism. The norm with respect to $\pi_{\mid C'}$ defines a morphism 
$\pi_{\mid C'\ast}: \Pico_{C'/k} \rightarrow \Pico_{C/k}$ of abelian varieties over $k$,
and the morphisms of abelian varieties $\pi^\ast$, $\pi_{\mid C'\ast}$, 
$\pi_{\mid C'}^{\ast}: \Pico_{C/k} \rightarrow \Pico_{C'/k},$  and $i^\ast: \Pico_{V/k} \rightarrow \Pico_{C/k}$ satisfy the relations
$$\pi_{\mid C'}^{\ast} =i^\ast \circ \pi^\ast$$
and $$\pi_{\mid C'\ast} \circ \pi_{\mid C'}^{\ast} = [\delta],$$
where $[\delta]$ denotes the morphism of multiplication by the degree $\delta$ of $\pi_{\mid C'}$ in $\Pico_{C/k}.$ This immediately implies that the kernel of $\pi^\ast$ is a subgroup of the $\delta$-torsion in $\Pico_{C/k}.$ 

The injectivity of $\pi^\ast$ in {\bf NFP4} is a consequence of the generic smoothness of the dominant morphism $\pi$ (recall that the base field $k$ is assumed to have characteristic zero). The injectivity of $\pi^\ast$ in {\bf NFP3} and the
equivalence of {\bf NFP3} and 
{\bf NFP4} follows from Hodge theory when $k=\C$, and therefore, by a 
standard base change argument, for any base field $k$ of characteristic  
zero.

The equivalence of {\bf NFP1} and {\bf NFP2} follows from the 
description of the $K/k$-trace of $\Pico_{V_K/K}$ recalled in
Proposition \ref{Kktrace} below. Finally, the equivalence of {\bf NFP2} and 
{\bf NFP3} follows from the identification of $H^1(C, \cO_C)$ (resp. 
$H^1(V,\cO_V)$) with $\Lie\,\Pico_{C/k}$ (resp. $\Lie\,\Pico_{V/k}$).

As demonstrated by the theorem of Mordell-Weil-Lang-N\'eron, it is natural to require a no fixed part condition when searching for statements valid over function fields that are as close as possible to their  arithmetic counterparts. This is indeed the case with Theorem \ref{MainGeom}. Namely, when $\Pico_{V_K/K}$ has no fixed part,  Conditions {\bf VA1-3}  are also equivalent to the following ones, which look more closely like the conditions appearing in i) and ii) of the ``arithmetic" Theorem \ref{eqcond}:

{\bf VA2'} \emph{
There exists a positive integer $N$ and a line bundle $M$ over $C$ 
such that the  line bundle $L^{\otimes N}$ is isomorphic to $\pi^\ast M.$}

{\bf VA3'} \emph{ The  class of $L_K$ in the  abelian group 
$\Pic_{V_K/K}(K)$  is torsion. Moreover, for any component $D$ of a closed fiber of $\pi,$ the degree
$\deg_{H,D}L$ vanishes. 
}

Indeed, the equivalence of {\bf VA3} and {\bf VA3'} when {\bf NFP1} holds is straightforward, and the equivalence of {\bf VA2} and {\bf VA2'} easily follows from {\bf NFP2}.

\subsubsection{} Generalizations of Theorem \ref{MainGeom} concerning a smooth projective variety $V$ over $k$ fibered over a projective variety $C$ of dimension $>1$ may be deduced from its original version with $C$ a curve by means of standard techniques, as in the proof of the Mordell-Weil-Lang-N\'eron theorem (\cf \cite{LangNeron59}). We leave this  to the interested reader.

\subsubsection{}
Finally observe that when the base $C$ is assumed to be affine instead of
projective, the determination of line bundles with vanishing relative
Atiyah class becomes a rather straightforward issue. For instance, we have:

\begin{proposition}\label{affineva}
Let $C$ be an affine integral scheme of finite type over a field $k$ of 
characteristic zero, and let $K:=k(C)$ denote its function field.
Let $\pi:V\rightarrow C$ be a smooth projective morphism, $L$ a line bundle 
over $V$, and $L_K$ the restriction of $L$ to the generic fibre $V_K$ of $\pi$.
The following conditions are equivalent:
\begin{enumerate}
\item[(i)]
the relative Atiyah class $\at_{V/C}^1(L)$ vanishes in
\[
\Ext^1_{\mathcal{O}_V}(L,L\otimes \Omega_{V/C}^1)\simeq H^1(V,\Omega_{V/C}^1);
\]
\item[(ii)]
the Atiyah class $\at_{V_K/K}(L_K)$ vanishes in $H^1(V_K,\Omega_{V_K/K})$;
\item[(iii)]
some positive power of $L_K$ is algebraically equivalent to zero over $V_K$.
\end{enumerate}
\end{proposition}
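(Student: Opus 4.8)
The plan is to prove the two equivalences (i) $\Leftrightarrow$ (ii) and (ii) $\Leftrightarrow$ (iii) separately. The equivalence (ii) $\Leftrightarrow$ (iii) is a purely ``geometric'' statement over the characteristic-zero field $K:=k(C)$ and is nothing but the equivalence already recorded in \ref{remcomplex} and \ref{prerem}: for the smooth projective $K$-scheme $V_K$, the vanishing of $\at_{V_K/K}(L_K)$ is equivalent to the rational vanishing of the first Chern class, hence (by \cite[II.2 Cor. 1 to Th. 2]{kleiman66}) to the algebraic equivalence to zero of some positive power of $L_K$. I would simply invoke this, reducing to the complex case of \ref{remcomplex} by base change to $\C$ (both the Atiyah class, by Lemma \ref{pullbackconnection}, and the notion of algebraic equivalence to zero, being defined geometrically, are stable under extension of the base field). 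So the real work is the equivalence (i) $\Leftrightarrow$ (ii), and this is where the hypothesis that $C$ is affine enters.

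For (i) $\Leftrightarrow$ (ii), I would consider the base-change square attached to the flat morphism $g:\Spec K\to C$ and the induced morphism $f:V_K\to V$. By Lemma \ref{pullbackconnection} i (compatibility of the jet extension, hence of the Atiyah class, with base change) the canonical restriction map
\[
r:H^1(V,\Omega^1_{V/C})\longrightarrow H^1(V_K,\Omega^1_{V_K/K})
\]
carries $\at^1_{V/C}(L)$ to $\at_{V_K/K}(L_K)$; this gives (i) $\Rightarrow$ (ii) at once and reduces the whole equivalence to the \emph{injectivity} of $r$. To analyse $r$ I would use that $C$ is affine to identify, via the Leray spectral sequence (whose terms $H^{\geq 1}(C,-)$ on coherent sheaves vanish), the group $H^1(V,\Omega^1_{V/C})$ with $H^0(C,R^1\pi_\ast\Omega^1_{V/C})$, and use flatness of $g$ (flat base change for higher direct images) to identify $H^1(V_K,\Omega^1_{V_K/K})$ with the generic stalk $(R^1\pi_\ast\Omega^1_{V/C})\otimes_{\cO_C}K$. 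Under these identifications $r$ becomes the localization map $M\to M\otimes_{\cO(C)}K$ of the $\cO(C)$-module $M:=H^0(C,R^1\pi_\ast\Omega^1_{V/C})$ at the generic point of the integral affine scheme $C$.

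The crux, and the only non-formal point, is therefore the injectivity of this localization map, i.e.\ the torsion-freeness of $M$. Here I would invoke the standard fact that, $\pi$ being smooth and projective and $k$ of characteristic zero, the Hodge cohomology sheaf $R^1\pi_\ast\Omega^1_{V/C}$ is locally free and its formation commutes with base change --- a consequence of the $E_1$-degeneration of the relative Hodge--de Rham spectral sequence together with the constancy of the Hodge numbers in a smooth proper family in characteristic zero. Being locally free on the integral scheme $C$, this sheaf is torsion-free, hence so is its module $M$ of global sections, and the localization $M\to M\otimes_{\cO(C)}K$ is injective. This yields the injectivity of $r$, whence (ii) $\Rightarrow$ (i), and completes the argument. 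The main obstacle is thus isolating and justifying this local-freeness input; once it is granted, everything else is either a citation or a formal manipulation of spectral sequences and base change, which is exactly why the affineness of $C$ makes the problem ``rather straightforward'' compared with the projective case treated in Theorem \ref{MainGeom}.
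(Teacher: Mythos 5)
Your proof is correct and follows essentially the same route as the paper's: the equivalence (i)$\Leftrightarrow$(ii) via the identification $H^1(V,\Omega^1_{V/C})\simeq H^0(C,R^1\pi_*\Omega^1_{V/C})$ over the affine base, together with the local freeness and base-change compatibility of $R^1\pi_*\Omega^1_{V/C}$ supplied by characteristic-zero Hodge theory, and (ii)$\Leftrightarrow$(iii) by the standard characterization of line bundles with vanishing first Chern class in Hodge cohomology (recorded in \ref{fcihc}, resting on \cite[II.2 Cor. 1 to Th. 2]{kleiman66}). You merely make explicit the torsion-freeness/localization step for the injectivity of the restriction map, which the paper's terse proof leaves implicit.
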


\proof
The equivalence (i)$\Leftrightarrow$ (ii) follows from the identification
\[
H^1(V,\Omega_{V/C}^1)\simeq H^0(C,R^1\pi_*\Omega_{V/C}^1)
\]
and from the fact that, since the base field has characteristic zero,
by Hodge theory the coherent sheaf $R^1\pi_*\Omega_{V/C}^1$ is a locally
free sheaf over $C$, the formation of which is actually compatible
with any base change.

The equivalence (ii)$\Leftrightarrow$ (iii) holds since the base field $K$
has characteristic zero (see for instance \ref{fcihc} below). 
\qed

\subsection{Hodge cohomology and first Chern class}\label{Hodge}

In this section, we review some basic properties of the Hodge cohomology of smooth projective varieties over fields of characteristic zero.  These properties are consequence of the duality theory for coherent sheaves on projective varieties, as explained in \cite{FGA}, expos\'e 149.

\subsubsection{Hodge cohomology groups}

Let $k$ be a field of characteristic  zero, and $\Sm_k$ the full subcategory 
of the category of $k$-schemes whose objects are smooth projective schemes 
$V$ over $k$.

To any object $V$ in $\Sm_k$ are attached his \emph{Hodge cohomology groups}:
\[ H^{p,q}(V/k):=H^q(V, \Omega^p_{V/k}).\]
These are finite dimensional $k$-vector spaces, and vanish if 
$\max (p,q) >d:=\dim V.$ Moreover,  the cup products
\[
\begin{array}{rcl}
 H^{p,q}(V/k) \times H^{p',q'}(V/k) & \longrightarrow   & H^{p+p',q+q'}(V/k)   \\
(\alpha, \alpha')  & \longmapsto   & \alpha.\alpha',  
\end{array}
\]
--- defined as the compositions of the products
\[H^q(V, \Omega^p_{V/k}) \times  H^{q'}(V, \Omega^{p'}_{V/k}) \longrightarrow
H^{q+q'}(V, \Omega^{p}_{V/k}\otimes\Omega^{p'}_{V/k})\] and of the mappings
\[ H^{q+q'}(V, \Omega^{p}_{V/k}\otimes\Omega^{p'}_{V/k})
\longrightarrow H^{q+q'}(V, \Omega^{p+p'}_{V/k})\]
deduced from the exterior product
$\wedge:  \Omega^{p}_{V/k}\otimes\Omega^{p'}_{V/k}
\longrightarrow \Omega^{p+p'}_{V/k}$ ---
make the direct sum $H^{\ast,\ast}(V/k):=\bigoplus_{(p,q)\in \N^2} H^{p,q}(V/k)$ 
a bigraded commutative\footnote{Namely, for any $\alpha$ (resp. $\alpha'$) in 
$H^q(V, \Omega^p_{V/k})$ (resp.
in $H^{q'}(V, \Omega^{p'}_{V/k})$), we have 
$\alpha.\alpha'=(-1)^{pp'+qq'} \alpha'.\alpha.$}  $k$-algebra.
% It is , as a 

%The cohomology group is a one dimensional $k$-vector space, with basis $1_V$, 
%if $V$ is a geometrically connected $k$-scheme. 

Moreover, the ``top-dimensional" Hodge cohomology group $H^{d,d}(V/k)$ is 
equipped with a canonical $k$-linear form:
\[\int_{V/k}. : H^{d,d}(V/k) \longrightarrow k,\]
and the attached $k$-bilinear map 
\[
\begin{array}{rrcl}
<.,.> :  & H^{\ast,\ast}(V/k) \times  H^{\ast,\ast}(V/k) & \longrightarrow & k   \\
  & (\alpha, \beta) & \longmapsto   & \int_{V/k} \alpha.\beta    
\end{array}
\]
is a perfect pairing.

In particular, when $V$ is  a geometrically connected $k$-scheme, or equivalently 
when the linear map
\[
\begin{array}{ccl}
 k & \longrightarrow   &  \Gamma(V, \cO_V) = H^{0,0}(V/k) \\
  \lambda & \longmapsto   & \lambda.1_V   
\end{array}
\]
 is an isomorphism, then the ``residue map" also is:
 \[ \int_{V/k}. :  H^{d,d}(V/k) \stackrel{\sim}{\longrightarrow} k.\]
 Then we denote $\mu_V$ the unique element in $H^{d,d}(V/k)$ such that
 \[\int_{V/k} \mu_V=1.\]

These constructions are compatible in an obvious sense with extensions of the base field $k$. Let us also indicate that, when $k=\C,$ the trace map 
\[\int_{V/\C}. : H^{d,d}(V/\C) \longrightarrow \C \]
satisfies the following compatibility relation with the Dolbeault isomorphism
$${\rm Dolb}_{\Omega^d_{V/\C}}:H^d(V,\Omega^d_{V/\C}) \longrightarrow H_{\rm{Dolb}}^d(V,\Omega^d_{V/\C})$$
(we follow the notation of \cite{bostkuennemann1}, A.5.1) and the integration of top degree forms:
$$\int_{V(\C)}. : A^{d,d}(V(\C)) \longrightarrow \C.$$
For any $\alpha$ in $A^{d,d}(V(\C))$, of class $[\alpha]$ in $H_{\rm{Dolb}}^d(V,\Omega^d_{V/\C})$, we have: 
$$\int_{V/\C} {\rm Dolb}_{\Omega^d_{V/\C}}^{-1}([\alpha]) = \varepsilon_d \frac{1}{(2\pi i)^d} \int_{V(\C)} \alpha,$$
where $\varepsilon _d$ denotes a sign, function of $d$ only, depending on the sign conventions followed  in duality theory (we refer the reader to \cite{deligne84}, Appendice, and \cite{sastrytong03} for discussions of this delicate issue).

\subsubsection{The first Chern class in Hodge cohomology}\label{fcihc}
 Any line bundle $L$ over some $V$ in $\Sm_k$ admits a first Chern class 
$c_1(L)$ in $H^{1,1}(V/k).$ It may be defined as the class 
$$\at_{X/k} L= \jet^1_{X/k} L$$
in
 \begin{align} \Ext^1_{{\cO_V}}(L, \Omega^1_{V/k}\otimes L) & \simeq
     \Ext^1_{{\cO_V}}(\cO_{V}, \Omega^1_{V/k} \label{tensL})\\
     & \simeq H^1(V, \Omega^1_{V/k}) \label{extiso}. 
     \end{align}
of the extension given by the principal parts of first order associated
with $L$
 \[\cjet^1_{X/k} L: 0 \longrightarrow \Omega^1_{X/k} \otimes L
 \longrightarrow P^1_{X/k}(L)
 \longrightarrow L \longrightarrow 0\]
  (see Section 1.2 above). The isomorphism (\ref{tensL})
 is the (inverse of the) one defined by applying the functor $.\otimes
 L$  to complexes of $\cO_{V}$-modules, without intervention of signs.
 The isomorphism (\ref{extiso})
 is the one discussed in \cite{bostkuennemann1}, A.2 and A.4.)
 
 The so-defined first Chern class defines a morphism of abelian groups:
 \[
\begin{array}{rcl}
 \Pic (V) & \longrightarrow   & H^1(V, \Omega^1_{V/k}) =: H^{1,1}(V/k)   
 \\
\left[L\right]  & \longmapsto   &  c_1(L).  
\end{array}
\]
Moreover, this morphism factorizes through the N\'eron-Severi group
\[
\NS_{V/k}(k)={\rm Pic}_{V/k}(k)\big/{\rm Pic}^0_{V/k}(k);
\] 
the induced morphism on $\NS_{V/k}(k)$ vanishes precisely on its torsion subgroup $\NS_{V/k}(k)_{\rm tor}$
(compare for example \cite[II.2 Cor. 1 to Th. 2]{kleiman66}),
%\fnote{ I have included a reference} 
and consequently 
defines an injective morphism of groups
 \[c_1:  \NS_{V/k}(k) /\NS_{V/k}(k)_{\rm tor} \longrightarrow  H^{1,1}(V/k).\]
 In other words, for any line bundle $L$ on $V$, the following two conditions 
are equivalent:
 
 (i) the first Chern class $c_1(L)$ in $H^{1,1}(V/k)$ vanishes;
 
 (ii) for some positive integer $N$, the line bundle $L^{\otimes N}$ over $V$ 
is algebraically equivalent to zero.
 
 Let us also recall that the construction of the first Chern class in Hodge 
cohomology is compatible with pull-back by $k$-morphisms. It is also compatible 
with intersection theory. In particular, we have:
 
 \begin{proposition}\label{intinter}
 For any $d$-tuple $D_1, \ldots, D_d$ of divisors in some $d$-dimensional variety $V$ in $\Sm_k$, the following formula 
holds:
\begin{equation}\label{HodgeChow}
 \int_{V/k} c_1(\cO(D_1)).\cdots.c_1(\cO(D_d))=\deg_k ([D_1].\cdots. [D_d]),
 \end{equation}
 where $[D_i]$ denotes the class of $D_i$ in the Chow group 
$CH^1(V),$ $[D_1].\cdots. [D_d]$ their product in $CH^d(V)=CH_0(V)$ and
 \[\deg_k: CH_0(V) \stackrel{\pi_\ast}{\longrightarrow} CH_0(\Spec k) \simeq \Z\] 
the degree map, attached to the structural morphism $\pi : V \rightarrow \Spec k$ of $V.$ 
 
 In particular, if $d=1$ and $V$ is geometrically irreducible, then 
 \[c_1(\cO(D))= \deg_k D. \mu_V.\]
\end{proposition}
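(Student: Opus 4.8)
The plan is to argue by induction on $d=\dim V$ after reducing both sides of (\ref{HodgeChow}) to a convenient form. Both members are symmetric and multilinear in the classes $[\cO(D_i)]\in\Pic(V)$: the left-hand side because $c_1:\Pic(V)\to H^{1,1}(V/k)$ is a group homomorphism (Section \ref{Hodge}) and the cup product is bilinear, the right-hand side because intersection in $CH^\ast(V)$ respects rational equivalence. Writing each $\cO(D_i)$ as a difference of very ample line bundles and invoking Bertini's theorem (recall $\mathrm{char}\,k=0$, so $k$ is infinite), I may therefore assume that each $D_i$ is a smooth divisor; decomposing $V$ into connected components and, if necessary, enlarging $k$ (both sides being compatible with base field extension, and the degree of a $0$-cycle being invariant under it), I may also assume $V$ geometrically irreducible.

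For the inductive step, set $W:=D_d$, a smooth very ample divisor. The key input is an adjunction (or projection) formula for the duality trace: for every class $\beta\in H^{d-1,d-1}(V/k)$,
\[
\int_{V/k} c_1(\cO(W))\cup \beta = \int_{W/k}\beta|_W .
\]
This expresses that cup product with $c_1(\cO(W))$ realizes the Gysin pushforward along the closed immersion $W\hookrightarrow V$, and it follows from Grothendieck--Serre duality: the residue sequence attached to the smooth divisor $W$ together with the adjunction isomorphism $\omega_W\simeq(\omega_V\otimes\cO(W))|_W$ identifies the trace $\int_{W/k}$ with the composite of restriction to $W$ and $\int_{V/k}(c_1(\cO(W))\cup-)$. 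Applying this with $\beta=c_1(\cO(D_1))\cdots c_1(\cO(D_{d-1}))$, together with the compatibility of $c_1$ with restriction, $c_1(\cO(D_i))|_W=c_1(\cO(D_i\cap W))$, reduces the left-hand side of (\ref{HodgeChow}) to the corresponding expression on $W$ in dimension $d-1$. On the Chow side, the projection formula gives the matching identity $\deg_k([D_1]\cdots[D_{d-1}]\cdot[W])=\deg_k([D_1\cap W]\cdots[D_{d-1}\cap W])$, so the induction hypothesis applied to $W$ concludes the step.

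It remains to treat the base case $d=1$: for a smooth projective geometrically irreducible curve $V$ and a closed point $P$, I must check $\int_{V/k}c_1(\cO(P))=[k(P):k]=\deg_k P$. Here $\int_{V/k}:H^1(V,\omega_V)\to k$ is the Serre duality trace, which computes a sum of local residues; computing $c_1(\cO(P))$ through a \v{C}ech cocycle for $\cjet^1_{V/k}(\cO(P))$ built from a local generator near $P$ and the constant generator elsewhere, the only nonzero residue is the one at $P$, equal to $[k(P):k]$. By linearity this yields $\int_{V/k}c_1(\cO(D))=\deg_k D$ for every divisor $D$, which is precisely the last assertion of the Proposition (recall $\mu_V$ is normalized by $\int_{V/k}\mu_V=1$), and combined with the induction of the previous paragraph it proves (\ref{HodgeChow}) in general. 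The main obstacle is the adjunction formula of the second paragraph --- the compatibility of the Grothendieck--Serre trace maps under the closed immersion of a smooth divisor --- which is where the duality theory of \cite{FGA}, expos\'e 149, is genuinely used; once it is in place, everything else is formal or classical. Alternatively, one could prove (\ref{HodgeChow}) transcendentally, reducing to $k=\C$ and identifying, via Lemma \ref{curvaturesff} and the comparison formula for $\int_{V/\C}$ recorded above, the left-hand side with $\int_{V(\C)}c_1(\overline{L}_1)\wedge\cdots\wedge c_1(\overline{L}_d)$ for suitable hermitian metrics; the delicate point in that route is instead the bookkeeping of the factors $(2\pi i)^{\pm d}$ and of the sign $\varepsilon_d$.
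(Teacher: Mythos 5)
Your proof is correct, but it is organized differently from the paper's. The paper's own argument first reduces to $k$ algebraically closed and $V$ connected, then quotes the duality theory of \cite{FGA}, expos\'e 149 (Th\'eor\`emes 1 and 2 \emph{and their proof}) wholesale in the case where $D_1,\ldots,D_d$ and all their successive intersections $D_1\cap\cdots\cap D_j$ are smooth, and finally removes that hypothesis via invariance under linear equivalence, Bertini, and multilinearity. You instead unwind the citation into an explicit induction on $d$, whose engine is the trace-compatibility formula $\int_{V/k} c_1(\cO(W))\cup\beta=\int_{W/k}\beta|_W$ for a \emph{single} smooth divisor $W$ --- proved via the adjunction (residue) sequence $0\to\omega_V\to\omega_V(W)\to\omega_W\to 0$ and Grothendieck--Serre duality --- together with an explicit residue computation on curves for the base case $d=1$. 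Both arguments ultimately rest on the same input (the behaviour of the duality trace under restriction to a smooth divisor, which is what expos\'e 149 supplies), but yours buys self-containedness and only ever needs one smooth divisor at a time: the $D_i\cap W$ appearing in the inductive step may be arbitrary, since the inductive hypothesis covers all divisors on $W$, whereas the paper's Bertini step must make the whole flag of successive intersections smooth. Two small points to watch in your version: you should note that $W$ can be chosen to contain no component of any $D_i$ (or simply replace $\cO(D_i)|_W$ by $\cO$ of any divisor representing it, which the induction hypothesis permits, using the compatibility of $c_1$ with pull-back stated in \ref{fcihc}); and the sign normalization in your trace-compatibility formula is precisely the convention issue the paper defers to \cite{FGA} and \cite{deligne84} --- your closing remark that the transcendental alternative merely trades this for the $(2\pi i)^{\pm d}$ and $\varepsilon_d$ bookkeeping is consistent with the paper's own caveat at the end of \ref{Hodge}.
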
 
 
To establish the equality (\ref{HodgeChow}), one easily reduces to the case where $k$ is algebraically closed and $V$ is connected. Then it follows from   \cite{FGA}, expos\'e 149 (Th\'eor\`eme 1,  Th\'eor\`eme 2, and its proof) when moreover the divisors $D_1,\ldots,D_n$ and their successive intersections $D_1 \cap D_2,$ $D_1\cap D_2 \cap D_3,$$\ldots, D_1 \cap D_2 \cap\cdots\cap D_n$ are smooth. Together with the invariance of both sides of  (\ref{HodgeChow}) by linear equivalence of $D_1,\ldots,D_n$ and Bertini theorem, this shows that (\ref{HodgeChow}) holds when $D_1,\ldots,D_n$ are very ample. The general case of (\ref{HodgeChow}) follows by multilinearity.

\subsection{An application of  the Hodge Index Theorem}\label{HIT}

Our proof of Theorem \ref{MainGeom} will rely on an application of 
Hodge Index Theorem to projective varieties fibered over curves that we discuss 
in the present Section. % in Hodge cohomology, and 

\subsubsection{The Hodge Index Theorem in Hodge cohomology}

Let $V$ be a smooth, projective, geometrically connected scheme over $k$, and 
let $h$ be the first Chern class $c_1(\cO(1))$ in $H^{1,1}(V/k)$ of some ample 
line bundle $\cO(1)$ on $V.$ 

We shall use the following straightforward consequence  of the Hodge Index Theorem (as formulated in \cite{kleiman71}, Appendix 7) and of the compatibility of intersection theory and products in Hodge cohomology stated in  Proposition \ref{intinter}:

\begin{proposition}\label{PropHIT} When $d:= \dim V \geq 2,$ for any class $\alpha$ 
of $H^{1,1}(V/k)$ in the image of 
$c_1: \Pic (V) \rightarrow H^{1,1}(V/k),$ the following conditions are equivalent:

(i) $\alpha = 0;$

(ii) $\alpha^2.h^{d-2}= \alpha.h^{d-1}=0$ in  $H^{d,d}(V/k) \simeq k.$ 

\end{proposition}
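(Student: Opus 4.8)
The implication (i) $\Rightarrow$ (ii) is immediate, since $\alpha=0$ makes every product in which it occurs vanish, so the entire content lies in (ii) $\Rightarrow$ (i). My plan is to deduce it from the Hodge Index Theorem (as formulated in \cite{kleiman71}, Appendix 7), translated into Hodge cohomology by means of the compatibility between cup products and intersection numbers established in Proposition \ref{intinter}. As a first, cheap, reduction I would pass to the case $k=\bar{k}$: the formation of the groups $H^{p,q}(V/k)$, of the cup products, of the residue map $\int_{V/k}$ and of the first Chern class all commute with extension of the base field, and $H^{1,1}(V/k)\to H^{1,1}(V_{\bar k}/\bar k)$ is injective (indeed an isomorphism after the base change $\otimes_k \bar k$), so $\alpha$ vanishes in $H^{1,1}(V/k)$ as soon as its image does over $\bar k$.

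Next I would write $\alpha=c_1(L)$ for some line bundle $L$ on $V$ — possible precisely because $\alpha$ is assumed to lie in the image of $c_1:\Pic(V)\to H^{1,1}(V/k)$ — and keep $h=c_1(\cO(1))$. By Proposition \ref{intinter}, the two elements $\int_{V/k}\alpha.h^{d-1}$ and $\int_{V/k}\alpha^2.h^{d-2}$ of $H^{d,d}(V/k)\simeq k$ coincide with the numerical intersection numbers $\deg_k\bigl(c_1(L).c_1(\cO(1))^{d-1}\bigr)$ and $\deg_k\bigl(c_1(L)^2.c_1(\cO(1))^{d-2}\bigr)$ computed in the Chow ring of $V$. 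Thus condition (ii) says exactly that these two intersection numbers vanish.

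The key step is then the Hodge Index Theorem: on the finite-dimensional real vector space $N^1(V)_\R$ of numerical divisor classes, the symmetric bilinear form $B(\beta,\gamma):=\deg_k(\beta.\gamma.h^{d-2})$ has signature $(1,\rho-1)$, the unique positive direction being spanned by $h$ (note $B(h,h)=h^d>0$ since $\cO(1)$ is ample). Consequently $B$ is negative definite on the hyperplane $h^\perp=\{\beta\in N^1(V)_\R : \deg_k(\beta.h^{d-1})=0\}$. By (ii) the class of $\alpha$ lies in $h^\perp$ and satisfies $B(\alpha,\alpha)=0$, so negative-definiteness forces $\alpha=0$ in $N^1(V)_\R$, i.e. $c_1(L)$ is numerically trivial; since numerical and algebraic equivalence of divisors agree modulo torsion, some positive power of $L$ is algebraically equivalent to zero. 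By the injectivity of $c_1$ on $\NS_{V/k}(k)/\NS_{V/k}(k)_{\rm tor}$ recalled in \ref{fcihc}, this yields $\alpha=c_1(L)=0$ in $H^{1,1}(V/k)$, completing the argument.

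I do not expect a genuine obstacle here: the substance is entirely contained in the cited Hodge Index Theorem, and the remaining work is formal. The only points requiring care are the bookkeeping that matches the products occurring in (ii) with the numerical intersection form $B$ through Proposition \ref{intinter}, and the choice to invoke the standard \emph{negative-definiteness of $B$ on the primitive hyperplane $h^\perp$} rather than the bare signature statement — it is this definiteness, applied to the isotropic class $\alpha\in h^\perp$, that produces the vanishing.
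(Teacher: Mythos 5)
Your argument is correct and is essentially the paper's own: the paper gives no further proof, stating the proposition as a ``straightforward consequence'' of the Hodge Index Theorem in Kleiman's formulation \cite{kleiman71} together with Proposition \ref{intinter}, and your steps --- translating (ii) into the vanishing of the two intersection numbers, invoking negative-definiteness of the form $B$ on $h^\perp$, and concluding via the injectivity of $c_1$ on $\NS_{V/k}(k)/\NS_{V/k}(k)_{\rm tor}$ recalled in \ref{fcihc} --- are exactly the verification the paper leaves to the reader. The preliminary base change to $\bar{k}$ and the appeal to the coincidence of numerical and algebraic equivalence modulo torsion are harmless details subsumed by the paper's terse citation.
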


\subsubsection{An application to projective varieties fibered over curves}\label{fibered}

We keep the notation of the previous paragraph, and assume that $d:= \dim V$ is 
at least $2$. Moreover, we consider a smooth geometrically connected 
projective curve $C$ over $k$, and a dominant $k$-morphism $\pi: V \rightarrow C.$ 
We shall denote $K$ the function field $k(C)$ of $C,$ $V_K:= V\times_C \Spec K$ the 
generic fiber of $\pi$, and $\cO(1)_K$ the pull-back of $\cO(1)$ to $V_K$.

Let us introduce the following class in
$H^{1,1}(V/k)$: 
\[F:= \pi^\ast \mu_C.\] 
Observe that $\mu_C^2=0$ for dimension reasons, and that consequently $F^2=0.$ Moreover Proposition \ref{intinter} and the naturality of $c_1$  show that, for any divisor $E$  on $C$, 
$$c_1(\cO(E))= \deg_k E\cdot \mu_C$$
and
\begin{equation}\label{EF}
 c_1(\cO(\pi^\ast(E)))= \deg_k E\cdot F.
\end{equation}

\begin{lemma}\label{integer} 1) For any divisor $D$ on $V$, 
${\int_{V/k} c_1(\cO(D)). h^{d-1}}$ coincides with the intersection number $\deg_k ([D]. [H]^{d-1})$, where $H$ denotes the divisor of some non-zero rational section of $\cO(1).$ In particular, it is an integer.

2) We have:
\[\int_{V/k} F.h^{d-1}= \deg_{\cO(1)_K} V_K.\]
In particular, the class $F$ is not zero, and the image of 
$\pi^\ast: H^{1,1}(C/k) \rightarrow H^{1,1}(V/k)$ is precisely the $k$-line $k.F.$
\end{lemma}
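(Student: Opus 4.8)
The first assertion is an immediate instance of Proposition \ref{intinter}. Applying the formula (\ref{HodgeChow}) to the $d$-tuple $(D, H, \ldots, H)$, where $H$ denotes the divisor of a non-zero rational section of $\cO(1)$ (so that $c_1(\cO(H)) = h$), yields
\[
\int_{V/k} c_1(\cO(D)) \cdot h^{d-1} = \deg_k\bigl([D] \cdot [H]^{d-1}\bigr),
\]
and the right-hand side is an intersection number of cycle classes on the projective $k$-variety $V$, hence an integer.

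For the second assertion, the plan is to realize $F$ as the first Chern class of a divisor, up to a rational multiple, and then to invoke part 1). I would fix a closed point $P$ of $C$ and set $n := [k(P):k] = \deg_k P$. Since $C$ is a geometrically connected curve, Proposition \ref{intinter} gives $c_1(\cO(P)) = n \cdot \mu_C$; combined with the compatibility of $c_1$ with pull-back and the identity $\pi^\ast \cO_C(P) = \cO_V(\pi^\ast P)$, this produces $c_1\bigl(\cO(\pi^\ast P)\bigr) = n \cdot F$. Applying part 1) to $D = \pi^\ast P$ then gives
\[
n \int_{V/k} F \cdot h^{d-1} = \int_{V/k} c_1\bigl(\cO(\pi^\ast P)\bigr) \cdot h^{d-1} = \deg_k\bigl([\pi^\ast P] \cdot [H]^{d-1}\bigr).
\]

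It remains to evaluate the $0$-cycle $[\pi^\ast P] \cdot [H]^{d-1}$. As it may be computed on the fibre $\pi^\ast P$ (namely as $(\cO(1)\vert_{\pi^\ast P})^{d-1}$ capped with the fundamental class of $\pi^\ast P$), and $\pi^\ast P$ is a $k(P)$-scheme, its degree over $k$ factors as $n$ times its degree over $k(P)$, the factor $n$ arising from $CH_0(\Spec k(P)) \to CH_0(\Spec k)$. The degree over $k(P)$ is by definition the degree $\deg_{\cO(1)\vert_{\pi^\ast P}}(\pi^\ast P)$ of the fibre with respect to the restriction of $\cO(1)$. Since $\pi$ is flat and $\cO(1)$ is ample relatively to $\pi$, the Hilbert polynomial of the fibres of $\pi$ with respect to $\cO(1)$ is locally constant on $C$, so this fibre degree equals that of the generic fibre, namely $\deg_{\cO(1)_K} V_K$. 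Collecting these identities, the factors of $n$ cancel and I obtain $\int_{V/k} F \cdot h^{d-1} = \deg_{\cO(1)_K} V_K$.

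Finally, $V_K$ is a projective variety of positive dimension $d - 1 \geq 1$ over $K$ and $\cO(1)_K$ is ample, so $\deg_{\cO(1)_K} V_K$ is a strictly positive integer; in particular $F \neq 0$. Since $C$ is a smooth, projective, geometrically connected curve, the residue isomorphism identifies its top Hodge group $H^{1,1}(C/k)$ with $k$, whence $H^{1,1}(C/k) = k \cdot \mu_C$ is one-dimensional; therefore its image under $\pi^\ast$ is $k \cdot \pi^\ast \mu_C = k \cdot F$, which is a genuine line precisely because $F \neq 0$. The only non-formal point in this argument is the degree bookkeeping of the previous paragraph, matching the degree of the fibre $\pi^\ast P$ over $k$ with $n$ times $\deg_{\cO(1)_K} V_K$; this rests on the constancy of fibre degrees in the flat family $\pi$, whereas everything else reduces directly to Proposition \ref{intinter}.
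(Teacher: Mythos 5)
Your proof is correct, and while part 1) coincides with the paper's (a direct appeal to Proposition \ref{intinter}), your evaluation of the key intersection number in part 2) takes a genuinely different route. Both you and the paper start by applying part 1) to a divisor pulled back from $C$, using $c_1(\cO(\pi^\ast E))=\deg_k E\cdot F$ (equation (\ref{EF})); but the paper takes an arbitrary divisor $E$ of positive degree and concludes in one line by the projection formula, $\deg_k([\pi^\ast E].[H]^{d-1})=\deg_k([E].\pi_\ast([H]^{d-1}))=\deg_k E\cdot\deg_{\cO(1)_K}V_K$, the generic-fibre degree appearing because the $1$-cycle class $\pi_\ast([H]^{d-1})$ on the integral curve $C$ is determined at the generic point. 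You instead specialize to a closed point $P$, compute the $0$-cycle on the fibre $\pi^\ast P$ itself, and transfer its degree over $k(P)$ to the generic fibre via the constancy of Hilbert polynomials in the flat family $\pi$; this implicitly also uses the identification of the intersection-theoretic degree of the (possibly non-reduced, but pure $(d-1)$-dimensional, since $V$ is integral over the smooth curve $C$) fibre with the leading term of its Hilbert polynomial over $k(P)$ --- the Snapper/asymptotic Riemann--Roch fact, which deserves a citation but is standard and valid here, and your bookkeeping of the factor $n=[k(P):k]$ is right. What the paper's route buys is economy: only the projection formula and the definition of proper push-forward are needed. What yours buys is the explicit geometric content that all fibres of the flat map $\pi$ have the same $\cO(1)$-degree, together with a worked-out verification of the two ``in particular'' claims ($F\neq 0$ from strict positivity of $\deg_{\cO(1)_K}V_K$, and $\im \pi^\ast=k.F$ from the residue isomorphism $H^{1,1}(C/k)\simeq k$), which the paper leaves implicit.
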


\begin{proof}

Assertion 1) is a special case of Proposition  \ref{intinter}. 

To establish 2), let us choose a divisor $E$ with positive degree on $C$. We have
\begin{equation}\label{unpeudinter} 
\deg_k ([\pi^\ast(E)].[H]^{d-1})= \deg_k ([E].\pi_\ast([H]^{d-1}))
= \deg_k E. \deg_{\cO(1)_K} V_K,
\end{equation}
by basic intersection theory. Besides, according to Proposition \ref{intinter} and (\ref{EF}),  the left-hand 
side of (\ref{unpeudinter}) is also equal to 
\[\int_{V/k}  c_1(\cO(\pi^\ast(E))). c_1(\cO(1))^{d-1}
= \deg_k E.\int_{V/k} F.h^{d-1}.\]
Together with  (\ref{unpeudinter}), this establishes the announced relation.
\end{proof}

%The ``intersection  number" $\int_{V/k} F.h^{d-1}$, which 
%\emph{a priori} belongs to $k$, is a positive integer --- namely the degree 
%$\deg_{\cO(1)_K} V_K$ of $V_K$ with respect to $\cO(1)_K$. Indeed, if $E$ is 
%a divisor on $C$ with positive degree, and if 
%%$H_1,\ldots, H_{d-1}$ are divisors in the linear system 
%$H$ is the divisor of some non-zero rational section of $\cO(1)$, we have
%\begin{equation}\label{unpeudinter} 
%\deg_k ([\pi^\ast(E)].[H]^{d-1})= \deg_k ([E].\pi_\ast([H]^{d-1}))
%= \deg_k E. \deg_{\cO(1)_K} V_K,
%\end{equation}
%by basic intersection theory. Besides, the naturality of $c_1$ and 
%Proposition \ref{intinter} show
% that the left-hand 
%side of (\ref{unpeudinter}) is also equal to 
%\[\int_{V/k} \pi^\ast c_1(\cO(E)). c_1(\cO(1))^{d-1}
%=\int_{V/k} \pi^\ast(\deg_k E. \mu_C).h^{d-1}= \deg_k E.\int_{V/k} F.h^{d-1}.\]
%Together with  (\ref{unpeudinter}), this establishes the announced relation
%\[\int_{V/k} F.h^{d-1}= \deg_{\cO(1)_K} V_K.\]

%In particular, the class $F$ is not zero, and the image of 
%$\pi^\ast: H^{1,1}(V/k) \rightarrow H^{1,1}(V/k)$ is precisely the $k$-line $k.F.$

\begin{proposition}\label{quelbeta} With the above notation, for any class $\beta$ of 
$H^{1,1}(V/k)$ in the image of $c_1,$ the following conditions are equivalent:

\emph{(i)} $\beta$ belongs to $\Q.F$;

\emph{(ii)} $\beta$ belongs to $k.F$;

\emph{(iii)} $\beta.\beta=\beta.F=0$ in $H^{2,2}(V/k);$

\emph{(iv)} $\beta^2. h^{d-2}=\beta.F.h^{d-2}=0$ in $H^{d,d}(V/k) \simeq k.$ 

\end{proposition}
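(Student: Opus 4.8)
The plan is to establish the cyclic chain of implications $(i) \Rightarrow (ii) \Rightarrow (iii) \Rightarrow (iv) \Rightarrow (i)$, with essentially all the content concentrated in the last step. The first three implications are formal. Indeed, $(i) \Rightarrow (ii)$ holds because $\Q \subseteq k$; $(ii) \Rightarrow (iii)$ follows at once from the relation $F^2 = 0$ (noted just before Lemma \ref{integer}), since if $\beta = \lambda F$ with $\lambda \in k$ then $\beta.\beta = \lambda^2 F^2 = 0$ and $\beta.F = \lambda F^2 = 0$; and $(iii) \Rightarrow (iv)$ is obtained by multiplying the two identities in $H^{2,2}(V/k)$ by $h^{d-2}$ and pushing forward to $H^{d,d}(V/k) \simeq k$.

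For the crucial implication $(iv) \Rightarrow (i)$, I would reduce everything to the Hodge Index Theorem in the form of Proposition \ref{PropHIT}. First I would fix a divisor $E$ of positive degree $e := \deg_k E$ on $C$ (a closed point suffices), so that, by compatibility of $c_1$ with pull-back and by Proposition \ref{intinter} applied to the curve $C$, one has $c_1(\cO_V(\pi^\ast E)) = \pi^\ast c_1(\cO_C(E)) = e\cdot \pi^\ast \mu_C = eF$; in particular $eF$ lies in the image of $c_1$. Writing $\beta = c_1(L)$ for a line bundle $L$ on $V$, I would then introduce the integers $a := \int_{V/k} \beta.h^{d-1}$ and $b := \int_{V/k} F.h^{d-1}$, both integral by Lemma \ref{integer}, the second being $\deg_{\cO(1)_K} V_K$ and hence strictly positive.

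The key object is the class
$$\gamma := eb\,\beta - ae\,F = c_1\bigl(L^{\otimes eb} \otimes \cO_V(-a\,\pi^\ast E)\bigr),$$
which by construction belongs to the image of $c_1$. By the very choice of $a$ and $b$ one computes $\int_{V/k} \gamma.h^{d-1} = eb\cdot a - ae\cdot b = 0$; and, expanding $\gamma.\gamma$, using $F^2 = 0$ and hypothesis $(iv)$ (namely $\beta^2.h^{d-2} = \beta.F.h^{d-2} = 0$), one finds $\gamma^2.h^{d-2} = e^2b^2(\beta^2.h^{d-2}) - 2abe^2(\beta.F.h^{d-2}) = 0$. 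Thus $\gamma$ meets both conditions of $(ii)$ in Proposition \ref{PropHIT}, so $\gamma = 0$. Since $eb \neq 0$ and $H^{1,1}(V/k)$ is a $k$-vector space, this gives $\beta = (a/b)\,F \in \Q.F$, which is $(i)$.

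The only delicate point I anticipate is ensuring that $\gamma$ genuinely lies in the image of $c_1 \colon \Pic(V) \to H^{1,1}(V/k)$, and not merely in its $\Q$-span — this is exactly why I clear the denominator in $F = \pi^\ast \mu_C$ by scaling with $e$, so that Proposition \ref{PropHIT}, which is stated only for first Chern classes of honest line bundles, can be applied to $\gamma$. Once this bookkeeping is arranged, the remainder is a routine bilinear expansion, and the positivity of $b = \deg_{\cO(1)_K} V_K$ (which guarantees one may divide by $b$) is supplied by Lemma \ref{integer} together with the ampleness of $\cO(1)$ on the positive-dimensional generic fibre $V_K$.
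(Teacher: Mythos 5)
Your proof is correct and takes essentially the same route as the paper: after the formal implications, the paper establishes (iv)$\Rightarrow$(i) by forming the class $\alpha:=\beta-\bigl(\int_{V/k}\beta.h^{d-1}\big/\int_{V/k}F.h^{d-1}\bigr)F$, noting via (\ref{EF}) and Lemma \ref{integer} that a positive multiple of it lies in the image of $c_1$, checking $\alpha.h^{d-1}=\alpha^2.h^{d-2}=0$, and invoking Proposition \ref{PropHIT}. Your class $\gamma=eb\,\beta-ae\,F$ is precisely such a positive multiple, so the only difference is that you clear denominators explicitly at the outset, a bookkeeping point the paper leaves implicit.
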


\begin{proof} The implications (i)$\Rightarrow$(ii)$\Rightarrow$(iii)$\Rightarrow$(iv) 
are straightforward. To establish the converse implications, observe that
${\int_{V/k} \beta. h^{d-1}}/{\int_{V/k} F. h^{d-1}}$ is a well defined rational number by Lemma \ref{integer}, and consider the class 
\[\alpha:= \beta - \frac{\int_{V/k} \beta. h^{d-1}}{\int_{V/k} F. h^{d-1}}. F\]
in $H^{1,1}(V/k).$ It satisfies
$\alpha. h^{d-1}= 0$
by its very definition (recall that $\int_{V/k}$ maps isomorphically $H^{d,d}(V/k)$ onto $k$). 
Moreover (\ref{EF}) shows that some positive multiple of $\alpha$ lies in the image of $c_1$. Finally, when condition (iv) holds, then $\alpha$ also satisfies 
$\alpha^2. h^{d-2}=0.$
Then, according to Proposition \ref{HIT}, $\alpha$ vanishes, 
or equivalently:
\[\beta =\frac{\int_{V/k} \beta. h^{d-1}}{\int_{V/k} F. h^{d-1}}. F.\]
This establishes (i).
\end{proof}

\subsection{The equivalence of {\bf VA1} and {\bf VA2}}\label{VA12} 

We keep the notation 
of the previous paragraph \ref{fibered}. In other words, the same hypotheses 
as in Theorem  \ref{MainGeom} are supposed to hold, except the connectedness of 
the geometric fibers of $\pi$.

The following result contains the equivalence of Conditions  {\bf VA1} and {\bf VA2} in Theorem \ref{MainGeom}:

\begin{theorem}\label{MainBis}
For any line bundle $L$ over $V,$ the following conditions are equivalent:

\emph{(i)} The relative Atiyah class $\at_{V/C} L$ vanishes in 
$H^{1,1}(V, \Omega^1_{V/C})$.

\emph{(ii)'} $c_1(L)$ belongs to $\Q.F$.

\emph{(ii)''} There exists a positive integer $N$ and a line bundle $M$ over 
$C$ such that $c_1(L^{\otimes N} \otimes \pi^\ast M)$ vanishes.

\end{theorem}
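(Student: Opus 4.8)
The plan is to identify condition (i) with the membership of $c_1(L)$ in the image of the map on $H^1$ induced by the fibration, and then feed a vanishing of cup products into the Hodge Index Theorem through Proposition \ref{quelbeta}. First I would record the exact sequence of differentials: since $C$ is a smooth curve, $\Omega^1_{C/k}$ is a line bundle, and since $\pi$ is dominant with $k$ of characteristic zero it is generically smooth, so the canonical map $a\colon\pi^\ast \Omega^1_{C/k} \to \Omega^1_{V/k}$ is injective (a morphism of locally free sheaves, injective at the generic point, whose source is a torsion-free line bundle). One thus gets a short exact sequence
\[
0 \longrightarrow \pi^\ast \Omega^1_{C/k} \stackrel{a}{\longrightarrow} \Omega^1_{V/k} \longrightarrow \Omega^1_{V/C} \longrightarrow 0,
\]
whose long exact cohomology sequence contains
\[
H^1(V, \pi^\ast\Omega^1_{C/k}) \stackrel{f}{\longrightarrow} H^1(V,\Omega^1_{V/k}) \stackrel{g}{\longrightarrow} H^1(V,\Omega^1_{V/C}),
\]
with $\ker g = \im f$. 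By functoriality of the Atiyah class (Lemma \ref{pullbackconnection} applied to $\id_V$ over the base change $C\to\Spec k$, which identifies $\jet^1_{V/C}(L)$ with the pushout of $\jet^1_{V/k}(L)$ along the canonical map $\Omega^1_{V/k}\to\Omega^1_{V/C}$), the map $g$ sends $c_1(L)=\at_{V/k}(L)$ to $\at_{V/C}(L)$ under the identifications (\ref{tensL})--(\ref{extiso}). Hence (i) is equivalent to $c_1(L)\in\im f$.

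The crux is that every cup product of two classes of $\im f$ vanishes in $H^{2,2}(V/k)$. Indeed, for $u,v\in H^1(V,\pi^\ast\Omega^1_{C/k})$ the product $f(u)\cup f(v)$ is the image of $u\cup v$ under the morphism induced by the sheaf map $\pi^\ast\Omega^1_{C/k}\otimes\pi^\ast\Omega^1_{C/k}\to\Omega^2_{V/k}$, $s\otimes t\mapsto a(s)\wedge a(t)$; and this map is zero, since locally $\pi^\ast\Omega^1_{C/k}$ is generated by a single section $e$ with $a(e)\wedge a(e)=0$. Moreover, pulling back the fundamental class $\mu_C\in H^{1,1}(C/k)$ realizes $F=\pi^\ast\mu_C$ as an element of $\im f$, so that $k.F\subseteq\im f$.

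These two facts make the equivalences quick. For (i)$\Rightarrow$(ii)': if $c_1(L)\in\im f$, the previous paragraph gives $c_1(L)\cdot c_1(L)=c_1(L)\cdot F=0$ in $H^{2,2}(V/k)$, which is exactly condition (iii) of Proposition \ref{quelbeta}; as $c_1(L)$ lies in the image of $c_1\colon\Pic(V)\to H^{1,1}(V/k)$, that proposition yields $c_1(L)\in\Q.F$. For (ii)'$\Rightarrow$(i): one has $\Q.F\subseteq k.F\subseteq\im f=\ker g$, whence $\at_{V/C}(L)=g(c_1(L))=0$. Finally (ii)'$\Leftrightarrow$(ii)'' is elementary: for any line bundle $M$ on $C$ one has $c_1(\pi^\ast M)=\deg_k M\cdot F$ by (\ref{EF}), so that
\[
c_1(L^{\otimes N}\otimes\pi^\ast M)=N\,c_1(L)+\deg_k(M)\,F;
\]
the vanishing of the left-hand side forces $c_1(L)\in\Q.F$, while conversely, starting from $c_1(L)=qF$ with $q\in\Q$, one clears denominators --- using that the degrees of line bundles on $C$ form a nonzero subgroup of $\Z$ --- to produce $N>0$ and $M$ with $Nq+\deg_k M=0$.

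The step I expect to demand the most care is the reduction (i)$\iff c_1(L)\in\im f$: one must justify the left-exactness of the differentials sequence and, above all, the compatibility $g(\at_{V/k}L)=\at_{V/C}L$, i.e.\ that the passage from the absolute to the relative Atiyah class is induced on cohomology by $\Omega^1_{V/k}\to\Omega^1_{V/C}$. Once this is secured, the cup-product vanishing on $\im f$ is precisely what makes the Hodge Index input of Proposition \ref{quelbeta} applicable, and the remainder is formal. Note that this argument never uses connectedness of the geometric fibers of $\pi$, in agreement with the hypotheses of the theorem.
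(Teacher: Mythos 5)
Your proof is correct and follows essentially the same route as the paper's: the short exact sequence of K\"ahler differentials, the identification $\at_{V/C}L = H^1(p)(\at_{V/k}L)$ via Lemma \ref{pullbackconnection}, the vanishing of cup products on the image of $H^1(i)$ (where the paper factors the wedge map through $\pi^\ast\Omega^2_{C/k}=0$ and you instead use a local generator of the line bundle $\pi^\ast\Omega^1_{C/k}$), and finally the Hodge Index input through Proposition \ref{quelbeta} applied to $\beta = c_1(L)$. Your extra justifications --- the left-exactness of the differentials sequence via generic smoothness and torsion-freeness, and the degree bookkeeping for (ii)'$\Leftrightarrow$(ii)'' using that degrees of line bundles on $C$ form a nonzero subgroup of $\Z$ --- merely make explicit what the paper treats as straightforward.
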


\begin{proof} The equivalence (ii)' $\Leftrightarrow$ (ii)'' is straightforward. 

To establish the implication (ii)' $\Rightarrow$ (i),
consider the canonical exact sequence of sheaves of 
K\"ahler differentials on $V$,
\[
0 \longrightarrow \pi^\ast \Omega^1_{C/k}
\stackrel{i}{\longrightarrow}
\Omega^1_{V/k}
\stackrel{p}{\longrightarrow}
\Omega^1_{V/C}
\longrightarrow 0, 
\]
and the associated exact sequence of cohomology groups
\[H^1(V,\pi^\ast \Omega^1_{C/k})
\stackrel{H^1(i)}{\longrightarrow}
H^1(V,\Omega^1_{V/k})
\stackrel{H^1(p)}{\longrightarrow}
H^1(V,\Omega^1_{V/C}).\]
As a special case of Lemma \ref{pullbackconnection}, i), we have
\begin{equation}\label{atat}
\at_{V/C} L= H^1(p)(\at_{V/k} L).
\end{equation}
Since $F$ belongs to the image of $H^1(i)$, hence to the kernel of $H^1(p),$ this establishes the implication (ii)' $\Rightarrow$ (i).

The implication (i)$\Rightarrow$(ii)' will follow from 
 the implication (iii)$\Rightarrow$(i) in Proposition \ref{quelbeta} (applied to $\beta:=c_1(L)$) combined 
with the following:

\begin{lemma}\label{vanishing}
For any line bundle $L$ over $V$, if the relative Atiyah class $\at_{V/C} L$ 
vanishes in $H^1(V,\Omega^1_{V/C})$, then $c_1(L).F$ and $c_1(L)^2$ vanish in 
$H^2(V, \Omega^2_{V/k}).$
\end{lemma}

To establish this lemma, 
observe that the cup product
\begin{equation} \label{cupvanishes}
H^{1,1}(V/k) \otimes H^{1,1}(V/k) \longrightarrow H^{2,2}(V/k)
\end{equation}
vanishes on $\im H^1(i) \otimes \im H^1(i)$. Indeed the map of sheaves of 
$\cO_V$-modules defined as the composition
\[
\pi^\ast \Omega^1_{C/k} \otimes \pi^\ast \Omega^1_{C/k}
\stackrel{i\otimes i}{\longrightarrow}
\Omega^1_{V/k} \otimes \Omega^1_{V/k}
\stackrel{.\wedge.}{\longrightarrow}
\Omega^2_{V/k}
\]
vanishes by functoriality of the exterior product, since $\Omega^2_{C/k}=0$. 
This entails the vanishing of the cup product (\ref{cupvanishes}) on 
$\ker H^1(p) \otimes \ker H^1(p)$ and on $\ker H^1(p) \otimes \im \pi^\ast,$ 
where $\pi^\ast$ denotes the pull-back map in Hodge cohomology 
$\pi^\ast:H^{1,1}(C/k) \rightarrow H^{1,1}(V/k).$

According to (\ref{atat}), $\at_{V/C} L$ vanishes precisely 
when  $c_1(L)=\at_{V/k} L$
belongs to $\ker H^1(p)$, in which case $c_1(L)^2$ and  $c_1(L).F$  vanish in 
$H^2(V, \Omega^2_{V/k})$ by the observation above. This completes the proof of 
Lemma \ref{vanishing}, hence of Theorem \ref{MainBis}.
%The required implication (i)$\Rightarrow$(ii)' in Theorem \ref{MainBis} follows from 
\end{proof}

\subsection{The Picard variety of a variety over a function 
field}\label{Picard}

In this paragraph, we recall some classical facts concerning the relations between the Picard varieties of $C$ and $V$, and the $K/k$-trace of the Picard variety of the generic fiber $V_K$ of $V$. (For modern presentations of Chow's classical theory of the $K/k$-trace of abelian varieties over $K$, we refer to \cite{conrad06} and Hindry's Appendix A in \cite{kahn06}.)

Let $(B,\tau)$ be the $K/k$-trace of $\Pico_{V_K/K}$. By construction, $B$ is an abelian variety over $k$, and $\tau$ is a morphism of abelian varieties over $K$
$$\tau: B_K \longrightarrow \Pico_{V_K/K}.$$ The pair $(B,\tau)$ is characterized by the following universal property: for any abelian variety $\tilde B$ over $k$ and any morphism of abelian varieties over $K$
$$\psi : \tilde B_K \longrightarrow \Pico_{V_K/K},$$
there exists a unique morphism 
$$\beta: \tilde B \longrightarrow  B$$ such that
$$\psi=\tau \circ \beta_K.$$
Actually, since our base field $k$ has characteristic zero, $\tau$ is an embedding.

The inclusion $V_K \hookrightarrow V$ induces 
a morphism of abelian varieties over $K$
$$\phi: \Pico_{V/k,K} \longrightarrow \Pico_{V_K/K}.$$
According to the universal property above, there exists a unique 
morphism of abelian varieties over $k$
$$\alpha: \Pico_{V/k} \longrightarrow B$$ 
such that 
$$\phi= \tau \circ \alpha_K.$$

Beside we may consider the morphism 
$$\pi^\ast : \Pico_{C/k} \longrightarrow \Pico_{V/k}$$
defined by functoriality from $\pi: V \rightarrow C.$

The following Proposition is established as Proposition 3.3 in \cite{hindryetal05}, where references are made to similar earlier results due to Tate, Shioda, and Raynaud.
 
\begin{proposition}\label{Kktrace}
The morphism $\alpha$ is surjective, and the morphism $\pi^\ast$ is an isogeny from $\Pico_{C/k}$ onto the abelian 
variety $(\ker \alpha)^\circ$ defined as  the identity component of the 
$k$-group scheme $\ker \alpha.$
    
\end{proposition}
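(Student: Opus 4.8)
The three objects fit into a complex of abelian varieties over $k$,
\[
\Pico_{C/k} \stackrel{\pi^\ast}{\longrightarrow} \Pico_{V/k} \stackrel{\alpha}{\longrightarrow} B,
\]
and the first thing I would check is that $\alpha \circ \pi^\ast = 0$. Indeed, after base change to $K$ and composition with the closed immersion $\tau$, this composite becomes the restriction to the generic fibre $V_K$ of line bundles pulled back from $C$; since a line bundle on $C$ restricts trivially to the generic point $\Spec K$ of $C$, one gets $\phi \circ \pi^\ast_K = 0$, hence $\alpha \circ \pi^\ast = 0$ (a $k$-morphism vanishing after the faithfully flat base change $\Spec K \to \Spec k$ vanishes). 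Thus $\im \pi^\ast \subseteq (\ker \alpha)^\circ$, as $\Pico_{C/k}$ is connected; and I would recall from the discussion of {\bf NFP2} that $\pi^\ast$ has finite kernel.

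For the surjectivity of $\alpha$ I would produce a partial inverse by extending a Poincaré bundle. The morphism $\tau : B_K \to \Pico_{V_K/K}$ is realized by a rigidified line bundle $\mathcal{P}$ on $V_K \times_K B_K = V_K \times_k B$. Spreading out, $\mathcal{P}$ extends to a line bundle over $\pi^{-1}(U) \times_k B$ for some dense open $U \subseteq C$; since $V \times_k B$ is smooth and its complement $\pi^{-1}(C \setminus U)\times_k B$ is a divisor, the restriction $\Pic(V \times_k B) \to \Pic(\pi^{-1}(U)\times_k B)$ is surjective, so $\mathcal{P}$ extends to a line bundle on $V \times_k B$. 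Normalizing so that the distinguished point maps to $0$, this extension defines a morphism $s : B \to \Pico_{V/k}$ whose restriction to the generic fibre recovers $\tau$; since $\tau$ is a monomorphism this forces $\alpha \circ s = \mathrm{id}_B$. Hence $\alpha$ is surjective, and the sequence splits up to isogeny.

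To identify $(\ker\alpha)^\circ$ with the image of $\pi^\ast$, I would analyse $\ker\alpha$ directly. As $\tau$ is a closed immersion, $(\ker\alpha)_K = \ker\alpha_K = \ker\phi$, which consists of the classes of algebraically trivial line bundles on $V$ whose restriction to $V_K$ is trivial, i.e. of algebraically trivial \emph{vertical} line bundles. Here I would invoke Zariski's lemma: on each closed fibre the intersection form on the fibre components is negative semi-definite with kernel spanned by the whole fibre, so a vertical class that is algebraically (hence numerically) trivial on $V$ pairs to zero with every fibre component and is therefore, modulo torsion, a combination of full fibres $\pi^\ast(P)$, i.e. lies in $\pi^\ast\Pic(C)$, and — being algebraically trivial — in $\pi^\ast\Pico_{C/k}$. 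This shows $(\ker\phi)^\circ = \im(\pi^\ast_K)$; descending along $\Spec K \to \Spec k$ gives $(\ker\alpha)^\circ = \im\,\pi^\ast$. Combined with the finiteness of $\ker\pi^\ast$, the map $\pi^\ast : \Pico_{C/k} \to (\ker\alpha)^\circ$ is surjective with finite kernel, hence an isogeny, which together with the surjectivity of $\alpha$ completes the proof.

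The main obstacle is the precise control of $\ker\alpha$ in the third step: showing that algebraically trivial line bundles trivial on the generic fibre come, up to isogeny, from the base. The Zariski-lemma argument above is the classical (Shioda--Tate) route; alternatively one may run a dimension count, using the Leray spectral sequence for $\pi$ — which, since $\pi_\ast\cO_V = \cO_C$ (connected fibres) and $H^2(C,-) = 0$, yields the short exact sequence $0 \to H^1(C,\cO_C) \to H^1(V,\cO_V) \to H^0(C, R^1\pi_\ast\cO_V) \to 0$ — together with the identification $\dim B = h^0(C, R^1\pi_\ast\cO_V)$ furnished by the theorem of the fixed part. A secondary technical point, which I would address by working over separable (in particular algebraic) extensions of $k$, is the compatibility of the $K/k$-trace $(B,\tau)$ with the base change needed to verify surjectivity and the isogeny property on geometric points.
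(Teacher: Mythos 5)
The paper does not actually prove Proposition \ref{Kktrace}: it quotes it from \cite{hindryetal05} (Proposition 3.3), with pointers to earlier results of Tate, Shioda and Raynaud. Your attempt is therefore necessarily independent of the text, and most of it is the classical Shioda--Tate route, correctly executed. Step 1 ($\alpha\circ\pi^\ast=0$ via triviality on the generic fibre, plus faithfully flat descent) is fine, and your step 3 is precisely the mechanism the paper itself isolates later as Lemma \ref{vertdiv} (Zariski's lemma) and exploits in Corollary \ref{imtau}. Two small repairs are needed there: the integer $N$ depends a priori on the class, so to pass from ``every $\overline{k}$-point of $(\ker\alpha)^\circ$ has a multiple in $\im \pi^\ast$'' to equality of abelian subvarieties you should either note that $N$ is uniformly bounded (by the multiplicities of the components of the finitely many singular fibres, exactly as in the proof of Lemma \ref{vertdiv}, where the kernel of the semi-definite form is a $\Q$-line with controlled denominators) and then use surjectivity of multiplication by $N$ on the abelian variety $(\ker\alpha)^\circ$, or invoke divisibility in some equivalent form; and the assertion that $\pi^\ast\cO(E')$ algebraically trivial forces $[E']\in\Pico_{C/k}$ deserves the one-line degree computation on a multisection.

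The genuine gap is the opening move of your surjectivity argument. The morphism $\tau$ is a $B_K$-point of $\Pico_{V_K/K}$, which is only an fppf sheaf: such a point is represented by an actual rigidified line bundle $\mathcal{P}$ on $V_K\times_K B_K$ only when the obstruction in ${\rm Br}(B_K)$ vanishes, for instance when $V_K(K)\neq\emptyset$ (\cf \cite[8.1]{boschetal90}). But a $K$-point of $V_K$ is a rational section of $\pi$, which can fail to exist even after replacing $k$ by $\overline{k}$ (genus-one fibrations without section), so your proposed remedy --- passing to algebraic extensions of $k$ --- does not address this at all: no constant-field extension creates a section of $\pi$. The standard repair is a multisection: take an integral curve $C'\subset V$ finite over $C$, so that $V$ acquires a tautological point over $K':=k'(C')$, run your spreading-out construction over (the normalization of) $C'$, and push back down by a norm along $V\times_C C'\to V$, or compare the $K'/k'$-trace with $B$; either way one only obtains a homomorphism $s$ with $\alpha\circ s$ an isogeny of $B$, which still gives surjectivity but requires real work, since the trace itself can grow under finite extension of $K$. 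Your fallback dimension count via Leray is coherent --- given your step 3, surjectivity follows from the exact sequence you wrote once one knows $\dim B = h^0(C,R^1\pi_\ast\cO_V)$ --- but that identity is exactly the substantive input (the theorem of the fixed part, together with a comparison, delicate at the singular fibres, between the coherent sheaf $R^1\pi_\ast\cO_V$ on all of $C$ and the invariants of the local system over the smooth locus), and you assert it rather than prove it. So steps 1 and 3 stand modulo minor repairs, while step 2 needs either the multisection/norm argument or a genuine proof of the fixed-part identity.
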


In brief, the following  diagram of abelian varieties over $k$
$$0 \longrightarrow \Pico_{C/k} \stackrel{\pi^\ast}{\longrightarrow} 
\Pico_{V/k} \stackrel{\alpha}{\longrightarrow} B \longrightarrow 0$$
is ``exact up to some finite group schemes". Together with Poincar\'e's reducibility theorem, this implies that the diagram of abelian groups
\begin{equation}\label{almex}
0 \longrightarrow \Pico_{C/k}(k) \stackrel{\pi^\ast}{\longrightarrow} 
\Pico_{V/k}(k) \stackrel{\alpha}{\longrightarrow} B(k) \longrightarrow 0
\end{equation}
is ``exact up to some finite groups."

\begin{corollary}\label{imtau}

For any line bundle $L$ over $V$, the following conditions are equivalent:

(i) There exists a positive integer $N$ such that the class of 
$L^{\otimes N}_K$ in $\Pic_{V_K/K}(K)$ belongs to $\tau(B(k)).$

(ii) There exist a positive integer $N$ and a line bundle $L'$ over $V$, 
algebraically equivalent to zero, such that, over $V_K$,
$$L^{\otimes N}_K \simeq L'_K.$$

(iii) There exist a positive integer $N$, a line bundle $L'$ over $V$, 
algebraically equivalent to zero, and a vertical divisor $E$ over $V$ such 
that, over $V$,
$$L^{\otimes N} \simeq L' \otimes \cO(E).$$

\end{corollary}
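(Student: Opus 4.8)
The plan is to use condition (ii) as a hub: I would prove (ii)$\Leftrightarrow$(iii) by a purely geometric argument about vertical divisors, and (i)$\Leftrightarrow$(ii) by transporting everything to Picard varieties via Proposition \ref{Kktrace} and the almost-exact sequence (\ref{almex}). Throughout I use that $V$ is smooth (hence regular and locally factorial), that $V_K$ is proper and geometrically connected over $K$, and that $\tau$ is a closed immersion.

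For (ii)$\Leftrightarrow$(iii) the key input is the localization sequence for the Picard group of the regular scheme $V$. Since $V_K$ is the fibre of $\pi$ over the generic point of $C$, the prime divisors of $V$ disjoint from $V_K$ are exactly the vertical prime divisors, so restriction gives a surjection $\Pic(V)\to\Pic(V_K)$ whose kernel is generated by the classes of vertical divisors. The implication (iii)$\Rightarrow$(ii) is then immediate, restricting $L^{\otimes N}\simeq L'\otimes\cO(E)$ to $V_K$ and using that $\cO(E)_K$ is trivial for vertical $E$. Conversely, under (ii) the class of $L^{\otimes N}\otimes L'^{-1}$ restricts to zero on $V_K$, hence lies in that kernel and equals $\cO(E)$ for some vertical divisor $E$, giving (iii).

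The implication (ii)$\Rightarrow$(i) is direct: if $L'$ is algebraically equivalent to zero on $V$, its class lies in $\Pico_{V/k}(k)$, and by the very definition of $\phi$ (induced by $V_K\hookrightarrow V$) together with $\phi=\tau\circ\alpha_K$, the class of $L'_K$ in $\Pico_{V_K/K}(K)$ equals $\tau(\alpha([L']))$ with $\alpha([L'])\in B(k)$; since $L^{\otimes N}_K\simeq L'_K$ this is exactly (i). For the reverse implication (i)$\Rightarrow$(ii), write the class of $L^{\otimes N}_K$ as $\tau(b)$ with $b\in B(k)$. By (\ref{almex}) the image $\alpha(\Pico_{V/k}(k))$ has finite index in $B(k)$, so $db=\alpha(x)$ for some integer $d>0$ and some $x\in\Pico_{V/k}(k)$. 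Granting momentarily that a suitable multiple $ex$ of $x$ is the class of a genuine line bundle $L'$ on $V$, such an $L'$ is algebraically equivalent to zero (its N\'eron--Severi class is the image of $ex\in\Pico_{V/k}(k)$, hence zero), and the class of $L'_K$ equals $\phi(ex)=\tau(edb)=ed\cdot\tau(b)$, i.e.\ the class of $L^{\otimes Ned}_K$, in $\Pic_{V_K/K}(K)$; since $\Pic(V_K)\hookrightarrow\Pic_{V_K/K}(K)$ is injective ($V_K$ being proper and geometrically connected), this yields $L'_K\simeq L^{\otimes Ned}_K$, which is (ii).

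The main obstacle is precisely the passage from the $k$-point $x$ of $\Pico_{V/k}$ to a genuine line bundle on $V$. The point I would emphasize is that the cokernel of $\Pic(V)\to\Pic_{V/k}(k)$ injects into ${\rm Br}(k)$, which is a torsion group; hence some positive multiple $ex$ of $x$ is represented by an actual line bundle on $V$. Since in condition (ii) we are free to replace $L$ by a positive power, both this representability defect and the finite-index defect coming from (\ref{almex}) are harmlessly absorbed into the exponent $N$. I expect the only genuinely delicate bookkeeping to be keeping these successive multiplications straight and checking that the resulting $L'$ remains algebraically equivalent to zero on all of $V$.
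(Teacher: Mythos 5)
Your proof is correct, and its skeleton is the paper's: you prove (ii)$\Leftrightarrow$(iii) from the description of $\ker\bigl(\Pic(V)\to\Pic(V_K)\bigr)$ as generated by vertical divisors (the paper dismisses this as ``straightforward''; your localization argument on the regular scheme $V$ is exactly the intended justification), and (i)$\Leftrightarrow$(ii) from $\phi=\tau\circ\alpha_K$, the almost-exactness of (\ref{almex}), and the lemma that any element of $\Pico_{V/k}(k)$ has a positive multiple represented by an actual line bundle on $V$, algebraically equivalent to zero, with all defects absorbed into the exponent $N$. The one genuinely different step is your justification of that representability lemma. The paper's footnote argues via \'etale sheafification and norms: the class becomes a line bundle $M'$ over $V_{k'}$ for a finite extension $k'/k$, and the norm $N_{V_{k'}/V}(M')$ represents $[k':k]$ times the class. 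You instead invoke the exact sequence $0\to\Pic(V)\to\Pic_{V/k}(k)\to{\rm Br}(k)$ (valid because $V$ is proper with $H^0(V,\cO_V)=k$) together with the fact that Brauer groups of fields are torsion. Both are standard; the norm argument is more elementary and self-contained and yields the explicit multiplier $[k':k]$, while your Brauer-obstruction argument is slicker and pinpoints where the defect lives --- though note that it tacitly uses that the obstruction map $\Pic_{V/k}(k)\to{\rm Br}(k)$ is a group homomorphism (it is, being a boundary map in the low-degree exact sequence of the Leray spectral sequence for $\G_m$), and this deserves a word. One caveat you share with the paper's own phrasing ``exact up to some finite groups'': over an arbitrary field $k$ of characteristic zero, Poincar\'e reducibility only shows that the cokernel of $\alpha$ on $k$-points has finite \emph{exponent} (killed by the degree of an isogeny $B'\to B$ splitting $\alpha$), not necessarily finite index; but bounded exponent is all your step ``$db=\alpha(x)$'' requires, so the argument is unaffected.
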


\proof The equivalence of (ii) and (iii) is straightforward. The one of (i) and (ii) follows from the ``almost exactness" of (\ref{almex}) and the fact that any element of the group $\Pico_{V/k}(k)$ has a positive multiple that may be represented by an actual line bundle\footnote{Indeed the functor $\Pico_{V/k}$ may be introduced via sheafification for the \'etale topology, hence given any $\alpha$ in $\Pico_{V/k}(k)$, we can find a finite (separable) extension $k'$ of $k$ and a line bundle $M'$ on $V':= V\otimes_{k}k'$ that represents the image of $\alpha$ in $\Pico_{V/k}(k')$. Then $[k':k].\alpha$ is represented by the line bundle $M:=N_{V'/V}(M')$ on $V$ defined as the norm of $M'$.
} over $V$, algebraically equivalent to zero.
\qed

\subsection{The equivalence of {\bf VA2} and {\bf VA3}}\label{VA23} 
In this section, we complete the proof of Theorem \ref{MainGeom} by 
establishing the equivalence of conditions {\bf VA2} and {\bf VA3}.

The implication {\bf VA2}$\Rightarrow${\bf VA3} follows from the implication 
(ii)$\Rightarrow$(i) in Corollary \ref{imtau} and from the invariance of 
$\deg_{H,D} L$ under algebraic equivalence of line bundles.

Conversely let us consider a line bundle $L$ over $V$ that satisfies {\bf VA3}. 

According to the implication (i)$\Rightarrow$(iii) in Corollary \ref{imtau}, we may 
find a positive integer $N$, a line bundle $L'$ over $V$, algebraically 
equivalent to zero, and a vertical divisor $E$ in $V$ such that 
$L^{\otimes N} \simeq L' \otimes \cO(E).$

Moreover, for every vertical integral divisor $D$ in $V$, we have
$$\deg_{H,D} L^{\otimes N} = N. \deg_{H,D} L = 0$$
 by {\bf VA3}, and 
 $$\deg_{H,D} L' =0$$
 since $L'$ is algebraically equivalent to zero. Therefore,
 $$\deg_{H,D} \cO(E)=0.$$
 Lemma \ref{vertdiv} below shows that, after possibly replacing $L$ and $L'$ 
by some positive power, the divisor $E$ is of the form $\pi^\ast(E')$ for some 
divisor $E'$ on $C$. Consequently,
 $$L^{\otimes N} \otimes \pi^\ast \cO(-E') \simeq L'$$
 is algebraically equivalent to zero, and $L$ satisfies {\bf VA2}.
 
 \begin{lemma}\label{vertdiv}
 For any vertical divisor $E$ on $V$, the following conditions are equivalent:
 
 (i) For every vertical divisor $D$ on $V,$ 
 $$\deg_{H,D} \cO(E)=0.$$
 
 (ii) There exist a divisor $E'$ on $C$ and a positive integer $N$ such that
 $$N\cdot E = \pi^\ast E'.$$

\end{lemma}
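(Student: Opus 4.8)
The plan is to prove the two implications separately, the substance lying in \textbf{(i)} $\Rightarrow$ \textbf{(ii)}, which I will reduce to Zariski's lemma on the intersection form of the fibres. First I would dispose of \textbf{(ii)} $\Rightarrow$ \textbf{(i)}. If $N\cdot E=\pi^\ast E'$ for some divisor $E'$ on $C$, then $\cO(E)^{\otimes N}\simeq \pi^\ast\cO(E')$, and for any vertical divisor $D$ (necessarily contained in a single fibre $\pi^{-1}(P)$) the projection formula gives
\[
N\,\deg_{H,D}\cO(E)=\deg_k\bigl(\pi^\ast c_1(\cO(E'))\cdot H^{n-2}\cdot[D]\bigr)=\deg_k\bigl(c_1(\cO(E'))\cdot\pi_\ast(H^{n-2}\cdot[D])\bigr).
\]
Here $H^{n-2}\cdot[D]$ is a $1$-cycle supported on $\pi^{-1}(P)$, so its direct image $\pi_\ast(H^{n-2}\cdot[D])$ is a $1$-cycle on $C$ supported on the point $P$, hence is zero; therefore $\deg_{H,D}\cO(E)=0$.

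For \textbf{(i)} $\Rightarrow$ \textbf{(ii)} I would first reduce. Replacing $\cO(1)$ by a positive power only multiplies each $\deg_{H,D}$ by a fixed positive integer, so I may assume $\cO(1)$ very ample. I may also pass to $\bar k$: since $E$ is defined over $k$, for each Galois orbit of geometric components the orbit-sum is a $k$-rational vertical divisor, so \textbf{(i)} forces its $\deg_{H,\cdot}$ to vanish, and as the conjugate members contribute equally this yields $\langle E,D_j\rangle=0$ for \emph{every} geometric component $D_j$ (notation below); the final identity will descend to $k$ by Galois-invariance of $E$ and the injectivity of $\pi^\ast$ on $\Q$-divisor classes. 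Now $E$ is supported over finitely many closed points $P_1,\dots,P_s$; I write $E=\sum_l E_l$ with $E_l$ supported on $\pi^{-1}(P_l)$, let $D_1^{(l)},\dots,D_{r_l}^{(l)}$ be the components of $\pi^{-1}(P_l)$ and $\pi^\ast(P_l)=\sum_i m_i^{(l)}D_i^{(l)}$ with $m_i^{(l)}>0$, and introduce on the free group generated by the $D_i^{(l)}$ the symmetric pairing $\langle D,D'\rangle:=\deg_k\bigl(c_1(\cO(D))\cdot c_1(\cO(D'))\cdot H^{n-2}\bigr)$, so that $\deg_{H,D'}\cO(E)=\langle E,D'\rangle$. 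As components over distinct $P_l$ have disjoint support the pairing is block-diagonal in $l$, and \textbf{(i)} says exactly that each $E_l$ lies in the radical of $\langle\cdot,\cdot\rangle$ restricted to the components over $P_l$.

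The heart of the argument is to identify this radical, for which I would record three facts: (a) $\langle D_i^{(l)},D_j^{(l)}\rangle\geq 0$ for $i\neq j$; (b) $\langle\pi^\ast(P_l),D_j^{(l)}\rangle=0$, by the projection-formula computation of the first paragraph; and (c) the dual graph of $\pi^{-1}(P_l)$ is connected, because the fibre is geometrically connected. To obtain (a) cleanly and to place myself in an honest surface situation, I would cut $V$ by $n-2$ general members $H_1,\dots,H_{n-2}$ of the very ample system: by Bertini the complete intersection $V':=V\cap H_1\cap\cdots\cap H_{n-2}$ is a smooth projective surface, $\pi':=\pi|_{V'}\colon V'\to C$ is dominant, and for $V'$ general the fibre $(\pi')^{-1}(P_l)$ has components $D_i^{(l)}\cap V'$ with the same multiplicities and stays connected for each of the finitely many $l$, while $\langle D,D'\rangle$ becomes the intersection number $(D\cap V')\cdot(D'\cap V')$ on $V'$. \emph{This is the step I expect to be the main obstacle}: arranging one general linear section that simultaneously, for all the fibres in play, preserves components and multiplicities, yields the non-negativity in (a), and — most importantly — preserves the connectedness (c) needed to pin the radical down exactly.

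Granting this, $\langle\cdot,\cdot\rangle$ restricted to the components over $P_l$ is the intersection form of the connected fibre $(\pi')^{-1}(P_l)$ of the fibred surface $V'/C$, and Zariski's lemma applies. Concretely, writing $D=\sum_i a_i D_i^{(l)}$ with $a_i=m_i^{(l)}b_i$, fact (b) gives the formal identity $\langle D,D\rangle=-\tfrac12\sum_{i\neq j}m_i^{(l)}m_j^{(l)}\langle D_i^{(l)},D_j^{(l)}\rangle(b_i-b_j)^2$, which by (a) shows the form is negative semidefinite and, using the connectedness (c), that its radical is exactly $\Q\cdot\pi^\ast(P_l)$. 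Since $E_l$ lies in the radical, $E_l=\lambda_l\,\pi^\ast(P_l)$ as a $\Q$-divisor for some $\lambda_l\in\Q$; equating coefficients on the linearly independent prime divisors $D_i^{(l)}$ (and using $m_i^{(l)}>0$) this identity already holds on $V$. Summing over $l$ gives $E=\pi^\ast\bigl(\sum_l\lambda_l P_l\bigr)$ in $\mathrm{Div}(V)\otimes\Q$, and clearing denominators produces a positive integer $N$ and an integral divisor $E'=\sum_l N\lambda_l P_l$ on $C$ with $N\cdot E=\pi^\ast E'$, which is \textbf{(ii)}.
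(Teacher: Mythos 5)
Your overall strategy is sound and, at its core, identical to the paper's: the implication (ii) $\Rightarrow$ (i) via the vanishing of $\pi_\ast(H^{n-2}.D)$ in $CH^0(C)$ and the projection formula is exactly the paper's argument, and your identity $\langle D,D\rangle=-\tfrac12\sum_{i\neq j}m_i m_j\langle D_i,D_j\rangle\,(b_i-b_j)^2$, combined with non-negativity of the off-diagonal terms and connectedness of the fibre, is precisely the Zariski-type computation the paper performs. Where you diverge is in the preliminaries: the paper neither passes to $\bar k$ nor cuts down to a surface. It works directly on $V$ with the form $q_{ij}:=\deg_k(H^{n-2}.D_i.D_j)$ and obtains your fact (a) immediately: $D_i$ and $D_j$ being distinct prime divisors on the smooth variety $V$, the cycle-theoretic intersection $D_i.D_j$ is the cycle attached to the scheme $D_i\cap D_j$, which is either empty or purely $(n-2)$-dimensional, so $q_{ij}\geq 0$ by ampleness of $H$, with $q_{ij}>0$ exactly when $D_i\cap D_j\neq\emptyset$; and your fact (c) is simply the connectedness of the scheme $\pi^\ast(P)$, available over $k$ itself since the fibres of $\pi$ are geometrically connected.

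The step you flagged is indeed the one genuine gap in your write-up, and your worry about connectedness is justified: a general hyperplane section of a connected but reducible scheme need not be connected (two planes in $\P^4$ meeting in a single point have disconnected general hyperplane sections), so abstract Bertini does not preserve (c). The repair in your setting is that any two distinct components of a fibre meet, if at all, in pure dimension $n-2$, so $q_{ij}>0$ precisely when they meet; hence the adjacency graph of the components survives the $n-2$ general cuts and the fibre of the surface $V'$ stays connected (irreducibility and multiplicities of the sections $D_i^{(l)}\cap V'$ being handled by Bertini irreducibility, each component having dimension $n-1$, and by generic transversality in characteristic zero). But note that the very observation that effects this repair --- emptiness or pure $(n-2)$-dimensionality of $D_i\cap D_j$ together with ampleness of $H$ --- already yields (a) and (c) on $V$ itself, which is why the paper dispenses with the surface reduction, and with the equally unnecessary passage to $\bar k$, altogether.
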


 This is well known, at least when $n=2$ and $k$ is algebraically closed, in 
which case it is  traditionally attributed  to  Zariski. 
We refer to \cite{deligne1972b} for a discussion of related results concerning 
intersection theory on surfaces, and to \cite{hindryetal05}, Lemme 2.1 for a similar result. We sketch a proof below for the sake of completeness. 
 
 \begin{proof} To establish the implication (ii)$\Rightarrow$(i), observe  that, for any integral vertical 
divisor $D$ on $V$,  the following equality holds in the Chow group $CH^0(C)$
\begin{equation}\label{encorezero}
\pi_\ast (H^{n-2}.D)=0.
\end{equation}
(Indeed the class in $CH_1 (V)$  of $H^{n-2}.D$ may be represented by a cycle in $Z_1(D)$, and consequently the left-hand side of (\ref{encorezero}) may be represented by a cycle in $Z_1(C)$ supported by $\pi(D)$. Since the latter is zero-dimensional, any such cycle vanishes.) Consequently, by the projection formula, for any divisor  $E'$ in $C$, we have
 \begin{align*}
 \deg_{H,D} \cO(\pi^\ast E') & = \deg_k (H^{n-2}.D.\pi^\ast E')\\
 &= \deg_k (\pi_\ast(H^{n-2}.D).E')\\
 &=0.
\end{align*}

To establish the implication (i)$\Rightarrow$(ii), we may assume that $E$ 
is supported by the fiber $\pi^\ast(P)$ of some closed point $P$ of $C$. 
Let $D_1,\ldots,D_r$ be the components of $\vert\pi^\ast(P)\vert$, and let 
$n_1,\ldots,n_r$ be the positive integers defined by the equality of divisors in $V$:
$$\pi^\ast P=\sum_{i=1}^r n_i. D_i.$$ We want to prove that if some divisor 
supported by $\pi^\ast(P),$ 
$E:=\sum_{i=1}^r m_i. D_i,$
satisfies 
$$\deg_{H,D_j} \cO(E)=0,$$
%or equivalently
%$$\deg_k (H^{n-2}.D_j.E)=0,$$
for every $j\in \{1,\ldots,r\},$ 
then $E$ is a rational multiple of $\pi^\ast(P)$, that is, there 
exists $m$ in $\Q$ such that
$$(m_1,\ldots,m_r)=m(n_1,\ldots,n_r).$$
In other words, we want to establish  that the kernel of the symmetric 
quadratic form attached to the matrix $(q_{ij})_{1\leq i,j\leq r}$ defined by
$$q_{ij}:= \deg_k (H^{n-2}.D_i.D_j)$$
is included in the line $\Q.(n_1,\ldots,n_r).$

To establish this inclusion, observe that the converse implication (ii)$\Rightarrow$(i), 
applied to $D=D_i$ and $E=\pi^\ast P$, shows that
$$\sum_{j=1}^r q_{ij} n_j= 0$$
for every $i\in \{1,\ldots,r\}.$
This yields the following expression for the quadratic form  defined by the $q_{ij}$'s:
$$\sum_{i,j=1}^r q_{ij} m_i m_j = - \sum_{1\leq i < j \leq r} q_{ij} n_i n_j 
\left(\frac{m_i}{n_i}-\frac{m_j}{n_j}\right)^2.$$
The required property now follows from the following two observations:

1) For any two distinct elements $i$ and $j$ in $\{1,\ldots,r\}$, the cycle 
theoretic intersection $D_i.D_j$ of the Cartier divisors $D_i$ and $D_j$ is 
the cycle attached to the intersection scheme $D_i \cap D_j$, which is either 
empty or purely $(n-2)$-dimensional, and consequently, by the ampleness of $H$, 
the degree 
$q_{ij}:= \deg_k (H^{n-2}.[D_i \cap D_j])$
\emph{is non-negative, and positive if $D_i \cap D_j$ is not empty.}

2) The scheme $\pi^\ast (P)$ is connected, and consequently \emph{there is 
no partition of $\{1,\ldots,r\}$ in two non-empty subsets $I$ and $J$ such 
that $(i,j) \in I \times J \Rightarrow q_{ij}=0.$}
\end{proof}

\begin{appendix}

\section{Arithmetic extensions and \v{C}ech cohomology}

Let $X$ be an arithmetic scheme over an arithmetic ring $R=(R,\Sigma, F_\infty),$ $E$ a quasi-coherent $\cO_X$-module on $X,$ 
and $\cU=(U_i)_{i\in I}$
an affine, open covering  of $X$. We fix 
a well ordering on $I$ and consider the (alternating) \v{C}ech complex 
$\bigl(\mathcal{C}^{\cdot}(\mathcal{U},E),\delta\bigr)$ where
\[
\mathcal{C}^{p}(\mathcal{U},E):=\prod\limits_{i_0<\ldots < i_p}
E(U_{i_0\ldots i_p}),
\]
with usual notation
\[
U_{i_0\ldots i_p}=U_{i_0}\cap \ldots \cap U_{i_p},
\]
and where the differential 
$\delta:\mathcal{C}^{p}(\mathcal{U},E)\rightarrow \mathcal{C}^{p+1}(\mathcal{U},E)$ 
is given by the formula
\[
(\delta\alpha)_{i_0,\ldots,i_{p+1}}:=\sum\limits_{k=0}^{p+1}(-1)^k
\alpha_{i_0,\ldots,\widehat i_{k},\ldots,i_{p+1}}\big|_{U_{i_0}\cap 
\ldots \cap U_{i_{p+1}}}.
\]
Recall from \cite[2.5]{bostkuennemann1} that we have a 
natural morphism of locally ringed spaces
\[
\rho:(X_\Sigma(\C),\mathcal{C}^\infty_{X_\Sigma})\longrightarrow 
(X_\Sigma(\C),\cO^\an_{X_\Sigma})\longrightarrow(X,\cO_X),
\]
and that, if
\[
E_\C:= \rho^\ast E
\]
denotes the  $\mathcal{C}^\infty$-module over $X_\Sigma(\C)$ deduced from $E$\footnote{Namely, when $E$ is coherent and locally free, the sheaf of $\mathcal{C}^\infty$-sections over $X_\Sigma(\C)$ of the holomorphic vector bundle $E_\C^{\rm hol}$ deduced from $E$.},  
there is a natural
morphism of $\cO_X$-modules, given by adjunction,
\[
{\rm ad}_E\colon E\,\longrightarrow\, (\rho_*E_\C)^{F_\infty}.
\]
 It induces a morphism of \v{C}ech complexes
\[
\mathcal{C}^{\cdot}(\mathcal{U},{\rm ad}_E)\colon 
\mathcal{C}^{\cdot}(\mathcal{U},E)\,
\longrightarrow\, \mathcal{C}^{\cdot}(\mathcal{U},(\rho_*E_\C)^{F_\infty}).
\]

Concerning cone constructions, in the sequel we use the sign conventions  
discussed in \cite[A.1]{bostkuennemann1}.

We consider the \v{C}ech hypercohomology  $\check{H}^0\bigl(\cU,C({\rm ad}_E))\bigr)$
of the cone $C({\rm ad}_E)$ of ${\rm ad}_E$ with respect to the covering $\cU$, namely 
the cohomology in degree zero of the cone 
$C\bigl(\mathcal{C}^{\cdot}(\mathcal{U},{\rm ad}_E) \bigr).$
This cone is a complex of $R$-modules which 
starts as
\[
0\longrightarrow 
\mathcal{C}^{0}(\mathcal{U},E)
\stackrel{\genfrac{(}{)}{0pt}{}{-\delta}{{\rm ad}_E}}
{\longrightarrow} 
\mathcal{C}^{1}(\mathcal{U},E)\oplus\mathcal{C}^{0}(\mathcal{U},(\rho_*E_\C)^{F_\infty})
\stackrel{\genfrac{(}{)}{0pt}{}{-\delta\,\,\,\,0}{{\rm ad}_E\,\,\delta}}{\longrightarrow} 
\mathcal{C}^{2}(\mathcal{U},E)\oplus\mathcal{C}^{1}(\mathcal{U},(\rho_*E_\C)^{F_\infty})
\]
where $\mathcal{C}^{0}(\mathcal{U},E)$ sits in degree $-1$.
Hence  $\check{H}^0\bigl(\cU,C({\rm ad}_E)\bigr)$
is the quotient
\begin{equation}\label{cocycleh1}
\frac{\Bigl\{(\alpha,\beta) \in \mathcal{C}^1(\cU,E) \oplus 
\mathcal{C}^0\bigl(\cU,(\rho_*E_\C)^{F_\infty}\bigr)\,\big\vert\, \delta\alpha = 0 
\wedge {\rm ad}_E(\alpha)=-\delta(\beta)\Bigr\}}
{\Bigl\{ \bigl(-\delta(\gamma),{\rm ad}_E(\gamma)\bigr)\,\big\vert\,\gamma \in 
\mathcal{C}^0(\mathcal{U},E)\Bigr\} }.
\end{equation}
According to the standard properties of  the cone construction (in the category of $R$-modules) and the very definition of  \v{C}ech cohomology as cohomology of the \v{C}ech complex, this group fits into a natural exact sequence:
\begin{multline}\label{A2}
\check{H}^0\bigl(\cU,E)\bigr)\longrightarrow 
\check{H}^0\bigl(\cU,(\rho_*E_\C)^{F_\infty}\bigr)\longrightarrow 
\check{H}^0\bigl(\cU,C({\rm ad}_E)\bigr) \\ \longrightarrow 
\check{H}^1\bigl(\cU,E)\bigr)\longrightarrow 
\check{H}^1\bigl(\cU,(\rho_*E_\C)^{F_\infty})\bigr).
\end{multline}

\begin{lemma}\label{bigdiaext}
 Let $E$ be quasi-coherent $\cO_X$-module.
There exists a canonical commutative diagram 
\[
\begin{array}{cccccccc}
\Gamma (X,E) & \rightarrow & 
A^0(X_\mathbb{R},E) & \rightarrow 
&\widehat{\rm Ext}^1_X (\mathcal{O}_X,E) & \rightarrow & 
{\rm Ext}^1 (\mathcal{O}_X,E) & \rightarrow 0\\
\downarrow {\scriptstyle } & &\downarrow {\scriptstyle } 
 & &\downarrow {\scriptstyle \hat\rho_{\mathcal{U},E}} & & 
\downarrow {\scriptstyle \rho_{\mathcal{U},E}} \\
\check{H}^0(\cU,E))&\rightarrow &
\check{H}^0(\cU,(\rho_*E_\C)^{F_\infty})&\rightarrow& 
\check{H}^0(\cU,C({\rm ad}_E))&\rightarrow &
\check{H}^1(\cU,E))&\rightarrow 0
\end{array}
\]
with exact horizontal lines where all vertical maps are isomorphisms.
\end{lemma}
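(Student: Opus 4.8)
The plan is to realise the two horizontal lines as exact sequences already available, to construct the central vertical map $\hat\rho_{\mathcal{U},E}$ by an explicit \v{C}ech cocycle recipe, and then to conclude by the five lemma. The top line is exactly the basic exact sequence (\ref{eq:longexseq1}) specialised to $F=\cO_X$ and $G=E$: here ${\rm Hom}_{\cO_X}(\cO_X,E)=\Gamma(X,E)$ and ${\rm Hom}_{\cC^\infty_{X(\C)}}((\cO_X)_\C,E_\C)^{F_\infty}=A^0(X_\R,E)$. The bottom line is the cone exact sequence (\ref{A2}), truncated after the term $\check{H}^1(\cU,E)$; the truncation is legitimate provided $\check{H}^1(\cU,(\rho_*E_\C)^{F_\infty})=0$, which I take up first.

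To prove this vanishing, I observe that the \v{C}ech complex $\mathcal{C}^\bullet(\cU,(\rho_*E_\C)^{F_\infty})$ is the complex of $F_\infty$-invariants of the \v{C}ech complex of the sheaf $E_\C$ of $\cC^\infty$-sections on the complex manifold $X_\Sigma(\C)$, computed for the open cover $(U_{i,\Sigma}(\C))_{i\in I}$. Since $E_\C$ is a fine sheaf, a $\cC^\infty$-partition of unity subordinate to this cover produces a contracting homotopy of the augmented complex in positive degrees; choosing the partition of unity invariant under $F_\infty$ (possible by averaging over the order-two action) makes this homotopy $F_\infty$-equivariant, so that $\check{H}^p(\cU,(\rho_*E_\C)^{F_\infty})=0$ for all $p\geq 1$. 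This is the only genuinely analytic input, and is the main point to get right.

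Next I set up the vertical maps. The two leftmost ones, $\Gamma(X,E)\to\check{H}^0(\cU,E)$ and $A^0(X_\R,E)\to\check{H}^0(\cU,(\rho_*E_\C)^{F_\infty})$, are the tautological identifications of global sections with degree-zero \v{C}ech cohomology, and the rightmost map $\rho_{\mathcal{U},E}\colon {\rm Ext}^1_{\cO_X}(\cO_X,E)=H^1(X,E)\to\check{H}^1(\cU,E)$ is the classical comparison isomorphism, valid because $\cU$ is an affine cover of the separated scheme $X$ (so all intersections $U_{i_0\ldots i_p}$ are affine) and $E$ is quasi-coherent (compare \cite[App. A.4.2]{bostkuennemann1}); each of these three is an isomorphism. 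To build $\hat\rho_{\mathcal{U},E}$, I start from an arithmetic extension $(\cE,s)$ representing a class in $\widehat{\rm Ext}^1_X(\cO_X,E)$ and choose, over each affine $U_i$, an $\cO_{U_i}$-linear splitting $\sigma_i$ of the underlying algebraic extension $\cE$ --- possible since $H^1(U_i,E|_{U_i})=0$. Setting $\alpha_{ij}:=(\sigma_j-\sigma_i)(1)\in E(U_{ij})$ and $\beta_i:=(s-\sigma_{i,\C})(1)\in A^0(U_{i,\R},E)$, one checks $\delta\alpha=0$ and ${\rm ad}_E(\alpha)=-\delta\beta$, so that $(\alpha,\beta)$ defines a class in the quotient (\ref{cocycleh1}); replacing the $\sigma_i$ by other local splittings modifies $(\alpha,\beta)$ by a coboundary, so $\hat\rho_{\mathcal{U},E}([(\cE,s)]):=[(\alpha,\beta)]$ is well defined, and a routine check (Baer sum going to the sum of cocycles) shows it is a homomorphism.

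It remains to verify commutativity and to apply the five lemma. The left square commutes by naturality of the passage from an algebraic section to its $\cC^\infty$-analytification. For the middle square, evaluating $\hat\rho_{\mathcal{U},E}$ on $b(T)$ --- for which the trivial extension admits the global algebraic splitting $1\mapsto(0,1)$ --- gives $\alpha=0$ and $\beta_i=T$, which is precisely the image of $T$ under $\check{H}^0(\cU,(\rho_*E_\C)^{F_\infty})\to\check{H}^0(\cU,C({\rm ad}_E))$. The right square commutes because the boundary map $\check{H}^0(\cU,C({\rm ad}_E))\to\check{H}^1(\cU,E)$ sends $[(\alpha,\beta)]$ to $[\alpha]$, which is $\rho_{\mathcal{U},E}$ applied to $\nu([(\cE,s)])=[\cE]$. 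Since both rows are exact and the four outer vertical arrows are isomorphisms, the five lemma shows that $\hat\rho_{\mathcal{U},E}$ is an isomorphism as well, completing the proof.
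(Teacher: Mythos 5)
Your proposal is correct and follows essentially the same route as the paper: the top row from the basic exact sequence of arithmetic extensions, the bottom row from the cone sequence truncated via the vanishing of $\check{H}^1\bigl(\cU,(\rho_*E_\C)^{F_\infty}\bigr)$, the same cocycle recipe $\alpha_{ij}=\sigma_j-\sigma_i$, $\beta_i=s-\sigma_{i,\C}$ for $\hat\rho_{\mathcal{U},E}$ with the same coboundary argument for well-definedness, and the five lemma to conclude. The only difference is cosmetic: where the paper simply cites the vanishing of \v{C}ech cohomology of a fine sheaf (Godement), you spell out the $F_\infty$-equivariant partition-of-unity homotopy, which is a correct expansion of the same fact.
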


\proof
The upper exact sequence is established in \cite[2.2]{bostkuennemann1}.

We have
\[
\check{H}^1\bigl(\cU,(\rho_*E_\C)^{F_\infty})\bigr)
=\check{H}^1\bigl(\rho^{-1}\cU,(E_\C)^{F_\infty})\bigr),
\]
and the latter group is zero as \v{C}ech cohomology of a fine
sheaf with respect to an
open covering vanishes (see for instance \cite[II.3.7  and II.5.2.3 (b)]{godement73}).
Consequently we obtain the lower exact sequence from (\ref{A2}).

The two left vertical maps are given by the natural isomorphisms
induced by the restriction maps of the sheaves $E$ and $(\rho_*E_\C)^{F_\infty}$.

We now define $\rho_{\mathcal{U},E}$.
Let
\[
\mathcal{E}: 0 \longrightarrow E \longrightarrow F 
\stackrel{\pi}{\longrightarrow} \mathcal{O}_X \longrightarrow 0
\]
be an extension of $\cO_X$-modules.  
The map $\pi$ admits a section $\varphi_i$ over each affine scheme $U_i$. 
The difference $\alpha_{ij} = \varphi_j\vert_{U_{ij}} - \varphi_i \vert_{U_{ij}} $ 
determines an element in $ \Gamma(U_{ij}, E)$. 
The family $(\alpha_{ij})_{ij}$ defines a $1$-cocycle in $\mathcal{C}^1(\cU,E)$ 
whose class in $\check{H}^1 (\cU,E)$ does not depend on the choices of the $\varphi_i$. 
One obtains a canonical isomorphism (compare for example \cite[Prop. 2]{atiyah57})
\[
\rho_{\mathcal{U},E}: {\rm Ext}^1_{\cO_X}(\mathcal{O}_X,E) \longrightarrow \check{H}^1  (\cU,E)\,,\,\, 
[\mathcal{E}] \longmapsto \big[(\alpha_{ij})_{ij}\big].
\]

Finally we define $\hat\rho_{\mathcal{U},E}$.
Let $(\mathcal{E},s)$ be an arithmetic extension with $\mathcal{E}$ as above. 
Choose the $\varphi_i$ as before and define
\[
\beta_i=s\vert_{U_i}-{\rm ad}_E(\varphi_i) \in A^{0,0} (U_{i,\mathbb{R}},E).
\]
We have ${\rm ad}_E(\alpha_{ij})= \beta_i\vert_{U_{ij}} - \beta_j\vert_{U_{ij}}$. Hence the pair
$\bigl((\alpha_{ij})_{ij}, (\beta_i)_i\bigr)$ 
determines an element $\hat\rho_{\mathcal{U},E}(\mathcal{E},s)$ in (\ref{cocycleh1}), i.e. 
in $\check{H}^0\bigl(\cU,C({\rm ad}_E)\bigr)$. 
This class does not depend on the choices of the $\varphi_i$.
Given different sections $\tilde\varphi_i$ which led to cocyles 
$\bigl((\tilde\alpha_{ij})_{ij}, (\tilde\beta_i)_i\bigr)$ as above, we consider
\[
\gamma\in\mathcal{C}^0(\mathcal{U},E)\,,\,\,\gamma_i=\varphi_i-\tilde\varphi_i
\]
and get
\[
\genfrac{(}{)}{0pt}{}{-\delta}{{\rm ad}_E}(\gamma)
=(\tilde \alpha,\tilde \beta)-(\alpha,\beta).
\] 
It is straightforward to check that
\[ 
\hat\rho_{\mathcal{U},E} : \widehat{\rm Ext}_X^1 (\cO_X,E)\longrightarrow 
\check{H}^0\bigl(\cU,C({\rm ad}_E)\bigr)\,,\,\,
\bigl[(\mathcal{E},s)\bigr] \longmapsto \bigl[(\alpha_{ij}), (\beta_i)\bigr]
\]
is a group homomorphism which fits into the above commutative diagram.
The five lemma implies that the map $\hat\rho_{\mathcal{U},E}$ is an isomorphism.
\qed

\begin{corollary}\label{bigdiaext2}
Let $F$, $G$ be quasi-coherent $\cO_X$-modules such that 
$F$ is a vector bundle on $X$.
There exists a canonical isomorphism 
\[ 
\hat\rho_{\mathcal{U},F,G} : \widehat{\rm Ext}_X^1 (F,G)\longrightarrow  
\check{H}^0\bigl(\cU,C({\rm ad}_{{\mathcal Hom}(F,G)})\bigr)
\]
which identifies $\widehat{\rm Ext}^1 (F,G)$ with the quotient
(\ref{cocycleh1}) for $E={\mathcal Hom}(F,G)$.
\end{corollary}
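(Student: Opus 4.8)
The plan is to deduce the statement from Lemma~\ref{bigdiaext} by means of the canonical isomorphism between arithmetic extensions of $F$ by $G$ and arithmetic extensions of $\cO_X$ by $\mathcal{Hom}(F,G)$ that is available precisely because $F$ is a vector bundle. First I would recall that, since $F$ is locally free, the functor $\,\cdot\,\otimes F^\vee$ is exact and compatible with $\mathcal{C}^\infty$-splittings and with complex conjugation; hence tensoring an arithmetic extension of $F$ by $G$ with $F^\vee$, and then pulling back along the canonical morphism $j_F\colon \cO_X \to F\otimes F^\vee \simeq \iend(F)$, $1\mapsto \id_F$, produces an arithmetic extension of $\cO_X$ by $F^\vee\otimes G \simeq \mathcal{Hom}(F,G)$. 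This construction is the arithmetic avatar of the classical identification $\Ext^1_{\cO_X}(F,G)\simeq \Ext^1_{\cO_X}(\cO_X,\mathcal{Hom}(F,G))$ valid for locally free $F$; it is recorded in \cite[2.4.6]{bostkuennemann1} and yields a canonical isomorphism
\[
\widehat{\rm Ext}^1_X(F,G)\stackrel{\sim}{\longrightarrow}\widehat{\rm Ext}^1_X(\cO_X,\mathcal{Hom}(F,G)).
\]

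Next I would apply Lemma~\ref{bigdiaext} to the quasi-coherent $\cO_X$-module $E:=\mathcal{Hom}(F,G)$. Observe that $G$ need not be a vector bundle, but $\mathcal{Hom}(F,G)=F^\vee\otimes G$ is quasi-coherent, which is all that the lemma requires. This provides a canonical isomorphism $\hat\rho_{\mathcal{U},\mathcal{Hom}(F,G)}$ from $\widehat{\rm Ext}^1_X(\cO_X,\mathcal{Hom}(F,G))$ onto $\check{H}^0\bigl(\cU,C({\rm ad}_{\mathcal{Hom}(F,G)})\bigr)$, the latter group being by definition the quotient (\ref{cocycleh1}) for $E=\mathcal{Hom}(F,G)$. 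I would then define $\hat\rho_{\mathcal{U},F,G}$ as the composite of the two displayed isomorphisms, which at once gives the asserted isomorphism and the identification with (\ref{cocycleh1}).

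The only point requiring care — the natural candidate for the main obstacle — is the verification that the operations ``$\otimes F^\vee$'' and ``pull-back along $j_F$'' are well defined on arithmetic extension \emph{groups}: that they respect isomorphism classes, the Baer-sum group law, and the $F_\infty$-invariant $\mathcal{C}^\infty$-splitting data, and that the resulting map is bijective. All of this is purely formal once one invokes the functoriality of arithmetic extensions under tensor products and pull-backs developed in \cite{bostkuennemann1}, and it is exactly what \cite[2.4.6]{bostkuennemann1} establishes, so no new work is needed. If a fully self-contained argument were preferred, I would instead unwind both isomorphisms into an explicit \v{C}ech description: choosing local sections $\varphi_i$ of the underlying extension of $F$ by $G$ over a trivializing cover and forming the pair $\bigl((\alpha_{ij})_{ij},(\beta_i)_i\bigr)$ after tensoring with $F^\vee$, one recovers a representative in (\ref{cocycleh1}), which is precisely the shape of cocycle used later in Proposition~\ref{cocycdescrat}.
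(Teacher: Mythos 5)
Your proposal is correct and follows essentially the same route as the paper: the paper's proof likewise invokes the canonical isomorphism $\widehat{\rm Ext}^1_X(F,G)\simeq\widehat{\rm Ext}^1_X(\cO_X,{\mathcal Hom}(F,G))$ from \cite[2.4.6]{bostkuennemann1} (built from $(\mathcal{E},s)\otimes F^\vee$ and the canonical map $j_F$) and defines $\hat\rho_{\mathcal{U},F,G}$ as its composition with $\hat\rho_{\mathcal{U},E}$ of Lemma \ref{bigdiaext} for $E={\mathcal Hom}(F,G)$. Your description of the $j_F$-operation as a pull-back (base change along $j_F\colon \cO_X\to F\otimes F^\vee$ on the quotient term) is in fact the type-correct one, consistent with the usage in Corollary \ref{atiyahdual}.
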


\proof
It is proved in \cite[2.4.6]{bostkuennemann1} that there is a canonical
isomorphism
\begin{equation}\label{iso1}
\widehat{\rm Ext}^1_X (F,G)\stackrel{\sim}{\rightarrow}
\widehat{\rm Ext}^1_X (\cO_X,{\mathcal Hom}(F,G))
\end{equation}
which maps the class of an arithmetic extension $(\mathcal{E},s)$ to the pushout
of $(\mathcal{E},s)\otimes F^\vee$ along the canonical map 
$j_F:\cO_X\rightarrow F\otimes F^\vee$.
Let $E={\mathcal Hom}(F,G)$.
We define  $\hat\rho_{\mathcal{U},F,G} $ as the composition of 
the isomorphisms (\ref{iso1}) and $\hat\rho_{\mathcal{U},E}$
in Lemma \ref{bigdiaext}.
\qed

\section{The universal vector extension of
a Picard variety}\label{univectext}
In this Appendix, we recall some basic facts concerning universal vector extensions 
of Picard varieties, which are essentially due to Messing and Mazur (\cite{messing73}, \cite{mazurmessing74}). We show in particular 
that the universal vector extension of the Picard variety ${\rm Pic}_{X/k}^0$
of a smooth projective variety $X$ over a field  $k$ of characteristic zero 
classifies line bundles with integrable connections (see (\ref{mainuve}) \emph{infra}; this is certainly 
well-known but, to our knowledge, only the case where $X$ is an 
abelian variety is treated in the literature).
We also describe the 
maximal compact subgroups of the Lie groups defined  by real and
complex points of universal vector extensions. 

\smallskip

\noindent{\bf B.1.} Let $S$ be a locally noetherian scheme.
In the sequel, we consider a morphism $f:X\rightarrow S$ of schemes which satisfies the following assumptions:
\begin{enumerate}
\item[ i) ]
The morphism $f$ is projective, smooth with geometrically connected
fibers.
\item[ii) ]
The Hodge to de Rham spectral sequence 
\[
E_1^{p,q}=R^qf_*\Omega_{X/S}^p\Rightarrow R^{p+q}f_*\Omega_{X/S}^\cdot
\]
degenerates at $E_1$ and the sheaves $R^qf_*\Omega_{X/S}^p$
are locally free.
\item[iii)]
The identity component ${\rm Pic}_{X/S}^0$ of the Picard scheme 
${\rm Pic}_{X/S}$ is an  abelian scheme.
\end{enumerate}

%\noindent{\bf B.1.} %%%\subsection{}\label{prereqchz}
We observe that i) implies that ${\rm Pic}_{X/S}$ is representable 
by a $S$-group scheme \cite[n.232, Thm. 3.1]{FGA} and that $f_*\cO_X=\cO_S$
holds universally \cite[7.8.6]{EGAIII2}.
Furthermore i) implies ii) if $S$ is of characteristic zero \cite[Th. 5.5]{deligne68}
and i) implies iii) if $S$ is the spectrum of a field of characteristic zero
\cite[8.4]{boschetal90}.
It is shown in \cite[8.3]{katz70} that the formation of the coherent sheaves 
$R^qf_*\Omega_{X/S}^p$ and $R^{n}f_*\Omega_{X/S}^\cdot$
commutes with arbitrary base change if they are locally free for all $p,q \geq 0$ and all $n\geq 0.$

\smallskip

\noindent{\bf B.2.} %%%\subsection{}
We consider the complex
\[
\Omega_{X/S}^\times:0\longrightarrow \cO_X^* \stackrel{\rm dlog}{\longrightarrow}
\Omega_{X/S}^1\stackrel{\rm d}{\longrightarrow} \Omega_{X/S}^2
\stackrel{\rm d}{\longrightarrow} \ldots
\]
where $\cO_X^*$ sits in degree zero.
The group
\[
{\rm Pic}^\#(X/S):=H^1(X_{\rm fppf},\Omega_{X/S}^\times)
\]
classifies isomorphism classes of pairs $(L,\nabla)$ where $L$ is a line bundle
on $X$ and $\nabla$ is an integrable connection
\[
\nabla:L\longrightarrow L\otimes \Omega_{X/S}^1
\]
relative to $S$ \cite[(2.5.3)]{messing73}.
We denote by
\[
{\rm Pic}_{X/S}^\#:=R^1f_{{\rm fppf} \ast}\Omega_{X/S}^\times
\]
the fppf-sheaf on the category of $S$-schemes associated to the presheaf 
\[
T\mapsto {\rm Pic}^\#(X\times_ST/T)
\]
(see for instance \cite[8.1]{boschetal90}).
If $X_T=X\times_ST$ admits a section over $T$, we have \cite[(2.6.4)]{messing73}
\begin{equation}\label{piccoker}
{\rm Pic}_{X/S}^\#(T)={\rm Coker}\bigl(
{\rm Pic}(T) = {\rm Pic}^\#(T/T)\stackrel{f^*}{\longrightarrow}{\rm Pic}^\#(X\times_ST/T)\bigr).
\end{equation}

\smallskip

\noindent{\bf B.3.} %%%\subsection{}
If $T/S$ is a fpqc-morphism, we have
\begin{equation}\label{fpqcbc}
{\rm Pic}_{X/S}^\#\times_ST= {\rm Pic}_{X_T/T}^\#.
\end{equation}
Indeed, this is obvious if $T/S$ is fppf. Hence we may assume without loss
of generality that $X/S$ admits a section $\varepsilon.$
This allows us to describe elements in ${\rm Pic}_{X/S}^\#(T)$ as isomorphism classes of triples $(L,\nabla, r)$ where $L$ is  a line bundle on $X_T,$ $\nabla$ is an integrable connection relative to $T$, and $$r:\varepsilon^\ast L\stackrel{\sim}{\longrightarrow} \cO_T$$ is a rigidification. It follows from 
 fpqc-descent 
that ${\rm Pic}_{X/S}^\#$ is in fact an fpqc-sheaf on $S$, which implies (\ref{fpqcbc}). 

We will apply (\ref{fpqcbc}) in the situation where $S$ is the spectrum of an arithmetic ring and $T$ is the spectrum of $\R$ or $\C.$

\smallskip

\noindent{\bf B.4.} %%%\subsection{}
The exact sequence of complexes
\begin{equation}\label{kuexse}
0\longrightarrow \tau_{\geq 1}\Omega_{X/S}^\cdot\longrightarrow \Omega_{X/S}^\times
\longrightarrow \cO_X^* \rightarrow0 
\end{equation}
induces an exact sequence
\begin{equation}\label{H12}
 H^1(X_{\rm fppf},\tau_{\geq 1}\Omega_{X/S}^\cdot)\longrightarrow {\rm Pic}^\#(X/S)
\longrightarrow H^1(X_{\rm fppf},\cO^*_X)
\longrightarrow H^2(X_{\rm fppf},\tau_{\geq 1}\Omega_{X/S}^\cdot).
\end{equation}
Observe also that the first map in \ref{H12} is injective: this follows from the long exact sequence of $H^0$ 's and $H^1$'s associated with (\ref{kuexse}), from the vanishing of the map 
\[
{\rm dlog}\colon\Gamma(X,\cO_X^*)\longrightarrow \Gamma(X,\Omega_{X/S}^1)
\]
(implied by  Assumption B.1 i)), and the fppf-descent isomorphisms $\Gamma(X,\cO_X^*) \simeq \Gamma(X_{\rm fppf},\cO_X^*)$ and  $\Gamma(X,\Omega_{X/S}^1)\simeq \Gamma(X_{\rm fppf},\Omega_{X/S}^1).$

Using fppf-descent and Assumption B.1 ii), one also gets:
$$
H^1(X_{\rm fppf},\cO^*_X)={\rm Pic}(X), $$
$$
H^2(X_{\rm fppf},\tau_{\geq 1}\Omega_{X/S}^\cdot)
=H^2(X_{\rm Zar},\tau_{\geq 1}\Omega_{X/S}^\cdot),
$$
and
\[
H^1(X_{\rm fppf},\tau_{\geq 1}\Omega_{X/S}^\cdot)
=\ker\bigl(H^0(X_{\rm fppf},\Omega_{X/S}^1) \longrightarrow H^0(X_{\rm fppf},\Omega_{X/S}^2)\bigr)
=\Gamma(S,f_*\Omega_{X/S}^1).
\]
Sheafification of the exact sequence (\ref{H12}) and the injectivity of its first map yields an exact sequence of fppf-sheaves of abelian groups over $S$:
\[
0\longrightarrow f_*\Omega_{X/S}^1\longrightarrow {\rm Pic}^\#_{X/S}
\longrightarrow {\rm Pic}_{X/S}
\stackrel{c}{\longrightarrow} R^2f_{*}\tau_{\geq 1}\Omega_{X/S}^\cdot.
\]
As there are no non-trivial homomorphisms from the abelian scheme
${\rm Pic}_{X/S}^0$ to the coherent sheaf
$R^2f_{*}\tau_{\geq 1}\Omega_{X/S}^\cdot$ by \cite[Lemma p.9]{mazurmessing74},
we have ${\rm Pic}_{X/S}^0\subseteq \ker (c)$. Finally we obtain an extension
of fppf-sheaves  of abelian groups over $S$
\begin{equation}\label{univext}
0\longrightarrow f_*\Omega_{X/S}^1\longrightarrow %{\mathbb P}_{X/S} 
{\rm Pic}_{X/S}^{\#,0}
\longrightarrow {\rm Pic}_{X/S}^0\longrightarrow 0
\end{equation}
where 
\[%{\mathbb P}_{X/S}
{\rm Pic}_{X/S}^{\#,0}
:={\rm Pic}_{X/S}^\#\times_{{\rm Pic}_{X/S}}{\rm Pic}_{X/S}^0.
\]

\smallskip

\noindent{\bf B.5.} %%%\subsection{}
The {\it universal vector extension} of the abelian scheme ${\rm Pic}_{X/S}^0$
is a group scheme $E_{X/S}$ which fits into an  exact sequence of fppf-sheaves
\begin{equation}\label{univvectext}
0\longrightarrow \mathbb{E}_{A/S}\longrightarrow E_{X/S}
\longrightarrow {\rm Pic}_{X/S}^0\longrightarrow 0
\end{equation}
where $\mathbb{E}_{A/S}$ denotes the Hodge bundle of the
dual abelian scheme $$A:=({\rm Pic}_{X/S}^0)^\vee \stackrel{\pi_A}{\longrightarrow} S,$$
namely
$$\mathbb{E}_{A/S} := \pi_{A \ast}\Omega^1_{A/S}.$$
The universal vector extension may be characterized by its universal property:
given an abelian fppf-sheaf $E'$ and a vector group scheme $M$ which 
fit into an extension of fppf-sheaves of abelian groups
\begin{equation}\label{testext}
0\longrightarrow M\longrightarrow E'
\longrightarrow {\rm Pic}_{X/S}^0\longrightarrow0,
\end{equation}
there exists a unique $\cO_S$-linear morphism $\phi:\E_{A/S}\rightarrow M$ 
such that (\ref{testext}) is isomorphic to the pushout of (\ref{univvectext})
along $\phi$.

By the universal property
there exist unique morphisms $\alpha$ and $\beta$ (of $\cO_S$-modules and $S$-group schemes respectively) such that 
\begin{equation}\label{podxa}
\begin{array}{ccccccccc}
0&\rightarrow &\mathbb{E}_{A/S}&\rightarrow &E_{X/S}&
\rightarrow &{\rm Pic}_{X/S}^0&\rightarrow& 0\\
&&\downarrow{\scriptstyle  \alpha} &&\downarrow{\scriptstyle  \beta}&& \| \\
0&\rightarrow &f_*\Omega_{X/S}^1&\rightarrow &{\rm Pic}_{X/S}^{\#,0}&
\rightarrow &{\rm Pic}_{X/S}^0&\rightarrow& 0
\end{array}
\end{equation}
is a pushout diagram.
The biduality of abelian schemes 
%$${\rm Pic}_{A/S}^0 =: A^\vee = \left( {\rm Pic}_{X/S}^0\right)^{\vee \vee} \simeq {\rm Pic}_{X/S}^0$$
$$  {\rm Pic}_{X/S}^0 \simeq \left( {\rm Pic}_{X/S}^0\right)^{\vee \vee} = A^\vee := {\rm Pic}_{A/S}^0$$
(see for instance  \cite[8.1, Theorem 5]{boschetal90}) yields a canonical isomorphism
\begin{equation}\label{bidualisom}
E_{X/S}\stackrel{\sim}{\longrightarrow}E_{A/S}.
\end{equation}
It is furthermore shown in \cite{mazurmessing74} and \cite{messing73} 
that (\ref{podxa}) with  $X$ replaced by $A$ 
induces a canonical isomorphism 
\[
E_{A/S}\stackrel{\sim}{\longrightarrow}{\rm Pic}_{A/S}^{\#,0}.
\]

Assume that $X/S$ admits a section $\epsilon$. 
There exists a canonical morphism of $S$-schemes, the Albanese morphism of $X$ 
over $S$ relative to the ``base point" $\epsilon$,
\[
\varphi:X\longrightarrow A,
\]
that is characterized by the fact that the pullback of a Poincar\'e bundle for $A$ over $S$ (rigidified along $0$) is isomorphic to a Poincar\'e bundle
 for $X$ (rigidified along $\epsilon$).
The pullback along $\varphi$ induces morphisms
\[
\varphi^*:\mathbb{E}_{A/S}\longrightarrow f_*\Omega_{X/S}^1\,,
\,\,\, \gamma \longmapsto \varphi^*\gamma
\] 
and (using description (\ref{piccoker}))
\[ 
\varphi^*:{\rm Pic}_{A/S}^{\#,0}\longrightarrow {\rm Pic}_{X/S}^{\#,0}
\,,\,\,\,[L,\nabla]\longmapsto [\varphi^*L,\varphi^*\nabla]
\]
such that the diagram
\begin{equation}\label{podxa2}
\begin{array}{ccccccccc}
0&\longrightarrow &\mathbb{E}_{A/S}&\longrightarrow &{\rm Pic}_{A/S}^{\#,0}&
\longrightarrow &{\rm Pic}_{X/S}^0&\longrightarrow& 0\\
&&\downarrow{\scriptstyle \varphi^*} &&\downarrow{\scriptstyle \varphi^*}&& \| \\
0&\longrightarrow &f_*\Omega_{X/S}^1&\longrightarrow &{\rm Pic}_{X/S}^{\#,0}&
\longrightarrow &{\rm Pic}_{X/S}^0&\longrightarrow& 0
\end{array}
\end{equation}
is commutative.
The uniqueness assertion in the universal property implies that the maps 
$\alpha$ and $\beta$ in (\ref{podxa}) are 
given under the canonical identifications
\[
E_{X/S}\stackrel{\sim}{\longrightarrow}E_{A/S}\stackrel{\sim}{\longrightarrow}{\rm Pic}_{A/S}^{\#,0}
\]
by pullback along $\varphi$.

\smallskip

\noindent{\bf B.6.} %%%\subsection{}\label{rpuve}
Let $S$ be the spectrum of a field $k$ of characteristic zero.
For a projective, smooth, geometrically connected $S$-scheme, 
our assumptions \ref{univectext} i)- iii) are satisfied by 
B.1. %%%%% NO LABEL!

Furthermore the morphism $\alpha$ becomes an isomorphism
\begin{equation}\label{figbp}
\alpha:\mathbb{E}_{A/k} :=  \Gamma(A,\Omega^1_{A/k}) \stackrel{\sim}{\longrightarrow} \Gamma(X,\Omega^1_{X/k})% f_*\Omega_{X/S}^1
\end{equation}
of $k$-vector spaces. Indeed, to establish that $\alpha$ is an isomorphism, we may replace $k$ by a finite field extension, and therefore assume that $X(k)$ is not empty. If $\varphi: X \to A$ denotes the Albanese morphism associated to some base point $\epsilon$ in $X(k)$, $\alpha$ is given by pull back along $\varphi,$ and is injective as $X$ generates $A$ as an abelian variety, and bijective for dimension reasons (compare for example 
\cite[8.4 Th. 1 b)]{boschetal90}).

It follows that $\beta$ is an isomorphism of $k$-group schemes 
\begin{equation}\label{mainuve}
\beta: E_{X/k}\stackrel{\sim}{\longrightarrow} {\rm Pic}_{X/k}^{\#,0}.
\end{equation} 
In other words, ${\rm Pic}_{X/k}^{\#,0}$ becomes canonically isomorphic to 
the universal vector extension $E_{X/k}$ of 
${\rm Pic}^0_{X/k}$.

When $X(k)$ is not empty, this isomorphism may be described as above, by means of the pull back along the Albanese map $\varphi$ associated to any base point $\epsilon$ in $X(k)$, and using   (\ref{piccoker}) we get a canonical isomorphism of abelian groups: 
\begin{equation}\label{linenabla}
E_{X/k}(k)\simeq \left.\biggl\{(L,\nabla) \left\vert
\genfrac{}{}{0pt}{}{ L \mbox{ line bundle algebraically equivalent to zero on } X }{\nabla \mbox{ integrable connection on }L } \right. \biggr\}
\right/ \sim \,\, ,
\end{equation}
where $\sim$ denotes the obvious isomorphism relations between pairs $(L,\nabla).$

 In general, when $X(k)$ is possibly empty, we may choose a Galois extension
$k'/k$ with Galois group $\Gamma$ such that $X(k')\neq \emptyset$ and
use the obvious identification
\begin{equation}
E_{X/k}(k)=E_{X_{k'}/k'}(k')^\Gamma
\end{equation}
to reduce to the previous case.

\smallskip

\noindent{\bf B.7.} %%%\subsection{}\label{mcsc}
If $k=\C$, the extension of commutative complex Lie groups 
\begin{equation}\label{univextoverc}
0\rightarrow \Gamma(X,\Omega_{X/\C}^1)\longrightarrow E_{X/\C}(\C)
\longrightarrow {\rm Pic}^0_{X/\C}(\C)\longrightarrow 0,
\end{equation}
deduced from (\ref{univvectext}) by considering the complex points, 
admits the following description in the complex analytic category (compare \cite[ex.(1.4)]{messing73}).

The Lie algebra of $\Pico_{X/\C}$, hence of the complex Lie group $\Pico_{X/\C}(\C)$, may be be identified with $H^1(X, \cO_X),$ that is, by GAGA, with
$H^1(X(\C),\cO^\an_{X(\C)})$. By considering the exact sequence of sheaves over $X(\C)$
$$0\longrightarrow 2 \pi i \Z \longrightarrow \cO_{X(\C)}^\an \stackrel{\exp}{\longrightarrow} \cO_{X(\C)}^{\an,\ast} \longrightarrow 0$$ and using GAGA, one obtains that the exponential map of $\Pico_{X/\C}$ defines an isomorphism of commutative complex Lie groups:
\begin{equation}\label{expPic}
\frac{H^1(X(\C),\cO_X^{\rm hol})}{H^1(X(\C),2\pi i\Z)} \simeq \Pico_{X/\C}(\C).
\end{equation}

The group of isomorphism classes of pairs $(L,\nabla)$ where $L$ is an algebraic line bundle over $X$ and $\nabla$ an integrable algebraic connection on $L$ --- or equivalently by GAGA, of pairs $(L^\an,\nabla^\an)$  where $L^\an$ is 
a
holomorphic line bundle on
the complex manifold $X(\C)$ and $\nabla^\an$  an integrable, complex analytic
connection on $L^\an$ --- may be identified with $H^1(X(\C), \C^\ast)$, by sending $[(L^\an,\nabla^\an)]$ to the class of the rank one local system $\Ker (\nabla^\an).$ By considering the exponential sequence 
\[
0 \longrightarrow 2 \pi i \Z \longrightarrow \C
\stackrel{\exp}{\longrightarrow} \C^* \longrightarrow 0,
\]
one sees that the group  of classes of such pairs $(L,\nabla)$ with $L$ algebraically equivalent to zero may be identified with the subgroup of $H^1(X(\C), \C^\ast)$ that is the isomorphic image under the exponential map of
$$\frac{H^1(X(\C),\C)}{H^1(X(\C),2 \pi i \Z)}.$$
Using the identification (\ref{linenabla}), we finally obtain an isomorphism
\begin{equation}\label{expE}
\frac{H^1(X(\C),\C)}{H^1(X(\C),2 \pi i \Z)} \simeq E_{X/\C}(\C).
\end{equation}

The analytic de Rham isomorphism
$$H^1(X(\C),\C) \simeq H^1(X(\C), \Omega^{\cdot \, \an}_{X/\C})$$
and the Hodge filtration give rise to a short exact sequence of finite dimensional $\C$-vector spaces
$$
0 \longrightarrow \Gamma(X(\C), \Omega^{1\, \an}_{X/\C})
\longrightarrow H^1(X(\C),\C) \longrightarrow
H^1(X(\C), \cO^\an_{X(\C)})
\longrightarrow 0,
$$% (deduced from the Hodeg filtar
and then, by quotienting its second and third terms by $H^1(X(\C),2 \pi i \Z)$, to a short exact sequence 
of commutative complex Lie groups:
\begin{equation}
0 \longrightarrow \Gamma(X(\C), \Omega^{1\, \an}_{X/\C})
\longrightarrow \frac{H^1(X(\C),\C)}{H^1(X(\C),2 \pi i \Z)} \longrightarrow
\frac{H^1(X(\C), \cO^\an_{X(\C)})}{H^1(X(\C),2 \pi i \Z)}
\longrightarrow 0,
\end{equation}

It turns out that it coincides with the short exact sequence (\ref{univextoverc}) when we take the GAGA isomorphism 
$\Gamma(X,\Omega_{X/\C}^1) \simeq \Gamma(X(\C), \Omega^{1\, \an}_{X/\C})$ and the ``exponential" isomorphisms (\ref{expPic}) and (\ref{expE}) into account.

Observe that the maximal compact subgroup of the Lie group $E_{X/\C}(\C)$ is precisely \begin{equation}\label{maxcptsg}
\frac{H^1(X(\C),2\pi i\R)}{H^1(X(\C),2\pi i\Z)}
\hookrightarrow
\frac{H^1(X(\C),\C)}{H^1(X(\C),2 \pi i \Z)} \simeq E_{X/\C}(\C).
\end{equation}
It is a ``real torus", of dimension the first Betti number of $X(\C).$  Moreover, as a consequence of Hodge theory, the canonical morphism  
$E_{X/\C}(\C)
\rightarrow {\rm Pic}^0_{X/\C}(\C)$ in (\ref{univextoverc})
maps this subgroup isomorphically (in the category of real Lie groups) onto $\Pico_{X/\C}(\C)$.

In this way, we define a canonical splitting 
\begin{equation}\label{sectoverc}
\varsigma \colon {\rm Pic}^0_{X/\C}(\C) \longrightarrow E_{X/\C}(\C).
\end{equation}of (\ref{univextoverc})
in the category of commutative real Lie groups, characterized by the fact that its image lies in --- or equivalently, is --- the maximal compact subgroup of $E_{X/\C}(\C).$

The injection $U(1) \hookrightarrow \C^\ast$ determines an injective morphism $H^1(X(\C),U(1)) \hookrightarrow H^1(X(\C),\C^\ast)$, and the maximal compact group (\ref{maxcptsg}) coincides with the preimage of $H^1(X(\C),U(1))$ under the exponential map. Consequently this group classifies the pairs $(L,\nabla)$ as above, with $L$ algebraically equivalent to zero, such that the monodromy of $\nabla^\an$ lies in $U(1).$ This shows that the real analytic splitting $\varsigma$ may also be described as follows:
 for any  line bundle $L$ over $X$ that is algebraically equivalent
to zero, we may equip $L_\C^{\rm hol}$ with its unique integrable, holomorphic connection  $\nabla_L^u$
with unitary monodromy (\cf   \ref{remcomplex} \emph{supra}); it algebraizes uniquely by GAGA, %cite{serre55}.
and the assignment
\[
[L]\longmapsto \bigl[(L,\nabla_L^u)\bigr]
\]
defines the group homomorphism (\ref{sectoverc}).

\smallskip

\noindent{\bf B.8.} %%%\subsection[kkk]{}\label{mcsr}
If $k=\R$, the extension
\begin{equation}\label{univextoverr}
0\longrightarrow \Gamma(X,\Omega_{X/\R}^1)\longrightarrow E_{X/\R}(\R)
\longrightarrow {\rm Pic}_{X/\R}^0(\R)\longrightarrow 0
\end{equation}
is obtained from the extension (\ref{univextoverc})
by taking invariants under complex conjugation.
We obtain again a canonical splitting 
\[
\varsigma_\R \colon {\rm Pic}_{X/\R}^0(\R) \longrightarrow E_{X/\R}(\R)
\]
since the splitting (\ref{sectoverc}) is invariant under complex 
conjugation.
The image of $\varsigma_\R$ is the unique maximal compact subgroup 
of $E_{X/\R}(\R)$.

\end{appendix}

\backmatter

\bibliographystyle{alpha}

\begin{thebibliography}{HPW05}

\bibitem[ALJ89]{angenioletal89}
B.~Ang{\'e}niol and M.~Lejeune-Jalabert.
\newblock {\em Calcul diff\'erentiel et classes caract\'eristiques en
  g\'eom\'etrie alg\'ebrique}.
\newblock Hermann, Paris, 1989.

\bibitem[Ati57]{atiyah57}
M.~F. Atiyah.
\newblock Complex analytic connections in fibre bundles.
\newblock {\em Trans. Amer. Math. Soc.}, 85:181--207, 1957.

\bibitem[Ber95]{bertrand95}
D.~Bertrand.
\newblock Points rationnels sur les sous-groupes compacts des groupes
  alg\'ebriques.
\newblock {\em Experiment. Math.}, 4(2):145--151, 1995.

\bibitem[Ber98]{bertrand98}
D.~Bertrand.
\newblock Relative splitting of one-motives.
\newblock In {\em Number theory (Tiruchirapalli, 1996)}, pages 3--17. Amer.
  Math. Soc., Providence, RI, 1998.

\bibitem[BF00]{buchweitzflenner98}
R.-O. Buchweitz and H.~Flenner.
\newblock The {A}tiyah-{C}hern character yields the semiregularity map as well
  as the infinitesimal {A}bel-{J}acobi map.
\newblock In {\em The arithmetic and geometry of algebraic cycles (Banff, AB,
  1998)}, pages 33--46. Amer. Math. Soc., Providence, RI, 2000.

\bibitem[BF03]{buchweitzflenner03}
R.-O. Buchweitz and H.~Flenner.
\newblock A semiregularity map for modules and applications to deformations.
\newblock {\em Compositio Math.}, 137(2):135--210, 2003.

\bibitem[BK07]{bostkuennemann1}
J.-B. Bost and K.~K{\"u}nnemann.
\newblock Hermitian vector bundles and extension groups on arithmetic schemes.
  {I}. {G}eometry of numbers.
\newblock Preprint arXiv math.NT/0701343, 2007.

\bibitem[BLR90]{boschetal90}
S.~Bosch, W.~L{\"u}tkebohmert, and M.~Raynaud.
\newblock {\em N\'eron models}.
\newblock Springer-Verlag, Berlin, 1990.

\bibitem[BW07]{bakerwustholz07}
A.~Baker and G.~W{\"u}stholz.
\newblock {\em Logarithmic forms and {D}iophantine geometry}, volume~9 of {\em
  New Mathematical Monographs}.
\newblock Cambridge University Press, Cambridge, 2007.

\bibitem[Che46]{chern46}
S.-s. Chern.
\newblock Characteristic classes of {H}ermitian manifolds.
\newblock {\em Ann. of Math. (2)}, 47:85--121, 1946.

\bibitem[Con06]{conrad06}
B.~Conrad.
\newblock Chow's {$K/k$}-image and {$K/k$}-trace, and the {L}ang-{N}\'eron
  theorem.
\newblock {\em Enseign. Math. (2)}, 52(1-2):37--108, 2006.

\bibitem[Del68]{deligne68}
P.~Deligne.
\newblock Th\'eor\`eme de {L}efschetz et crit\`eres de d\'eg\'en\'erescence de
  suites spectrales.
\newblock {\em Inst. Hautes \'Etudes Sci. Publ. Math.}, 35:259--278, 1968.

\bibitem[Del70]{deligne70}
P.~Deligne.
\newblock {\em \'{E}quations diff\'erentielles \`a points singuliers
  r\'eguliers}.
\newblock Springer-Verlag, Berlin, 1970.
\newblock Lecture Notes in Mathematics, Vol. 163.

\bibitem[Del73]{deligne1972b}
P.~Deligne.
\newblock Intersections sur les surfaces r\'eguli\`eres.
\newblock In {\em Groupes de monodromie en g\'eom\'etrie alg\'ebrique
  (SGA~7~II)\/}, Lectures {N}otes in {Mathematics}, Vol.340, 1--38, Springer-Verlag, 1973.

\bibitem[Del84]{deligne84}
P.~Deligne.
\newblock Int\'egration sur un cycle \'evanescent.
\newblock {\em Invent. Math.}, 76(1):129--143, 1984.

\bibitem[Ful98]{FultonIT}
W.~Fulton.
\newblock {\em {Intersection theory. 2nd ed.}}
\newblock {Ergebnisse der Mathematik und ihrer Grenzgebiete. 3. Folge. 2.
  Berlin, Springer}, 1998.

%\bibitem[GDK73]{sga7-II}
%A.~Grothendieck, P.~Deligne, and N.~M. Katz.
%\newblock {\em Groupes de monodromie en g\'eom\'etrie alg\'ebrique (SGA~7~II)},
%  volume 340 of {\em Lectures {N}otes in {Mathematics}}.
%\newblock Springer-{V}erlag, 1973.

\bibitem[GH78]{griffithsharris78}
P.~Griffiths and J.~Harris.
\newblock {\em Principles of algebraic geometry}.
\newblock Wiley-Interscience [John Wiley \& Sons], New York, 1978.
%\newblock Pure and Applied Mathematics.

\bibitem[God73]{godement73}
R.~Godement.
\newblock {\em Topologie alg\'ebrique et th\'eorie des faisceaux}.
\newblock Troisi{\`e}me {\'e}dition revue et corrig{\'e}e.
 Publications de
l'Institut de Math{\'e}matique de l'Universit{\'e} de Strasbourg, XIII.
  %Actualit{\'e}s Scientifiques et Industrielles, No. 1252.
\newblock Hermann, Paris, 1973.

\bibitem[Gro62]{FGA}
A.~Grothendieck.
\newblock {\em Fondements de la g\'eom\'etrie alg\'ebrique. [{E}xtraits du
  {S}\'eminaire {B}ourbaki, 1957--1962.]}.
\newblock Secr\'etariat math\'ematique, Paris, 1962.

\bibitem[Gro63]{EGAIII2}
A.~Grothendieck.
\newblock \'{E}l\'ements de g\'eom\'etrie alg\'ebrique. {I}{I}{I}. \'{E}tude
  cohomologique des faisceaux coh\'erents. {I}{I}.
\newblock {\em Inst. Hautes \'Etudes Sci. Publ. Math.}, 17:137--223, 1963.

\bibitem[Gro67]{EGAIV4}
A.~Grothendieck.
\newblock \'{E}l\'ements de g\'eom\'etrie alg\'ebrique. {I}{V}. \'{E}tude
  locale des sch\'emas et des morphismes de sch\'emas {I}{V}.
\newblock {\em Inst. Hautes \'Etudes Sci. Publ. Math.}, 32:5--361, 1967.

\bibitem[GS90a]{gilletsoule90}
H.~Gillet and C.~Soul{\'e}.
\newblock Arithmetic intersection theory.
\newblock {\em Inst. Hautes \'Etudes Sci. Publ. Math.}, 72:93--174, 1990.

\bibitem[GS90b]{gilletsoule89}
H.~Gillet and C.~Soul{\'e}.
\newblock Characteristic classes for algebraic vector bundles with {H}ermitian
  metric. {I}.
\newblock {\em Ann. of Math. (2)}, 131(1):163--203, 1990.

\bibitem[HPW05]{hindryetal05}
M.~Hindry, A.~Pacheco, and R.~Wazir.
\newblock Fibrations et conjecture de {T}ate.
\newblock {\em J. Number Theory}, 112(2):345--368, 2005.

\bibitem[Ill71]{illusie71}
L.~Illusie.
\newblock {\em Complexe cotangent et d\'eformations. {I}}.
\newblock Springer-Verlag, Berlin, 1971.
\newblock Lecture Notes in Mathematics, Vol. 239.

\bibitem[Kah06]{kahn06}
B.~Kahn.
\newblock Sur le groupe des classes d'un sch\'ema arithm\'etique.
\newblock {\em Bull. Soc. Math. France}, 134(3):395--415, 2006.
\newblock With an appendix by M. Hindry.

\bibitem[Kat70]{katz70}
N.~M. Katz.
\newblock Nilpotent connections and the monodromy theorem: {A}pplications of a
  result of {T}urrittin.
\newblock {\em Inst. Hautes \'Etudes Sci. Publ. Math.}, 39:175--232, 1970.

\bibitem[Kle66]{kleiman66}
S.~L. Kleiman.
\newblock Toward a numerical theory of ampleness.
\newblock {\em Ann. of Math. (2)}, 84:293--344, 1966.

\bibitem[Kle71]{kleiman71}
S.~Kleiman.
\newblock {Les th\'eor\`emes de finitude pour le foncteur de Picard}.
\newblock In {\em Groupes de monodromie en g\'eom\'etrie alg\'ebrique
  (SGA~6)}, Lectures {N}otes in {Mathematics}, Vol. 225,  616--666.
  Springer-{V}erlag, Berlin, 1971.

\bibitem[KN63]{kobayashinomizu63}
Sh. Kobayashi and K.~Nomizu.
\newblock {\em {Foundations of differential geometry. I.}}
\newblock {New York-London: Interscience Publishers}, 1963.

\bibitem[Lan66]{lang66}
S.~Lang.
\newblock {\em Introduction to transcendental numbers}.
\newblock Addison-Wesley Publishing Co., Reading, Mass., 1966.

\bibitem[LN59]{LangNeron59}
S.~Lang and A.~N\'eron.
\newblock {Rational points of abelian varieties over function fields.}
\newblock {\em Am. J. Math.}, 81:95--118, 1959.

\bibitem[Mes73]{messing73}
W.~Messing.
\newblock The universal extension of an abelian variety by a vector group.
\newblock In {\em Symposia Mathematica, Vol. XI (Convegno di Geometria, INDAM,
  Rome, 1972)}, pages 359--372. Academic Press, London, 1973.

\bibitem[MF82]{mumford82}
D.~Mumford and J.~Fogarty.
\newblock {\em Geometric invariant theory}, volume~34 of {\em Ergebnisse der
  Mathematik und ihrer Grenzgebiete}.
\newblock Springer-Verlag, Berlin, second edition, 1982.

\bibitem[MM74]{mazurmessing74}
B.~Mazur and W.~Messing.
\newblock {\em Universal extensions and one dimensional crystalline
  cohomology}.
\newblock Lecture Notes in Mathematics, Vol. 370.
\newblock Springer-Verlag, Berlin, 1974.


\bibitem[Nak55]{nakano55}
S.~Nakano.
\newblock On complex analytic vector bundles.
\newblock {\em J. Math. Soc. Japan}, 7:1--12, 1955.

\bibitem[Sch41]{schneider41}
Th. Schneider.
\newblock Zur {T}heorie der {A}belschen {F}unktionen und {I}ntegrale.
\newblock {\em J. Reine Angew. Math.}, 183:110--128, 1941.

\bibitem[Ser59]{serre59}
J.-P. Serre.
\newblock {\em Groupes alg\'ebriques et corps de classes}.
\newblock Publications de l'institut de math\'ematique de l'universit\'e de
  Nancago, VII. Hermann, Paris, 1959.

\bibitem[Spi70]{spivak70}
M.~Spivak.
\newblock {\em A comprehensive introduction to differential geometry. {V}ol.
  {II}}.
\newblock Published by M. Spivak, Brandeis Univ., Waltham, Mass., 1970.

\bibitem[ST03]{sastrytong03}
P.~Sastry and Y.~L.~L. Tong.
\newblock The {G}rothendieck trace and the de {R}ham integral.
\newblock {\em Canad. Math. Bull.}, 46(3):429--440, 2003.

\bibitem[Wal87]{waldschmidt87}
M.~Waldschmidt.
\newblock Nombres transcendants et groupes alg\'ebriques.
\newblock {\em Ast\'erisque}, (69-70):218, 1987.
%\newblock With appendices by Daniel Bertrand and Jean-Pierre Serre.

\bibitem[Wel80]{wells80}
R.~O. Wells, Jr.
\newblock {\em Differential analysis on complex manifolds}.
\newblock Springer-Verlag, New York, second edition, 1980.

\bibitem[W{\"u}s89]{wuestholz89}
G.~W{\"u}stholz.
\newblock Algebraische {P}unkte auf analytischen {U}ntergruppen algebraischer
  {G}ruppen.
\newblock {\em Ann. of Math. (2)}, 129(3):501--517, 1989.

\end{thebibliography}

\end{document}